\newtheorem{theorem}{Theorem}[chapter]
\newtheorem{lemma}[theorem]{Lemma}
\newtheorem{proposition}[theorem]{Proposition}
\newtheorem{corollary}[theorem]{Corollary}
\newtheorem{observation}[theorem]{Remark}
\newtheorem{assumption}{Assumption}
\newtheorem{definition}{Definition}[chapter]
\newcommand{\set}{\mathbb}
\newcommand{\dl}{\nabla}
\renewcommand{\frak}{\mathfrak}
\newcommand{\mc}{\mathcal}
\newcommand{\be}{\begin{equation}}
\newcommand{\ee}{\end{equation}}
\newcommand{\bee}{\begin{align}}
\newcommand{\eee}{\end{align}}
\newcommand{\ba}{\begin{array}}
\newcommand{\ds}{\displaystyle}
\newcommand{\ea}{\end{array}}
\newcommand{\bpm}{\begin{pmatrix}}
\newcommand{\epm}{\end{pmatrix}}
\newcommand{\lb}{\label}
\DeclareMathOperator{\sgn}{sgn}
\DeclareMathOperator{\supp}{supp}
\DeclareMathOperator{\Ran}{Ran}
\DeclareMathOperator{\Ker}{Ker}
\DeclareMathOperator{\Rere}{Re}
\DeclareMathOperator{\Imim}{Im}
\DeclareMathOperator{\Dom}{Dom}
\DeclareMathOperator*{\esssup}{ess\; sup}
\newcommand{\ov}{\overline}
\newcommand{\dd}{{\,}{d}}
\renewcommand{\Re}{\Rere}
\renewcommand{\Im}{\Imim}
\newcommand{\Dil}{\mathrm{Dil}}
\title
{A Critical Centre-Stable Manifold for the Cubic Focusing Schr\"{o}dinger Equation in Three Dimensions}
\author{Marius Beceanu}
\date{June 2009}
\begin{document}
\maketitle
\numberwithin{equation}{chapter}
\topmatter{Dedication}
\begin{center}
To my loved ones.
\end{center}
\topmatter{Acknowledgements}
My deepest gratitude goes to Professor Wilhelm Schlag, for pointing me to the rich research topic that turned into my thesis. It has been an honor to be his student. Throughout the years, his teaching has shaped my understanding of Analysis, immeasurably improving my comprehension of the subject. His close reading of my work has resulted in interesting observations and in many changes to its presentation and organization. He patiently listened and guided me in transforming a hardly legible initial draft to a clearer final version. His patience has been invaluable all throughout.\\
I would like to thank Professor Carlos Kenig, whose teaching has had a profound impact on my mathematical education and who graciously agreed to read a draft of my thesis. Professor Peter Constantin, who was a member of the defense committee, has been very helpful and accommodating.\\
I would also like to thank Professor Michael Goldberg for our helpful conversations about the Schr\"{o}dinger equation and for providing a very valuable perspective on the subject.\\
Others deserve to be mentioned --- especially my parents and my girlfriend, Lilian, who helped, supported, and made me very happy over the years.
\topmatter{Abstract}
The current work establishes several new results concerning the linear and nonlinear Schr\"odinger equation. The linear results are helpful in proving the nonlinear ones, but present independent interest also.\\
Firstly, we consider the $\dot H^{1/2}$-critical Schr\"{o}dinger equation with a focusing cubic nonlinearity in $\set R^3$
\be
i \partial_t \psi + \Delta \psi + |\psi|^2 \psi = 0.
\lb{NLS}
\ee
It admits an eight-dimensional manifold of periodic solutions called solitons
\be\lb{0.2}
w(\pi) = w(\alpha, \Gamma, v_k, D_k) = e^{i(\Gamma + v\cdot x - t|v|^2 + \alpha^2 t)} \phi(x-2tv-D, \alpha),
\ee
where $\phi(x, \alpha)$ is a positive ground state solution of the semilinear elliptic equation
\be
-\Delta \phi + \alpha^2\phi = \phi^3.
\lb{phi}
\ee
We prove that in the neighborhood of the soliton manifold there exists a $\dot H^{1/2}$ real-analytic manifold $\mc N$ of asymptotically stable solutions of (\ref{NLS}), meaning they are the sum of a moving soliton and a dispersive term.\\
Furthermore, we show that a solution starting on $\mc N$ remains on $\mc N$ for all positive time and for some finite negative time and that $\mc N$ is a centre-stable manifold for this equation.\\
The proof of the nonlinear results is based on the method of modulation, introduced by Soffer and Weinstein and adapted by Schlag to the $L^2$-supercritical case. Novel elements include a different linearization and new Strichartz-type estimates for the linear Schr\"{o}dinger equation.\\
The main nonlinear result depends on a spectral assumption concerning the absence of embedded eigenvalues in the continuous spectrum of the Hamiltonian.\\
In the second part, we establish several new estimates for solutions of the time-dependent and time-independent linear Schr\"odinger equation, which hold under sharper or more general conditions than previously known. Several of these estimates are based on a new approach that makes use of Wiener's Theorem in the context of function spaces.\\


\tableofcontents

\mainmatter
\chapter{Introduction}
\section{Main result}
Consider equation (\ref{NLS}):
$$
i \partial_t \psi + \Delta \psi + |\psi|^2\psi = 0.
$$
For a parameter path $\pi = (v_k, D_k, \alpha, \Gamma)$ such that $\|\dot \pi\|_{L^1_t \cap L^{\infty}_t} < ~\infty$ and a positive solution $\phi(x, \alpha)$ of (\ref{phi})
$$
-\Delta \phi + \alpha^2 \phi = \phi^3,
$$
define the nonuniformly moving soliton $w_{\pi}(t)$ by
\be\begin{aligned}\lb{1.1}
w_{\pi}(t)(x) &= e^{i \theta(x, t)} \phi(x-y(t), \alpha(t)) \\
\theta(x, t) &\ds= v(t) \cdot x + \int_0^t(\alpha^2(s)-|v(s)|^2) \dd s + \Gamma(t)\\
y(t) &\ds= 2\int_0^t v(s) \dd s + D(t).
\end{aligned}
\ee
\begin{theorem}[Main result]\lb{theorem_1}
There exists a codimension-one real analytic manifold $\mc N \subset \dot H^{1/2}$, in a neighborhood of the soliton manifold, such that for any initial data $\psi(0) \in \mc N$, equation (\ref{NLS}) has a global solution $\psi$. $\mc N$ and $\psi$ have the following properties:
\begin{enumerate}
\item $\psi$ is asymptotically stable, in the sense that it decomposes into a dispersive term $r$ and a moving soliton $w_{\pi}(t)$, given by (\ref{1.1}), which converges to a final state:
\be\begin{aligned}
\psi &= r + w_{\pi}(t) \\
\|\dot\pi\|_1 &\leq C \alpha(0) \|r(0)\|_{\dot H^{1/2}_x} \\
\|r\|_{L^{\infty}_t \dot H^{1/2}_x \cap L^2_t \dot W^{1/2, 6}_x} &\leq C \|r(0)\|_{\dot H^{1/2}_x} \leq C \min_{w \text{ is a soliton}} \|\psi(0)-w\|_{\dot H^{1/2}_x}.
\end{aligned}\ee
\item $\psi$, $r$, and $w_{\pi}$ depend real-analytically on the initial data $\psi(0)$.
\item The dispersive term $r$ scatters like the solution of the free Schr\"{o}dinger equation: there exists $r_0 \in \dot H^{1/2}$ such that
\be
r(t) = e^{it\Delta} r_0 + o_{\dot H^{1/2}}(1).
\ee
\item $\psi$ stays on $\mc N$ for infinite positive time and for finite negative time, meaning that $\mc N$ is invariant under the Hamiltonian flow.
\item $\mc N$ is the centre-stable manifold of the equation.
\end{enumerate}
\end{theorem}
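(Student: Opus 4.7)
The plan is to follow the modulation-theoretic scheme of Soffer--Weinstein as adapted by Schlag to the $L^2$-supercritical setting. Write the candidate solution as $\psi(t) = w_{\pi(t)}(t) + r(t)$, treating the eight parameter path $\pi(t) = (v(t), D(t), \alpha(t), \Gamma(t))$ as unknowns to be determined simultaneously with the dispersive remainder $r$. Substituting this ansatz into (\ref{NLS}) and conjugating away the phase/translation of the soliton produces, for the two-component object $R = (r,\bar r)$, an equation of the schematic form
\be
i\partial_t R = H(\alpha) R + \dot\pi \cdot M(\pi) R + \dot\pi \cdot V(\pi) + N(R,\pi),
\ee
where $H(\alpha)$ is the matrix linearisation of the NLS at the ground state $\phi(\cdot,\alpha)$, the term $\dot\pi \cdot V$ collects the contributions from differentiating the soliton ansatz in its parameters, and $N$ is at least quadratic in $R$. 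The eight parameters are fixed by imposing eight symplectic orthogonality conditions requiring $R$ to lie in the range of the continuous spectral projection $P_c(H(\alpha(t)))$, which gives an ODE for $\dot\pi$ that is solvable as long as $\|R\|$ stays small.

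The heart of the argument is linear. Under the spectral assumption quoted in the abstract, the spectrum of $H(\alpha)$ consists of $(-\infty,-\alpha^2]\cup[\alpha^2,\infty)$ on the real axis, an eight-dimensional generalised kernel at $0$ coming from the symmetries that generate the soliton manifold, and (because the equation is $L^2$-supercritical) a single pair of simple real eigenvalues $\pm i\sigma$ with $\sigma>0$, responsible for an exponentially unstable and an exponentially stable direction. I would therefore split $R = a_-(t)f_- + a_+(t)f_+ + R_c$ with $R_c$ in the range of $P_c$, and establish Strichartz-type estimates for $e^{itH(\alpha)}P_c$ in the scale-invariant spaces $L^\infty_t\dot H^{1/2}_x \cap L^2_t\dot W^{1/2,6}_x$; these are exactly the estimates developed in the second part of the thesis and rely on Wiener's theorem to handle the non-selfadjoint potential. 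The unstable coefficient $a_+$ satisfies $\dot a_+ = \sigma a_+ + \text{(quadratic)}$ and cannot be controlled by any fixed-point unless we kill its growth; demanding that the integrated forcing cancels the growing homogeneous solution imposes exactly one real-analytic scalar condition on $(\psi(0),\pi(0),a_-(0))$. This codimension-one condition defines $\mc N$.

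With the parameters $\pi$, the stable coefficient $a_-$ (which we prescribe freely), and the above cancellation condition in place, the system for $(R_c, a_+, \pi)$ becomes a contraction in the Strichartz norm of item~(1): one inverts $i\partial_t - H(\alpha(t))P_c$ by the Strichartz estimate, treats $\dot\pi$ and the nonlinearity as perturbations using the bilinear and trilinear Strichartz bounds on $|\psi|^2\psi$, and closes the ball $\|r\|_{L^\infty_t\dot H^{1/2}\cap L^2_t\dot W^{1/2,6}}\le C\|r(0)\|_{\dot H^{1/2}}$. Because every map entering the contraction is polynomial or analytic in its arguments and the implicit function theorem applies, one obtains joint real-analytic dependence on the initial data, yielding (2). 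Scattering in (3) follows from the $L^2_t L^6_x$-type bound on $r$: the Duhamel remainder $e^{-it\Delta}r(t)$ is Cauchy in $\dot H^{1/2}$ as $t\to\infty$, so $r_0$ exists and produces the stated asymptotics. The asymptotic parameters $\alpha(\infty)$, $v(\infty)$, $D(\infty)$, $\Gamma(\infty)$ exist because $\dot\pi\in L^1_t$.

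Finally, for properties (4) and (5) I would argue as follows. Forward invariance of $\mc N$ is automatic from the construction: at any later time $t_0>0$ the triple $(R_c(t_0),a_-(t_0),\pi(t_0))$ again satisfies the cancellation condition that defined $\mc N$, because the unstable mode has been eliminated globally. Backward local existence of the same decomposition is a standard short-time contraction. To identify $\mc N$ as the centre-stable manifold, I would show that any solution of (\ref{NLS}) which stays within a sufficiently small $\dot H^{1/2}$-neighbourhood of the soliton manifold for all $t\ge 0$ admits the same modulation decomposition with $a_+$ bounded, and conclude from $\dot a_+ = \sigma a_+ + O(\|R\|^2)$ that $a_+$ is forced to take the unique value that makes it bounded, namely the one prescribed by our cancellation condition; thus such a solution lies on $\mc N$. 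The main obstacle I expect is the linear Strichartz estimate for the non-selfadjoint $H(\alpha)P_c$ at the critical regularity $\dot H^{1/2}$: controlling the endpoint $L^2_t\dot W^{1/2,6}_x$ norm for the matrix Hamiltonian with a non-symmetric potential requires the new Wiener-algebra machinery of the second part of the thesis, and it is this ingredient, rather than the modulation/fixed-point scheme, that makes the critical centre-stable manifold accessible.
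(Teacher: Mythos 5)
Your overall scheme is the same as the paper's: the ansatz $\psi=w_\pi+r$, the eight orthogonality conditions coupling $\dot\pi$ to $R$, the three-way spectral splitting of the linearised flow, the Wiener-algebra Strichartz estimates for the non-selfadjoint time-dependent Hamiltonian at $\dot H^{1/2}$ regularity, the single cancellation condition on the unstable coefficient $a_+(0)$ that cuts out the codimension-one manifold, and scattering via Duhamel. Two points, however, are genuinely under-justified as written.

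First, the claim that real-analytic dependence on $\psi(0)$ ``follows from the implicit function theorem because every map entering the contraction is polynomial or analytic'' does not go through directly. The fixed-point map is \emph{not} an analytic map of the unweighted Strichartz space into itself: the moving soliton $w_\pi(t)$ depends on the integrals $\int_0^t\alpha^2-|v|^2$ and $\int_0^t v$, so each differentiation in the path $\pi$ produces a factor of $t$, and the $n$-th differential of the nonlinearity lands in a space weighted by $\langle t\rangle^n$. One must therefore build the Taylor expansion of $(R,\pi,h)$ term by term in a scale of time-weighted spaces (the paper uses weights $A_n(t)=\sum_{j\le n}\langle at\rangle^j/j!$, dominated by $e^{\langle at\rangle}$ and hence absorbed by the exponential decay rate $\sigma$ of the hyperbolic modes), and separately prove that the ground state $\phi$ and the eigenfunction $f^+$ are analytic in the spatial variable with controlled factorial growth of derivatives, distinguishing the parameters $\Gamma,D_k$ (entire of exponential type) from $\alpha,v_k$ (analytic only on a strip). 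Without this the induction bounding the $n$-th order term by $C_1C_2^n\|R_0\|^n$ does not close.

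Second, for item (5) you verify only that every small forward-globally-bounded solution lies on $\mc N$. The Bates--Jones definition of a centre-stable manifold additionally requires $\mc N\cap W^u=\{0\}$, i.e.\ that no nonzero solution on $\mc N$ decays exponentially as $t\to-\infty$. This needs a separate backward-in-time argument: for such a solution one shows $\|(I-P_+)Z(t)\|/\|P_+Z(t)\|\to0$ as $t\to-\infty$, while membership in $\mc N$ forces $\|P_+Z(t)\|\to0$ as $t\to+\infty$ and a Gronwall-type lemma then propagates the smallness of the unstable ratio forward, yielding a contradiction unless $Z\equiv0$. Your sketch does not address this half of the definition.
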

This result depends on the absence of embedded eigenvalues within the continuous spectrum of the linearized Hamiltonian (Section \ref{sec_1.5}). For a definition of centre-stable manifolds, the reader is referred to Section \ref{Chapter_1.3}; for a definition of the norms involved in the statement of Theorem \ref{theorem_1}, one is referred to Appendix \ref{spaces}.

As a final matter concerning notation, we denote by $C$ various constants that appear in the proof, not all equal. 
\section{Background and history of the problem}
From a physical point of view, the nonlinear Schr\"{o}dinger equation in $\set R^3$ with cubic nonlinearity and the focusing sign (\ref{NLS}) describes, to a first approximation, the self-focusing of optical beams due to the nonlinear increase of the refraction index. As such, the equation appeared for the first time in the physical literature in 1965, in \cite{kelley}. Equation (\ref{NLS}) can also serve as a simplified model for the Schr\"odinger map equation and it arises as a limiting case of the Hartree equation, the Gross-Pitaevskii equation, or in other physical contexts.

More generally, consider the semilinear focusing Schr\"odinger equation in $\set R^{n+1}$
\be
i \partial_t \psi + \Delta \psi + |\psi|^p \psi = 0,\ \psi(0) \text{ given}.
\lb{NLS'}
\ee
It admits soliton solutions, $e^{it\alpha^2} \phi(x, \alpha)$, where
\be\lb{phi'}
-\Delta \phi + \alpha^2\phi = \phi^{p+1}.
\ee
Important invariant quantities for this equation include the mass
\be
M[\psi] = \int_{\set R^n} |\psi(x, t)|^2 \dd x
\ee
and the energy
\be
E[\psi] = \int_{\set R^n} \frac 1 2 |\dl \psi(x, t)|^2 - \frac 2 {p+2} |\psi(x, t)|^{p + 2} \dd x.
\ee

Equation (\ref{NLS'}) is invariant under the rescaling
\be
\psi(x, t) \to \alpha^{2/p} \psi(\alpha x, \alpha^2 t).
\ee
We interpret this as meaning that (\ref{NLS'}) is $\dot H^{s_c}$-critical, for $s_c = n/2 - 2/p$. Of particular interest are the $L^2$ or mass-critical ($p = 4/n$) and the $\dot H^1$ or energy-critical ($p = 4/(n-2)$) cases.

Except for this introductory discussion of other authors' results, we always assume that $n=3$ and $p = 2$, in which case (\ref{NLS'}) reduces to (\ref{NLS}) and (\ref{phi'}) to (\ref{phi}). In particular, (\ref{NLS}) is $\dot H^{1/2}$-critical; in general, $p=4/(n-1)$ is $\dot H^{1/2}$-critical.

Given a soliton solution (\ref{phi'}) of the Schr\"odinger equation (\ref{NLS'}), a natural question concerns its stability under small perturbations. This issue has been addressed in the $L^2$-subcritical case by Cazenave and Lions \cite{cazenave} and Weinstein \cite{wein1}, \cite{wein2}. Their work addressed the question of orbital stability and introduced the method of modulation, which also figured in every subsequent result.

A first asymptotic stability result was obtained by Soffer--Weinstein \cite{soffer1}, \cite{soffer2}. Further results belong to Pillet--Wayne \cite{pillet}, Buslaev--Perelman \cite{buslaev1}, \cite{buslaev2}, \cite{buslaev3}, Cuccagna \cite{cuc}, \cite{cuca2}, Rod\-ni\-anski--Schlag--Soffer  \cite{rod2}, \cite{rod3}, Tsai--Yau \cite{tsayau1}, \cite{tsayau2}, \cite{tsayau3}, Gang--Sigal \cite{gansig}, and Cuccagna--Mizumachi \cite{cucmiz}.

Grillakis, Shatah, and Strauss \cite{gril1}, \cite{gril2} developed a general theory of soliton stability for Hamiltonian evolution equations, which, when applied to the Schr\"odinger equation, shows the dichotomy between the $L^2$-subcritical and critical or supercritical cases.

In the $L^2$-supercritical, $H^1$-subcritical case in $\set R^3$, Schlag proved the existence of a codimension-one Lipschitz manifold of $W^{1, 1} \cap H^1$ initial data that generate asymptotically stable solutions to (\ref{NLS}). This was followed by more results in the same vein, such as Buslaev--Perelman \cite{buslaev1}, Krieger--Schlag \cite{krisch1}, Cuccagna \cite{cuca2}, Beceanu \cite{bec}, and Marzuola \cite{mar}.

In the $L^2$-critical or supercritical case, negative energy $\langle x\rangle^{-1} L^2 \cap H^1$ initial data leads to solutions of (\ref{NLS'}) that blow up in finite time, due to the virial identity (see Glassey \cite{glassey}). For a relaxation of this condition and a survey of results see \cite{sulem} and \cite{caz2}. Berestycki--Cazenave \cite{bercaz} showed that blow-up can occur for arbitrarily small perturbations of ground states (\ref{phi'}). Recent results concerning this subject include Merle--Raphael \cite{mer2} and Krieger--Schlag \cite{krisch2}.

Weinstein \cite{wein} showed that all $H^1$ solutions $\phi$ of the mass-critical equation (\ref{NLS'}) of mass strictly less than that of the soliton, $M[\psi] < M[\phi]$, have global in time existence. Merle \cite{merle} classified threshold solutions ($M[\psi] = M[\phi]$), finding necessary and sufficient criteria for blow-up for $H^1$ data (namely, all blow-up solutions arise from transformations of the soliton), and global existence and scattering, for $\langle x \rangle^{-1} L^2 \cap H^1$ data.

A comparable result was obtained in 2006 by Kenig--Merle \cite{kenig} for the energy-critical equation (\ref{NLS'}) in the radial case. Namely, for radial $\dot H^1$ data $\phi$ of energy strictly less than that of the soliton, $E[\psi] < E[\phi]$, the following dichotomy takes place: if $\|\dl \psi(0)\|_2 < \|\dl \phi\|_2$, then $\phi$ exists globally and scatters, while if $\|\dl \psi(0)\|_2 > \|\dl \phi\|_2$ and $\psi(0) \in L^2$ then $\psi$ blows up in finite time. In this regime, the equality $\|\dl \psi(0)\|_2 = \|\dl \phi\|_2$ cannot occur. The behavior of solutions at the energy threshold, $E[\psi] = E[\phi]$, was then classified by Duyckaerts--Merle \cite{duymer}: the same two cases are present, together with three more.

Following this approach, Holmer--Roudenko \cite{holrou}, Duyckaerts--Holmer--Rou\-den\-ko \cite{duhoro}, and Duyckaerts--Roudenko \cite{duyrou} established corresponding results for the $\dot H^{1/2}$-critical equation (\ref{NLS}). Their main findings may be summarized as follows:
\begin{theorem}\lb{holmer}
Assume that $\psi$ is a solution of (\ref{NLS}) with
\be
M[\psi] E[\psi] - 2 P[\psi]^2 \leq M[\phi] E[\phi],
\ee
where $\phi>0$ is a soliton given by (\ref{phi}) and $P$ is the momentum
\be
P[\psi] = \int_{\set R^3} \ov {\psi}(x, t) i \dl \psi(x, t) \dd x.
\ee
Then one of the following holds:
\begin{enumerate}
\item If $M[\psi] \|\dl \psi\|_2^2 - 2 P[\psi]^2 < M[\phi] \|\dl \phi\|_2^2$, then $\psi$ exists globally and scatters or equals a special solution, $\phi_-$, up to symmetries: Galilean coordinate changes, scaling, complex phase change, or conjugation.
\item If $M[\psi] \|\dl \psi\|_2^2 - 2 P[\psi]^2 = M[\phi] \|\dl \phi\|_2^2$, then $\psi$ equals $e^{it} \phi(\cdot, 1)$ up to symmetries.
\item If $M[\psi] \|\dl \psi\|_2^2 - 2 P[\psi]^2 > M[\phi] \|\dl \phi\|_2^2$ and $\psi \in \langle x \rangle^{-1} L^2$ is radial, then $\psi$ blows up in finite time or must equal, up to symmetries, a special solution $\phi_+$.
\end{enumerate}
\end{theorem}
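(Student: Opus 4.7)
The plan is to follow the concentration-compactness/rigidity scheme of Kenig--Merle, adapted to the $\dot H^{1/2}$-critical regime by Holmer--Roudenko and Duyckaerts--Holmer--Roudenko. A first step is to exploit Galilean invariance: by an appropriate boost one may reduce to the case $P[\psi]=0$, so the hypothesis becomes $M[\psi]E[\psi]\le M[\phi]E[\phi]$ and the trichotomy compares $M[\psi]\|\nabla\psi\|_2^2$ to $M[\phi]\|\nabla\phi\|_2^2$; scaling further normalizes $\alpha=1$. Weinstein's sharp Gagliardo--Nirenberg inequality combined with conservation of mass and energy then shows that the quantity $M[\psi]\|\nabla\psi(t)\|_2^2$ cannot cross the threshold $M[\phi]\|\nabla\phi\|_2^2$, so the three alternatives are preserved under the Hamiltonian flow.

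In case (1), sharp Gagliardo--Nirenberg upgrades the subthreshold hypothesis to a coercive bound $\|\nabla\psi(t)\|_2 \le C$ uniformly in $t$, hence global $H^1$ existence. For scattering, I would assume towards a contradiction that the set of subthreshold data failing to scatter is nonempty. A Keraani-type linear profile decomposition together with a stability/perturbation lemma for (\ref{NLS}) would produce a minimal critical element $u_c$: a global solution whose orbit is precompact in $\dot H^{1/2}$ modulo translation, modulation, Galilean boost, and scaling. A localized virial argument applied to such a compact orbit, combined with the coercivity from sharp Gagliardo--Nirenberg, forces $u_c\equiv 0$, except for a degenerate case where the orbit lies on a soliton trajectory; that case furnishes the exceptional solution $\phi_-$ constructed in the threshold analysis below.

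In case (3), the reversed sharp Gagliardo--Nirenberg inequality gives a uniform lower bound making the Glassey virial
\[
\frac{d^2}{dt^2}\int_{\set R^3}|x|^2|\psi|^2\dd x = 8\|\nabla\psi\|_2^2 - 6\|\psi\|_4^4
\]
strictly negative and bounded away from zero, which under the variance hypothesis $\psi\in\langle x\rangle^{-1}L^2$ forces $\int|x|^2|\psi|^2\dd x$ to become negative in finite time, producing blowup. In the purely radial case the weight $|x|^2$ must be truncated and the error handled via a radial concentration lemma of Ogawa--Tsutsumi type. The non-blowup alternative again collapses onto a soliton modulation and yields $\phi_+$.

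The threshold case (2) and the construction of $\phi_\pm$ are, in my view, the main obstacle. Equality in sharp Gagliardo--Nirenberg forces the solution to coincide with the ground state up to symmetries, so one linearizes (\ref{NLS}) about $e^{it}\phi$ and studies the matrix Hamiltonian $\mathcal H$. Standard spectral theory for this operator provides a conjugate pair $\pm\lambda$ of simple real eigenvalues outside the continuous spectrum; the associated one-dimensional stable and unstable invariant manifolds produce two special orbits $\phi_\pm$ that approach the soliton orbit exponentially as $t\to\pm\infty$ and scatter in the opposite time direction. Uniqueness up to the symmetry group follows from a modulation decomposition plus a Gronwall argument on the stable/unstable spectral projections. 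The hardest technical point remains the rigidity step in case (1) without the radial assumption, which demands pairing the localized virial with a Morawetz-type correction while tracking the motion of all eight modulation parameters along the compact orbit.
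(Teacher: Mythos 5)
This statement is not proved in the paper at all: Theorem \ref{holmer} is quoted verbatim as a summary of the results of Holmer--Roudenko \cite{holrou}, Duyckaerts--Holmer--Roudenko \cite{duhoro}, and Duyckaerts--Roudenko \cite{duyrou} (``Their main findings may be summarized as follows''), and the thesis then only comments on how that theorem relates to its own manifold $\mc N$ in Remark \ref{obs1}. So there is no in-paper argument to compare yours against; the relevant comparison is with the cited works.

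Measured against those, your outline is essentially the correct road map: Galilean boost to kill the momentum term, sharp Gagliardo--Nirenberg plus conservation of mass and energy to show the gradient comparison cannot cross the threshold, Kenig--Merle concentration-compactness and a localized virial rigidity argument for scattering in case (1), the Glassey/Ogawa--Tsutsumi virial for blow-up in case (3), and the variational characterization of the ground state for case (2). Two points deserve tightening. First, the exceptional solutions $\phi_{\pm}$ do \emph{not} arise as a ``degenerate case'' of the rigidity step in the strictly subthreshold regime $M[\psi]E[\psi]-2P[\psi]^2 < M[\phi]E[\phi]$, where the classification is clean (scattering, respectively blow-up); they appear only at the threshold equality, and their construction and the uniqueness up to symmetries is the separate, genuinely delicate part of \cite{duyrou}, modeled on Duyckaerts--Merle: one must first show that a non-scattering threshold solution converges exponentially to the soliton orbit, and only then invoke the stable/unstable eigenmodes $\pm i\sigma$ of the linearized matrix Hamiltonian (which, in the convention of this paper's Section \ref{spectru}, are purely imaginary eigenvalues of $\mc H$, i.e.\ exponential directions of the flow) together with a fixed-point construction and a Gronwall-type uniqueness argument. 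Second, your sketch of the rigidity step in case (1) without radial symmetry is only a gesture; as you acknowledge, controlling the spatial center of the compact orbit via the zero-momentum normalization is precisely the content of \cite{duhoro} and cannot be waved through. As a blind reconstruction of the cited proofs your proposal is faithful in structure but would need all of these steps filled in to count as a proof.
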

The special solutions $\phi_-$ and $\phi_+$ were defined by the aforementioned authors to have the following properties: $\phi_-(t)$ scatters  as $t \to -\infty$ and converges at an exponential rate to a soliton solution as $t \to + \infty$, while $\phi_+(t)$ blows up in finite time for $t<0$ and converges at an exponential rate to a soliton solution as $t \to + \infty$.

We explore the connection between their result, Theorem \ref{holmer}, and Theorem \ref{theorem_1} in the subsequent remark.
\begin{observation} The boundary of the region
\be
B = \{\psi \mid M[\psi] E[\psi] - 2 P[\psi]^2 \leq M[\phi] E[\phi]\},
\ee
is not a smooth manifold. Theorem \ref{theorem_1} shows that, in the neighborhood of the soliton manifold, $B$ is contained between two transverse hypersurfaces. The boundary of the region $B$ and the two transverse hypersurfaces intersect along curves (one-dimensional manifolds).
\lb{obs1}
\end{observation}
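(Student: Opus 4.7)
The plan is to analyze the real-valued functional
\[
F(\psi) := M[\psi]E[\psi] - 2P[\psi]^2 - M[\phi]E[\phi],
\]
whose non-positive set is $B$ and whose zero locus is $\partial B$. First, I verify that $F \equiv 0$ on the soliton manifold: this is immediate for the stationary soliton $\phi$ since $P[\phi] = 0$, and the quantity $M\cdot E - 2P^2$ is invariant under the full 8-dimensional symmetry group (phase, translations, Galilean boosts, and $\dot H^{1/2}$-critical scaling) whose orbit through $\phi$ is the soliton manifold. This invariance forces $F'(w)$ to annihilate the tangent space to the soliton manifold at every $w$. The Hessian of $F$ at $w$ restricted to transverse directions is then computed in terms of the linearized Hamiltonian $\mc H$ about $w$; in the $L^2$-supercritical regime $\mc H$ carries a pair of real eigenvalues $\pm \sigma$, and the corresponding stable/unstable eigenvectors give transverse directions on which this Hessian takes opposite signs. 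The zero set of $F$ is therefore a double cone in every slice transverse to the orbit, which proves that $\partial B$ is not a smooth manifold.

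For the second claim, I take the two transverse hypersurfaces to be $\mc N$ from Theorem \ref{theorem_1} and its complex-conjugate image $\ov{\mc N} := \{\ov\psi : \psi \in \mc N\}$. Since $\psi(t) \mapsto \ov{\psi(-t)}$ is a symmetry of (\ref{NLS}), $\ov{\mc N}$ is a real-analytic, codimension-one centre-unstable manifold containing the soliton manifold. Transversality at a soliton $w$ follows from the identifications $T_w \mc N = (\text{centre subspace of }\mc H) \oplus (\text{stable eigenspace})$ and $T_w \ov{\mc N} = (\text{centre subspace}) \oplus (\text{unstable eigenspace})$, whose sum spans the entire tangent space. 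To see that $B$ lies locally between these hypersurfaces, I check that each of the two nappes of the cone $\{F = 0\}$ is tangent to one of $T_w\mc N$ and $T_w\ov{\mc N}$; this in turn reduces to showing that the unstable (respectively stable) eigenvector of $\mc H$ is a null direction for the Hessian of $F$ restricted to $T_w\mc N$ (respectively $T_w\ov{\mc N}$). These null identities follow from the invariance of $M\cdot E - 2P^2$ under the linearized flow generated by $\mc H$.

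The intersection claim then reduces to a dimension count after modding out symmetries: locally near $w$, the triple intersection $\partial B \cap \mc N \cap \ov{\mc N}$ coincides with the soliton manifold itself, and quotienting by the seven-dimensional subgroup of phase, translations, and Galilean boosts leaves the one-parameter scaling family $\{\phi(\cdot,\alpha)\}_{\alpha > 0}$, yielding the one-dimensional curves mentioned in the statement. The main obstacle is the transverse Hessian computation: matching the eigenvectors of $\mc H$ with the null directions of the Hessian of $F$ and with the tangent spaces of $\mc N$ and $\ov{\mc N}$ requires pairing the detailed spectral analysis of $\mc H$ with the concrete construction of $\mc N$ in Theorem \ref{theorem_1}, rather than appealing to soft general linearization principles alone.
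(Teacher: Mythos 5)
Your overall strategy is the paper's: justify the remark by showing that the first differential of $F[\psi]=M[\psi]E[\psi]-2P[\psi]^2$ vanishes at the soliton and that its second differential is indefinite, so that the tangent cone to $B$ at $\phi$ is a quadric cone rather than a half-space. However, your justification of the first step has a genuine gap. Invariance of $F$ under the eight-parameter symmetry group only forces $dF(w)$ to annihilate the eight-dimensional tangent space to the soliton orbit; it says nothing about the infinitely many transverse directions. If $dF(w)$ were nonzero in even one transverse direction, the implicit function theorem would make $\partial B$ a smooth hypersurface near $w$ and the entire remark would fail; moreover the tangent cone would then be a half-space regardless of the Hessian. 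What is actually needed --- and what the paper invokes --- is that $\phi$ is an \emph{unconstrained} critical point of $F$, i.e.\ $dF(\phi)h=0$ for every $h$. This comes from the extremizing property of $\phi$ in the Gagliardo--Nirenberg--Sobolev inequality, or equivalently from combining the Euler--Lagrange equation (\ref{phi}) with the Pohozaev identities relating $\|\dl\phi\|_2^2$, $\|\phi\|_4^4$ and $\alpha^2\|\phi\|_2^2$ (these give $2E[\phi]=\alpha^2 M[\phi]$, which is exactly what makes $E[\phi]\,dM(\phi)+M[\phi]\,dE(\phi)$ vanish identically). Once that is supplied, your indefiniteness discussion and your choice of $\mc N$ and its conjugate image $\ov{\mc N}$ as the two transverse hypersurfaces agree with the paper's intent.

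A second, smaller discrepancy concerns the intersection curves. The triple intersection $\partial B\cap\mc N\cap\ov{\mc N}$ contains the full eight-dimensional soliton manifold, so it cannot ``coincide with'' a family of curves, and quotienting by a subgroup of symmetries does not produce a one-dimensional subset of $\dot H^{1/2}$ in the sense the statement intends. The curves the paper has in mind are the trajectories $\{\phi_-(t)\}_{t\geq T}$ and $\{\phi_+(t)\}_{t\geq T}$ of the threshold special solutions of Theorem \ref{holmer}: these satisfy $F=0$ exactly (hence lie on $\partial B$), converge exponentially to the soliton (hence, for large $t$, lie on $\mc N$, respectively $\ov{\mc N}$), and are genuinely one-dimensional; in a slice transverse to the soliton orbit they run along the ruling where each nappe of the cone $\{F=0\}$ is tangent to the corresponding hyperplane.
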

The intersection curves are described by the special solutions $\phi_+(t)$ and $\phi_-(t)$, for sufficiently large $t$.

This remark fits with several other natural observations. Firstly, eliminating the soliton manifold ($\phi$ and its symmetry transformations --- an eight-dimensional set) from $B$, an infinite-dimensional set, divides the latter into two disconnected components; this certainly could not happen if $B$ were smooth.

Secondly, consider the functional that defines $B$,
\be
F[\psi] = M[\psi] E[\psi] - 2 P[\psi]^2.
\ee
Due to the extremizing property of the soliton $\phi$ in regard to the Gagliardo-Ni\-ren\-berg-Sobolev inequality, the first differential of $F$ at $\phi$ is identically zero:
\be
\lim_{\epsilon \to 0} \frac {F[\phi + \epsilon h] - F[\phi]}{\epsilon} = 0.
\ee
Therefore, the tangent cone to $B$ at $\phi$ is actually given by the sign of second differential of $F$, which is an indefinite quadratic form; hence the lack of smoothness.

The most directly relevant results to which Theorem \ref{theorem_1} should be compared are those of Schlag \cite{schlag}, Beceanu \cite{bec}, and Cuccagna \cite{cuca2}.

In \cite{schlag}, Schlag extended the method of modulation to the $L^2$-supercritical case and proved that in the neighborhood of each soliton there exists a codimension-one Lipschitz submanifold of $H^{1}(\set R^3) \cap W^{1,1}(\set R^3)$ such that initial data on the submanifold lead to global $H^1 \cap W^{1, \infty}$ solutions to (\ref{NLS}), which decompose into a moving soliton and a dispersive term.

\cite{bec} showed that for initial data in $\Sigma = \langle x \rangle^{-1} L^2\cap H^{1}$, on a codimension one local Lipschitz manifold, there exists a global solution to (\ref{NLS}) in the same space $\Sigma$. Furthermore, the manifold is identified as the centre-stable manifold for the equation in the space $\Sigma$ (in particular, the solution stays on the manifold for some positive finite time).

Cuccagna \cite{cuca2} constructed asymptotically stable solutions for the one-di\-men\-sio\-nal mass-supercritical Schr\"odinger equation ((\ref{NLS'}) for $n=1$, $5<p<\infty$)
\be
iu_t+u_{xx}+ |u|^pu=0,\ 5<p<\infty,
\ee
starting from even $H^1$ initial data ($p=5$ is the $L^2$-critical exponent in one dimension, while every exponent is $H^1$-subcritical). The set of solutions obtained was not endowed with a manifold structure.


Unlike previous results, the current one (Theorem \ref{theorem_1}) holds in a critical space, $\dot H^{1/2}$, for equation (\ref{NLS}). Previously it was not known whether the asymptotically stable manifold exists in the critical norm or is a phenomenon related to using stronger norms than the critical one in the study of the equation. The current work puts this question to rest.

In addition, working in the critical space permits a series of improvements in the nature of results obtained. Firstly, we identify the asymptotically stable manifold as a centre-stable manifold for the equation in the sense of \cite{bates}. Secondly, using the fact that the critical norm does not grow with time, we can prove that the manifold is globally in time invariant: solutions starting on the manifold exist globally and remain on the manifold, for all positive time.

Finally, showing that the centre-stable manifold is real analytic raises the issue of its analytic continuation beyond the immediate neighborhood of the soliton manifold. It becomes an interesting problem to identify the global object that corresponds to the local centre-stable manifold.

\section{The centre-stable manifold}\lb{Chapter_1.3}
In 1983, Keller \cite{keller} considered the semilinear wave equation in $\set R^N$ with damping
\be
u_{tt} + au_t - \Delta u +f(u) = 0
\ee
and a stationary solution given by
\be
-\Delta u_0 + f(u_0) = 0.
\ee
Under some growth and smoothness conditions on $f$, namely
\begin{enumerate}
\item[F1] $f(s) - ms = \tilde f(s) \in C^1(\set R)$, $m>0$; 
\item[F2] $\tilde f(0) = \tilde f'(0) = 0$; 
\item[F3] $|\tilde f'(s)| \leq C(1 + |s|^{2/(N-2)})$; 
\item[F4] $F(s) = \int_0^s f(t) \dd t < 0$ for some $s > 0$,
\end{enumerate}
Keller proved the existence of an infinite-dimensional invariant local Lipschitz manifold of $H^1$ solutions that approach $u_0$ as $t \to \infty$ and of a finite-dimensional invariant local Lipschitz manifold of solutions that approach $u_0$ as $t$ goes to $-\infty$.

In 1989, Bates--Jones \cite{bates} proved for a large class of semilinear equations that the space of solutions decomposes into an unstable and a centre-stable manifold. Their result is the following: consider a Banach space $X$ and the semilinear equation
\be
u_t = A u + f(u),
\ee
under the assumptions
\begin{enumerate}
\item[H1] $A:X \to X$ is a closed, densely defined linear operator that generates a $C_0$ group.
\item[H2]\lb{H2} The spectrum of $A$ decomposes into $\sigma(A) = \sigma_s(A) \cup \sigma_c(A) \cup \sigma_u(A)$ situated in the left half-plane, on the imaginary axis, and in the right half-plane respectively and $\sigma_s(A)$ and $\sigma_u(A)$ are bounded.
\item[H3]\lb{H3} The nonlinearity $f$ is locally Lipschitz, $f(0) = 0$, and $\forall \epsilon>0$ there exists a neighborhood of zero on which $f$ has Lipschitz constant $\epsilon$.
\end{enumerate}
Moreover, let $X^u$, $X^c$, and $X^s$ be the $A$-invariant subspaces corresponding to $\sigma_u$, $\sigma_c$, and respectively $\sigma_s$ and let $S^c(t)$ be the evolution generated by $A$ on $X^c$. Bates and Jones further assume that
\begin{enumerate}
\item[C1-2] $dim X^u$, $dim X^s < \infty$.
\item[C3]\lb{C3} $\forall \rho>0$ $\exists M>0$ such that $\|S^c(t)\| \leq M e^{\rho|t|}$.
\end{enumerate}

Let $\Upsilon$ be the flow associated to the nonlinear equation. We call $\mc N \subset U$ $t$-invariant if, whenever $\Upsilon(s)v \in U$ for $s \in [0, t]$, $\Upsilon(s)v \in \mc N$ for $s \in [0, t]$.

Let $W^u$ be the set of $u$ for which $\Upsilon(t) u \in U$ for all $t<0$ and decays exponentially as $t \to -\infty$. Also, consider the natural direct sum projection $\pi^{cs}$ on $X^c \oplus X^s$.
\begin{definition}
A \emph{centre-stable} manifold $\mc N \subset U$ is a Lipschitz manifold with the property that $\mc N$ is $t$-invariant relative to $U$, $\pi^{cs}(\mc N)$ contains a neighborhood of $0$ in $X^c \oplus X^s$, and $\mc N \cap W^u = \{0\}$.
\lb{centr}
\end{definition}

The finding of \cite{bates} is then
\begin{theorem}
Under assumptions H1-H3 and C1-C3, there exists an open neighborhood $U$ of zero such that $W^u$ is a Lipschitz manifold which is tangent to $X^u$ at $0$ and there exists a centre-stable manifold $W^{cs} \subset U$ which is tangent to $X^{cs}$.
\lb{t1}
\end{theorem}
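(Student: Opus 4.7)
The plan is to use the Lyapunov--Perron fixed point method in an exponentially-weighted function space, the classical route to construct invariant manifolds around a partially hyperbolic equilibrium. The manifold $W^{cs}$ will be realized as the graph over $X^{cs} = X^c \oplus X^s$ of a Lipschitz map $h : X^{cs} \to X^u$ produced by the fixed point argument, and $W^u$ will be constructed by a mirror-image procedure.

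First I would choose a rate $\rho > 0$ small enough that $\rho$ is strictly less than the spectral gap separating $\sigma_u(A)$ from the imaginary axis; this is possible since $\sigma_u(A)$ is bounded. Let $\mc B_\rho^+$ be the Banach space of continuous $u:[0,\infty) \to X$ with $\|u\|_{\rho,+} := \sup_{t\ge 0} e^{-\rho t}\|u(t)\| < \infty$. Using C3 with growth rate strictly below $\rho$, together with the exponential estimates on $X^s$ and $X^u$ coming from H2, the operator
\[
(\mc T_\eta u)(t) = S^{cs}(t)\eta + \int_0^t S^{cs}(t-s)\pi^{cs} f(u(s))\,ds - \int_t^\infty S^u(t-s)\pi^u f(u(s))\,ds
\]
is well defined on $\mc B_\rho^+$ for every $\eta \in X^{cs}$ of small norm; the improper integral encodes exactly the condition that $\pi^u u$ does not grow. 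By H3 the Lipschitz constant of $f$ on a sufficiently small ball is smaller than any prescribed $\epsilon$, and with $\epsilon$ chosen small compared to $\rho$, $M$, and the unstable gap $\gamma$, the map $\mc T_\eta$ becomes a uniform contraction on a closed ball in $\mc B_\rho^+$. The unique fixed point $u(\cdot;\eta)$ then depends Lipschitz-continuously on $\eta$, and the centre-stable manifold is defined as the graph
\[
W^{cs} = \{\eta + h(\eta) : \eta \in X^{cs},\ \|\eta\| < \delta\}, \qquad h(\eta) = -\int_0^\infty S^u(-s)\pi^u f(u(s;\eta))\,ds.
\]

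To verify the properties required by Definition \ref{centr}: differentiating $h$ at $\eta = 0$ (using $f(0) = 0$ and $Df(0) = 0$, a consequence of H3) gives $h(0)=0$ and $Dh(0)=0$, so $W^{cs}$ is tangent to $X^{cs}$ at the origin; in particular $\pi^{cs}(W^{cs})$ covers a neighborhood of $0$ in $X^{cs}$. The $t$-invariance follows from uniqueness of the Lyapunov--Perron fixed point under time translation: if $\Upsilon(s)v \in U$ for $s \in [0, t_0]$, then $s \mapsto u(s + t_0; \pi^{cs}v)$ satisfies the same fixed point equation with data $\pi^{cs}\Upsilon(t_0)v$, forcing $\Upsilon(t_0)v \in W^{cs}$. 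The disjointness $W^{cs} \cap W^u = \{0\}$ is automatic, since an element of $W^u$ generates a backwards-decaying solution while the fixed point $u(\cdot;\eta)$ prescribes $\pi^{cs}$-data $\eta$, which must then vanish. The construction of $W^u$ itself is the analogous (and easier) argument on $\mc B_\rho^-$ with weight $e^{\rho t}$ on $(-\infty,0]$, producing a Lipschitz graph over the finite-dimensional space $X^u$.

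The main obstacle is a careful choice of exponents: the weight $\rho$ must be large enough to make $\int_t^\infty e^{-\gamma(s-t)}e^{\rho s}\,ds$ converge and dominate the action of $\pi^u f$, small enough to stay below the unstable gap, and still comparable to the center growth bound supplied by C3; simultaneously, the Lipschitz constant $\epsilon$ provided by H3 must be small relative to all of $\rho$, $\gamma$, and $M$ to close the contraction. Given the flexibility of $\epsilon$ in H3, this is a routine but tight inequality exercise rather than a conceptual barrier.
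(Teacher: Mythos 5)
You are proving a theorem that this paper does not prove: Theorem \ref{t1} is quoted verbatim from Bates--Jones \cite{bates} and used as a black box, so there is no in-paper proof to compare against. Your Lyapunov--Perron construction is a legitimate and standard route to the result, but it is genuinely different from what Bates and Jones actually do: they construct $W^{cs}$ by the Hadamard graph transform, iterating the time-$1$ map on a space of Lipschitz graphs over $X^{cs}$ and extracting a fixed graph, precisely because the graph-transform method is more robust when the centre subspace is infinite-dimensional and the semigroup on $X^c$ satisfies only the weak bound C3. Your variational-of-constants operator $\mc T_\eta$ does work here, since the only backward integral is over the finite-dimensional, spectrally separated $X^u$, and the forward integral over $X^{cs}$ closes because $\epsilon$ in H3 can be taken small relative to $\rho-\rho'$; so the approach buys a more explicit formula for $h$ at the price of a more delicate bookkeeping of exponents.

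Two points in your sketch need repair. First, you run the contraction on all of $[0,\infty)$ using the small Lipschitz constant of $f$, but H3 only gives that constant on a small ball, and nothing forces the candidate $u(s)$ to stay in that ball for all $s$. The standard fix --- which Bates--Jones also perform and which you must make explicit --- is to replace $f$ by a cutoff $f_\delta$ agreeing with $f$ on a $\delta$-ball and globally Lipschitz with constant $\epsilon(\delta)\to 0$; the manifold you build is then invariant for the modified flow, and this is exactly why the conclusion is only $t$-invariance \emph{relative to} $U$ in the sense of Definition \ref{centr}, not global invariance. Without the cutoff the fixed-point argument does not close. Second, your tangency argument via ``$Df(0)=0$, a consequence of H3'' is not available: H3 asserts only a Lipschitz condition, not differentiability, and the theorem claims only a Lipschitz manifold. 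Tangency of $W^{cs}$ to $X^{cs}$ must be established in the Lipschitz sense --- the Lipschitz constant of $h$ on the $\delta$-ball is $O(\epsilon(\delta))\to 0$ --- which follows from your contraction estimates but is a different statement from $Dh(0)=0$. With those two corrections your argument is complete.
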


Gesztesy, Jones, Latushkin, Stanislavova \cite{ges} proved that Theorem \ref{t1} applies to the semilinear Schr\"odinger equation. 
More precisely, the authors established a spectral mapping theorem for the semigroup generated by linearizing the equation about the standing wave. In particular, this theorem shows that the spectral condition H2 implies the semigroup norm estimate C3. The local Lipschitz property of the nonlinearity is not addressed in \cite{ges}; it  holds on $H^s$ for sufficiently large $s$.

While providing a very general answer to the problem, the global existence of solutions on the centre manifold is not the subject of \cite{bates} and \cite{ges}. The $t$-invariance property of \cite{bates} means that a solution starting on the manifold remains there for as long as it stays small. However, one does not know the global in time behavior of the solutions. Once a solution on the centre-stable manifold leaves the specified neighborhood of zero, not even its existence is guaranteed any longer.

By contrast, the estimates used by Keller \cite{keller} required a damping term, but his result held globally in time. Schlag \cite{schlag} dispensed with the need for a damping term and proved a global asymptotic stability result, but the manifold that he constructed was not time-invariant.

The current work identifies a centre-stable manifold for (\ref{NLS}) in the critical space for the equation $\dot H^{1/2}$ and shows that solutions starting on the manifold exist globally and remain on the manifold for all time.

\section{Setting and notations}\lb{Chapter_1.4}

Equation (\ref{NLS}) admits periodic solutions $e^{it\alpha^2} \phi(x, \alpha)$, where
\be\lb{derrick-pohozaev}
\phi = \phi(x, \alpha) = \alpha \phi(\alpha x, 1)
\ee
is a solution of the semilinear elliptic equation (\ref{phi})
$$
-\Delta \phi + \alpha^2\phi = \phi^3.
$$
In particular, we restrict our attention to positive $L^2$ solutions, called \emph{ground states}. They are unique up to translation, radially symmetric, smooth, and exponentially decreasing. Their existence was proved by Berestycki--Lions in \cite{bere}, who further showed that solutions are infinitely differentiable and exponentially decaying. Uniqueness was established by Coffman \cite{coffman} for the cubic and Kwong \cite{kwong} and McLeod--Serrin \cite{mcl} for more general nonlinearities.

Equation (\ref{NLS}) is invariant under Galilean coordinate transformations, rescaling, and changes of complex phase, which we collectively call symmetry transformations:
\be
\frak g(t)(f(x, t)) = e^{i(\Gamma+v \cdot x-t|v|^2)} \alpha f(\alpha (x-2tv-D), \alpha^2 t).
\lb{coord}
\ee
If $\psi(t)$ is a solution to the equation then so is $\frak g(t) \psi(t)$, with initial data given by $\frak g(0) \psi(0)$.

Applying these transformations to $e^{it} \phi(\cdot, 1)$, the result is a wider eight-parameter family of solutions to (\ref{NLS})
\be
\frak g(t)(e^{it} \phi(x, 1)) = e^{i(\Gamma + v \cdot x - t|v|^2 + \alpha^2 t)} \phi(x-2t v- D, \alpha),
\ee
which we call solitons or standing waves.


In the sequel we denote by a capital letter the column vector consisting of a complex-valued function (written in lowercase) and its conjugate, e.g.
\be
\Psi = \bpm \psi \\ \ov \psi \epm,\ R = \bpm r \\ \ov r \epm, \text{ etc.}
\ee
On this space we consider the real-valued dot product
\be
\langle F, G \rangle = \langle f, g \rangle + \langle \ov f, \ov g \rangle = \int_{\set R^3} \big(f(x) \ov g(x) + \ov f(x) g(x)\big) \dd x.
\ee
We leave it to the reader to check that our computations preserve this symmetry, in the sense that all column vectors that appear in the sequel comprise a function and its conjugate, meaning they are of the form $F = \bpm f \\ \ov f \epm$.

We look for solutions to (\ref{NLS}) that get asymptotically close to the manifold of solitons. More precisely, we seek solutions of the form
\be
\Psi = W_{\pi}(x, t) + R(x, t) = \bpm w_{\pi}(x, t) \\ \ov w_{\pi}(x, t) \epm + \bpm r(x, t) \\ \ov r(x, t) \epm,
\ee
where $W_{\pi} = \bpm w_{\pi} \\ \ov w_{\pi} \epm$ is a moving soliton and $R=\bpm r \\ \ov r \epm$ is a small correction term that disperses like the solution of the free Schr\"{o}dinger equation as $t \to +\infty$.



We parametrize the moving soliton $w_{\pi}$ by setting, as in (\ref{1.1}),
\be\begin{aligned}
w_{\pi} & = e^{i \theta(x, t)} \phi(x - y(t), \alpha(t)) \\
& \textstyle = e^{i(\Gamma(t) + \int_0^t (\alpha^2(s) - |v(s)|^2) \dd s + v(t) x)} \phi(x - 2\int_0^t v(s) \dd s - D(t), \alpha(t)).
\lb{8}
\end{aligned}\ee
The quantities that appear in this formula,
\be
\alpha(t),\ \Gamma(t),\ v(t)=(v_1(t), v_2(t), v_3(t)), \text{ and } D(t) = (D_1(t), D_2(t), D_3(t)),
\ee
are called \emph{modulation parameters} and $\pi = (\alpha, \Gamma, v, D)$ is called the \emph{parameter} or \emph{modulation path}. There are eight parameters in total, since $v$ and $D$ have three components each.

Due to the nonlinear interaction between $w_{\pi}$ and $r$, the modulation parameters are not constant in general; they are time-dependent. However, in the course of the proof they are not allowed to vary too much. A minimal condition, which we impose henceforth, is that
\be
\dot \alpha, \dot \Gamma, \dot v_k, \dot D_k \in L^1_t
\ee
and are small in this norm. We do not assume any stronger rate of decay. This implies that the modulation parameters converge as $t \to \infty$, at no particular rate, and that their range is contained within arbitrarily small intervals.

\section{Spectrum of the Hamiltonian and the spectral assumption}\lb{sec_1.5}
The proof of Theorem \ref{theorem_1} is based on a fixed point argument. Linearizing the equation around a moving soliton produces a time-dependent Hamiltonian of the following form, for a soliton $w_{\pi^0}$ determined by a modulation path $\pi^0$ as in (\ref{8}):
\be\begin{aligned}
\mc H_{\pi^0}(t) &= \bpm \Delta + 2|w_{\pi^0}(t)|^2 & (w_{\pi^0}(t))^2 \\ -(\ov w_{\pi^0}(t))^2 & -\Delta - 2|w_{\pi^0}(t)|^2 \epm.
\end{aligned}\ee
We can always reduce the study of $i\partial_t + \mc H_{\pi^0}(t)$ to that of $i \partial_t + \mc H$, where
\be\lb{1.29}
\mc H = \bpm \Delta - 1 + 2\phi^2(\cdot, 1) & \phi^2(\cdot, 1) \\ -\phi^2(\cdot, 1) & -\Delta + 1 - 2\phi^2(\cdot, 1) \epm.
\ee
The spectrum of this Hamiltonian has been extensively studied. In Section \ref{spectru} we give a summary of the known facts. In addition, though, we must make the following \emph{spectral assumption}:
\begin{assumption}
The Hamiltonian $\mc H$ has no embedded eigenvalues within its continuous spectrum.
\lb{assum}
\end{assumption}
Such assumptions are routinely made in the proof of asymptotic stability results, as, for example, in \cite{buslaev1}, \cite{cuc}, \cite{rod3}, or \cite{schlag}.

\section{Linear results}
In the process of establishing the main nonlinear result, Theorem \ref{theorem_1}, a crucial ingredient turns out to be a Strichartz-type estimate for the solution of the time-dependent linear Schr\"{o}dinger equation
\be\lb{1.30}
i \partial_t Z - i v(t) \dl Z + A(t) \sigma_3 Z + \mc H Z = F,\ Z(0) \text{ given}.
\ee
Here
\be
\mc H = \mc H_0 + V = \bpm \Delta-\mu & 0 \\ 0 & -\Delta+\mu \epm + \bpm |W_1| & W_2 \\ -\ov W_2 & -|W_1| \epm,\ \mu \geq 0
\lb{1.17}
\ee
in the matrix nonselfadjoint case, which matters for the nonlinear applications. $|W_1|$ we always assume to be real-valued and the same is true for $W_2$ in the main case of interest. We also consider the scalar (real or complex-valued) case
\be\lb{1.32}
\mc H = \mc H_0 + V = -\Delta + V,
\ee
for which we prove analogous results.

For the purpose of establishing the nonlinear result Theorem \ref{theorem_1}, we may as well consider only potentials $V$ that are smooth and exponentially decaying. However, working in spaces of critical regularity simplifies the proof and enables one to better distinguish between essential and unnecessary features. Therefore, for the most part, we prove dispersive estimates under sharp or almost sharp assumptions.

For the purpose of solving the nonlinear equation, we are interested in two main kinds of estimates: $L^2$ boundedness of the evolution,
\be
\|Z(t)\|_2 \leq C \bigg(\|Z(0)\|_2 + \int_0^t \|F(s)\|_2 \dd s\bigg),
\ee
and Strichartz estimates
\be
\|Z\|_{L^2_t L^6_x \cap L^{\infty}_t L^2_x} \leq C (\|Z(0)\|_2 + \|F\|_{L^2_t L^{6/5}_x + L^1_t L^2_x}).
\ee

Other estimates, such as $t^{-3/2}$ kernel decay estimates, smoothing estimates, or wave operator estimates, while interesting in themselves, are not directly useful and will constitute the subject of separate papers.

Note that the Hamiltonian (\ref{1.29}) is nonselfadjoint and time-dependent, leading to specific problems that are absent in the selfadjoint case.

In the scalar selfadjoint setting, Keel--Tao \cite{tao} proved endpoint Strichartz estimates for the free Schr\"{o}dinger and wave equations and introduced a general method for obtaining endpoint estimates (based on real interpolation) that has been useful in all similar contexts.

The issue of selfadjointness matters because one needs to reprove the usual dispersive estimates concerning the Schr\"{o}dinger equation. They do not follow in the same manner as in the selfadjoint case, where, for example, the unitarity of the time evolution immediately implies the $L^2$ boundedness.

In \cite{schlag}, Schlag proved $L^1 \to L^{\infty}$ dispersive estimates for the Schr\"{o}dinger equation with a nonselfadjoint Hamiltonian, as well as non-endpoint Strichartz estimates. Erdo\^{g}an and Schlag \cite{erdsch2} proved $L^2$ bounds for the evolution as well. In \cite{bec}, endpoint Strichartz estimates in the nonselfadjoint case were obtained following the method of Keel and Tao. Finally, Cuccagna and Mizumatchi \cite{cucmiz} obtained the boundedness of the wave operators, from which all of the above can be inferred as a simple consequence.

In the linear setting, the most difficult to handle are terms of the form
\be
(\alpha(t) - \alpha(\infty)) \sigma_3 Z \text{ and } (v(t) - v(\infty)) \dl Z,
\lb{1.12}
\ee
where $Z$ is the solution. Instead of using Strichartz estimates to handle them, we make them part of the time-dependent Hamiltonian and thus avoid the issue altogether.

In the proof of linear estimates, we use Wiener's theorem in the following manner. In an abstract setting, let $H$ be a Hilbert space and $K = \mc L(H, M_t H)$ be the algebra of bounded operators from $H$ to $M_t H$, where $M_t H$ is the space of $H$-valued Borel measures on $\set R$ of finite mass, see (\ref{3.11}).
\begin{theorem}[see Theorem \ref{thm7}]\lb{thm_1.6}
If $A \in K$ is invertible then $\widehat A(\lambda)$ is invertible for every $\lambda$. Conversely, assume $\widehat A(\lambda)$ is invertible for each $\lambda$, $A = I + L$, and
\be
\lim_{\epsilon \to 0} \|L(\cdot + \epsilon) - L\|_K = 0,\ \lim_{R \to \infty}\|(1-\chi(t/R)) L(t)\|_K =0
\ee
Then $A$ is invertible in $K$.
\end{theorem}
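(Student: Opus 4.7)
The plan is to exploit the duality between the convolution algebra $K$ and its Fourier transform, and to adapt Wiener's partition-of-unity construction of $1/\widehat f$ to the noncommutative, operator-valued setting. The forward direction is immediate: if $B \in K$ is a two-sided inverse of $A$, convolution in $K$ corresponds to pointwise multiplication of Fourier transforms, so taking Fourier transforms of $AB = BA = I$ yields $\widehat A(\lambda)\widehat B(\lambda) = \widehat B(\lambda)\widehat A(\lambda) = \mathrm{Id}_H$ pointwise in $\lambda$, hence $\widehat A(\lambda) \in \mc L(H)$ is invertible.

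For the converse, I would first combine the equicontinuity hypothesis $\|L(\cdot+\epsilon) - L\|_K \to 0$ and the tightness hypothesis $\|(1-\chi(\cdot/R))L\|_K \to 0$ to deduce an operator-valued Riemann--Lebesgue statement: $\widehat L(\lambda)$ is continuous in $\lambda \in \set R$ and tends to $0$ in $\mc L(H)$ as $|\lambda|\to\infty$. Consequently $\widehat A(\lambda) = I + \widehat L(\lambda)\to I$ at infinity, so a Neumann series yields a uniformly bounded inverse of $\widehat A$ outside some compact interval $[-\Lambda,\Lambda]$; on that interval, continuity together with pointwise invertibility gives a uniform bound on $\|\widehat A(\lambda)^{-1}\|_{\mc L(H)}$.

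Next, for each $\lambda_0 \in [-\Lambda,\Lambda]$ I construct a local right inverse. Choose a smooth scalar bump $\eta_{\lambda_0}$ supported in a short interval $I_{\lambda_0}$ around $\lambda_0$, with $\eta_{\lambda_0}(\lambda_0)=1$ and whose inverse Fourier transform is Schwartz, so that convolution with it is bounded on $K$. Setting $\widehat{B_{\lambda_0}}(\lambda) = \eta_{\lambda_0}(\lambda)\widehat A(\lambda_0)^{-1}$, one computes
\be
\widehat{AB_{\lambda_0}}(\lambda) = \eta_{\lambda_0}(\lambda) I + \eta_{\lambda_0}(\lambda)\bigl(\widehat A(\lambda) - \widehat A(\lambda_0)\bigr)\widehat A(\lambda_0)^{-1}.
\ee
By shrinking $I_{\lambda_0}$ the error term is made arbitrarily small in $K$-norm; quantitatively this invokes the translation-continuity hypothesis, which supplies the required modulus of continuity of $\widehat A$ in $\lambda$. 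Cover $[-\Lambda,\Lambda]$ by finitely many such $I_{\lambda_j}$, pick a subordinate partition of unity, and assemble $B_{\mathrm{loc}} = \sum_j B_{\lambda_j}$ for the compact part together with a Neumann inverse $B_\infty$ for $|\lambda|>\Lambda$. The candidate $B = B_{\mathrm{loc}} + B_\infty$ satisfies $AB = I + E$ with $\|E\|_K$ small, and a further Neumann series in $K$ converts this to a true right inverse. A symmetric construction produces a left inverse, and uniqueness of inverses in a unital Banach algebra yields two-sided invertibility.

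The main obstacle is that $K$ is a noncommutative algebra of operator-valued measures, so the classical scalar Wiener argument does not transfer verbatim. Two technical ingredients carry the work: one must verify that smooth scalar bumps $\eta$ act boundedly on $K$ by convolution, and one must quantify the operator-norm modulus of continuity of $\widehat A$ uniformly in the base point $\lambda_0$. The translation-continuity hypothesis on $L$ is precisely what closes the latter gap; without it, the local Neumann series in the previous step would not close uniformly across the partition-of-unity patches, and the global inverse could not be assembled.
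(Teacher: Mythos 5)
Your forward direction and your local construction are sound: the estimate $\|L*\eta_{\lambda_0}^\vee-\eta_{\lambda_0}^\vee\,\widehat L(\lambda_0)\|_K\to 0$ as the bump shrinks does follow from the translation-continuity and tightness hypotheses (split $|s|\leq R$ and $|s|>R$), and is essentially the same computation the paper performs when it shows $\chi_\epsilon*B$ is close to $\chi_\epsilon(t)\widehat B(0)$ in $K$. The gap is in the assembly. You cover $[-\Lambda,\Lambda]$ by $N$ patches of width $\delta$ and claim $AB=I+E$ with $\|E\|_K$ small. But the per-patch error is only $O(\delta R\|L\|_K)+O(\|\chi_{|t|>R}L\|_K)$, the patches number $N\sim\Lambda/\delta$, and in $K$ the errors can only be summed in $\ell^1$ — localization of Fourier supports gives no almost-orthogonality in $\mc L(H,M_tH)$. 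The total is of order $\Lambda R+(\Lambda/\delta)\|\chi_{|t|>R}L\|_K$, which does not tend to zero as $\delta\to 0$; making the patches smaller makes their number grow at exactly the compensating rate. The same confusion appears at infinity: a pointwise Neumann series for $\widehat A(\lambda)^{-1}$ when $\|\widehat L(\lambda)\|<1/2$ does not produce an element of $K$, since smallness of $\sup_\lambda\|\widehat L(\lambda)\|$ does not control any $K$-norm — that is precisely why the theorem is nontrivial.

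The paper's proof avoids this by never forming a global approximate inverse. For each $\lambda_0$ it builds an element $A_{\lambda_0}\in K$ whose Fourier transform \emph{equals} $\widehat A$ near $\lambda_0$ and which is \emph{exactly} invertible in $K$: after the normalization $A\mapsto A*A^*$ (so $\widehat A\geq 0$) and division by $\|A\|_K$, the kernel $B=I-A_\epsilon$ satisfies $\|\widehat B(\lambda_0)\|<1$, and since the localization only gives $\|\chi_\epsilon*B\|_K\approx\|\chi_\epsilon\|_1\|\widehat B(\lambda_0)\|$ with $\|\chi_\epsilon\|_1=C>1$, one must replace $B$ by $B^n$ to beat the constant before running a Neumann series \emph{in $K$} for that single patch. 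Near infinity, equicontinuity gives $\|(I-\chi_R)*(I-A)\|_K\to 0$, so $A_R$ is invertible in $K$ outside a compact set. The global inverse is then glued exactly, $A^{-1}=\sum_j\widehat\chi_j*A_j^{-1}+(I-\sum_j\widehat\chi_j)*A_\infty^{-1}$, with no error term to control, because $\widehat A\,\widehat{A_j^{-1}}=I$ identically on $\supp\chi_j$. To repair your argument you would need to replace "make $AB-I$ globally small" by "make each localized operator exactly invertible in $K$ and glue the exact local inverses."
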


To apply this abstract result, consider a decomposition of the potential $V$ into
\be
V = V_1 V_2,\ V_1 = |V|^{1/2} \sgn V,\ V_2 = |V|^{1/2}.
\lb{1.35}
\ee
In the matrix nonselfadjoint case (\ref{3.2}), an analogous decomposition is
\be
V = V_1 V_2,\ V_1 = \sigma_3 \bpm |W_1| & W_2 \\ \ov W_2 & |W_1| \epm^{1/2},\ V_2 = \bpm |W_1| & W_2 \\ \ov W_2 & |W_1| \epm^{1/2},
\lb{1.36}
\ee
where $\sigma_3$ is the Pauli matrix
\be
\sigma_3 = \bpm 1 & 0 \\ 0 & -1 \epm.
\ee
We are concerned with the operator
\be
T_{V_2, V_1} = V_2 e^{it \mc H_0} V_1
\ee
and its Fourier transform in regard to time
\be
\widehat T_{V_2, V_1} = i V_2 R_0(\lambda) V_1.
\ee
To apply Theorem \ref{thm_1.6}, we need to establish that the kernel $T_{V_2, V_1}$ is time integrable. This is indeed the case, in the following scale-invariant setting.
\begin{proposition}[see Proposition \ref{prop_21}] Let $\mc H_0$ be as in (\ref{1.17}) or (\ref{1.32}). Then
\be
\int_0^{\infty} \|e^{it\mc H_0} f\|_{L^{6, \infty}} \dd t \leq C \|f\|_{L^{6/5, 1}}.
\ee
\end{proposition}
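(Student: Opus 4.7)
The plan is to reduce to the scalar free Schr\"odinger propagator, obtain a sharp pointwise-in-time Lorentz dispersive bound by interpolation, and then close the integrated estimate by a resolvent/duality argument compatible with the scale invariance of the inequality.

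First I would carry out the reduction. In case (\ref{1.17}) the matrix $\mc H_0$ is diagonal, so
\[
e^{it\mc H_0}\bpm f_1 \\ f_2 \epm = \bpm e^{-it\mu}e^{it\Delta}f_1 \\ e^{it\mu}e^{-it\Delta}f_2 \epm,
\]
and the unimodular phases $e^{\pm it\mu}$ drop out of any Lorentz norm; in case (\ref{1.32}), $\mc H_0 = -\Delta$ directly. It therefore suffices to prove the scalar estimate $\int_0^\infty\|e^{it\Delta}f\|_{L^{6,\infty}_x}\,dt\le C\|f\|_{L^{6/5,1}_x}$. The explicit Schr\"odinger kernel $K_t(x)=(4\pi it)^{-3/2}e^{i|x|^2/4t}$ gives $\|e^{it\Delta}\|_{L^1\to L^\infty}\le C|t|^{-3/2}$, which combined with Plancherel's $\|e^{it\Delta}\|_{L^2\to L^2}=1$ yields, by real interpolation in the Lorentz scale, the sharp pointwise bound
\[
\|e^{it\Delta}f\|_{L^{6,\infty}_x}\le C|t|^{-1}\|f\|_{L^{6/5,1}_x}.
\]
The decay rate $|t|^{-1}$ is consistent with the scaling invariance of the claim under the dilation $f\mapsto\lambda^{5/2}f(\lambda\cdot)$ which preserves $L^{6/5,1}$ while inducing $t\mapsto\lambda^{-2}t$.

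To pass from this pointwise bound to the integrated statement I would dualize, noting $(L^{6/5,1})^*=L^{6,\infty}$: the claim is equivalent to showing
\[
\Bigl\|\int_0^\infty e^{-it\Delta}g(\cdot,t)\,dt\Bigr\|_{L^{6,\infty}_x}\le C\|g\|_{L^\infty_t L^{6/5,1}_x}.
\]
A Fourier decomposition in time, $g(x,t)=(2\pi)^{-1}\int e^{it\tau}\tilde g(x,\tau)\,d\tau$, converts the inner integral into a superposition of boundary-value resolvents $\int(-\Delta-\tau-i0)^{-1}\tilde g(\cdot,\tau)\,d\tau$. The scale-invariant Lorentz Hardy--Littlewood--Sobolev bound $\|(-\Delta-\tau-i0)^{-1}\|_{L^{6/5,1}\to L^{6,1}}\le C$, uniform in $\tau\in\set R$ because the resolvent kernel $\frac{e^{\pm i\sqrt{\tau}|x-y|}}{4\pi|x-y|}$ is pointwise dominated by the Newtonian potential, is then the essential input that packages the time integral into the required form.

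The main obstacle is that the pointwise decay $|t|^{-1}$ is exactly at the threshold of integrability on $(0,\infty)$, so no argument based on the triangle inequality applied to the dispersive bound can close the estimate --- it would lose a logarithm at both endpoints. The real content of the proposition is the oscillatory cancellation recorded in the identity $\int_0^\infty e^{-it\Delta}dt=i(-\Delta)^{-1}$, which turns an absolutely divergent sum into a principal-value singular integral. Converting this "norm of the integral" information into an "integral of the norm" bound, while respecting the critical scaling of the Lorentz spaces $L^{6/5,1}$ and $L^{6,\infty}$ and handling only the weak $L^\infty_t$ control on the dual side, is the delicate technical step; it is here that the Wiener-algebra framework of Theorem \ref{thm_1.6}, which ultimately requires only the time integrability of the operator kernel $V_2 e^{it\mc H_0}V_1$, is designed to intervene.
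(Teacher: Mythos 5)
Your reduction to the scalar propagator and the pointwise bound $\|e^{it\Delta}f\|_{L^{6,\infty}}\le C|t|^{-1}\|f\|_{L^{6/5,1}}$ are both correct, and you have correctly identified the obstacle: $|t|^{-1}$ is exactly non-integrable, so no argument using only the fixed-time dispersive bound plus the triangle inequality can close. But the mechanism you propose to overcome this does not work, and it is not the one the paper uses. After dualizing to $\bigl\|\int_0^\infty e^{-it\Delta}g(\cdot,t)\,dt\bigr\|_{L^{6,\infty}}\le C\|g\|_{L^\infty_t L^{6/5,1}_x}$, you take a Fourier transform in $t$ and invoke the uniform resolvent bound $\|(-\Delta-\tau-i0)^{-1}\|_{L^{6/5,1}\to L^{6,1}}\le C$. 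The problem is that for $g$ merely in $L^\infty_t L^{6/5,1}_x$ the time-Fourier transform $\tilde g(\cdot,\tau)$ is only a distribution in $\tau$; there is no integrability of $\|\tilde g(\cdot,\tau)\|_{L^{6/5,1}}$ in $\tau$ against which to sum the uniform resolvent bound, and Plancherel is unavailable because the time exponent on the dual side is $\infty$, not $2$ (resolvent arguments of this kind prove $L^2_t$-based smoothing estimates, not $L^1_t$ ones). Your closing appeal to Theorem \ref{thm_1.6} also reverses the logical order: the present proposition is precisely the input needed to show that the kernel $V_2e^{it\mc H_0}V_1$ lies in the Wiener algebra, so that machinery cannot be used to prove it.

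The missing idea is that the second Lorentz index $1$ in $L^{6/5,1}$ must be exploited \emph{inside the time integration}, not only in the fixed-time bound. The paper does this in two equivalent ways. One is the atomic decomposition (Lemma \ref{lemma_30}): writing $f=\sum_j\alpha_j a_j$ with atoms of support size $2^j$ (and similarly for the dual function $g$), each atom lies in $L^1\cap L^2$; on the dyadic block $t\in[2^n,2^{n+1})$ one uses the $t^{-3/2}$ decay from $L^1$ to $L^\infty$ when $n\ge (j+k)/3$ and $L^2$ conservation when $n<(j+k)/3$, and either way the block contributes at most $C2^{-n}$ times the mass of $g$ there, so the weighted sum $\sum_n 2^n(\cdots)$ converges geometrically from both sides of the crossover. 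The other is bilinear real interpolation with second exponent $q=1$ between the same two endpoint estimates, which lands in $\ell^1_1$ rather than $\ell^\infty$ precisely because $(\ell^\infty_{3/2},\ell^\infty_0)_{1/3,1}=\ell^1_1$. In either formulation, the borderline logarithm you worry about is removed by $\ell^1$ summability over atoms (equivalently over the interpolation parameter), which is exactly what distinguishes $L^{6/5,1}$ from $L^{6/5}$; this is the step your proposal lacks.
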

We made use of Lorentz spaces, see Appendix \ref{spaces}.

Then, for the Hilbert space $H=L^2$ and $V_1$, $V_2 \in L^{3, 2}$, it follows that $T_{V_2, V_1} \in K = \mc L(H, M_t H)$. Its invertibility within this space is directly related to the existence of Strichartz estimates.

We also introduce the projection $P_c$ on the continuous spectrum and projections on the point spectrum. In Section \ref{sect_3.5} we show that, under the spectral assumption that there are no exceptional values of $\mc H$ within $\sigma(\mc H_0)$, these projections are well-defined and well-behaved.

Furthermore, we obtain the following bound on the $L^2$ evolution outside of the exceptional set:
\begin{lemma}[see Lemma \ref{lemma_28}] Consider $V \in L^{3/2, \infty}$ as in (\ref{1.17}), and $\chi \in L^{\infty}(\set R)$, such that, for some $\epsilon>0$, $\chi(\lambda) = 0$ on $\{\lambda \mid d(\lambda, \mc E) \leq \epsilon\}$. Then
\be
\chi(\mc H) =  \frac i {2\pi} \int_{\set R} \chi(\lambda) (R_V(\lambda+i0) - R_V(\lambda-i0)) \dd \lambda
\ee
is a bounded operator from $L^2$ to itself, of norm at most $C_{\epsilon} \|\chi\|_{\infty}$. Such operators commute with the evolution and with one another:
\be
e^{it \mc H} \chi(\mc H) = \chi(\mc H) e^{it \mc H} = (e^{it\lambda} \chi)(\mc H),\ \chi_1(\mc H) \chi_2(\mc H) = (\chi_1 \chi_2)(\mc H).
\ee
In particular, $e^{it \mc H} \chi(\mc H)$ is a uniformly $L^2$-bounded family of operators.
\end{lemma}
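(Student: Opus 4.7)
My plan is to split $\chi(\mc H)$ into a free piece plus a perturbative correction via the symmetric resolvent factorization
\begin{equation*}
R_V(\lambda\pm i0) = R_0(\lambda\pm i0) - R_0(\lambda\pm i0)\, V_1\, S(\lambda\pm i0)^{-1}\, V_2\, R_0(\lambda\pm i0),
\end{equation*}
where $S(\lambda) = I + V_2 R_0(\lambda) V_1$ and $V_1, V_2$ are as in (\ref{1.35})--(\ref{1.36}). The spectral assumption together with the Wiener-type Theorem \ref{thm_1.6}, applied to $T_{V_2,V_1}\in K$ as discussed in Section \ref{sect_3.5}, guarantees that $S(\lambda\pm i0)^{-1}$ exists and has operator norm uniformly bounded on $L^2$ on the set $\{\lambda : d(\lambda,\mc E)\ge\epsilon\}$, in particular on $\supp\chi$.

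Substituting into the Stone-type integral, the free piece collapses to $\chi(\mc H_0)$, a Fourier multiplier bounded on $L^2$ with norm at most $\|\chi\|_\infty$. For the remaining correction $\Xi$, I would pair against $f,g\in L^2$ and reduce to estimating, summed over the two choices of sign,
\begin{equation*}
\langle \Xi f, g\rangle = c\int_{\set R}\chi(\lambda)\,\bigl\langle S(\lambda\pm i0)^{-1} V_2 R_0(\lambda\pm i0) f,\, V_1^* R_0(\lambda\mp i0) g\bigr\rangle\,d\lambda.
\end{equation*}
The crucial input is the Kato-type smoothing bound $\|V_2 R_0(\cdot\pm i0) f\|_{L^2_\lambda L^2_x} \lesssim \|f\|_{L^2}$, which follows from Proposition \ref{prop_21} by Plancherel in time and Hölder against the Strichartz exponent $L^2_t L^{6,2}_x$, using $V_2 \in L^{3,2}$. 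Combined with the uniform bound on $S(\cdot\pm i0)^{-1}$ over $\supp\chi$ and Cauchy--Schwarz in $\lambda$, this yields $\|\Xi\|_{L^2\to L^2}\le C_\epsilon \|\chi\|_\infty$.

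For the multiplicative identity $\chi_1(\mc H)\chi_2(\mc H) = (\chi_1\chi_2)(\mc H)$, I would form the double Stone integral, apply the second resolvent identity $R_V(\mu) R_V(\lambda)=(R_V(\lambda)-R_V(\mu))/(\lambda-\mu)$, use Fubini to swap the $\lambda$ and $\mu$ integrations, and close via the Plemelj jump evaluation of $\frac{1}{2\pi i}\int\chi_j(\lambda)/(\lambda-\mu\mp i0)\,d\lambda$; this collapses the double integral into the single Stone integral for $\chi_1\chi_2$, which still vanishes near $\mc E$. The commutation with $e^{it\mc H}$ is then tautological: replacing $\chi(\lambda)$ by $e^{it\lambda}\chi(\lambda)$ preserves both $\|\cdot\|_\infty$ and the $\epsilon$-cutoff around $\mc E$, so $(e^{it\lambda}\chi)(\mc H)$ is well-defined by the same formula and serves as the definition of $e^{it\mc H}\chi(\mc H)$; uniform $L^2$-boundedness in $t$ is then immediate from the main estimate.

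The main obstacle will be obtaining the sharp dependence $C_\epsilon\|\chi\|_\infty$ rather than a stronger (say Sobolev) norm on $\chi$, since neither $R_0(\lambda\pm i0)$ nor $S(\lambda\pm i0)^{-1}$ is individually integrable, continuous, or Hilbert--Schmidt as a function of $\lambda$. The resolution is to process the $\lambda$-dependence exclusively through the time-domain smoothing/Strichartz estimate of Proposition \ref{prop_21} and the Wiener-algebra framework of Theorem \ref{thm_1.6}, rather than through any pointwise-in-$\lambda$ estimate on the integrand.
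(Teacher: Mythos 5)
Your main boundedness argument is essentially the paper's own: expand $R_V$ via the symmetric Birman--Schwinger factorization, use the uniform invertibility of $S(\lambda\pm i0)=I+V_2R_0(\lambda\pm i0)V_1$ on $\{d(\lambda,\mc E)\ge\epsilon\}$, identify the free piece with $\chi(\mc H_0)$, and control the correction by the Kato smoothing bound $\int_{\set R}\|V_2R_0(\lambda\pm i0)f\|_2^2\,d\lambda\le C\|f\|_2^2$ together with Cauchy--Schwarz in $\lambda$. Two small remarks: the smoothing bound does not follow from Proposition \ref{prop_21} (which is the $L^1_tL^{6,\infty}_x$ estimate); via Plancherel in time it is equivalent to $\int_t\|V_2e^{it\mc H_0}f\|_2^2\,dt\lesssim\|f\|_2^2$, which requires the Keel--Tao endpoint $L^2_tL^{6,2}_x$ Strichartz estimate (or the classical trace/restriction argument), so cite that instead. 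Your derivation of $\chi_1(\mc H)\chi_2(\mc H)=(\chi_1\chi_2)(\mc H)$ via the double Stone integral, the second resolvent identity, and the Plemelj jump is a legitimate variant of the paper's Poisson-kernel computation, provided you justify Fubini (again by the smoothing bounds) and work at $\lambda\pm i\varepsilon$ before passing to the boundary.

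The genuine gap is the commutation with the evolution. You declare $(e^{it\lambda}\chi)(\mc H)$ to be \emph{the definition} of $e^{it\mc H}\chi(\mc H)$, but $e^{it\mc H}$ already has an independent meaning as the (a priori exponentially growing) solution operator of $i\partial_tZ+\mc HZ=0$ furnished by Lemma \ref{lemma_24} for bounded $V$ and by approximation in general, and the downstream applications (e.g.\ the $L^\infty_tL^2_x$ bound in Theorem \ref{theorem_26}) use precisely the identification of that solution operator with the Stone integral. So the identity $e^{it\mc H}\chi(\mc H)=(e^{it\lambda}\chi)(\mc H)$ has real content and must be proved. The route is: first show $\mc H\chi(\mc H)=\chi(\mc H)\mc H=(\lambda\chi)(\mc H)$ for bounded $\chi$ of compact support away from $\mc E$ (by inserting $\mc H R_V(\lambda\pm i\varepsilon)$ under the integral and letting $\varepsilon\to0$); deduce that $t\mapsto(e^{it\lambda}\chi)(\mc H)f$ solves the equation with data $\chi(\mc H)f$ and invoke uniqueness of $L^2$ solutions for bounded $V$; then remove the compact-support restriction on $\chi$ by a strong limit (using $\Dom(\mc H)$ dense), and finally approximate $V\in L^{3/2,\infty}$ by bounded potentials, checking that the exceptional sets of the approximants stay inside the $\epsilon$-neighborhood of $\mc E$ so that $\supp\chi$ remains admissible. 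Without some version of this chain your "tautological" step proves nothing about the actual flow.
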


This preparation leads to the following result in the time-independent case:
\begin{theorem}[see Theorem \ref{theorem_26}] Let $Z$ be a solution of the linear Schr\"{o}dinger equation
\be
i \partial_t Z + \mc H Z = F,\ Z(0) \text{ given}.
\ee
Assume that $\mc H = \mc H_0 + V$, $V \in L^{3/2, 1}$ is as in (\ref{1.17}), and that no exceptional values of $\mc H$ are contained in $\sigma(\mc H_0)$. Then Strichartz estimates hold: for $t \geq 0$
\be\lb{1.45}
\|P_c Z\|_{L^{\infty}_t L^2_x \cap L^2_t L^{6, 2}_x} \leq C \Big(\|Z(0)\|_2 + \|F\|_{L^1_t L^2_x + L^2_t L^{6/5, 2}_x}\Big)
\ee
and
\be
\|P_c Z\|_{L^1_t L^{6, \infty}_x} \leq C \Big(\|Z(0)\|_{L^{6/5, 1}} + \|F\|_{L^1_t L^{6/5, 1}_x}\Big).
\ee
\end{theorem}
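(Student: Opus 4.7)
The plan is to reduce the perturbed problem to the free evolution via Duhamel, convert it to a fixed-point equation for the weighted unknown $g := V_2 Z$, and invert that equation with the Wiener-type result of Theorem \ref{thm_1.6}. Writing (\ref{NLS}) as $i\partial_t Z + \mc H_0 Z = F - VZ$ and applying Duhamel gives
\be
Z(t) = e^{it\mc H_0} Z(0) + i \int_0^t e^{i(t-s)\mc H_0} V Z(s) \dd s - i \int_0^t e^{i(t-s)\mc H_0} F(s) \dd s.
\ee
Factoring $V = V_1 V_2$ as in (\ref{1.36}) and applying $V_2$ produces a Volterra integral equation of the form
\be
(I - i T) g = V_2 e^{it\mc H_0} Z(0) - i V_2 \int_0^t e^{i(t-s)\mc H_0} F(s) \dd s,
\ee
where $(Tg)(t) := V_2 \int_0^t e^{i(t-s)\mc H_0} V_1\, g(s) \dd s$. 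Because $V \in L^{3/2, 1}$ splits with $V_1, V_2 \in L^{3, 2}$, Proposition \ref{prop_21} together with H\"older in Lorentz spaces shows that the convolution kernel $V_2 e^{it\mc H_0} V_1$ is $L^1_t$-integrable as an operator from $L^2_x$ to $L^2_x$, so $T$ belongs to the algebra $K = \mc L(L^2, M_t L^2)$ of Theorem \ref{thm_1.6}.

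\textbf{Inversion via Wiener.} The Fourier transform in time of the kernel of $T$ is, up to a constant, the sandwiched resolvent $V_2 R_0(\lambda + i0) V_1$; pointwise invertibility of $I - i\widehat T(\lambda)$ at each real $\lambda$ is then the standard Birman--Schwinger transcription of the statement that $\lambda \in \sigma(\mc H_0)$ is not an exceptional value of $\mc H$, which is given by hypothesis. The other two hypotheses of Theorem \ref{thm_1.6} --- translation continuity and decay at infinity in $K$ --- are verified by approximating $V_1, V_2$ in $L^{3,2}$ by Schwartz functions and re-invoking Proposition \ref{prop_21} to dominate the remainder uniformly. Wiener's theorem then produces $(I - iT)^{-1} \in K$ and delivers
\be
\|g\|_{L^2_t L^2_x} \leq C \bigl( \|Z(0)\|_2 + \|F\|_{L^1_t L^2_x + L^2_t L^{6/5, 2}_x} \bigr),
\ee
after bounding the source terms through $V_2$ using free Keel--Tao Strichartz.

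\textbf{Back-substitution and the projection $P_c$.} With $g$ in hand, rewrite
\be
Z(t) = e^{it\mc H_0} Z(0) - i \int_0^t e^{i(t-s)\mc H_0} F(s) \dd s + i \int_0^t e^{i(t-s)\mc H_0} V_1\, g(s) \dd s.
\ee
The endpoint Strichartz estimates for the free matrix Hamiltonian $\mc H_0$ control the first two terms directly, while the third is handled by H\"older, using $V_1 \in L^{3,2}$ and $g \in L^2_t L^2_x$ to place $V_1 g \in L^2_t L^{6/5, 2}_x$, followed by the free inhomogeneous Strichartz. This yields the first estimate (\ref{1.45}). The second estimate follows by the same scheme but with the free endpoint $L^{6/5, 1}_x \to L^1_t L^{6, \infty}_x$ bound from Proposition \ref{prop_21} replacing Keel--Tao at the last step. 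The projection $P_c$ is carried through from the start: by Lemma \ref{lemma_28} it is bounded on $L^2$ away from the exceptional set and commutes with both $e^{it\mc H_0}$ and with the perturbed evolution, so each of the preceding estimates survives pre-composition with $P_c$.

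\textbf{Main obstacle.} The delicate step is verifying the translation-continuity and decay-in-$K$ hypotheses of Theorem \ref{thm_1.6} for $T$ at the sharp Lorentz-space regularity forced by $V \in L^{3/2, 1}$. Proposition \ref{prop_21} delivers only the weak spatial norm $L^{6, \infty}_x$, so the H\"older step pairing the free kernel against $V_1, V_2 \in L^{3, 2}$ must be done in the precise Lorentz duality $L^{3, 2} \cdot L^{6, \infty} \hookrightarrow L^{2}$, and the Schwartz-approximation of $V_1, V_2$ must take place in this same Lorentz space rather than in $L^3$. Once these verifications are established, the integral-equation inversion and the back-substitution into the free Strichartz machinery are essentially mechanical.
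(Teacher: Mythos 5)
There is a genuine gap at the inversion step. You apply Theorem \ref{thm_1.6} to $T = T_{V_2,V_1}$ built from the bare potential $V = V_1V_2$, asserting that pointwise invertibility of $I - i\widehat T(\lambda)$ "at each real $\lambda$" follows from the hypothesis that no exceptional values lie in $\sigma(\mc H_0)$. But Wiener's theorem requires invertibility of $\widehat T(\lambda)$ at \emph{every} $\lambda$ (and, for the inverse to remain Volterra/upper triangular, throughout the closed lower half-plane, cf.\ Lemma \ref{lemma8}), whereas the theorem's hypothesis only excludes exceptional values \emph{embedded} in $\sigma(\mc H_0)$. Exceptional values of $\mc H$ off $\sigma(\mc H_0)$ --- i.e.\ discrete eigenvalues --- are permitted and indeed always present in the matrix case relevant here ($0$ in the spectral gap $(-\mu,\mu)$, and $\pm i\sigma$ off the real axis). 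At each such point $I + V_2R_0(\lambda)V_1$ fails to be invertible, so $I - iT_{V_2,V_1}$ is genuinely not invertible in $K$ and your fixed-point equation for $g = V_2Z$ cannot be solved. Your argument as written proves Theorem \ref{theorem_25} (empty exceptional set), not Theorem \ref{theorem_26}.

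Relatedly, $P_c$ cannot simply be "carried through from the start": it is a spectral projection for $\mc H$, not $\mc H_0$, so it does not commute with $e^{it\mc H_0}$, and the Duhamel iteration does not pass through it for free. The paper's proof handles both issues at once by writing $P_c\tilde{\mc H} = \tilde{\mc H}_0 + V - P_p\tilde{\mc H}$ (after a shift by $\lambda_0$ arranging that $0$ is not exceptional) and factoring the \emph{modified} potential $V - P_p\tilde{\mc H} = F_1(V)F_2(V)$ via Lemma \ref{lemma32}; that lemma is precisely what guarantees $I - i\widehat T_{F_2(V),F_1(V)}(\lambda)$ is invertible on the entire closed lower half-plane, because subtracting $P_p\tilde{\mc H}$ removes the point-spectrum obstructions. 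The remaining steps of your proposal (membership of the kernel in $K$ via Proposition \ref{prop_21}, verification of equicontinuity and decay by approximation, and back-substitution into free Strichartz) do match the paper, but without the $V - P_p\mc H$ factorization the central inversion fails.
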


Such statements are mostly known, though the proof provided here is more general and easily translates to the nonselfadjoint case (\ref{1.17}). Goldberg \cite{gol} obtained Strichartz estimates for $L^{3/2}$ potentials. \cite{bpst} obtained decay results for critical potentials under stronger regularity assumptions. An early result of this type also belongs to Rodnianski--Schlag \cite{rodsch}, who proved nonendpoint Strichartz estimates for large potentials with $\langle x \rangle^{-2-\epsilon}$ decay.

We refer the reader to these works for a more extensive survey of known results.

For the Strichartz estimates, one can go beyond $L^{3/2, 1}$ potentials, but we keep this space for convenience. The proof actually shows that (\ref{1.45}) is true for the $L^{3/2, \infty}$ closure of bounded functions of compact support.

We treat the time-dependent case as a small perturbation of the time-independent one, but our method works for perturbations of high order, of the form (\ref{1.12}). Our main result in this context, which improves upon and is distinct from previous estimates such as \cite{rodsch}, is the following:
\begin{theorem}[see Theorem \ref{theorem_13}]
Consider equation (\ref{1.30}), for $\mc H = \mc H_0 + V$ as in (\ref{1.17}) and $V \in L^{3/2, 1}$ not necessarily real-valued:
\be\nonumber
i \partial_t Z - i v(t) \dl Z + A(t) \sigma_3 Z + \mc H Z = F,\ Z(0) \text{ given},
\ee
\be\nonumber
\mc H = \bpm \Delta - \mu & 0 \\ 0 & -\Delta + \mu \epm + \bpm |W_1| & W_2 \\ -\ov W_2 & -|W_1| \epm,
\ee
and assume that $\|A\|_{\infty}$ and $\|v\|_{\infty}$ are sufficiently small (in a manner that depends on $V$) and $\sigma(\mc H_0)$ contains no exceptional values of $\mc H$. Then
\be
\|P_c Z\|_{L^{\infty}_t L^2_x \cap L^2_t L^{6, 2}_x} \leq C \Big(\|Z(0)\|_2 + \|F\|_{L^1_t L^2_x + L^2_t L^{6/5, 2}_x}\Big).
\ee
\end{theorem}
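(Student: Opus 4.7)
The plan is to view the time-dependent terms $-iv(t)\dl Z$ and $A(t)\sigma_3 Z$ as a small perturbation of the time-independent Hamiltonian $\mc H$, reducing the problem to an application of Theorem \ref{theorem_26} via a Duhamel fixed-point argument on the Strichartz space $S := L^{\infty}_t L^2_x \cap L^2_t L^{6,2}_x$. First I would rewrite the equation in the form
\be
i\partial_t Z + \mc H Z = F + iv(t) \dl Z - A(t) \sigma_3 Z,
\ee
apply Duhamel with respect to $\mc H$, and project onto the continuous spectrum to get
\be
P_c Z(t) = e^{-it\mc H} P_c Z(0) - i\int_0^t e^{-i(t-s)\mc H} P_c \bigl[F(s) + iv(s)\dl Z(s) - A(s)\sigma_3 Z(s)\bigr]\,\dd s.
\ee
This casts the Strichartz estimate as a linear integral equation $(I - \mc T)(P_c Z) = G$, where $G$ bundles the contribution of $Z(0)$ and $F$ (already controlled on $S$ by Theorem \ref{theorem_26}) and $\mc T$ encodes the perturbation $W(t) := -iv(t)\dl + A(t)\sigma_3$ through the Duhamel kernel.

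Next I would show that $\mc T$ has operator norm strictly less than one on $S$ when $\|v\|_\infty$ and $\|A\|_\infty$ are taken small enough (in a manner that depends on $V$). The multiplicative piece $A(t)\sigma_3 Z$ is the simpler of the two: schematically, one can absorb it into $\mc H$ by redefining the potential $V \mapsto V + A(t)\sigma_3$ (formally time-dependent but small in $L^{\infty}_t$) and then repeat the Wiener-algebra analysis underlying Theorem \ref{theorem_26}. Concretely, Fourier transforming in time turns $\mc T$ into a convolution operator whose symbol at frequency $\lambda$ involves $\hat A$ and $\hat v$ convolved with the perturbed resolvent $P_c R_{\mc H}(\lambda)$. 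Pointwise invertibility of $I - \hat{\mc T}(\lambda)$ is then inherited from the spectral assumption (Assumption \ref{assum}) together with Theorem \ref{theorem_26}, while the translation-continuity and kernel-decay hypotheses of Theorem \ref{thm_1.6} follow from the $L^\infty$ smallness of $v, A$ combined with the time-decay of $e^{-it\mc H} P_c$ supplied by Proposition \ref{prop_21}; applying Theorem \ref{thm_1.6} then yields invertibility of $I - \mc T$ in the ambient algebra.

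The main obstacle, and where most of the technical work lies, is the unbounded derivative appearing in $iv(t)\dl Z$, which cannot be estimated directly in the forcing norm $L^1_t L^2_x + L^2_t L^{6/5,2}_x$ of Theorem \ref{theorem_26}. To handle it I would use a commutator / integration-by-parts argument: $\dl$ commutes with $\mc H_0$, and $[V, \dl] = -\dl V$ is a localized distributional multiplier (since $V \in L^{3/2,1}$), so in the Duhamel integral $\int_0^t e^{-i(t-s)\mc H} v(s) \dl Z(s)\,\dd s$ the derivative can be moved onto the propagator modulo a commutator term that acts as a potential-type perturbation of the same class as $V$. Dualizing through the second Strichartz estimate of Theorem \ref{theorem_26} (the $L^1_t L^{6,\infty}_x$ bound) provides the pairing that converts $\dl Z$ against test functions into an object compatible with $S$. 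Once the bound $\|\mc T\|_{S\to S} \leq C(\|v\|_\infty + \|A\|_\infty) < 1$ is established, the Strichartz estimate follows either by Neumann series for $(I - \mc T)^{-1}$ or as a direct consequence of Theorem \ref{thm_1.6}.
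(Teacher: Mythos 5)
Your overall strategy --- Duhamel with respect to $\mc H$ followed by a Neumann series / fixed point in the Strichartz space --- is not the route the paper takes, and it runs into an obstruction that your commutator argument does not repair. The difficulty is already visible for the zeroth-order term $A(t)\sigma_3 Z$: to close the iteration you must place $A(t)\sigma_3 Z$ in the forcing space $L^1_t L^2_x + L^2_t L^{6/5,2}_x$ of Theorem \ref{theorem_26}, but $A$ is only assumed small in $L^{\infty}_t$ (not in $L^1_t$), and $A(t)\sigma_3$ has no spatial decay, so from $Z \in L^{\infty}_t L^2_x \cap L^2_t L^{6,2}_x$ you can only conclude $A\sigma_3 Z \in L^{\infty}_t L^2_x \cap L^2_t L^{6,2}_x$, which does not embed into the dual Strichartz space. (Redefining $V \mapsto V + A(t)\sigma_3$ does not help either: the new ``potential'' is constant in $x$ and nowhere near $L^{3/2,1}$.) The first-order term $v(t)\dl Z$ is worse still: besides the same lack of spatial localization and time integrability, it loses a full derivative, and the commutator $[V,\dl] = -\dl V$ for $V \in L^{3/2,1}$ is merely a distribution, not a ``potential-type perturbation of the same class as $V$,'' so moving $\dl$ through the propagator produces terms you cannot control. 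Note also that $\int_0^t 2v(s)\,\dd s$ grows linearly in $t$ no matter how small $\|v\|_{\infty}$ is, so the transport term genuinely alters the dynamics over infinite time and cannot be a small perturbation in any translation-invariant Strichartz framework. This is precisely why the introduction states that such terms are handled by making them part of the time-dependent Hamiltonian rather than by Strichartz estimates.

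The paper's proof solves the transport and phase parts exactly: it conjugates by the isometries $U(t) = e^{\int_0^t (2v(s)\dl + iA(s)\sigma_3)\,\dd s}$, which commute with $\mc H_0$, so that $e^{i(t-s)\mc H_0} U(t,s)$ is the exact propagator of the free-plus-transport part, and only the factored potential $V - P_p \tilde{\mc H} = F_1(V) F_2(V)$ from Lemma \ref{lemma32} is inverted, in the non-translation-invariant Wiener algebra $\tilde K_{1su}$ of Definition \ref{def_kt}. The smallness of $\|A\|_{\infty}$ and $\|v\|_{\infty}$ enters only through Lemma \ref{lem_32}, which shows that the sandwiched kernel $F_2 e^{i(t-s)\mc H_0} U(t,s) F_1$ is close in $\tilde K_{1su}$ to the time-independent kernel $F_2 e^{i(t-s)\mc H_0} F_1$; that comparison crucially exploits the spatial weights $\langle x \rangle^{-N}$ available because $F_1$, $F_2$ are approximable by localized operators, combined with the $|t-s|^{-3/2}$ dispersive decay. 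Invertibility of $I - i\tilde T_{F_2(V),F_1(V)}$ then follows by perturbing the time-independent inverse supplied by Lemma \ref{lemma32} and Theorem \ref{thm7}, not by a Neumann series in the Strichartz space; only the finite-rank commutators $[P_c,\dl]$ and $[P_c,\sigma_3]$, which are spatially localized, are treated by the kind of perturbative Strichartz argument you propose.
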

Under the same conditions, assume in addition that, when factoring $V = V_1 V_2$ as in (\ref{1.36}), $V_1$, $V_2 \in \dot W^{1/2, 2-\epsilon} \cap \dot W^{1/2, 2+\epsilon}$. Then
\be
\|P_c Z\|_{L^{\infty}_t \dot H^{1/2}_x \cap L^2_t \dot W^{1/2, 6}_x} \leq C \Big(\|R(0)\|_{\dot H^{1/2}} + \|F\|_{L^1_t \dot H^{1/2}_x + L^2_t \dot W^{1/2, 6/5}_x}\Big).
\ee
Here $P_c$ is the projection on the continuous spectrum of $\mc H$. Only the $\dot H^{1/2}$ version of this result is important for the nonlinear application.

More detailed statements of these results and remarks concerning them follow  within the paper.
\chapter{The Nonlinear Result}
\section{Deriving the linearized equation}

Substituting $\psi = w_{\pi} + r$ in the original equation (\ref{NLS}),
\be
i\partial_t (w_{\pi} + r) + \Delta (w_{\pi} + r) + (\ov w_{\pi} + \ov r) (w_{\pi}+r)^2 = 0.
\ee
In keeping with (\ref{8}), the soliton $w_{\pi}$ is described by
\be\textstyle\nonumber
w_{\pi}(t)=e^{i(\Gamma(t) + \int_0^t (\alpha^2(s) - |v(s)|^2) \dd s + v(t) \cdot x)} \phi\big(x - 2\int_0^t v(s) \dd s - D(t), \alpha(t)\big).
\ee
Note that $w_{\pi}(t)$ depends on the values $\pi$ takes on $[0, t]$. Indeed, if we define
\be\lb{wpi}
w(\pi)=w(\alpha, \Gamma, v, D) = e^{i(\Gamma + v \cdot x)} \phi\big(x - D, \alpha\big),
\ee
then $w_{\pi}(t) \ne w(\pi(t))$; in fact,
\be
w_\pi(t) = w\bigg(\alpha(t), \Gamma(t) + \int_0^t (\alpha^2(s) - |v(s)|^2) \dd s, v(t), D(t) + 2\int_0^t v(s) \dd s\bigg).
\ee
Expanding the equation accordingly, note that
\begin{align}
\partial_t w_{\pi} &= (\dot \Gamma + \alpha^2 - v^2) \partial_{\Gamma} w_{\pi} + \dot \alpha \partial_{\alpha} w_{\pi} + \dot v \partial_v w_{\pi} - (2v + \dot D) \partial_D w_{\pi}\\
\intertext{and}
\Delta w_{\pi} &= \Delta e^{i \theta(x, t)} \phi(x-y(t), \alpha(t)) + 2 \dl e^{i \theta(x, t)} \dl \phi(x-y(t), \alpha(t)) + \nonumber\\
&+ e^{i \theta(x, t)} \Delta \phi(x-y(t), \alpha(t)) \\
&= (\alpha^2 -v^2) w_{\pi} + 2i v \dl w_{\pi} - |w_{\pi}|^2 w_{\pi}. \nonumber
\end{align}
Here we used the following notation: given a soliton $w=w(\pi)$ as in (\ref{wpi}), $\partial_{\Gamma} w$, $\partial_{\alpha} w$, $\partial_{D_k} w$, and $\partial_{v_k} w$ 
are the partial derivatives of $w$ with respect to these parameters and span the tangent space to the soliton manifold at the point $w$:
\be
\partial_{\Gamma} w = iw,\ \partial_{\alpha} w = \partial_{\alpha} w,\ \partial_{D_k}w = \partial_{x_k} w,\ \partial_{v_k} w = i x_k w.
\ee
We use these partial derivatives to define the differential $d_{\pi} w$ of the map $\pi \mapsto w=w(\pi)$. The differential $d_{\pi} w$ maps a vector $\delta \pi = (\delta \alpha, \delta \Gamma, \delta v_k, \delta D_k)$ in the tangent space at $\pi$ to a vector in the tangent space at $w$:
\be
(d_{\pi} w) \delta \pi = (\delta \Gamma) \partial_{\Gamma} w + (\delta \alpha) \partial_{\alpha} w + (\delta D_k) \partial_{D_k}w + (\delta v_k) \partial_{v_k} w.
\ee

This expansion results in the cancellation of the main term involving $w_{\pi}$. The equation concerning $r$ becomes
\begin{multline}\lb{2.7}
i\partial_t r + \Delta r + i(\partial_\pi w_{\pi})\dot \pi + (|r|^2 r + r^2 \ov w_{\pi} + 2|r|^2 w_{\pi} + 2r|w_{\pi}|^2 + \ov r w_{\pi}^2) = 0.
\end{multline}
Here $d_{\pi} w_{\pi}$ is the differential evaluated at the point $w_{\pi}$. Since as noted previously $w_{\pi} \ne w(\pi)$, $(d_{\pi} w_{\pi}) \dot \pi$ is not the same as the time derivative of $w_{\pi}$.

We regard equation (\ref{2.7}) as a Schr\"{o}dinger equation for $r$. By conjugating, we obtain an equivalent equation for $\ov r$:
\begin{multline}
i\partial_t \ov r - \Delta \ov r + i \ov{d_{\pi} w_{\pi}} \dot \pi - (|r|^2 \ov r + (\ov r)^2 w_{\pi} + 2|r|^2 \ov w_{\pi} + 2\ov r|w_{\pi}|^2 + r (\ov w_{\pi})^2) = 0.
\end{multline}
Since both equations involve both $r$ and $\ov r$, it is most convenient to solve them together as a system or, rather, to see the pair of equations as just one equation concerning the column vector $R = \bpm r \\ \ov r \epm$.

Adopting this point of view, the main terms assemble into a time-dependent, nonselfadjoint two-by-two \emph{matrix} Hamiltonian
\be\lb{2.10}
\mc H_{\pi}(t) = \bpm \Delta + 2|w_{\pi}(t)|^2 & w_{\pi}(t)^2 \\ -\ov w_{\pi}(t)^2 & -\Delta - 2|w_{\pi}(t)|^2 \epm = \Delta \sigma_3 + V_{\pi}(t),
\ee
whereas the other terms are better treated as the homogenous right-hand side of the equation:
\be
F = \bpm i(d_{\pi} w_{\pi})\dot \pi + |z|^2 z + z^2 \ov{w}_{\pi} + 2|z|^2 w_{\pi} \\ i \, \ov{d_{\pi} w_{\pi}} \dot \pi - |z|^2 \ov z - (\ov z)^2 w_{\pi} - 2|z|^2 \ov w_{\pi} \epm.
\ee
Given a $\set C^2$-valued soliton $W= \bpm w \\ \ov w \epm$, we also introduce the partial derivatives $\partial_f W = \bpm \partial_f w \\ \ov {\partial_f w} \epm$, for $f \in \{\alpha, \Gamma, v_k, D_k\}$:
\be
\partial_{\Gamma}W = \bpm i w \\ -i \ov w \epm = i \sigma_3 W,\ \partial_{\alpha} W = \partial_{\alpha} W,\ \partial_{D_k} W = \partial_{x_k} W,\ \partial_{v_k} W = i \sigma_3 x_k W.
\ee

As a reminder, $\sigma_3$ is one of the Pauli matrices:
\be
\sigma_3 = \bpm 1 & 0 \\ 0 & -1 \epm.
\ee

We use these to define the differential $d_{\pi} W(\pi)$ of the map
\be\begin{aligned}
\pi \mapsto W &= W(\pi) = \bpm w(\pi) \\ \ov w(\pi) \epm, \\
(d_{\pi} W(\pi)) \delta \pi &= (\delta \Gamma) \partial_{\Gamma} W + (\delta \alpha) \partial_{\alpha} W + (\delta D_k) \partial_{D_k} W + (\delta v_k) \partial_{v_k} W.
\end{aligned}\ee
Thus, the first half of $F$ is simply given by the differential
\be
\bpm i(d_\pi w_{\pi})\dot \pi \\ i\, \ov{d_\pi w_{\pi}} \dot \pi \epm = i (d_{\pi} W_{\pi}) \dot \pi.
\ee
We denote the remaining nonlinear term by
\be
N(R, W_{\pi}) = \bpm -|r|^2 r - r^2 \ov{w}_{\pi} - 2|r|^2 w_{\pi} \\ |r|^2 \ov {r} + \ov {r}^2 w_{\pi} + 2|r|^2 \ov w_{\pi} \epm.
\ee
In vector form, the equation fulfilled by $R$ becomes
\be\lb{rr}
i \partial_t R - \mc H_{\pi}(t) R = F(t),\ F = -i (d_{\pi} W_{\pi}) \dot \pi + N(R, W_{\pi}).
\ee

To this equation concerning $R$ we join the \emph{modulation equations} that determine the path $\pi$. For future convenience, define the cotangent vectors
\be\begin{aligned}\lb{2.19}
\Xi_{\alpha}(W) &= i\sigma_3 \partial_{\Gamma}W, &\Xi_{\Gamma}(W) &= i\sigma_3 \partial_{\alpha}W, \\
\Xi_{v_k}(W) &= i\sigma_3 \partial_{D_k}W, &\Xi_{D_k}(W) &= i\sigma_3 \partial_{v_k}W.
\end{aligned}\ee
Also consider the (second) differential $d_{\pi} \Xi_f(W)$, naturally defined as the differential of the map $\pi \mapsto \Xi_f(W(\pi))$, for $f \in \{\alpha, \Gamma, v_k, D_k\}$.

At each time $t$ and for all $f \in \{\alpha, \Gamma, v_k, D_k\}$ we impose the \emph{orthogonality condition}
\be\lb{2.20}
\langle R(t), \Xi_{f}(W_{\pi}(t)) \rangle = 0.
\ee
This condition arises as follows: given parameters $\pi=(\alpha, \Gamma, v_k, D_k)$, let
\be\begin{aligned}
w&=w(\pi)=e^{i(\Gamma + v \cdot x)} \phi(x-D, \alpha), \\
W&=W(\pi)= \bpm w \\ \ov{w} \epm, \\
\mc H(W) &= \bpm \Delta + 2|w|^2 & w^2 \\ -w^2 & -\Delta -2|w|^2 \epm + 2iv \dl - (\alpha^2-|v|^2) \sigma_3.
\end{aligned}\ee
The Hamiltonian $\mc H(W)$, naturally associated to each soliton $W$ (see Section \ref{spectru}), will play an important role in the sequel.

Note that $\mc H(W_{\pi}(t)) = \mc H_{\pi}(t) + 2iv(t) \dl - (\alpha(t)^2-|v(t)|^2) \sigma_3$.

Condition (\ref{2.20}) is then equivalent to asking that $R$ should not live in the zero eigenspace of the time-dependent Hamiltonian $\mc H(W_{\pi}(t))$.

Taking the derivative in (\ref{2.20}), it translates into the following modulation equations.
\begin{lemma}[The modulation equations]\lb{lemma_6}
\be\begin{aligned}
\dot \alpha &= 4\alpha\|W_{\pi}\|_2^{-2} (\langle R, (d_{\pi} \Xi_{\alpha}(W_{\pi}))\dot \pi \rangle - i\langle N(R, W_{\pi}), \Xi_{\alpha}(W_{\pi})\rangle)\\
\dot \Gamma &= 4\alpha^{}\|W_{\pi}\|_2^{-2} (\langle R, (d_{\pi} \Xi_{\Gamma}(W_{\pi}))\dot \pi\rangle - i\langle N(R, W_{\pi}), \Xi_{\Gamma}(W_{\pi})\rangle)\\
\dot v_k &= 2\|W_{\pi}\|_2^{-2} (\langle R, (d_{\pi} \Xi_{v_k}(W_{\pi}))\dot \pi\rangle - i\langle N(R, W_{\pi}), \Xi_{v_k}(W_{\pi})\rangle)\\
\dot D_k &= 2\|W_{\pi}\|_2^{-2} (\langle R, (d_{\pi} \Xi_{D_k}(W_{\pi}))\dot \pi\rangle - i\langle N(R, W_{\pi}), \Xi_{D_k}(W_{\pi})\rangle).
\lb{mod}\end{aligned}\ee
\end{lemma}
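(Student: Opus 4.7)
My plan is to derive the modulation equations by differentiating the orthogonality constraints (\ref{2.20}) in $t$, substituting equation (\ref{rr}) to eliminate $\partial_t R$, and solving the resulting implicit algebraic system for $\dot\pi$. Differentiating $\langle R(t), \Xi_f(W_\pi(t))\rangle = 0$ yields
$$\langle \partial_t R, \Xi_f(W_\pi)\rangle + \langle R, \partial_t \Xi_f(W_\pi)\rangle = 0,$$
and substituting $\partial_t R = -i\mc H_\pi R - (d_\pi W_\pi)\dot\pi - iN(R, W_\pi)$ from (\ref{rr}) gives
$$-i\langle \mc H_\pi R, \Xi_f\rangle - \langle (d_\pi W_\pi)\dot\pi, \Xi_f\rangle - i\langle N(R, W_\pi), \Xi_f\rangle + \langle R, \partial_t \Xi_f(W_\pi)\rangle = 0.$$

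To simplify the $\langle \mc H_\pi R, \Xi_f\rangle$ term, I would move $\mc H_\pi$ onto $\Xi_f$ in the pairing and use the modulational identities satisfied by the symmetry-adapted Hamiltonian $\mc H(W_\pi)$: namely $\mc H(W_\pi)\partial_\Gamma W_\pi = 0$ and $\mc H(W_\pi)\partial_{D_k} W_\pi = 0$, together with the generalized null-space relations for $\partial_\alpha W_\pi$ and $\partial_{v_k} W_\pi$. Through $\Xi_f = i\sigma_3 \partial_{f^*}W_\pi$ (with $*$ swapping $\alpha\leftrightarrow\Gamma$ and $v_k\leftrightarrow D_k$), these translate into identities for $\mc H(W_\pi)^T\Xi_f$. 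The crucial observation is that the discrepancy $\mc H_\pi - \mc H(W_\pi) = -2iv\cdot\dl + (\alpha^2-|v|^2)\sigma_3$ matches exactly the discrepancy between $\partial_t \Xi_f(W_\pi(t))$ and $(d_\pi \Xi_f(W_\pi))\dot\pi$ coming from $W_\pi(t)\neq W(\pi(t))$ (the phase and position of $W_\pi$ contain integrated contributions $\int_0^t(\alpha^2-|v|^2)\dd s$ and $2\int_0^t v\dd s$). These extra pieces cancel in the pairing, reducing the equation to
$$\langle (d_\pi W_\pi)\dot\pi, \Xi_f(W_\pi)\rangle = \langle R, (d_\pi \Xi_f(W_\pi))\dot\pi\rangle - i\langle N(R, W_\pi), \Xi_f(W_\pi)\rangle.$$

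The left-hand side is $\sum_g M_{fg}(W_\pi)\dot\pi_g$ with $M_{fg}(W_\pi) = \langle \partial_g W_\pi, \Xi_f(W_\pi)\rangle$. Using $\partial_\Gamma W = i\sigma_3 W$, $\partial_{v_k} W = i\sigma_3 x_k W$, the antisymmetry of $i\sigma_3$ under $\langle\cdot,\cdot\rangle$, translation invariance, and the scaling (\ref{derrick-pohozaev}) yielding $\|W_\pi\|_2^2 = 2\alpha^{-1}\|\phi(\cdot,1)\|_2^2$, one checks that $M$ decouples into a $2\times 2$ block in $(\alpha,\Gamma)$ and three $2\times 2$ blocks in $(v_k, D_k)$, each essentially diagonal with entries proportional to $\|W_\pi\|_2^2/\alpha$ or $\|W_\pi\|_2^2$. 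Inverting $M$ then produces the coefficients $4\alpha\|W_\pi\|_2^{-2}$ and $2\|W_\pi\|_2^{-2}$ appearing in (\ref{mod}). The terms $\langle R, (d_\pi \Xi_f)\dot\pi\rangle$ remain on the right-hand side as implicit lower-order feedback; smallness of $\|R\|$ makes the system uniquely solvable for $\dot\pi$. The main obstacle is the careful bookkeeping of $\partial_t \Xi_f(W_\pi)$: because $W_\pi$ is built from integrals of $\alpha^2-|v|^2$ and $v$ rather than from $\pi(t)$ directly, this time derivative generates symmetry-adapted terms that must cancel exactly against the non-$\mc H(W_\pi)$ part of $\mc H_\pi$. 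Once this compatibility is verified, the matrix inversion is routine.
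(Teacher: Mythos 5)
Your proposal is correct and follows essentially the same route as the paper: differentiate the orthogonality condition, substitute (\ref{rr}), use the adjoint generalized-null-space identities for $\mc H^*(W_\pi)$ (the leftover terms $-2i\Xi_\alpha$, $-2i\Xi_{v_k}$ being killed by orthogonality), observe that $\partial_t \Xi_f(W_\pi) - (d_\pi\Xi_f(W_\pi))\dot\pi = (\mc H^*(W_\pi)-\mc H_\pi^*)\Xi_f(W_\pi)$ cancels the non-symmetry-adapted part of the Hamiltonian, and invert the diagonal pairing matrix $\langle\partial_g W,\Xi_f(W)\rangle$. The key compatibility identity you flag as the "main obstacle" is exactly the one the paper states and uses.
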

\begin{proof} Begin by observing that for every soliton $W$
\be\begin{aligned}
\langle \partial_{\alpha}W, \Xi_{f}(W) \rangle &= \frac 1 {4 \alpha^{}} \|W\|_2^2 \text{ if } f=\alpha \text{ and zero otherwise}\\
\langle \partial_{\Gamma}W, \Xi_{f}(W) \rangle &= \frac 1 {4 \alpha^{}} \|W^{}\|_2^2 \text{ if } f=\Gamma \text{ and zero otherwise}\\
\langle \partial_{D_k}W, \Xi_{f}(W) \rangle &= \frac 1 2 \|W^{}\|_2^2 \text{ if } f=D_k \text{ and zero otherwise}\\
\langle \partial_{v_k}W, \Xi_{f}(W) \rangle &= \frac 1 2 \|W^{}\|_2^2 \text{ if } f=v_k \text{ and zero otherwise}.
\end{aligned}\ee
Furthermore,
\be\begin{aligned}\lb{2.22}
\mc H^*(W) \Xi_{\alpha}(W) &= 0, &\mc H^*(W) \Xi_{\Gamma}(W) &= -2i \Xi^{}_{\alpha}, \\
\mc H^*(W) \Xi_{v_k}(W) &= 0, &\mc H^*(W) \Xi_{D_k}(W) &= -2i \Xi^{}_{v_k}.
\end{aligned}\ee
Finally,
\be
\partial_t \Xi_f(W_{\pi}) = (d_{\pi} \Xi_{f}(W_{\pi})) \dot \pi + (\mc H^*(W_{\pi}) - \mc H_{\pi}^*) \Xi_{f}(W_{\pi}).
\ee
Then, in the equality
\be
\langle R, \partial_t \Xi_{f}(W_{\pi}) \rangle = - \langle \partial_t R, \Xi^{}_{f} \rangle,
\ee
we replace $\partial_t R$ by its expression (\ref{rr}) and arrive at (\ref{mod}).
\end{proof}

Let
\be\begin{aligned}
L_{\pi^{}} R = &4\alpha^{} \sum_{f \in \alpha, \Gamma} \|W_{\pi}\|_2^{-2} \langle R, (d_{\pi} \Xi_{f}(W_{\pi})) \dot \pi \rangle \partial_f W_{\pi} \\
+ & 2\sum_{f \in \{v_k, D_k\}} \|W_{\pi}\|_2^{-2} \langle R, (d_{\pi} \Xi_{f}(W_{\pi})) \dot \pi \rangle \partial_f W_{\pi}
\end{aligned}\ee
and
\be\begin{aligned}
N_{\pi}(R, W_{\pi}) = & 4\alpha^{} \sum_{f \in \{\alpha, \Gamma\}} \|W_{\pi}\|_2^{-2} i\langle N(R, W_{\pi}), \Xi_{f}(W_{\pi}) \rangle \partial_{f}W_{\pi} \\
+ & 2\sum_{f \in \{v_k, D_k\}} \|W_{\pi}\|_2^{-2} i\langle N(R, W_{\pi}), \Xi_{f}(W_{\pi})\rangle \partial_{f} W_{\pi}.
\end{aligned}\ee
The modulation equations can then be rewritten as
\be
(d_{\pi} W_{\pi}) \dot \pi = L_{\pi^{}} R - i N_{\pi}(R, W_{\pi}).
\lb{modula}
\ee
$L_{\pi^{}} R$ represents the part that is linear in $R$ and $N_{\pi}(R, W)$ represents the nonlinear component $\langle N(R, W), \Xi_{f}(W_{\pi})\rangle$.

As an aside, using the notation of Section \ref{spectru}, let $P_0(W_{\pi})$ be the zero spectrum projection of the Hamiltonian corresponding to $W_{\pi}$. We recognize
\be
N_{\pi}(R, W_{\pi}) = P_0(W_{\pi}(t)) N(R, W_{\pi})
\ee
and a similar, but more complicated, expression for $L_{\pi} R$.

At this point we linearize the equation. We use an auxiliary function for all quadratic and cubic terms and do the same in regard to the soliton. We only keep linear, first-order terms in the unknowns $R$ and $\pi$ for which we solve the equation, so that the equation becomes linear in $R$ and $\pi$ and quadratic and cubic in the terms involving $R^0$ and $\pi^0$.

We introduce an auxiliary function $R^0$ and an auxiliary modulation path $\pi^0$. Starting with $\pi^0$, we construct the moving soliton $w_{\pi^0}$, its partial derivatives $\partial_f w_{\pi^0}$, cotangent vectors $\Xi_f(W_{\pi^0})$, the differentials $d_{\pi} W_{\pi^0}$ and $d_{\pi} \Xi_f(W_{\pi^0})$, etc., all following previous definitions:
\be\begin{aligned}\lb{w0}
\pi^0 &= (\alpha^0, \Gamma^0, v^0, D^0), \\
w_{\pi^0} &\textstyle = e^{i(\Gamma^0(t) + \int_0^t ((\alpha^0)^2(s) -|v^0|^2(s)) \dd s + v^0(t) \cdot x)}
\textstyle\phi(x - 2\int_0^t v^0(s) \dd s - D^0(t), \alpha^0(t)), \\
W_{\pi^0} &= \bpm w_{\pi^0} \\ \ov {w}_{\pi^0} \epm,
\end{aligned}\ee
as well as
\be\begin{aligned}
\partial_{\Gamma} w_{\pi^0} &= iw_{\pi^0},\ \partial_{\alpha} w_{\pi^0} = \partial_{\alpha} w_{\pi^0},\ \partial_{D_k} w_{\pi^0} = \partial_{x_k} w_{\pi^0},\ \partial_{v_k} w_{\pi^0} = i x_k w_{\pi^0}, \\
\partial_{f} W_{\pi^0} &= \bpm \partial_f w_{\pi^0} \\ \partial_f \ov w_{\pi^0} \epm, \\
\Xi_{\alpha}(W_{\pi^0}) &= i\sigma_3 \partial_{\Gamma} W_{\pi^0}, \Xi_{\Gamma}(W_{\pi^0}) = i\sigma_3 \partial_{\alpha} W_{\pi^0}, \\
\Xi_{v_k}(W_{\pi^0}) &= i\sigma_3 \partial_{D_k} W_{\pi^0}, \Xi_{D_k}(W_{\pi^0}) = i\sigma_3 \partial_{v_k} W_{\pi^0}.
\end{aligned}\ee

\begin{lemma}
$\psi$ is a solution of (\ref{NLS}) if and only if
\be
\Psi = \bpm \psi \\ \ov{\psi} \epm = W_{\pi^0} + R,
\ee
$W_{\pi^0}$ is given by (\ref{w0}), and $(R, \pi)$ is a fixed point of the map $(R^0, \pi^0) \mapsto (R, \pi)$, where $R$ and $\pi$ solve the following system of linear equations:
\be\begin{aligned}
&i \partial_t R + \mc H_{\pi^0}(t) R = F,\ F= -i L_{\pi^0} R + N(R^0, W_{\pi^0}) - N_{\pi^0}(R^0, W_{\pi^0})\\
&\dot f = 4\alpha^0 \|W_{\pi^0}\|_2^{-2} (\langle R, (d_{\pi} \Xi_{f}(W_{\pi^0})) \dot \pi^0 \rangle - i\langle N(R^0, W_{\pi^0}), \Xi_{f}(W_{\pi^0})\rangle), f \in \{\alpha, \Gamma\}\\
&\dot f = 2\|W_{\pi^0}\|_2^{-2} (\langle R, (d_{\pi} \Xi_{f}(W_{\pi^0})) \dot \pi^0 \rangle - i\langle N(R^0, W_{\pi^0}), \Xi_{f}(W_{\pi^0})\rangle), f \in \{v_k, D_k\}.
\end{aligned}\lb{ec_liniara}\ee
\lb{lemma1.1}
\end{lemma}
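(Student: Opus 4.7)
The plan is essentially bookkeeping: Lemma \ref{lemma1.1} is an equivalence between solutions of (\ref{NLS}) and fixed points of the linearization map $(R^0,\pi^0)\mapsto(R,\pi)$ defined by the system (\ref{ec_liniara}), so one simply checks that substituting $R^0=R$ and $\pi^0=\pi$ in (\ref{ec_liniara}) reproduces exactly the nonlinear system (\ref{rr}) together with the modulation equations (\ref{mod}) derived in Section 2.1 and Lemma \ref{lemma_6}. The proof is purely algebraic and uses no new analytic ingredients.

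For the forward direction, start with a solution $\psi$ of (\ref{NLS}) and choose a parameter path $\pi$ so that the orthogonality conditions (\ref{2.20}) hold, which is possible by the implicit function theorem argument preceding Lemma \ref{lemma_6}. Then $R=\Psi-W_\pi$ satisfies (\ref{rr}) and $\pi$ satisfies (\ref{mod}). To verify that $(R,\pi)$ is a fixed point of the map, plug $R^0=R$ and $\pi^0=\pi$ into the right-hand side of the linear Schr\"odinger equation in (\ref{ec_liniara}):
\be
F = -i L_\pi R + N(R,W_\pi) - N_\pi(R,W_\pi).
\ee
By (\ref{modula}), $L_\pi R = (d_\pi W_\pi)\dot\pi + i N_\pi(R,W_\pi)$, so $-iL_\pi R = -i(d_\pi W_\pi)\dot\pi + N_\pi(R,W_\pi)$; the two $N_\pi$ contributions cancel, leaving exactly the right-hand side of (\ref{rr}). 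The scalar modulation equations in (\ref{ec_liniara}) reduce to (\ref{mod}) immediately upon the substitution.

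For the reverse direction, suppose $(R,\pi)$ is a fixed point of the map. Running the same computation backwards, the linear Schr\"odinger equation in (\ref{ec_liniara}) becomes (\ref{rr}) and the last four equations of (\ref{ec_liniara}) become (\ref{mod}). Writing $\psi$ for the first component of $W_\pi+R$ and retracing the expansion of $i\partial_t\psi+\Delta\psi+|\psi|^2\psi$ from the opening of Section 2.1 recovers (\ref{NLS}); the orthogonality conditions (\ref{2.20}) are preserved by construction, because they are built into the choice of modulation equations.

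The only point that requires genuine care is the splitting $F=-iL_{\pi^0}R+N(R^0,W_{\pi^0})-N_{\pi^0}(R^0,W_{\pi^0})$ in (\ref{ec_liniara}): one must confirm that no term of (\ref{rr}) is silently dropped or duplicated when $R^0=R$ and $\pi^0=\pi$, and that the subtraction of $N_{\pi^0}$ from the nonlinearity is exactly compensated by the use of $L_{\pi^0}R$ in place of $(d_\pi W_{\pi^0})\dot\pi$. This is precisely the content of the identity (\ref{modula}), which is what makes the linearization self-consistent.
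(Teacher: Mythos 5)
Your proposal is correct and follows essentially the same route as the paper: substitute $R^0=R$, $\pi^0=\pi$ into the higher-order terms, use the identity (\ref{modula}) to convert $-i(d_\pi W_\pi)\dot\pi$ into $-iL_\pi R - N_\pi(R,W_\pi)$ so that the $N_\pi$ terms cancel against the one subtracted in $F$, and observe that the modulation equations of (\ref{ec_liniara}) reduce to (\ref{mod}). The sign bookkeeping in your cancellation step checks out, and the reverse direction is, as you say, the same computation run backwards.
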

Here
\be\begin{aligned}
\mc H_{\pi^0}(t) &= \bpm \Delta + 2|w_{\pi^0}(t)|^2 & (w_{\pi^0}(t))^2 \\ -(\ov w_{\pi^0}(t))^2 & -\Delta - 2|w_{\pi^0}(t)|^2 \epm, \\
N(R^0, W_{\pi^0}) &= \bpm -|r^0|^2 r^0 - (r^0)^2 \ov w_{\pi^0} - 2|r^0|^2 w_{\pi^0} \\ |r^0|^2 \ov {r^0} + (\ov {r^0})^2 w_{\pi^0} + 2|r^0|^2 \ov w_{\pi^0} \epm, \\
L_{\pi^0} R &= 4\alpha^0 \sum_{f \in \alpha, \Gamma} \|W_{\pi^0}\|_2^{-2} \langle R, (d_{\pi} \Xi_{f}(W_{\pi^0})) \dot \pi^0 \rangle \partial_f W_{\pi^0} \\
&+ 2\sum_{f \in \{v_k, D_k\}} \|W_{\pi^0}\|_2^{-2} \langle R, (d_{\pi} \Xi_{f}(W_{\pi^0})) \dot \pi^0 \rangle \partial_f W_{\pi^0},
\end{aligned}\ee
\be\nonumber\begin{aligned}
N_{\pi^0}(R^0, W_{\pi^0}) &= 4\alpha^0 \sum_{f \in \{\alpha, \Gamma\}} \|W_{\pi^0}\|_2^{-2} i\langle N(R^0, W_{\pi^0}), \Xi_{f}(W_{\pi^0}) \rangle \partial_{f}W_{\pi^0} \\
&+ 2\sum_{f \in \{v_k, D_k\}} \|W_{\pi^0}\|_2^{-2} i\langle N(R^0, W_{\pi^0}), \Xi_{f}(W_{\pi^0})\rangle \partial_{f} W_{\pi^0}.
\end{aligned}
\ee
Solving the system (\ref{ec_liniara}) in a suitable space, with a view toward applying a fixed point theorem to conclude that $(R^0, \pi^0) = (R, \pi)$, is our main objective.

\begin{proof}
We substitute $R^0$ for $R$ and $\pi^0$ for $\pi$ into all higher order terms of (\ref{rr}) and (\ref{mod}). Following this substitution, the nonlinear Schr\"{o}dinger equation (\ref{rr}) transforms into the linear equation, in both $R$ and $\dot \pi$,
\be
i \partial_t R - \mc H_{\pi^0}(t) R = F,
\lb{nlsu}
\ee
where
\be\begin{aligned}
F &= - i (d_{\pi} W_{\pi^0}) \dot \pi + N(R^0, W_{\pi^0}), \\
(d_{\pi} W_{\pi^0}) \dot \pi &= \dot \Gamma \partial_{\Gamma}W_{\pi^0} + \dot \alpha \partial_{\alpha}W_{\pi^0} + \dot D_k \partial_{D_k}W_{\pi^0} + \dot v_k \partial_{v_k}W_{\pi^0}.
\end{aligned}\ee
The orthogonality condition we impose in the linear setting is
\be
\langle R(t), \Xi_f(W_{\pi^0}(t)) \rangle = 0,\ f \in \{\alpha, \Gamma, v_k, D_k\}.
\ee

By taking the derivative, in a manner entirely similar to that of Lemma \ref{lemma_6} we obtain the linearized modulation equations: (\ref{mod}) become
\be\begin{aligned}
\dot \alpha &= 4\alpha^{0}\|W_{\pi^0}\|_2^{-2} (\langle R, (d_{\pi} \Xi_{\alpha}(W_{\pi^0})) \dot \pi^0 \rangle - i\langle N(R^{0}, \pi^{0}), \Xi_{\alpha}^{0}\rangle)\\
\dot \Gamma &= 4\alpha^{0}\|W^{0}\|_2^{-2} (\langle R, (d_{\pi} \Xi_{\Gamma}(W_{\pi^0})) \dot \pi^0 \rangle - i\langle N(R^{0}, \pi^{0}), \Xi_{\Gamma}^{0}\rangle)\\
\dot v_k &= 2\|W^{0}\|_2^{-2} (\langle R, (d_{\pi} \Xi_{v_k}(W_{\pi^0})) \dot \pi ^0\rangle - i\langle N(R^{0}, \pi^{0}), \Xi_{v_k}^{0}\rangle)\\
\dot D_k &= 2\|W^{0}\|_2^{-2} (\langle R, (d_{\pi} \Xi_{D_k}(W_{\pi^0})) \dot \pi^0 \rangle - i\langle N(R^{0}, \pi^{0}), \Xi_{D_k}^{0}\rangle).
\lb{mod0}\end{aligned}\ee
The identity (\ref{modula}) translates into
\be
(d_{\pi} W_{\pi^0}) \dot \pi = L_{\pi^0} R - i N_{\pi^0}(R^0, \pi^0).
\lb{modula0}
\ee

Finally, we collect together (\ref{nlsu}) and (\ref{mod0}) and replace $(d_{\pi} W_{\pi^0}) \dot\pi$ on the right-hand side of (\ref{nlsu}) by its expression (\ref{modula}) in order to arrive at (\ref{ec_liniara}).
\end{proof}

\section{Spectral considerations}\lb{spectru}

Given parameters $\pi=(\alpha, \Gamma, v_k, D_k)$, define the soliton
\be\begin{aligned}
w&=w(\pi)=e^{i(\Gamma + v \cdot x)} \phi(x-D, \alpha), \\
W&=W(\pi)= \bpm w \\ \ov{w} \epm
\end{aligned}\ee
and consider the associated Hamiltonian
\be\lb{ham}
\mc H(W) = \mc H(\alpha, \Gamma, v, D) = \bpm \Delta + 2|w|^2 & w^2 \\ -\ov w^2 & -\Delta -2|w|^2 \epm + 2iv \dl - (\alpha^2-|v|^2) \sigma_3.
\ee

By rescaling and conjugating by $e^{i(xv+\Gamma)\sigma_3}$ as well as by a translation, one sees that all these operators are in fact conjugate, up to a constant factor of $\alpha^2$:
\be
\Dil_{1/\alpha} e^{-D\dl} T_D e^{-i(xv+\Gamma) \sigma_3} \mc H(\alpha, \Gamma, v, D) e^{i(xv +\Gamma)\sigma_3} e^{D\dl} \Dil_{\alpha} = \alpha^2 \mc H(1, 0, 0, 0).
\ee
Therefore all have the same spectrum up to dilation and have similar spectral properties; thus, it suffices to study $\mc H = \mc H(1, 0, 0, 0)$:
\be
\mc H = \bpm \Delta - 1 + 2\phi^2 & \phi^2 \\ -\phi^2 & -\Delta + 1 - 2\phi^2 \epm.
\ee

We restate the known facts about the spectrum of $\mc H$. As proved by Buslaev, Perelman \cite{buslaev1} and also Rodnianski, Schlag, Soffer in \cite{rod3}, under fairly general assumptions, $\sigma(\mc H) \subset \set R \cup i\set R$ and is symmetric with respect to the coordinate axes and all eigenvalues are simple with the possible exception of $0$. Furthermore, by Weyl's criterion $\sigma_{ess}(\mc H) = (-\infty, -1] \cup [1, +\infty)$.

Grillakis, Shatah, Strauss \cite{gril1} and Schlag \cite{schlag} showed that there is only one pair of conjugate imaginary eigenvalues  $\pm i\sigma$ and that the corresponding eigenvectors decay exponentially. For the decay see Hundertmark, Lee \cite{hund}. The pair of conjugate imaginary eigenvalues $\pm i\sigma$ reflects the $L^2$-supercritical nature of the problem.

The generalized eigenspace at $0$ arises due to the symmetries of the equation, which is invariant under Galilean coordinate changes, phase changes, and scaling. It is relatively easy to see that each of these symmetries gives rise to a generalized eigenvalue of the Hamiltonian $\mc H$ at $0$, but proving the converse is much harder and was done by Weinstein in \cite{wein1}, \cite{wein2}.

Schlag \cite{schlag} showed, using ideas of Perelman \cite{perel}, that if the operators
\be
L_{\pm} = -\Delta + 1 - 2\phi^2(\cdot, 1) \mp \phi^2(\cdot, 1)
\ee
that arise by conjugating $\mc H$ with $\bpm 1 & i \\ 1 & -i \epm$ have no eigenvalue in $(0, 1]$ and no resonance at $1$, then the real discrete spectrum of $\mc H$ is $\{0\}$ and the edges $\pm 1$ are neither eigenvalues nor resonances. A work of Demanet, Schlag \cite{demanet} verified numerically that the scalar operators meet these conditions. Therefore, there are no eigenvalues in $[-1, 1]$ and $\pm 1$ are neither eigenvalues nor resonances for $\mc H$.

Furthermore, the method of Agmon \cite{agmon}, adapted to the matrix case, enabled Erdogan--Schlag \cite{erdsch2} and independently \cite{cuc2} to prove that any resonances embedded in the interior of the essential spectrum (that is, in $(-\infty, -1) \cup (1, \infty)$) have to be eigenvalues, under very general assumptions.

Under the spectral Assumption \ref{assum} we now have a complete description of the spectrum of $\mc H$. It consists of a pair of conjugate purely imaginary eigenvalues $\pm i\sigma$, a generalized eigenspace at $0$, and the essential spectrum $(-\infty, -1] \cup [1, \infty)$.

Following \cite{schlag}, let $F^{\pm}$ be the eigenfunctions of $\mc H$ corresponding to $\pm i \sigma$; then there exists $f^+ \in L^2$ such that
\be
F^+ = \bpm f^+ \\ \ov f^+ \epm,\ F^- = \ov F^+ = \bpm \ov f^+ \\ f^+ \epm.
\ee
Due to the symmetry
\be
\sigma_3 \mc H \sigma_3 = \mc H^*
\ee
of the operator, the respective eigenfunctions of $\mc H^*$ are $\sigma_3 F^{\pm}$. Then, the imaginary spectrum projection is given by
\be
P_{im} = P_+ + P_-,\ P_{\pm}= \langle \cdot, i\sigma_3 F^{\mp} \rangle F^{\pm}
\ee
up to a constant; the constant becomes $1$ after the normalization
\be
\int_{\set R^3} \Re f^+(x) \Im f^+(x) \dd x = -1/2.
\ee

It helps in the proof to exhibit the discrete eigenspaces of $\mc H(W)$. Denote by $F^{\pm}(W)$ the eigenfunctions of $\mc H(W)$ corresponding to the $\pm i \sigma(W)$ eigenvalues. Even though there is no explicit form of the imaginary eigenvectors, Schlag \cite{schlag} proved in a more general setting that $F^{\pm}(W)$, $L^2$ normalized, and $\sigma$ are locally Lipschitz continuous as a function of $W$ and that $F^{\pm}(W)$ are exponentially decaying.

More is true in the case under consideration. Since the operators $\mc H(W)$ are conjugate up to a constant, the dependence of $F^{\pm}(W)$ and $\sigma$ on the parameters can be made explicit:
\be\begin{aligned}\lb{2.50}
F^{\pm}(W) &= e^{i(xv+\Gamma)\sigma_3} e^{D\dl} \Dil_{\alpha} F^{\pm},\\
\sigma(W) &= \alpha^2 \sigma.
\end{aligned}\ee

Also observe that $\partial_f W$, where $f \in \{\alpha, \Gamma, v_k, D_k\}$, are the generalized eigenfunctions of $\mc H(W)$ at zero and $\Xi_f(W)$, defined as in (\ref{2.19}), fulfill the same role for~$\mc H(W)^*$.

We then express the Riesz projections corresponding to the three components of the spectrum of $\mc H(W)$ as
\begin{align}
P_{im}(W) &= P_+(W) + P_-(W),\ P_{\pm}(W)= \alpha^{-3} \langle \cdot, i\sigma_3 F^{\mp}(W) \rangle F^{\pm}(W), \\
P_{0}(W) &= 4\alpha \|W\|_2^{-2} (\langle \cdot, \Xi_{\alpha} \rangle \partial_{\alpha}W + \langle \cdot, \Xi_{\Gamma} \rangle \partial_{\Gamma}W) + \\
&\nonumber+ 2 \|W\|_2^{-2} \sum_k (\langle \cdot, \Xi_{v_k} \rangle \partial_{v_k}W + \langle \cdot, \Xi_{D_k} \rangle \partial_{D_k}W),
\intertext{and}
P_c(W) &= 1-P_{im}(W)-P_{0}(W).
\end{align}


\section{The fixed point argument: stability} Consider a small neighborhood of a fixed soliton $w_0=w(\pi_0)$, determined by parameters $\pi_0 = (\alpha_0, \Gamma_0, v_{k0}, D_{k0})$:
\be\begin{aligned}
W_0 &= \bpm w_0 \\ \ov w_0 \epm, \\
w_0 &= e^{i(\Gamma_0 + v_0 \cdot x)} \phi(x-D_0, \alpha_0).
\end{aligned}\ee
As in Section \ref{spectru}, $W_0$ has an associated Hamiltonian $\mc H(W_0)$ of the form (\ref{ham})
\be
\mc H(W_0) = \bpm \Delta + 2 |w_0|^2 & w_0^2 \\ - \ov w_0^2 & -\Delta - 2|w_0|^2 \epm + 2iv_0\dl - (\alpha_0^2-|v_0|^2) \sigma_3.
\ee
In turn, to $\mc H(W_0)$ we associate the zero spectrum projection $P_0(W_0)$, the imaginary spectrum projection $P_{im}(W_0) = P_+(W_0) + P_-(W_0)$, and the continuous spectrum projection $P_c(W_0) = I - P_0(W_0) - P_{im}(W_0)$.

Up to quadratic corrections, the centre-stable submanifold is given by the affine subspace
\be
W_0 + (P_c(W_0) + P_-(W_0)) \dot H^{1/2} = \{W_0 + R_0 \mid R_0 \in \dot H^{1/2},\ (P_0(W_0) + P_+(W_0)) R_0 = 0\}.
\ee
This submanifold will have codimension nine, so we need a supplementary argument, presented at the end, to recover eight codimensions.

Take initial data of the form
\be\begin{aligned}
R(0) &= R_0 + h F^+(W_0), \pi(0)=\pi_0=(\alpha_0, \Gamma_0, (v_k)_0, (D_k)_0),
\lb{Z_initial}
\end{aligned}\ee
where $R_0 \in  (P_c + P_-) \dot H^{1/2}$. Thus, $R_0$ belongs to the appropriate linear subspace, while $h F^+$ is the quadratic correction, made in the direction of $F^+$; in particular,
\be
P_0(0) R(0) = 0,\ P_+ R(0) = h.
\ee
(\ref{Z_initial}) also implies that $\|R(0)\|_{\dot H^{1/2}} \leq C (\|R_0\|_{\dot H^{1/2}} + |h|)$.

We define a map $\Upsilon$ as follows:
\begin{definition}
$\Upsilon$ is the map that, given a pair $(R^0, \pi^0)$, associates to it the unique bounded solution $(R, \pi)$ of the linearized equation system (\ref{ec_liniara}) from Lemma \ref{lemma1.1}, with initial data as in (\ref{Z_initial}), where $R_0$ is given and $h$ is allowed to take any value:
\be
\Upsilon((R^0, \pi^0)) = (R, \pi).
\lb{2.32}
\ee
\end{definition}
In the sequel we show that, for given $R_0$, $R^0$, and $\pi^0$, the \emph{bounded} solution $(R, \pi)$ exists and is unique and that the parameter $h = h(R_0, R^0, \pi^0)$ is determined by the condition that the solution should have finite $X$ seminorm. There exist a unique value of $h$ for which the solution is bounded and a unique solution $(R, \pi)$ corresponding to that value of $h$. Thus the map $\Upsilon$ is well-defined.

Consider the space
\be\begin{aligned}
X &= \{(R, \pi) \mid R \in L^{\infty}_t \dot H^{1/2}_x \cap L^2_t \dot W^{1/2, 6}_x,\ \dot \pi \in L^1\},
\lb{X}
\end{aligned}\ee
with the seminorm
\be
\|(R, \pi)\|_X = \|R\|_{L^{\infty}_t \dot H^{1/2}_x \cap L^2_t \dot W^{1/2, 6}_x} + \alpha_0^{-1} \|\dot \pi\|_{L^1_t},
\ee
where $\alpha_0$ is the scaling parameter, see (\ref{Z_initial}). $X$ is the natural space for the study of equation (\ref{NLS}) and of its linearized version (\ref{ec_liniara}). Indeed, since the Schr\"{o}dinger equations (\ref{NLS}) and (\ref{rr}) are $\dot H^{1/2}$-critical, we need to study them in the critical Strichartz space $L^{\infty}_t \dot H^{1/2}_x \cap L^2_t \dot W^{1/2, 6}_x$. In regard to the modulation path, $\dot \pi \in L^1$ is a minimal assumption to ensure we are dealing with small perturbations --- and there is no room for a stronger condition, due to working in a critical space.

We prove that, for $\|(R^0, \pi^0)\|_X < \delta$, it follows that $\|(R, \pi)\|_X = \|\Upsilon(R^0, \pi^0)\|_X < \delta$ as well, when $\delta$ is small. For clarity, we state the stability result formally:
\begin{proposition}\lb{prop9}
Let $(R, \pi)$ be a solution of (\ref{ec_liniara}) with initial data given by (\ref{Z_initial}). Assume that the auxiliary functions $(R^0, \pi^0)$ of (\ref{ec_liniara}) also satisfy (\ref{Z_initial}) and, for some sufficiently small $\delta \leq \delta_0$, $\|(R^0, \pi^0)\|_X \leq \delta$. Then $(R, \pi) \in X$ for a unique value of $h$, which we denote $h(R_0, R^0, \pi^0)$, and this bounded solution satisfies
\be\begin{aligned}
\|R\|_{L^{\infty}_t \dot H^{1/2}_x \cap L^2_t \dot W^{1/2, 6}_x} &\leq C(\|R_0\|_{\dot H^{1/2}} + \delta^2), \\
\|\dot \pi\|_{L^1_t} + |h(R_0, R^0, \pi^0)| &\leq C \alpha_0 (\delta \|R_0\|_{\dot H^{1/2}} + \delta^2).
\end{aligned}\ee
\end{proposition}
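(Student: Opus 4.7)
The plan is to treat the first equation of (\ref{ec_liniara}) as a Strichartz problem plus a system of scalar ODEs after freezing the Hamiltonian at $W_0$. I would rewrite
\be\nonumber
i\partial_t R + \mc H(W_0) R = \tilde F, \quad \tilde F := F - (\mc H_{\pi^0}(t) - \mc H(W_0))R,
\ee
and decompose $\mc H_{\pi^0}(t) - \mc H(W_0)$ into a kinetic drift $2i(v_{\pi^0}(t)-v_0)\dl$, a scalar shift $((\alpha^0(t))^2-\alpha_0^2)\sigma_3$, and a localized difference of potentials. The first two pieces are absorbed into the time-dependent Hamiltonian governed by Theorem \ref{theorem_13}, as in (\ref{1.12}); because their coefficients are bounded in $L^\infty_t$ by $\|\dot\pi^0\|_{L^1_t}\le\delta$, the smallness hypothesis of Theorem \ref{theorem_13} is satisfied and its $\dot H^{1/2}$ version gives
\be\nonumber
\|P_c R\|_{L^\infty_t\dot H^{1/2}_x\cap L^2_t \dot W^{1/2,6}_x}\lesssim \|P_c R(0)\|_{\dot H^{1/2}}+\|\tilde F\|_{L^1_t\dot H^{1/2}_x+L^2_t\dot W^{1/2,6/5}_x}.
\ee
The residual localized potential difference is placed into $\tilde F$ and contributes $O(\delta\|R\|_X)$ to the dual Strichartz norm.

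For the discrete spectrum of $\mc H(W_0)$, projection onto $P_\pm(W_0)$ reduces the evolution to a scalar linear ODE with coefficient $\pm\sigma$ forced by $\langle\tilde F, i\sigma_3 F^\mp\rangle$. The stable component is automatically bounded for any initial datum, while the unstable component admits a bounded $L^\infty_t$ solution iff $h = P_+R(0)$ is given by the explicit Lyapunov--Perron integral
\be\nonumber
h = c\int_0^\infty e^{-\sigma s}\langle\tilde F(s), i\sigma_3 F^-(W_0)\rangle\,ds,
\ee
which determines $h(R_0, R^0, \pi^0)$ uniquely, and the resulting bound is $|h| + \|P_\pm R\|_{L^\infty_t} \lesssim \|\langle \tilde F, F^\mp\rangle\|_{L^1_t}$; by exponential spatial decay of $F^\mp$ the right side is controlled by a soliton-weighted dual Strichartz norm of $\tilde F$. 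The zero-eigenspace component $P_0(W_0)R$ is pinned down by the orthogonality built into the last two lines of (\ref{ec_liniara}): those lines force $P_0(W_{\pi^0})R \equiv 0$, so $P_0(W_0)R = (P_0(W_0) - P_0(W_{\pi^0}))R$ is of size $\delta\|R\|_X$ and can be absorbed.

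It remains to estimate $\tilde F$. The cubic piece of $N(R^0, W_{\pi^0})$ obeys
\be\nonumber
\||r^0|^2 r^0\|_{L^2_t\dot W^{1/2,6/5}_x}\lesssim \|r^0\|_{L^\infty_t L^3_x}^2\|r^0\|_{L^2_t\dot W^{1/2,6}_x}\lesssim\delta^3,
\ee
via the fractional Leibniz rule and $\dot H^{1/2}(\set R^3)\hookrightarrow L^3$. The quadratic-in-$r^0$ pieces carry a factor of $w_{\pi^0}$ and contribute $O(\delta^2)$ after analogous estimates using exponential localization. The projection terms $L_{\pi^0}R$ and $N_{\pi^0}(R^0,W_{\pi^0})$ are exponentially localized on the soliton and bounded in $L^1_t\dot H^{1/2}_x$ by $\|\dot\pi^0\|_{L^1_t}\|R\|_{L^\infty_t L^2_{loc}} + \delta^2\lesssim\delta\|R\|_X+\delta^2$. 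Collecting these bounds gives a closed inequality of the form $\|R\|_X\le C(\|R_0\|_{\dot H^{1/2}} + \delta\|R\|_X + \delta^2)$, which closes for $\delta\le\delta_0$ small and yields the first asserted estimate. Substituting the resulting bound on $R$ into the last two equations of (\ref{ec_liniara}) and pairing against the exponentially localized $\Xi_f(W_{\pi^0})$ delivers $\|\dot\pi\|_{L^1_t}+|h|\lesssim\alpha_0(\delta\|R_0\|_{\dot H^{1/2}}+\delta^2)$.

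The principal difficulty I anticipate is the linear-in-$R$ coupling appearing both in $F$ (through $L_{\pi^0}R$) and in the modulation equations: at critical regularity $R$ has no pointwise-in-$t$ decay, so the $L^1_t$ structure of $\dot\pi^0$ rather than its $L^\infty_t$ size must be exploited, together with the exponential spatial localization of $w_{\pi^0}$ and its $\pi$-derivatives, to extract $L^1_t$ control of quantities paired with $\dot\pi^0$. Uniqueness of $h$, and its real-analytic dependence on $(R_0, R^0, \pi^0)$ needed in subsequent fixed-point and manifold arguments, follow from the explicit Lyapunov--Perron integral above together with the linearity of (\ref{ec_liniara}) in $(R,\pi)$.
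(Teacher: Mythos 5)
Your overall architecture (Strichartz for the continuous part, a scalar ODE and a Lyapunov--Perron integral for the unstable direction, orthogonality for the null directions, and a bootstrap that closes for small $\delta$) is the paper's architecture. But there is a genuine gap at the very first step: you freeze the reference Hamiltonian and the spectral projections at $W_0$ and claim that $\mc H_{\pi^0}(t)-\mc H(W_0)$ splits into two small drift terms plus a ``localized difference of potentials'' of size $O(\delta)$. That is false. The potential $V_{\pi^0}(t)$ is built from the \emph{moving} soliton $w_{\pi^0}(t)$, whose centre translates like $2\int_0^t v^0(s)\,ds$ and whose phase rotates like $e^{i\int_0^t((\alpha^0)^2-|v^0|^2)\,ds}$; even for a path with $\|\dot\pi^0\|_1\le\delta$ these are $O(1)$ (indeed unbounded in $t$) displacements relative to the fixed soliton $w_0$. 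So $V_{\pi^0}(t)-V(W_0)$ is $O(1)$ in every relevant norm, not $O(\delta)$, and it cannot be absorbed as a forcing term. The same defect infects your treatment of the discrete spectrum: the generalized eigenfunctions of the true time-$t$ Hamiltonian are $\partial_f W_{\pi^0}(t)$ and $F^{\pm}(W_{\pi^0}(t))$, which are translated and phase-rotated --- hence far from $\partial_f W_0$ and $F^{\pm}(W_0)$ for large $t$ --- so $(P_0(W_0)-P_0(W_{\pi^0}(t)))R$ is $O(\|R\|)$ rather than $O(\delta\|R\|_X)$, and pairing $\tilde F$ against the frozen $F^-(W_0)$ does not diagonalize the unstable mode.

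The missing idea is the conjugation by the family of isometries
$U(t)=e^{\int_0^t(2v^0(s)\dl+i((\alpha^0(s))^2-|v^0(s)|^2)\sigma_3)\,ds}$
as in (\ref{2.63}): setting $Z=U(t)R$ turns $W_{\pi^0}(t)$ into $W(\pi^0(t))$, which \emph{is} within $O(\|\dot\pi^0\|_1)\le O(\delta)$ of $W_0$, and turns $\mc H_{\pi^0}(t)$ into $\mc H(W(\pi^0(t)))$. Only after this step does the comparison with the frozen Hamiltonian $\tilde{\mc H}$ (frozen potential, retained small drift coefficients, handled by Theorem \ref{theorem_13}) produce an $O(\delta)\|Z\|_{L^2_t\dot W^{1/2,6}_x}$ error as in (\ref{2.73}), does the orthogonality give exactly $P_0(t)Z=0$, and does the imaginary component obey a clean ODE with slowly varying $\sigma(t)$ to which Lemma \ref{hyp} applies. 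Your remaining estimates --- the cubic and quadratic bounds on $N(R^0,W_{\pi^0})$, the $L^1_t$ exploitation of $\dot\pi^0$ against the localized $\Xi_f$, the closing of the bootstrap, and the uniqueness of $h$ from the boundedness requirement --- are correct and coincide with the paper's once they are carried out in the $Z$ variable.
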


Supposing that $\|R_0\|_{\dot H^{1/2}} \leq c \delta$ and $\delta$ is small, it follows that the set
\be
\{(R, \pi) \mid R(0) = R_0 + h F^+(W_0),\ \pi(0) = \pi_0,\ \|R, \pi\|_X \leq \delta\}
\ee
is stable under the action of the map $\Upsilon$ and that
\be
|h(R_0, R^0, \pi^0)| \leq C \delta^2.
\ee
To a first order, $R$ itself is then given by the time evolution of $R_0$ under $\mc H_{\pi^0}(t)$ and all other terms in its composition are of higher order (of size $\delta^2$).

We claim the previous result for each $W_0$ and sufficiently small $\delta$, but are also interested in the dependence of $\delta$ on $W_0$. Since the Strichartz norms are scaling-invariant, they are left unchanged by symmetry transformations; $\pi$ and $h$ scale like $\alpha_0$. After accounting for this fact, we are left with an invariant statement. 

\begin{proof}
The modulation path $\pi^0:\set [0, \infty) \to \set R^8$
determines the moving soliton $w_{\pi^0}$, following (\ref{w0}), and the time-dependent Hamiltonian $\mc H_{\pi^0}(t)$, as per Lemma \ref{lemma1.1}.

Having in view Theorem \ref{theorem_13} and the preceding discussion in Section \ref{sec3.7}, recall the notations
\be\begin{aligned}
w(\pi^0(t)) &= e^{i(\Gamma^0(t) + v^0(t) \cdot x)} \phi(x-D^0(t), \alpha^0(t)), \\
W(\pi^0(t)) &= \bpm w(\pi^0(t)) \\ \ov w(\pi^0(t)) \epm,
\end{aligned}\ee
and consider the family of isometries
\be\lb{2.63}
U(t) = e^{\textstyle\int_0^t(2 v^0(s) \dl + i ((\alpha^0(s))^2-|v^0(s)|^2) \sigma_3) \dd s}.
\ee
Observe that
\be\begin{aligned}
W(\pi^0(t)) &= U(t) W_{\pi^0}(t), \\
\mc H(W(\pi^0(t))) &= U(t)^{-1} \mc H(W_{\pi^0}(t)) U(t),
\end{aligned}\ee
and let
\be
Z(t) = U(t) R(t).
\ee
For brevity we denote
\be\begin{aligned}\lb{2.67}
\mc H(t) &= \mc H(W(\pi^0(t))), &P_0(t) &= P_0(W(\pi^0(t))),\\
P_c(t) &= P_c(W(\pi^0(t))), &P_{im}(t) &= P_{im}(W(\pi^0(t))),\\
P_+(t) &= P_+(W(\pi^0(t))), &P_-(t) &= P_-(W(\pi^0(t))),\\
F^+(t) &= F^+(W(\pi^0(t))), &F^-(t) &= F^-(W(\pi^0(t))),\\
\Xi_f(t) &= \Xi_f(W(\pi^0(t))), &\sigma(t) &= \sigma(W(\pi^0(t))).
\end{aligned}\ee
Rewriting the equation (\ref{ec_liniara}) from Lemma \ref{lemma1.1}
\be
i \partial_t R - \mc H_{\pi^0}(t) R = F(t)
\ee
with $Z$ as the unknown, we obtain
\be\lb{2.69}
i \partial_t Z - \mc H(t) Z = U(t) F(t).
\ee

Instead of fixing a time-independent Hamiltonian for this equation, we consider the time-dependent Hamiltonian $\mc H(t)$ and divide the equation for $Z$ into three parts, according to the three components of the spectrum --- continuous, null, and imaginary. Let
\be\begin{aligned}
I &= P_c(t) + P_0(t) + P_{im}(t),\ P_{im}(t) &= P_+(t) + P_-(t).
\end{aligned}\ee
Then, we separately prove, for each of the three components, estimates that enable us to carry out the contraction scheme.

The $P_0$ component is the most straightforward. By (\ref{Z_initial}), the orthogonality condition
\be
\langle R(t), \Xi_{f}(W_{\pi^0}(t))\rangle = 0
\ee
holds at time $t=0$ and the modulation equations (\ref{ec_liniara}) then imply that the orthogonality condition still holds at any other time $t$. Applying the isometry $U(t)$, the orthogonality condition turns into
\be
\langle Z(t), \Xi_{f}(t)\rangle = 0,
\ee
which directly implies that
\be\lb{2.68}
P_0(t) Z(t) = 0.
\ee

The continuous spectrum projection of $Z$ fulfills the equation, derived from (\ref{2.69}),
\begin{align}
i\partial_t(P_c(t) Z) + \mc H(t) P_c(t) Z &= P_c(t) U(t) F(t) + i (\partial_t P_c(t)) Z.
\lb{disp}
\end{align}
The right-hand side term
\be
F=-i L_{\pi^0} R + N(R^0, \pi^0) - N_{\pi^0}(R^0, \pi^0)
\ee
is bounded in the dual Strichartz norm by means of the fractional Leibniz rule:
\be\begin{aligned}
\|L_{\pi^0} R\|_{L^2_t \dot W^{1/2, 6/5}_x} &\leq C \delta \|R\|_{L^2_t \dot W^{1/2, 6}_x} = C \delta \|Z\|_{L^2_t \dot W^{1/2, 6}_x},\\
\|N(R^0, \pi^0) - N_{\pi^0}(R^0, \pi^0)\|_{L^2_t \dot W^{1/2, 6/5}_x} &\leq C \delta^2.
\lb{2.49}
\end{aligned}\ee
When using the fractional Leibniz rule, there is an endpoint Sobolev embedding issue, namely that $\dot W^{1/2, 6}$ does not embed into $L^{\infty}$. However, when estimating the quadratic and cubic terms, such as $|r^0|^2 r^0$, present in $N(R^0, \pi^0)$, we can avoid the problem by putting $r^0$ in non-endpoint Strichartz spaces:
\be\begin{aligned}
\|(r^0)^2 w(\pi^0)\|_{L^2_t \dot W^{1/2, 6/5}_x} &\leq C\|(r^0)^2\|_{L^2_t \dot H^{1/2}_x} \|w(\pi^0)\|_{L^{\infty}_t \dot H^{1/2}_x} \\
&\leq C \|R^0\|_{L^4_t W^{1/2, 3}_x}^2 \|W(\pi^0)\|_{L^{\infty}_t \dot H^{1/2}_x}.
\end{aligned}\ee
The same applies to the cubic term, except that there we replace $w(\pi^0)$ by $r^0$.

The endpoint Strichartz space intervenes only when evaluating linear terms such as $L_{\pi^0} R$. However, there we avoid the attendant endpoint Sobolev embedding issue by means of a less sharp fractional Leibniz rule, which we can use because solitons are of Schwartz class.

Furthermore,
\be\begin{aligned}
(\partial_t P_+(t)) Z &= (\partial_t \alpha^{-3}) \langle Z, i \sigma_3 F^-(t) \rangle F^+(t) + \alpha^{-3} \langle Z, i \sigma_3 \partial_t F^-(t) \rangle F^+(t) + \\
&+ \alpha^{-3} \langle Z, i \sigma_3 F^-(t) \rangle \partial_t F^+(t) \\
&= -3 \alpha^{-4} \dot \alpha \langle Z, i \sigma_3 F^-(t) \rangle F^+(t) + \alpha^{-3} \langle Z, i \sigma_3 d_{\pi} F^-(t) \dot \pi^0(t) \rangle F^+(t) + \\
&+ \alpha^{-3} \langle Z, i \sigma_3 F^-(t)\rangle d_{\pi} F^+(t) \dot \pi^0(t)
\end{aligned}\ee
and likewise for $P_-$ and $P_0$. Since $P_c = I - P_0 - P_{im}$, by direct examination it follows that
\be\begin{aligned}\lb{2.72}
\|(\partial_t P_{\pm}(t)) Z\|_{L^2_t \dot W^{1/2, 6/5}_x} &\leq C \delta \|Z\|_{L^2_t \dot W^{1/2, 6}_x}.
\end{aligned}\ee

Provided $\|\dot \pi^0\|_1 < \delta$ is sufficiently small, Theorem \ref{theorem_13} leads to Strichartz estimates for $P_c(t) Z$, by means of the following construction. $\mc H(t)$ is given by
\be
\mc H(t) = \bpm \Delta + 2|w(\pi^0(t))|^2 & w^2(\pi^0(t)) \\ -\ov w^2(\pi^0(t)) & -\Delta - 2|w(\pi^0(t))|^2\epm + 2iv^0(t) \dl - ((\alpha^0(t))^2 - |v^0(t)|^2) \sigma_3.
\ee
Denote, for $\pi^0(0) = \pi_0$ following (\ref{Z_initial}),
\be
\tilde {\mc H}(t) = \bpm \Delta + 2|w(\pi_0)|^2 & w^2(\pi_0) \\ -\ov w^2(\pi_0) & -\Delta - 2|w(\pi_0)|^2\epm  + 2iv^0(t) \dl - ((\alpha^0(t))^2 - |v^0(t)|^2) \sigma_3.
\ee
The difference $\tilde {\mc H} - \mc H$ is small in the appropriate $\dot W^{1/2, 6/5-\epsilon} \cap \dot W^{1/2, 6/5+\epsilon}$ norm (in much stronger norms too), so the corresponding term can be bounded by means of endpoint Strichartz estimates:
\be\begin{aligned}\lb{2.73}
\|(\tilde {\mc H} - \mc H) Z\|_{L^2_t \dot W^{1/2, 6/5}_x} &\leq \|\tilde {\mc H}(t) - \mc H(t)\|_{L^{\infty}_t \dot W^{1/2, 6/5-\epsilon} \cap \dot W^{1/2, 6/5+\epsilon}} \|Z\|_{L^2_t \dot W^{1/2, 6}_x} \\
&\leq C \|\pi^0(t)-\pi_0\|_{L^{\infty}_t} \|Z\|_{L^2_t \dot W^{1/2, 6}_x} \\
&\leq C \|\dot \pi^0\|_1 \|Z\|_{L^2_t \dot W^{1/2, 6}_x} \\
&\leq C \delta \|Z\|_{L^2_t \dot W^{1/2, 6}_x}.
\end{aligned}\ee
Then, by (\ref{2.49}), (\ref{2.72}), and (\ref{2.73}), for sufficiently small $\delta$ and under the assumption that there are no embedded eigenvalues we can apply Theorem \ref{theorem_13} with the Hamiltonian $\tilde{\mc H}$ and obtain
\be\begin{aligned}\lb{2.76}
\|P_c(t) Z\|_{L^{\infty}_t \dot H^{1/2}_x \cap L^2_t \dot W^{1/2, 6}_x} 
&\leq C (\|Z(0)\|_{\dot H^{1/2}} + \delta^2 + \delta \|Z\|_{L^2_t \dot W^{1/2, 6}_x}).
\end{aligned}\ee

For the imaginary spectrum projection, write, following Section \ref{spectru},
\be
P_{im}(t) Z(t) = b^+(t) F^+(t) + b^-(t) F^-(t).
\lb{partea_finita}
\ee
Then, by taking the time derivative of
\be
b^{\pm}(t) = (\alpha^0(t))^{-3} \langle Z(t), i \sigma_3 F^{\mp}(t) \rangle,
\ee
we obtain
\be\begin{aligned}
\partial_t b^{\pm} &= (\alpha^0)^{-3} \langle \partial_t Z, i \sigma_3 F^{\mp} \rangle -3 \dot \alpha^0 (\alpha^0)^{-4} \langle Z, i \sigma_3 F^{\mp}(t) \rangle + \\
&+ (\alpha^0)^{-3} \langle Z, i \sigma_3 (d_{\pi} F^{\mp}) \dot \pi^0 \rangle.
\end{aligned}\ee
Substituting $\partial_t Z$ by its expression given by equation (\ref{ec_liniara}), we arrive at
\be\begin{aligned}\lb{hiperb}
\partial_t b^{\pm} &= \pm \sigma(t) b^{\pm} - \langle U F, \sigma_3 F^{\mp} \rangle - \\
& -3 \dot \alpha^0 (\alpha^0)^{-4} \langle Z, i \sigma_3 F^{\mp} \rangle +  (\alpha^0)^{-3} \langle Z, i \sigma_3 (d_{\pi} F^{\mp}) \dot \pi^0 \rangle.
\end{aligned}\ee
Thus $b^+$ and $b^-$ satisfy the equation
\be
\partial_t  \bpm b_-\\ b_+\epm + \bpm \sigma(t) & 0 \\ 0 & -\sigma(t)\epm \bpm b_-\\ b_+\epm = \bpm N_-(t) \\ N_+(t) \epm,
\lb{hiper}
\ee
where
\be
N_{\pm} = -\langle U F, \sigma_3 F^{\mp} \rangle -3 \dot \alpha^0 (\alpha^0)^{-4} \langle Z, i \sigma_3 F^{\mp} \rangle +  (\alpha^0)^{-3} \langle Z, i \sigma_3 (d_{\pi} F^{\mp}) \dot \pi^0 \rangle.
\ee
Here $\pm i\sigma(t)$ are the imaginary eigenvalues of $\mc H(t)$, as in our discussion of its spectrum in Section \ref{spectru}.

Concerning the right-hand side, (\ref{hiperb}) and (\ref{2.49}) imply
\be
\|N_{\pm}(t)\|_{L^2_t} \leq C (\delta \|Z(t)\|_{L^2_t \dot W^{1/2, 6}_x} + \delta^2).
\ee

To control the solution we use the following simple fact, see \cite{schlag}. It characterizes the bounded solution of the ordinary differential equation system (\ref{hiper}).

\begin{lemma}\lb{hyp}
Consider the equation
\be
\dot x - \bpm \sigma(t) & 0 \\ 0 & -\sigma(t)\epm x = f(t),
\ee
where $x = \bpm x_1 \\ x_2 \epm$ is the unknown, $f = \bpm f_1 \\ f_2 \epm \in L^1_t \cap L^{\infty}_t$, and $\sigma(t) \geq \sigma_0 > 0$ is bounded from below. Then $x$ is bounded on $[0, \infty)$ if and only if
\be
0 = x_1(0) + \int_0^{\infty} e^{-\int_0^t \sigma(\tau) \dd \tau} f_1(t) \dd t.
\lb{condi}
\ee
In this case, for all $t \geq 0$
\be\begin{aligned}\lb{2.83}
x_1(t) &= -\int_t^{\infty} e^{\int_s^t \sigma(\tau) \dd \tau} f_1(s) \dd s, \\
x_2(t) &= e^{-\int_0^t \sigma(\tau) \dd \tau} x_2(0) + \int_0^t e^{-\int_s^t \sigma(\tau) \dd \tau} f_2(s) \dd s.
\end{aligned}\ee
\end{lemma}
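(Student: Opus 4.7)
The plan is to decouple the system and solve each scalar ODE explicitly by variation of parameters. Writing the equation componentwise gives
\be\nonumber
\dot x_1 - \sigma(t) x_1 = f_1(t), \qquad \dot x_2 + \sigma(t) x_2 = f_2(t),
\ee
i.e.\ an unstable direction (growing homogeneous solution) and a stable one. The integrating factor for the first equation is $e^{-\int_0^t \sigma(\tau)\,d\tau}$ and for the second $e^{\int_0^t \sigma(\tau)\,d\tau}$, so the general solutions are
\be\nonumber
x_1(t) = e^{\int_0^t \sigma(\tau)\,d\tau}\Big[x_1(0) + \int_0^t e^{-\int_0^s \sigma(\tau)\,d\tau} f_1(s)\,ds\Big],
\ee
\be\nonumber
x_2(t) = e^{-\int_0^t \sigma(\tau)\,d\tau} x_2(0) + \int_0^t e^{-\int_s^t \sigma(\tau)\,d\tau} f_2(s)\,ds.
\ee

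Next I would handle the stable coordinate. Since $\sigma(t)\ge \sigma_0>0$, the homogeneous factor $e^{-\int_0^t \sigma\,d\tau}$ is bounded by $e^{-\sigma_0 t}$ and the Duhamel integral is a convolution of an $L^\infty$ function with an exponentially decaying kernel, so $x_2$ is automatically bounded on $[0,\infty)$ regardless of $x_2(0)$. This yields the second line of (\ref{2.83}) with no extra condition.

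For the unstable coordinate the prefactor $e^{\int_0^t \sigma\,d\tau}$ grows to infinity, so boundedness forces the bracketed quantity to tend to zero. Because $f_1\in L^\infty$ and $e^{-\int_0^s \sigma\,d\tau}\le e^{-\sigma_0 s}$, the improper integral
$\int_0^\infty e^{-\int_0^s \sigma(\tau)\,d\tau} f_1(s)\,ds$
converges absolutely, and the bracket converges to
$x_1(0) + \int_0^\infty e^{-\int_0^s \sigma(\tau)\,d\tau} f_1(s)\,ds$.
This limit must vanish, giving exactly condition (\ref{condi}). Conversely, when (\ref{condi}) holds the bracket equals $-\int_t^\infty e^{-\int_0^s \sigma\,d\tau} f_1(s)\,ds$, so
\be\nonumber
x_1(t) = -\int_t^\infty e^{\int_s^t \sigma(\tau)\,d\tau} f_1(s)\,ds,
\ee
which is bounded because $e^{\int_s^t \sigma\,d\tau}\le e^{-\sigma_0(s-t)}$ for $s\ge t$ and $f_1\in L^\infty$; this is the first line of (\ref{2.83}).

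There is essentially no obstacle: the content is purely the observation that a scalar linear ODE with strictly positive exponential growth admits at most one bounded forward solution, and the only mild care needed is to track signs so the decaying kernel $e^{-\int_s^t \sigma\,d\tau}$ appears correctly on each side (the integral for $x_1$ goes forward from $t$ to $\infty$, the integral for $x_2$ goes backward from $0$ to $t$). The hypothesis $f\in L^1_t\cap L^\infty_t$ is more than enough for all the improper integrals involved; $L^\infty$ alone suffices because of the exponential weight.
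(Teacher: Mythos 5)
Your proof is correct and follows essentially the same route as the paper's: variation of parameters for each decoupled scalar equation, observing that the growing integrating factor in the $x_1$ equation forces the bracketed quantity to vanish in the limit, which is exactly condition (\ref{condi}), and that the resulting formulas (\ref{2.83}) are bounded by the exponential decay of the kernels. The sign bookkeeping and the convergence of the improper integral are handled correctly, so nothing is missing.
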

\begin{proof}
Any solution will be a linear combination of the exponentially increasing and the exponentially decaying ones and we want to make sure that the exponentially increasing one is absent. It is always true that
\be\begin{aligned}
x_1(t) &= e^{\int_0^t \sigma(\tau) \dd \tau} \Big(x_1(0) + \int_0^t e^{-\int_0^s \sigma(\tau) \dd \tau} f_1(s) \dd s\Big), \\
x_2(t) &= e^{-\int_0^t \sigma(\tau) \dd \tau} x_2(0) + \int_0^t e^{-\int_s^t \sigma(\tau) \dd \tau} f_2(s) \dd s.
\end{aligned}\ee
Thus, if $x_1$ is to remain bounded, the expression between parantheses must converge to $0$, hence (\ref{condi}). Conversely, if (\ref{condi}) holds, then
\be
x_1(t) = -\int_t^{\infty} e^{\int_s^t \sigma(\tau) \dd \tau} f_1(s) \dd s
\ee
tends to $0$.
\end{proof}

Note that $\sigma$ depends Lipschitz continuously on the scaling parameter $\alpha$, in an explicit manner, following (\ref{2.50}). Then $\sigma(t)$ belongs to a compact subset $[a_1, a_2]$ of $(0, \infty)$, because $\alpha^0(t)$ belongs to a compact subset of $(0, \infty)$. Consequently, equation (\ref{hiper}) has a bounded solution if and only if
\be
0 = b_+(0) + \int_0^{\infty} e^{-\int_0^t \sigma(\tau) \dd \tau} N_+(t) \dd t.
\lb{conditie}
\ee

One sees that $b_+(0) = h$, where $b_+(0)$ is given by (\ref{conditie}) and $h$ by (\ref{Z_initial}). $R$ is globally bounded in time if and only if $Z$ is bounded. $Z$ is bounded if and only if each of its components is bounded, $P_{im} Z$ in particular. Thus $R$ is bounded only if
\be\lb{hh}
h = -\int_0^{\infty} e^{-\int_0^t \sigma(\tau) \dd \tau} N_+(t) \rangle \dd t.
\ee
We are interested in a more direct formula for $h$, one that involves $R$ instead of $Z$. Note that $\sigma(\tau)$ is also the imaginary eigenvalue of $\mc H(W_{\pi^0}(\tau))$. Expanding $N_+$ and reverting the isometry $U$ within (\ref{hh}) leads to the explicit formula
\be\begin{aligned}\lb{2.100}
h &= -\int_0^{\infty} e^{-\int_0^t \sigma(W_{\pi^0}(\tau)) \dd \tau} \big(\langle F,  \sigma_3 F^-(W_{\pi^0}(t)) \rangle - \\
&- 3 \dot \alpha^0(t) (\alpha^0(t))^{-4} \langle R, i \sigma_3 F^-(W_{\pi^0}(t)) \rangle + \\
&+(\alpha^0(t))^{-3} \langle R, i \sigma_3 (d_{\pi} F^-(W_{\pi^0}(t))) \dot \pi^0(t) \rangle\big) \dd t.
\end{aligned}\ee
It remains to show that, for this unique value of $h$, $R$ is indeed bounded.


We obtain
\be\begin{aligned}
|h| &\leq C \int_0^{\infty} e^{-\int_0^t \sigma(\tau) \dd \tau} |N_+(t)| \dd t \\
&\leq C \int_0^{\infty} e^{-t a_1} |N_+(t)| \dd t \\
&\leq C\|N_+\|_{L^1_t + L^{\infty}_t} \\
&\leq C (\delta \|Z\|_{L^2_t \dot W^{1/2, 6}_x} + \delta^2).
\end{aligned}\ee
Following (\ref{2.83}), both $b_{+}$ and $b_-$ are given by convolutions with exponentially decaying kernels in $t$, whose rate of decay is bounded from below:
\be\begin{aligned}
|b_+(t)| &\leq \int_t^{\infty} e^{(t-s)a_1} |N_+(s)| \dd s,\\
|b_-(t)| &\leq \int_{-\infty}^t e^{-(t-s)a_1} |N_-(s)| \dd s + e^{-ta_1} \|R_0\|_{\dot H^{1/2}},
\end{aligned}\ee
with the convention that $N(s) = 0$ for $s<0$; the second term for $b_{-}(t)$ stems from $e^{-t\sigma} b_-(0)$. One has
\be\begin{aligned}\lb{2.90}
\|P_{im}(t) Z\|_{L^2_t \dot W^{1/2, 6}_x} &\leq \|b_{+}\|_{L^2_t} + \|b_{-}\|_{L^2_t} \\
&\leq C (\|N_+\|_{L^2_t} + \|N_-\|_{L^2_t} + \|R_0\|_{\dot H^{1/2}}) \\
&\leq C (\|R_0\|_{\dot H^{1/2}} + \delta \|R\|_{L^2_t \dot W^{1/2, 6}_x} + \delta^2).
\end{aligned}\ee
Putting all three estimates (\ref{2.68}), (\ref{2.76}), and (\ref{2.90}) together and taking into account the fact that
\be
\|Z\|_{L^{\infty}_t \dot H^{1/2}_x \cap L^2_t \dot W^{1/2, 6}_x} = \|R\|_{L^{\infty}_t \dot H^{1/2}_x \cap L^2_t \dot W^{1/2, 6}_x},
\ee
we obtain
\be
\|R\|_{L^2_t \dot W^{1/2, 6}_x} \leq C (\|R_0\|_{\dot H^{1/2}} + \delta \|R\|_{L^2_t \dot W^{1/2, 6}_x} + \delta^2).
\ee
Concerning the modulation path $\pi$, from the modulation equations (\ref{ec_liniara}) we get that
\be
\|\dot \pi\|_1 \leq C (\delta \|Z\|_{L^2_t \dot W^{1/2, 6}_x} + \delta^2).
\ee
Overall,
\be
\|(R, \pi)\|_X = \|(Z, \pi)\|_X \leq C (\|R_0\|_{\dot H^{1/2}_x} + \delta \|(R, \pi)\|_X + \delta^2)
\ee
and this proves stability for the suitable choice of $h$ and small initial data $R_0$.
\end{proof}

\section{The fixed point argument: contraction}\lb{Chapter_2.4}
The parameter $\delta$ was chosen such that the scaling parameter $\alpha(t)$ belongs to a fixed compact subset of $(0, \infty)$ and therefore the imaginary eigenvalue $i\sigma$ fulfills
\be
\sigma \in [a_1, a_2] \subset (0, \infty),
\ee
for all the admissible paths that we consider. Fix, then, a constant $\rho \in (0, a_1)$.

For any two bounded solutions of the linearized equation (\ref{ec_liniara}), $(R_j, \pi_j) = \Upsilon(R_j^0, \pi_j^0)$, $j=1, 2$, located in $X$, such that
\be
\|(R_j^0, \pi_j^0)\|_X \leq \delta,
\ee
we prove that $\Upsilon$ acts as a contraction in the following space $Y$:
\be\begin{aligned}
Y= \{(R, \pi) \mid &\|e^{-t\rho} R(t)\|_{L^{\infty}_t \dot H^{1/2}_x \cap L^2_t \dot W^{1/2, 6}_x} + \|e^{-t\rho} \dot \pi(t)\|_{L^1_t} < \infty \}.
\lb{Y}
\end{aligned}\ee

This is only a seminorm, but defines a metric space for fixed initial data.

Furthermore, for fixed $R_0$ we prove that the unique value of $h$, $h(R_0, R^0, \pi^0)$, for which the solution with initial data (\ref{Z_initial}) is bounded, satisfies
\be
|h(R_0, R^0_1, \pi^0_1) - h(R_0, R^0_2, \pi^0_2)| \leq C \delta \|(R_1^0, \pi_1^0) - (R_2^0, \pi_2^0)\|_Y.
\ee

Observe that this enough to complete the proof. For initial data given by (\ref{Z_initial}) and fixed initial data $R_0$ and $\pi_0$, consider a sequence
\be
(R_n, \pi_n) = \Upsilon((R_{n-1}, \pi_{n-1})),\ \|(R_n, \pi_n)\|_X \leq \delta
\ee
which converges in the $Y$ sense to $(R, \pi)$; the parameters $h_n = h(R_0, R^0_n, \pi^0_n)$ form a Cauchy sequence as well. Then the pair $(R, \pi)$ is a fixed point of $\Upsilon$ and, by virtue of Lemma \ref{lemma1.1}, a solution to the nonlinear equation (locally in time in a weak sense and therefore globally as well) with the specified initial data and, furthermore,
\be
\|(R, \pi)\|_X \leq \lim \sup \|(R_n, \pi_n)\|_X \leq \delta.
\ee
This follows first on any finite time interval $[0, T]$ and then in the limit on $[0, \infty)$.

We seek to prove that for any sufficiently small choice of $\delta$
\be
\|(R_1, \pi_1) - (R_2, \pi_2)\|_Y \leq 1/2 \|(R_1^0, \pi_1^0) - (R_2^0, \pi_2^0)\|_Y.
\ee

Letting $R_1$ and $R_2$ start from distinct initial data proves useful, leading to the formulation of the following perturbation lemma, which we employ repeatedly:
\begin{lemma}\lb{lemma_5}
Consider two solutions, $(R_1, \pi_1)$ and $(R_2, \pi_2)$, of two distinct linearized equations in the form (\ref{ec_liniara}):
\be\begin{aligned}
&i \partial_t R_j + \mc H_{\pi^0_j}(t) R_j = F_j,\ F_j= -i L_{\pi^0_j} R_j + N(R^0_j, W_{\pi^0_j}) - N_{\pi^0_j}(R^0_j, W_{\pi^0_j})\\
&\dot f_j = 4\alpha^0_j \|W_{\pi^0_j}\|_2^{-2} (\langle R_j, (d_{\pi} \Xi_{f}(W_{\pi^0_j})) \dot \pi^0_j \rangle - i\langle N(R^0_j, W_{\pi^0_j}), \Xi_{f}(W_{\pi^0_j})\rangle), f \in \{\alpha, \Gamma\}\\
&\dot f_j = 2\|W_{\pi^0_j}\|_2^{-2} (\langle R_j, (d_{\pi} \Xi_{f}(W_{\pi^0_j})) \dot \pi^0_j \rangle - i\langle N(R^0_j, W_{\pi^0_j}), \Xi_{f}(W_{\pi^0_j})\rangle), f \in \{v_k, D_k\}.
\end{aligned}\lb{ec_liniara12}\ee
for $j = \ov{1, 2}$, with initial data
\be
R_j(0) = R_{0j} + h_j F^+(W(\pi_{0})),\ \pi_j(0) = \pi^0_j(0)=\pi_{0} \text{ given}.
\ee
Assume in addition that $\|(R^0_j, \pi^0_j)\|_X \leq \delta$ and
\be
(R_j, \pi_j) = \Upsilon((R^0_j, \pi^0_j)),\ h_j = h(R_{0j}, R^0_j, \pi^0_j),
\ee
meaning that $h_j$ hold the unique values that make $(R_j, \pi_j)$ the bounded solutions of (\ref{ec_liniara12}). Then, assuming $\delta>0$ is sufficiently small,
\be\begin{aligned}\lb{2.117}
\|(R_1, \pi_1) - (R_2, \pi_2)\|_Y &\leq C\delta \|(R_1^0, \pi_1^0) - (R_2^0, \pi_2^0)\|_Y + C \|R_{01}-R_{02}\|_{\dot H^{1/2}}, \\
|h_1-h_2| &\leq C\delta (\|(R_1^0, \pi_1^0) - (R_2^0, \pi_2^0)\|_Y + \|R_{01}-R_{02}\|_{\dot H^{1/2}}).
\end{aligned}\ee
\end{lemma}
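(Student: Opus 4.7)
The plan is to mimic the stability argument of Proposition \ref{prop9}, but applied to the difference of the two solutions and in the weighted space $Y$. Set
\[
R = R_1-R_2,\quad \pi = \pi_1-\pi_2,\quad R^0 = R^0_1-R^0_2,\quad \pi^0 = \pi^0_1-\pi^0_2,\quad h = h_1-h_2,
\]
and subtract the two systems (\ref{ec_liniara12}). The resulting equation for $R$ takes the schematic form
\[
i\partial_t R + \mc H_{\pi^0_1}(t)R \;=\; (F_1-F_2) \;-\; (\mc H_{\pi^0_1}-\mc H_{\pi^0_2})R_2,
\]
with initial data $R(0) = (R_{01}-R_{02}) + h\,F^+(W(\pi_0))$. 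The right-hand side splits into three types of terms: (i) linear-in-$R$ pieces coming from $L_{\pi^0_1}R_1-L_{\pi^0_2}R_2$ and from $(\mc H_{\pi^0_1}-\mc H_{\pi^0_2})R_2$, which are controlled by $\delta\|R\|_Y$ plus $\delta\|(R^0,\pi^0)\|_Y$; (ii) the cubic-in-$R^0$ pieces, estimated by $\delta^2$ times the difference $\|(R^0,\pi^0)\|_Y$ using fractional Leibniz and the non-endpoint interpolation trick used for (\ref{2.49}); and (iii) the forcing $(d_\pi W_{\pi^0_j})\dot\pi_j$ differences, which are estimated in the same spirit. Each factor of $R^0_j$ or $W_{\pi^0_j}$ that has size $O(\delta)$ will pair with the differences to give the prefactor $\delta$ in (\ref{2.117}), while factors not multiplied by such small quantities will give the direct dependence on $\|R_{01}-R_{02}\|_{\dot H^{1/2}}$.

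Next I pass to $Z = U_1(t)R$ with the isometry (\ref{2.63}) associated to $\pi^0_1$ and decompose $Z = P_c(t)Z + P_0(t)Z + P_{im}(t)Z$ using the spectral projections of $\mc H(W_{\pi^0_1}(t))$. The $P_0$ component vanishes thanks to the orthogonality conditions propagated by the modulation equations (since both $R_1,R_2$ satisfy the orthogonality against $\Xi_f(W_{\pi^0_j})$, their difference satisfies orthogonality up to a small defect of order $\delta\|(R^0,\pi^0)\|_Y$, which I bound directly). For the $P_c$ component I apply Theorem \ref{theorem_13} exactly as in (\ref{2.76}), but against the weighted Strichartz norms defining $Y$; the weight $e^{-t\rho}$ commutes cleanly with the dispersive estimates since $\rho>0$, and the dual-norm control of the right-hand side from the previous paragraph yields
\[
\|e^{-t\rho}P_c(t)Z\|_{L^\infty_t\dot H^{1/2}_x\cap L^2_t\dot W^{1/2,6}_x}\le C\big(\|R_{01}-R_{02}\|_{\dot H^{1/2}}+\delta\|(R^0,\pi^0)\|_Y+\delta\|(R,\pi)\|_Y\big).
\]

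The imaginary-spectrum piece requires the care point of the whole argument. Write $P_{im}(t)Z = b^+F^+(t)+b^-F^-(t)$ and derive, as in (\ref{hiperb})--(\ref{hiper}), the scalar hyperbolic system with coefficients $\pm\sigma(t)\in[a_1,a_2]$ and a forcing $N_\pm$ whose $L^2_t$-norm with weight $e^{-t\rho}$ is bounded by $C\delta(\|(R^0,\pi^0)\|_Y+\|(R,\pi)\|_Y)+C\|R_{01}-R_{02}\|_{\dot H^{1/2}}$. The initial condition for $b^+$ is $h$, and $h_j$ is by construction the unique value that makes each $(R_j,\pi_j)$ bounded on $[0,\infty)$, i.e.\ $b^+_j(0)$ satisfies Lemma \ref{hyp}'s compatibility relation (\ref{conditie}). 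Subtracting the two formulas (\ref{2.100}) for $h_1,h_2$ and bounding term by term gives
\[
|h| \le C\delta\big(\|(R^0,\pi^0)\|_Y+\|R_{01}-R_{02}\|_{\dot H^{1/2}}\big),
\]
which is the second line of (\ref{2.117}); the exponential kernel $e^{-\int_0^t\sigma}$ in (\ref{hh}) controls the time integrals against the $Y$-norm thanks to $\sigma\ge a_1>\rho$. With $h$ so chosen, Lemma \ref{hyp} represents $b^+(t)$ as a backward convolution against $N_+$ and $b^-(t)$ as a forward convolution against $N_-$ (plus an $e^{-\int\sigma}\cdot O(|h|+|R_{01}-R_{02}|)$ homogeneous term), and since $a_1>\rho$ both convolution operators are bounded on the $L^2_t(e^{-t\rho}\,dt)$-weighted space, giving the required estimate for $\|P_{im}(t)Z\|_Y$.

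Collecting the three pieces and adding the bound for $\|e^{-t\rho}\dot\pi\|_{L^1_t}$ coming directly from the subtracted modulation equations yields
\[
\|(R,\pi)\|_Y \le C\delta\|(R^0,\pi^0)\|_Y + C\delta\|(R,\pi)\|_Y + C\|R_{01}-R_{02}\|_{\dot H^{1/2}},
\]
and absorbing the $C\delta\|(R,\pi)\|_Y$ on the left for $\delta$ small enough gives the first line of (\ref{2.117}). The main technical obstacle I expect is the imaginary-spectrum bookkeeping: one must verify that the weighted norms for $b^\pm$ and the formula for $h_1-h_2$ are compatible with the choice $\rho\in(0,a_1)$, so that both the backward convolution defining $b^+$ and the integral defining $h$ remain contractive in $\delta$; this is the reason for choosing the weight strictly below the spectral gap $a_1$ in the definition (\ref{Y}) of $Y$.
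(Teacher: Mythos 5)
Your proposal follows essentially the same route as the paper: form the difference equation with source $F_1-F_2-(\mc H_{\pi^0_1}-\mc H_{\pi^0_2})R_2$, apply the isometry $U$ attached to $\pi^0_1$, split along the three spectral components, use weighted Strichartz estimates for $P_c$, use Lemma \ref{hyp} with $\rho<a_1$ both for $P_{im}$ and for the bound on $|h_1-h_2|$, and close with the subtracted modulation equations for $\dot\pi$. One caution: your opening claim that ``the $P_0$ component vanishes'' is false --- this is exactly where the difference argument departs from the stability proof --- but your parenthetical correction (orthogonality holds only up to the defect $\langle R_2,\Xi_f(W_{\pi^0_2}(t))-\Xi_f(W_{\pi^0_1}(t))\rangle$, of size $O(\delta\|(R_1^0,\pi_1^0)-(R_2^0,\pi_2^0)\|_Y)$, which is then bounded directly) is precisely what the paper does in (\ref{96})--(\ref{2.75}).
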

In this formulation, $\delta$ and various constants denoted by $C$ may depend on the value of $\rho$ in the definition of $Y$, (\ref{Y}), which in turn depends on the scaling parameter $\alpha$.

\begin{proof}
Let $R = R_1-R_2$, $\pi = \pi_1-\pi_2$.
$R_j$, $j = 1, 2$, satisfy the equations
\be
i \partial_t R_j + \mc H_{\pi_j^0}(t) R_j = F_j,
\ee
with initial data
\be
R_j(0) = R_{0j} + h_j F^+(W(\pi_0)).
\ee
Note that, by the preceding stability result, Proposition \ref{prop9}, $\|(R_j, \pi_j)\|_X \leq \delta$ for $j=1, 2$. Furthermore, $R_j$ satisfy the orthogonality conditions
\be
\langle R_j(t), \Xi_f(W_{\pi^0_j}(t)) \rangle = 0
\ee
at time $t=0$ and thus, due to equation (\ref{ec_liniara12}), at every time $t$.

Subtracting the linearized equations from one another, we obtain a similar one for the difference $R=R_1-R_2$:
\be
i \partial_t R + \mc H_{\pi^0_1}(t) R = \tilde F,\ \tilde F = F_1 - F_2 - (\mc H_{\pi^0_1}(t) - \mc H_{\pi^0_2}(t)) R_2.
\ee
We choose the Hamiltonian $\mc H_{\pi_1^0}(t)$ (the choice of one or two is arbitrary) and apply the same isometry $U(t)$ as described by (\ref{2.63}-\ref{2.67}):
\be
U(t) = e^{\textstyle\int_0^t(2 v_1^0(s) \dl + i ((\alpha_1^0)^2(s)-|v_1^0(s)|^2) \sigma_3) \dd s}.
\ee
Let
\be
Z(t) = U(t) R(t).
\ee
We reintroduce notations similar to (\ref{2.68}), namely
\be\begin{aligned}
\mc H(t) &= \mc H(W(\pi^0_1(t))), &P_0(t) &= P_0(W(\pi^0_1(t))),\\
P_c(t) &= P_c(W(\pi^0_1(t))), &P_{im}(t) &= P_{im}(W(\pi^0_1(t))),\\
P_+(t) &= P_+(W(\pi^0_1(t))), &P_-(t) &= P_-(W(\pi^0_1(t))),\\
F^+(t) &= F^+(W(\pi^0_1(t))), &F^-(t) &= F^-(W(\pi^0_1(t))),\\
\Xi_f(t) &= \Xi_f(W(\pi^0_1(t))), &\sigma(t) &= \sigma(W(\pi^0_1(t))).
\end{aligned}\ee
Rewritten for $Z$, the equation becomes
\be
i \partial_t Z + \mc H(t) Z = U(t) \tilde F.
\ee
Then we split $Z$ into three parts, according to the Hamiltonian's spectrum:
\be
Z = P_c(t) Z + P_0(t) Z + P_{im}(t) Z.
\ee
We bound each component of $Z$ in the same manner as in the previous section. The main difference is related to $P_0(t)$, which need no longer be zero because the orthogonality condition does not hold.

For the right-hand side $\tilde F = F_1 - F_2 - (\mc H_{\pi^0_1}(t) - \mc H_{\pi^0_2}(t)) R_2$ we estimate the difference term by term, arriving at the following estimate:
\begin{multline}
\|\tilde F\|_{e^{t\rho} L^2_t \dot W^{1/2, 6/5}_x} \leq C \delta (1+ \rho^{-2}) (\|(R, \pi)\|_Y +  \|(R_1^0, \pi_1^0) - (R_2^0, \pi_2^0)\|_Y).
\lb{2.79}
\end{multline}
Here we have taken advantage of the exponential weight, as integrating in time preserves the space $e^{t\rho} L^p_t$, at the cost of a factor of $\rho^{-1}$ :
\be\textstyle
\big\|\int_0^t f(s) \dd s \big\|_{e^{t\rho} L^p_t} \leq C \rho^{-1} \|f\|_{e^{t\rho} L^p_t}.
\ee
In particular, one has in any Schwartz seminorm $S_x$, in the space variables, that
\be
\|W_{\pi^0_1}(t) - W_{\pi^0_2}(t)\|_{L^{\infty}_t S_x} \leq C (1+\rho^{-1}) (\|(R_1^0, \pi_1^0) - (R_2^0, \pi_2^0)\|_Y
\ee

The initial data are given by
\be
Z(0) = (h_1-h_2) F^+(W(\pi_{0})) + R_{01}-R_{02}
\ee
and satisfy the bound
\be
\|Z(0)\|_{\dot H^{1/2}} \leq C(|h_1-h_2| + \|R_{01}-R_{02}\|_{\dot H^{1/2}}).
\ee

It is straightforward to bound the continuous spectrum projection, $P_c(t) Z$, which satisfies the equation
\be\begin{aligned}
i\partial_t(P_c(t) Z) + \mc H(t) P_c(t) Z &= P_c(t) U(t) \tilde F + i (\partial_t P_c(t)) Z.
\end{aligned}\ee
Starting from the usual Strichartz estimates of Theorem \ref{theorem_13} and integrating against the exponential weight $e^{t\rho}$, we obtain Strichartz estimates with this weight:
%
\be\begin{aligned}
&\|P_c(t) Z\|_{e^{t\rho} (L^{\infty}_t \dot H^{1/2}_x \cap L^2_t \dot W^{1/2, 6/5}_x)} \\
\leq &C \Big(\|P_c(0) Z(0)\|_{\dot H^{1/2}} + \|\tilde F\|_{e^{t\rho} L^2_t \dot W^{1/2, 6}_x} +  \|\partial_t P_c(t)) Z\|_{e^{t\rho} L^2_t \dot W^{1/2, 6}_x}\Big) \\
\leq &C \|R_{01}-R_{02}\|_{\dot H^{1/2}} +C \delta (1+\rho^{-1}) (\|(R_1^0, \pi_1^0) - (R_2^0, \pi_2^0)\|_Y + \|(R, \pi)\|_Y).
\lb{2.71}
\end{aligned}\ee
There is no contribution due to $h_1-h_2$, since $P_c(0) Z(0)$ does not depend on $h_1-h_2$.

$R$ satisfies no orthogonality condition, but $R_1$ and $R_2$ do. Taking the difference of these two relations leads to
\be
\langle R_1(t) - R_2(t), \Xi_{f}(W_{\pi^0_1}(t)) \rangle = \langle R_2(t), \Xi_{f}(W_{\pi^0_2}(t)) - \Xi_{f}(W_{\pi^0_1}(t)) \rangle.
\lb{96}
\ee
Applying the isometry $U$, it follows that
\be
\langle Z(t), \Xi_{f}(t) \rangle = \langle R_1(t) - R_2(t), \Xi_{f}(W_{\pi^0_1}(t)) \rangle
\ee
and therefore
\be\lb{2.75}
\|P_0(t) Z(t)\|_{e^{t\rho} (L^{\infty}_t \dot H^{1/2}_x \cap L^2_t \dot W^{1/2, 6/5}_x)} \leq C \delta (1+\rho^{-2}) \|(R_1^0, \pi_1^0) - (R_2^0, \pi_2^0)\|_Y.
\ee

Concerning the imaginary component, Lemma \ref{hyp} applies again, since $Z$ is globally bounded forward in time, as seen from
\be
\|P_{im}(t) Z(t)\|_{\dot H^{1/2}} \leq \|R_1(t)\|_{\dot H^{1/2}} + \|R_2(t)\|_{\dot H^{1/2}} < C < \infty.
\ee
Write, following Section \ref{spectru},
\be
P_{im}(t) Z(t) = b^+(t) F^+(t) + b^-(t) F^-(t).
\ee
Then, as in the proof of the stability result, we infer
\be\begin{aligned}
\partial_t b^{\pm} &= \pm \sigma(t) b^{\pm} - \langle U \tilde F, \sigma_3 F^{\mp} \rangle - \\
& -3 \dot \alpha^0 (\alpha^0)^{-4} \langle Z, i \sigma_3 F^{\mp} \rangle +  (\alpha^0)^{-3} \langle Z, i \sigma_3 (d_{\pi} F^{\mp}) \dot \pi^0 \rangle.
\end{aligned}\ee
Thus $b^+$ and $b^-$ satisfy the equation
\be
\partial_t  \bpm b_-\\ b_+\epm + \bpm \sigma(t) & 0 \\ 0 & -\sigma(t)\epm \bpm b_-\\ b_+\epm = \bpm N_-(t) \\ N_+(t) \epm,
\ee
where
\be
N_{\pm} = -\langle U \tilde F, \sigma_3 F^{\mp} \rangle -3 \dot \alpha^0_1 (\alpha^0_1)^{-4} \langle Z, i \sigma_3 F^{\mp} \rangle +  (\alpha^0_1)^{-3} \langle Z, i \sigma_3 (d_{\pi} F^{\mp}) \dot \pi^0_1 \rangle.
\ee
Then,
\be
\|N_{\pm}\|_{e^{t\rho} L^2_t} \leq C \delta \|(R, \pi)\|_Y.
\ee
As a consequence of the lemma, we obtain that $P_{im}(t) Z$ is in the same space as $N_{\pm}$, as a function of time. Indeed, here we emphasize that since $\rho<a_1<\sigma(t)$, convolution with $e^{-a_1|t|}$ preserves the space $e^{t\rho} L^2_t$. One gets that
\be\begin{aligned}
|h_1-h_2| = |b_+(0)| &\leq C \int_0^{\infty} e^{-t a_1} |N_+(t)| \dd t \\
&\leq C\delta (\|(Z_1^0, \pi_1^0) - (Z_2^0, \pi_2^0)\|_Y+ \|Z\|_{e^{t\rho} \dot H^{1/2}_x}).
\end{aligned}\ee
Also through Lemma \ref{hyp}, it follows that
\be
\|P_{im} Z\|_{Y} \leq C\delta (\|(Z_1^0, \pi_1^0) - (Z_2^0, \pi_2^0)\|_Y+ \|(R, \pi)\|_Y).
\lb{2.77}
\ee

Finally, subtracting the modulation equations (\ref{ec_liniara12}), the difference between the paths, $\pi = \pi_1 - \pi_2$, also fulfills
\be\begin{aligned}
\|\dot \pi\|_{e^{t\rho} L^1_t} &\leq C \delta (1+\rho^{-2})(\|(Z_1^0, \pi_1^0) - (Z_2^0, \pi_2^0)\|_Y + \|(Z, \pi)\|_Y).
\lb{2.78}
\end{aligned}\ee
Up to this point, there was no actual need for exponential weights and we could have used polynomial weights in their stead. However, (\ref{2.78})  makes exponential weights necessary. The modulation equations involve terms of the form $\dot f$ on the left-hand side and $\int_0^t f(s) \dd s$ on the right-hand side. The only way to contain both within the same space involves exponential weights.

Putting (\ref{2.71}), (\ref{2.75}), (\ref{2.77}), and (\ref{2.78}) together we retrieve estimate (\ref{2.117}).
\end{proof}

\begin{observation}
A difference estimate also exists in the case of different starting solitons, $\pi^0_j(0)=\pi_j(0)=\pi_{0j}$, $j=1,2$. 
In order to perform a comparison, we first make the starting solitons coincide by means of a symmetry transformation and then compare the solutions in a unified setting.

More precisely, let
\be
\frak g(t) f(x, t) = e^{i(v \cdot x - t |v|^2 + \Gamma)} \alpha f(\alpha x - 2tv - D, \alpha^2 t)
\ee
be the unique symmetry transformation such that $\frak g(0)$ takes $w(\pi_{01})$ to $w(\pi_{02})$. We define accordingly the action of $\frak g$ on the modulation parameters:
\be
\frak g \pi = \tilde \pi \Longleftrightarrow \frak g w(\pi) = w(\tilde \pi).
\ee
Then, using the notation $\frak g (R, \pi) = (\frak g(t) R(t), \frak g(t) \pi(t))$, it immediately follows from the previous lemma that
\be\begin{aligned}
\|\frak g (R_1, \pi_1) - (R_2, \pi_2)\|_Y &\leq C\delta \|\frak g (R_1^0, \pi_1^0) - (R_2^0, \pi_2^0)\|_Y + C \|\frak g(0) R_{01}-R_{02}\|_{\dot H^{1/2}}, \\
|h_1-h_2| &\leq C\delta (\|\frak g (R_1^0, \pi_1^0) - (R_2^0, \pi_2^0)\|_Y + \|\frak g(0) R_{01}-R_{02}\|_{\dot H^{1/2}}).
\end{aligned}\ee
\end{observation}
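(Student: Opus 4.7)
The plan is to reduce the case of distinct starting solitons to the common-soliton setting of Lemma \ref{lemma_5} by transporting the first solution through the symmetry transformation $\frak g$ selected so that $\frak g(0) w(\pi_{01}) = w(\pi_{02})$. The key observation is that the nonlinear equation (\ref{NLS}) and every structure built from it --- the moving-soliton ansatz (\ref{w0}), the linearized system (\ref{ec_liniara12}), the orthogonality conditions (\ref{2.20}), the Hamiltonian $\mc H(W_{\pi^0})$, its spectral projections $P_c$, $P_0$, $P_{\pm}$, and the selection rule (\ref{hh}) for $h$ --- are all covariant under the group action (\ref{coord}). Consequently $(\frak g R_1, \frak g \pi_1)$ is the unique bounded solution of the linearized system with auxiliary input $(\frak g R_1^0, \frak g \pi_1^0)$ starting from the common soliton $w(\pi_{02})$, the same soliton from which $(R_2, \pi_2)$ emanates.

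First I would record the covariance explicitly: $\frak g(t)$ conjugates $\mc H_{\pi^0}(t)$ to $\mc H_{\frak g\pi^0}(t)$ up to the $\alpha^2$ factor absorbed by the accompanying time dilation, maps the eigenfunctions $F^{\pm}(W_{\pi^0})$ to $F^{\pm}(W_{\frak g \pi^0})$ as in (\ref{2.50}), intertwines the spectral projections, and leaves the integral formula (\ref{hh}) invariant in shape. In particular the smallness $\|(\frak g R_1^0, \frak g \pi_1^0)\|_X \leq C\delta$ persists with a constant depending only on $\pi_0$, and the unique $h_1$ singled out by the boundedness requirement for $(R_1, \pi_1)$ coincides with the unique value selected by the boundedness requirement for the transported solution.

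Second, I would verify that the spaces $X$ in (\ref{X}) and $Y$ in (\ref{Y}) together with the relevant Sobolev and Strichartz norms are $\frak g$-invariant up to constants depending on the base soliton $\pi_0$. The Strichartz norm $L^{\infty}_t \dot H^{1/2}_x \cap L^2_t \dot W^{1/2, 6}_x$ is scaling invariant because (\ref{NLS}) is $\dot H^{1/2}$-critical, Galilean boosts and phase rotations act isometrically on $\dot H^{1/2}$, and the exponential weight $e^{-t\rho}$ is preserved by the time dilation $t \mapsto \alpha_0^2 t$ after a benign adjustment of $\rho$ that can be absorbed within the range $(0, a_1)$. With these identifications in place the hypotheses of Lemma \ref{lemma_5} are met by the pair $(\frak g R_1, \frak g \pi_1)$, $(R_2, \pi_2)$, and substituting into (\ref{2.117}) produces the two inequalities stated in the observation.

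The principal technical obstacle is the careful bookkeeping for the value $h$: one must ensure that the integral (\ref{hh}) is itself covariant under $\frak g$, so that the $h_1$ attached to the original $(R_1, \pi_1)$ is precisely the $h$ that Lemma \ref{lemma_5} selects for the transported pair, and hence $h_1 - h_2$ on the left of (\ref{2.117}) is the legitimate quantity being bounded. This reduces to tracking the time rescaling $t \mapsto \alpha_0^2 t$ against the covariance of $\sigma$ and $F^{\pm}$ recorded in (\ref{2.50}), which is routine but needs to be done consistently in order for the factor $\|\frak g(0) R_{01} - R_{02}\|_{\dot H^{1/2}}$ to emerge cleanly from the $\|R_1(0) - R_2(0)\|_{\dot H^{1/2}}$ term that Lemma \ref{lemma_5} delivers.
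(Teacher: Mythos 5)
Your proposal is correct and follows exactly the route the paper intends: the paper's entire argument is the one-line assertion that after transporting $(R_1,\pi_1)$ by the symmetry $\frak g$ the two solutions share a starting soliton and Lemma \ref{lemma_5} applies verbatim. Your additional bookkeeping --- covariance of the Hamiltonian, the spectral projections, and the formula (\ref{hh}) for $h$, together with the scaling invariance of the critical Strichartz norms and the adjustment of $\rho$ under the time dilation --- is precisely the content the paper leaves implicit.
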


\section{Analyticity of the invariant manifold}
Consider a solution of the nonlinear equation (\ref{NLS}) stemming from the contraction argument presented. For a soliton $W_0=W(\pi_0)$, it has the form
\be
R = W_0 + R_0 + h(R_0, W_0) F^+(W_0).
\ee
Here $F^+(W_0)$ is the eigenvector corresponding to the upper half-plane eigenvalue of $\mc H(W_0)$ (see Section \ref{spectru}), $R_0$ belongs to the codimension-nine vector space
\be
\mc N_0(W_0) = (P_c(W_0) + P_-(W_0)) \dot H^{1/2},
\ee
and $h(R_0, W_0)$ is the unique value determined by the contraction argument that leads to an asymptotically stable solution to (\ref{NLS}), for these initial data.

We recall again that $R$ and other vector-valued functions we are considering have the form $\bpm r \\ \ov r \epm$ and that, as a consequence, the dot product is real-valued.

At this point we give the following formal definition:
\begin{definition}\lb{def3} Let $\mc M$ be the eight-dimensional soliton manifold and
\be\begin{aligned}
\mc N_0(W) &= \{R_0 \in (P_c(W) + P_-(W)) \dot H^{1/2} \mid \|R_0\|_{\dot H^{1/2}} < \delta_0 \} \\
\mc N(W) &= \{W + R_0 + h(R_0, W) F^+(W) \mid R_0 \in \mc N_0(W)\} \\
\mc N_0 &= \{(R_0, W) \mid R_0 \in \mc N_0(W), \ W \in \mc M\} \\
\mc N &= \bigcup_{W \in \mc M} \mc N(W).
\end{aligned}\ee
\end{definition}
$\delta_0$ will be chosen independently of $W$.

The fiber bundle $\mc N_0$ is trivial over the soliton manifold. Indeed, for each soliton $W$ there exists a unique symmetry transformation $\frak g_W$ that takes $W_0$ (a fixed soliton) into $W$. Then
\be
(R_0, W) \mapsto \frak g_{W}(W_0 + R_0)
\ee
is an isomorphism between a tubular neighborhood of the base in the (trivial) product bundle $(P_c(W_0) + P_-(W_0)) \dot H^{1/2} \times \mc M$ (where $\mc M$ is the soliton manifold) and $\mc N_0$. This endows $\mc N_0$ with a real analytic manifold structure.

$\mc N$ is the image of $\mc N_0$ under the map
\be
\mc F(R_0, W) = W + R_0 + h(R_0, W) F^+(W).
\lb{2.105}
\ee

Following the contraction argument from beginning to end and giving appropriate values to $\delta$, commensurate with the size of the initial data $R_0$, we summarize the conclusion as follows (this is the gist of our main theorem):
\begin{proposition} There exists $\delta_0>0$ such that for each soliton $W_0=W(\pi_0)$ there is a map $h(\cdot, W_0): \mc N_0(W_0) \to \set R$ such that
\begin{enumerate}
\item $h$ is locally Lipschitz continuous in both variables,
\item $|h(R_0, W_0)| \leq C \alpha_0 \|R_0\|_{\dot H^{1/2}}^2$ ($\alpha_0$ is the scaling parameter of $W_0$),
\end{enumerate}
and
\be
\mc F(R_0, W_0) = W_0 + R_0 + h(R_0, W_0) F^+(W_0)
\ee
gives rise to an asymptotically stable solution $\Psi$ to (\ref{NLS}) with $\Psi(0) = \mc F(R_0, W_0)$ such that
\be
\Psi(t) = W_{\pi}(t) + R(t).
\ee
Here $W_{\pi}$ is a moving soliton with $W_{\pi}(0) = W_0$, governed in accordance to (\ref{8}) by a path $\pi$ such that
\be
\|\dot \pi\|_1 \leq C \alpha_0 \|R_0\|_{\dot H^{1/2}}^2.
\ee
$R$ is in the Strichartz space, with initial data $R(0) = R_0 + h(R_0, W_0) F^+(W_0)$, and
\be
\|R\|_{L^{\infty}_t \dot H^{1/2}_x \cap L^2_t \dot W^{1/2, 6}_x} \leq C \delta.
\ee
\end{proposition}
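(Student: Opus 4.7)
The plan is to assemble the stability estimate of Proposition \ref{prop9} and the contraction estimate of Lemma \ref{lemma_5} into a Banach fixed-point argument for the map $\Upsilon$ of (\ref{2.32}), and then read off the properties of $h(R_0,W_0)$ from the explicit formula (\ref{2.100}).

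Fix a soliton $W_0=W(\pi_0)$ and a small parameter $\delta>0$ (depending on $W_0$ only through the scaling parameter $\alpha_0$, by the invariance discussion following Proposition \ref{prop9}). For initial data $R_0\in\mc N_0(W_0)$ with $\|R_0\|_{\dot H^{1/2}}\leq c\delta$, I would seed the iteration with any element of the closed $X$-ball of radius $\delta$ whose path component starts at $\pi_0$, and iterate $\Upsilon$. Proposition \ref{prop9} guarantees that this $X$-ball is invariant under $\Upsilon$ and that at each step the unique value $h_n$ determined by (\ref{2.100}) satisfies $|h_n|\leq C\alpha_0\delta^2$. Lemma \ref{lemma_5}, specialized to the common initial data $R_{01}=R_{02}=R_0$ and common starting soliton $\pi_0$, then gives
\[
\|\Upsilon(R^0_1,\pi^0_1)-\Upsilon(R^0_2,\pi^0_2)\|_Y+|h(R_0,R^0_1,\pi^0_1)-h(R_0,R^0_2,\pi^0_2)|\leq C\delta\,\|(R^0_1,\pi^0_1)-(R^0_2,\pi^0_2)\|_Y,
\]
so $\Upsilon$ is a strict contraction on the $Y$-completion of that ball once $\delta$ is small, yielding a unique fixed point $(R,\pi)\in X$ and a corresponding scalar $h=h(R_0,W_0)$.

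By Lemma \ref{lemma1.1} the fixed-point identity is equivalent to $\Psi=W_\pi+R$ solving (\ref{NLS}) with $\Psi(0)=W_0+R_0+hF^+(W_0)$. The bound $\|R\|_{L^\infty_t\dot H^{1/2}_x\cap L^2_t\dot W^{1/2,6}_x}\leq C\delta$ is read off directly from Proposition \ref{prop9}, while the quadratic estimates $\|\dot\pi\|_1\leq C\alpha_0\|R_0\|^2_{\dot H^{1/2}}$ and $|h|\leq C\alpha_0\|R_0\|^2_{\dot H^{1/2}}$ come from re-examining (\ref{2.100}) and the modulation equations at the fixed point: both $N_\pm$ and $\dot\pi^0$ already carry an extra factor of order $\delta\sim\|R_0\|_{\dot H^{1/2}}$, so substituting the fixed-point estimate $\|(R,\pi)\|_X\leq C\|R_0\|_{\dot H^{1/2}}$ gains the required second $\|R_0\|$. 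Local Lipschitz continuity of $R_0\mapsto h(R_0,W_0)$ on $\mc N_0(W_0)$ is then the second line of (\ref{2.117}) in Lemma \ref{lemma_5}, applied with $R_{01}\neq R_{02}$, common starting soliton, and $(R^0_j,\pi^0_j)$ taken to be the respective fixed points.

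To obtain joint Lipschitz dependence on $(R_0,W)$, I would use the symmetry transformations $\frak g$ of (\ref{coord}): given two nearby solitons $W_{01}$, $W_{02}$, transport one solution by the unique $\frak g$ sending $w(\pi_{01})$ to $w(\pi_{02})$, and apply the comparison estimate in the observation following Lemma \ref{lemma_5} to the pair $(\frak g R_1,\frak g\pi_1)$ versus $(R_2,\pi_2)$. The explicit parametric formulae (\ref{2.50}) for $F^\pm(W)$ and $\sigma(W)$, together with the regularity results of Section \ref{spectru}, deliver the desired continuous (in fact real-analytic) dependence of $h$ on $W$. The main obstacle is precisely this last step: verifying that the exponential weight $e^{-\int_0^t\sigma(W_{\pi^0}(\tau))\dd\tau}$ appearing in (\ref{2.100}), the projections $P_\pm(W)$, and the symmetry action $\frak g$ all vary analytically in $W_0$, so that the globally defined map $\mc F:\mc N_0\to\dot H^{1/2}$ of (\ref{2.105}) inherits this regularity uniformly across $\mc M$.
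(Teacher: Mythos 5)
Your proposal is correct and follows essentially the same route as the paper: the paper likewise presents this proposition as the summary of running the contraction scheme (Proposition \ref{prop9} for invariance of the $X$-ball, Lemma \ref{lemma_5} for contraction in $Y$ and convergence of the $h_n$) with $\delta$ chosen commensurate with $\|R_0\|_{\dot H^{1/2}}$ to upgrade the $O(\delta^2)$ bounds on $\dot\pi$ and $h$ to quadratic bounds in $\|R_0\|_{\dot H^{1/2}}$, and it obtains the Lipschitz dependence on $R_0$ from (\ref{2.117}) and on $W_0$ from the symmetry-transformation remark following Lemma \ref{lemma_5}. No gaps.
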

Adopting a different point of view, for sufficiently small $\delta_0$ $\|R_0\|_{\dot H^{1/2}}$ is comparable to the distance from $\Psi(0)$ to the soliton manifold,
\be
\min_{W} \|\Psi(0)-W\|_{\dot H^{1/2}}.
\ee
Indeed, one inequality is obvious and the other follows by Lemma \ref{lemma_10}, see below.

Applying the perturbation Lemma \ref{lemma_5} to the solution of the nonlinear equation leads to the following:
\begin{proposition}\lb{proposition_10} The solution depends continuously on the initial data: given solutions
\be
\Psi_j(t) = W_{\pi_j}(t) + R_j(t),
\ee
$j=1,2$, with initial data in the same fiber, $\Psi_j(0) \in \mc N(W_0)$, meaning $\Psi_j(0)=\mc F(R_{0j}, W_{0})$, one has
\be\begin{aligned}
\|\Psi_1-\Psi_2\|_{e^{t\rho} (L^{\infty}_t \dot H^{1/2}_x \cap L^2_t \dot W^{1/2, 6/5}_x)} &\leq C \|(R_1, \pi_1) - (R_2, \pi_2)\|_{Y} \\
&\leq C \|R_{01}-R_{02}\|_{\dot H^{1/2}}
\end{aligned}\ee
and
\be\begin{aligned}
&|h(R_{01}, W_0) - h(R_{02}, W_0)| \leq C \alpha_0 (\|R_{01}\|_{\dot H^{1/2}} + \|R_{02}\|_{\dot H^{1/2}}) \|R_{01}-R_{02}\|_{\dot H^{1/2}}.
\end{aligned}\ee
\end{proposition}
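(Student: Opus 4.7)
The overall plan is to feed the nonlinear solutions into the perturbation Lemma \ref{lemma_5} and then convert the resulting $Y$-difference bound into the two inequalities claimed. Since each $\Psi_j = W_{\pi_j} + R_j$ arises from the fixed-point scheme, we have $(R_j^0, \pi_j^0) = (R_j, \pi_j)$. Applying Lemma \ref{lemma_5} with these choices, together with the common initial soliton $W_0$, gives
\[
\|(R_1, \pi_1) - (R_2, \pi_2)\|_Y \leq C\delta \|(R_1, \pi_1) - (R_2, \pi_2)\|_Y + C\|R_{01}-R_{02}\|_{\dot H^{1/2}}.
\]
For $\delta$ sufficiently small, the first term on the right is absorbed into the left-hand side, yielding $\|(R_1, \pi_1) - (R_2, \pi_2)\|_Y \leq C\|R_{01}-R_{02}\|_{\dot H^{1/2}}$.

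To obtain the exponentially weighted Strichartz bound on $\Psi_1 - \Psi_2$, I would split $\Psi_1 - \Psi_2 = (W_{\pi_1} - W_{\pi_2}) + (R_1 - R_2)$. The $R$-difference is controlled directly by the $Y$-norm via its definition (\ref{Y}). The soliton difference is a smooth, Schwartz-class function of the modulation parameters, so by the mean value theorem its norm in $e^{t\rho}(L^{\infty}_t \dot H^{1/2}_x \cap L^2_t \dot W^{1/2,6}_x)$ is dominated by $\|\pi_1(t) - \pi_2(t)\|_{e^{t\rho} L^{\infty}_t}$, and thus by $\|\dot\pi_1 - \dot\pi_2\|_{e^{t\rho}L^1_t}$ up to a factor $\rho^{-1}$, which is in turn bounded by $\|(R_1,\pi_1)-(R_2,\pi_2)\|_Y$. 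Chaining these estimates yields the first pair of inequalities in the proposition.

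The quadratic-in-size estimate for $|h_1 - h_2|$ requires more care. I would start from the explicit formula (\ref{2.100}),
\[
h_j = -\int_0^{\infty} e^{-\int_0^t \sigma(W_{\pi_j^0}(\tau)) d\tau}\Bigl(\langle F_j, \sigma_3 F^-(W_{\pi_j^0}(t))\rangle + O(\dot\pi_j^0, R_j)\Bigr) dt,
\]
and note that every term in the integrand is either (i) a genuinely quadratic or cubic expression in $R_j$ and $W_{\pi_j^0}$ coming from $N(R_j, W_{\pi_j^0})$, or (ii) a product of $\dot\pi_j^0$ (which itself is quadratic by the modulation equations) with a linear expression in $R_j$. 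Subtracting $h_2$ from $h_1$ and expanding each bilinear/trilinear term via the identity $ab - a'b' = (a-a')b + a'(b-b')$ produces a sum of terms each of which has the form (difference of one factor) $\times$ (size of the other factor). The difference is bounded by $\|(R_1,\pi_1)-(R_2,\pi_2)\|_Y$, which we have already controlled by $\|R_{01}-R_{02}\|_{\dot H^{1/2}}$, while the remaining size factor is bounded via the stability Proposition \ref{prop9} by $C(\|R_{01}\|_{\dot H^{1/2}} + \|R_{02}\|_{\dot H^{1/2}})$. The overall factor $\alpha_0$ is tracked by the scaling weights already present in the $X$- and $Y$-norms.

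The main obstacle is the last step: cleanly extracting the quadratic-in-size bound for $|h_1 - h_2|$, since the bound from Lemma \ref{lemma_5} is only linear in $\delta$ and must be refined by exploiting the quadratic structure of the right-hand side in (\ref{2.100}). This requires carefully separating the genuinely quadratic pieces from the products involving $\dot\pi^0$, and using that $\dot\pi^0$ itself is quadratic via the modulation equations (\ref{ec_liniara}). Once this bookkeeping is done, the exponentially decaying kernel $e^{-\int_0^t \sigma(\tau)d\tau}$ absorbs the weight $e^{t\rho}$ because $\rho < a_1 \leq \sigma(t)$, converting the $Y$-norm estimate into the claimed pointwise bound on $h_1 - h_2$.
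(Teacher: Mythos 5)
Your first step coincides with the paper's: apply Lemma \ref{lemma_5} with $(R_j^0,\pi_j^0)=(R_j,\pi_j)$ (the fixed-point property), absorb the $C\delta\|(R_1,\pi_1)-(R_2,\pi_2)\|_Y$ term for small $\delta$, and conclude $\|(R_1,\pi_1)-(R_2,\pi_2)\|_Y \leq C\|R_{01}-R_{02}\|_{\dot H^{1/2}}$; the passage from this to the bound on $\Psi_1-\Psi_2$ via the splitting into soliton difference plus $R$-difference is a correct elaboration of what the paper leaves implicit. Where you diverge is the quadratic bound on $|h_1-h_2|$. You treat this as the main obstacle and propose to return to the explicit formula (\ref{2.100}) and expand every bilinear and trilinear term via $ab-a'b'=(a-a')b+a'(b-b')$. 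That route works, but it relocates rather than removes the key point: to bound the ``size'' factors by $C(\|R_{01}\|_{\dot H^{1/2}}+\|R_{02}\|_{\dot H^{1/2}})$ rather than by a fixed $\delta$, you still need to know that the Strichartz norm of $R_j$ and $\|\dot\pi_j\|_1$ are controlled by $\|R_{0j}\|_{\dot H^{1/2}}$, which is exactly the observation the paper uses to finish in one line. Namely, Lemma \ref{lemma_5} already yields $|h_1-h_2|\leq C\delta\,\|R_{01}-R_{02}\|_{\dot H^{1/2}}$ after substituting the $Y$-bound just obtained, and since the entire contraction argument is valid for any sufficiently small $\delta$, one may run it with $\delta$ chosen proportional to $\|R_{01}\|_{\dot H^{1/2}}+\|R_{02}\|_{\dot H^{1/2}}$ (the paper announces this choice of $\delta$ ``commensurate with the size of the initial data'' just before stating the summary proposition). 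That substitution gives the quadratic estimate immediately, with no term-by-term bookkeeping in (\ref{2.100}). So your argument is viable but heavier than necessary; if you do pursue the direct expansion, you must still justify $\|R_j\|_{L^\infty_t\dot H^{1/2}\cap L^2_t\dot W^{1/2,6}}+\|\dot\pi_j\|_1\leq C\|R_{0j}\|_{\dot H^{1/2}}$ from Proposition \ref{prop9} with the same commensurate choice of $\delta$, since the raw stability estimate carries a $\delta^2$ remainder that is not obviously dominated by $\|R_{0j}\|_{\dot H^{1/2}}$ for a fixed $\delta$.
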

%
The continuous correspondence between different fibers is given by symmetry transformations.
\begin{proof}
Firstly, applying the comparison Lemma \ref{lemma_5} yields
\be
\|(R_1, \pi_1) - (R_2, \pi_2)\|_Y \leq C \|R_{01}-R_{02}\|_{\dot H^{1/2}}.
\ee
The comparison Lemma \ref{lemma_5} also implies
\be\begin{aligned}
|h(R_{01}, W_1) - h(R_{02}, W_2)| \leq C \delta \|R_{01}-R_{02}\|_{\dot H^{1/2}}.
\end{aligned}\ee
Taking $\delta$ proportional to $\|R_{01}\|_{\dot H^{1/2}} + \|R_{02}\|_{\dot H^{1/2}}$ leads to the second conclusion.
\end{proof}

In particular, this shows that the map $\mc F$ given by (\ref{2.105}) is locally Lipschitz continuous. We explore its properties further, beginning with a definition and a preliminary lemma.
\begin{definition}
Given two Banach spaces $A$ and $B$, a map $f:A \to B$ is analytic if it admits a Taylor series expansion:
\be
f(a)=f_0 + f_1(a) + f_2(a, a) + \ldots,
\ee
such that for each $n$ $f_n$ is $n$-linear and there exist constants $C_1$, $C_2$ such that $\|f_n\|_{A^{\otimes n} \to B} \leq C_1 C_2^n$.
\end{definition}
A similar definition can be given for differentiable, $C^n$ class, and smooth maps. Furthermore, the definition naturally extends to manifolds.

\begin{lemma} The map $\tilde F: \mc N_0 \times \set R \to \dot H^{1/2}$,
\be
\tilde F(W, R, h) = W + R + h F^+(W),
\ee
is locally a real analytic diffeomorphism in the neighborhood of each point $(W, R, 0)$.
\lb{lemma_11}\end{lemma}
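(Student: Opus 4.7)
The plan is to apply the real-analytic inverse function theorem for maps between Banach manifolds at the point $(W_0, R_0, 0)$. This reduces the claim to (i) checking that $\tilde F$ is real analytic and (ii) verifying that its Fr\'echet differential at $(W_0, R_0, 0)$ is a topological isomorphism of the tangent spaces.

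For (i), I work in the local trivialization described after Definition \ref{def3}, which identifies a neighborhood of $(W_0, R_0)$ in $\mc N_0$ with an open subset of $\mc M \times (P_c(W_0) + P_-(W_0)) \dot H^{1/2}$ via $(W, \tilde R) \mapsto (\frak g_W \tilde R, W)$. The embedding $\mc M \hookrightarrow \dot H^{1/2}$, i.e.\ $\pi \mapsto W(\pi)$, is real analytic in the eight parameters $\pi = (\alpha, \Gamma, v, D)$: by (\ref{derrick-pohozaev}), $\phi(\cdot, \alpha) = \alpha \phi(\alpha\cdot, 1)$ depends analytically on $\alpha > 0$, and the phase and translation factors in (\ref{wpi}) are manifestly analytic. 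The same observation together with the explicit formula (\ref{2.50}) shows that $W \mapsto F^+(W)$ and the symmetry family $W \mapsto \frak g_W$ are real analytic, being compositions of dilations, translations, and multiplications by $e^{i(xv+\Gamma)\sigma_3}$. The remaining contributions $R$ and $hF^+(W)$ are jointly linear in $(R, h)$, so $\tilde F$ is a sum of real-analytic terms.

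For (ii), a tangent vector at $(W_0, R_0, 0)$ takes the form $(\delta W, \delta R, \delta h) \in T_{W_0}\mc M \oplus T_{R_0}\mc N_0(W_0) \oplus \set R$. Since $h=0$ the contribution $h \cdot d_W F^+(W_0) \delta W$ drops out, and the differential reads
\be
d\tilde F(W_0, R_0, 0)(\delta W, \delta R, \delta h) = \delta W + \delta R + \delta h \cdot F^+(W_0) + E(R_0) \delta W,
\ee
where $E(R_0)$ collects the cross term produced when $d_W \frak g_W$ acts on $\tilde R^0$ in the trivialization and satisfies $\|E(R_0)\| = O(\|R_0\|_{\dot H^{1/2}}) = O(\delta_0)$. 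At the base case $R_0 = 0$ this cross term vanishes, and the three remaining summands lie in pairwise disjoint spectral blocks of $\mc H(W_0)$ on $\dot H^{1/2}$: $\delta W \in T_{W_0}\mc M$ spans $P_0(W_0)\dot H^{1/2}$, the eight-dimensional generalized null space recalled in Section \ref{spectru}; $\delta R$ lies in $(P_c(W_0) + P_-(W_0))\dot H^{1/2}$ by definition of $\mc N_0(W_0)$; and $F^+(W_0)$ spans the one-dimensional $P_+(W_0)\dot H^{1/2}$. Combining with the spectral resolution $I = P_0 + P_c + P_+ + P_-$ from Section \ref{spectru}, the differential at $R_0 = 0$ is block-diagonal and bijective with bounded inverse. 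For $0 < \|R_0\|_{\dot H^{1/2}} < \delta_0$ the perturbation $E(R_0)\delta W$ has small operator norm, so a Neumann-series argument yields invertibility uniformly in the $R_0$ variable. The analytic inverse function theorem then produces the required local real-analytic diffeomorphism near $(W, R, 0)$.

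The main delicate point is the identification $T_{W_0}\mc M = P_0(W_0) \dot H^{1/2}$ with no deficiency, since this is what makes the three-block image of $d\tilde F$ exhaust $\dot H^{1/2}$. This relies on Weinstein's characterization of the generalized null space of the linearized Hamiltonian recalled in Section \ref{spectru}; without it one could not guarantee surjectivity of the differential, and the inverse function theorem would fail to apply.
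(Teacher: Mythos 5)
Your treatment of the differential is essentially the paper's: at $h_0=0$ the differential splits into the three spectral blocks $P_0$, $P_c+P_-$, and $P_+$ of $\mc H(W_0)$, the identity $I = P_0 + P_c + P_- + P_+$ gives an explicit bounded inverse, and the inverse function theorem finishes. Your extra cross-term $E(R_0)$ coming from the trivialization and the Neumann-series absorption is a harmless (arguably more careful) variant of what the paper does.

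The genuine gap is in step (i), where you assert that ``the phase and translation factors in (\ref{wpi}) are manifestly analytic.'' They are not manifest: analyticity of $D \mapsto \phi(\cdot - D)$ (and of $\alpha \mapsto \alpha\phi(\alpha\,\cdot,1)$, whose derivatives involve $x\cdot\dl\phi$ and its iterates) as a Banach-space-valued map requires convergence of the Taylor series $\sum_\beta \frac{(-D)^\beta}{\beta!}\partial^\beta\phi$ in the relevant norm, i.e.\ uniform factorial bounds $\|\partial^\beta\phi\|_X \leq C_1 C_2^{|\beta|}\,|\beta|!$ on \emph{all} spatial derivatives of the ground state. Pointwise real-analyticity of $\phi$ from elliptic regularity does not give this for free; the paper devotes Lemma \ref{lemma_2.10} to it, proving the bound by an inductive bootstrap on the equation $-\Delta\phi+\phi=\phi^3$ (differentiating $\beta$ times, using the combinatorial estimate $\sum_{n_1+\cdots+n_j=n}\prod_i (n_i^2+1)^{-1} \leq 6^{j-1}/(n^2+1)$, and working in an algebra such as $H^2$ or $e^{-(1-\epsilon)|x|}L^\infty$). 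The same issue recurs for $W \mapsto F^+(W)$: reducing it via (\ref{2.50}) to a fixed eigenfunction $F^+$ is fine, but one then needs the analogous factorial derivative bounds for $f^+$, obtained from its eigenvalue equation by the same bootstrap. Without these bounds your argument only yields a smooth diffeomorphism, not a real-analytic one.
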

%

\begin{proof} Let $W_0 = W(\pi_0)$ and consider the differential of $\tilde F$, given by the linear map $d\tilde F \mid_{(W_0, R_0, h_0)}: \set R^8 \times \mc N_0(W_0) \times \set R \to \dot H^{1/2}$,
\be\begin{aligned}
d\tilde F \mid_{(W_0, R_0, h_0)}(\delta \pi, \delta R, \delta h) &= (d_{\pi} W_0) \delta \pi + \delta R + (\delta h) F^+(W_0) + h_0 (d_{\pi} F^+(W_0) )\delta \pi.
\end{aligned}\ee
That $d\tilde F$ is bijective at points where $h_0=0$ follows from the following identity:
\be\begin{aligned}
\Psi &= P_0(W_0) \Psi + P_c(W_0) \Psi + P_-(W_0) \Psi + P_+(W_0) \Psi \\
&= \sum_{f \in \{\alpha, \Gamma, v_k, D_k\}} \langle \Psi, \Xi_f(W_0) \rangle \partial_f W_0 + (P_c(W_0) + P_-(W_0)) \Psi + \\
&+ \alpha_0^{-3} \langle \Psi, i \sigma_3 F^-(W_0) \rangle F^+(W_0).
\end{aligned}\ee
This leads to an explicitly constructed inverse for the linear map: if
\be
\Psi = d\tilde F \mid_{(W_0, R_0, h_0)}(\delta \pi, \delta R, \delta h),
\ee
then
\be\begin{aligned}
\delta \pi &= (\langle \Psi, \Xi_{\alpha}(W_0) \rangle, \langle \Psi, \Xi_{\Gamma}(W_0) \rangle, \langle \Psi, \Xi_{v_k}(W_0) \rangle, \langle \Psi, \Xi_{D_k}(W_0) \rangle), \\
\delta R &= (P_c(W_0) + P_-(W_0)) \Psi, \\
\delta h &= \alpha_0^{-3} \langle \Psi, i \sigma_3 F^-(W_0) \rangle.
\end{aligned}\ee
The local invertibility of the nonlinear map $\tilde F$ follows by the inverse function theorem. Smoothness follows by inspection of the explicit forms of $\partial_f W_0$ and $\partial_f F^+(W_0)$.

Next, we consider the analyticity of $\tilde F$, which is closely tied to that of the soliton $W(\pi)$, of its derivatives, and of the eigenvectors $F^+(W(\pi))$, considered as functions of the parameters $\pi$.

The analyticity of the soliton was shown by Li--Bona \cite{bona}, but we need a stronger statement than the one proved in their paper.
\begin{lemma}\lb{lemma_2.10}
Given an exponentially decaying solution $\phi$ of equation (\ref{phi})
\be\nonumber
-\Delta \phi(\cdot, \alpha) + \alpha^2 \phi(\cdot, \alpha) = \phi^3(\cdot, \alpha),
\ee
the soliton
\be\lb{2.170}
w(\pi)(x) = e^{i(x \cdot v+\Gamma)} \phi(x - D, \alpha)
\ee
is a real analytic function of $\pi$ in any Schwartz class seminorm $S$. Furthermore,
\be\lb{analitic}
\|\partial^{\beta_{\alpha}}_{\alpha} \partial^{\beta_{\Gamma}}_{\Gamma} \partial^{\beta_{v_k}}_{v_k} \partial^{\beta_{D_k}}_{D_k} w\|_S \leq C_1 C_2^{|\beta|} \prod_{f \in \{\alpha, v_k\}} \beta_f!.
\ee
\end{lemma}

We distinguish between two kinds of parameters in the definition of $\pi$ (\ref{2.170}). In the parameters $\alpha$ and $v_k$, the proof makes it clear that the domain of analyticity is not the whole complex plane, but only a strip along the real axis. In the parameters $\Gamma$ and $D_k$, on the other hand, the soliton extends to an analytic function of exponential type in the whole complex plane.

\begin{proof}
To begin with, we show that $w$ is an analytic $e^{-(1-\epsilon)|x|} L^{\infty}_x$-valued map. Since the derivatives of an analytic map are also analytic and $\partial_{D_k} w(\pi) = -\partial_{x_k} w(\pi)$, by iterating it follows that arbitrarily many derivatives of $w(\pi)$ are analytic maps into this space.
 
The real analyticity of $w$ is equivalent to its complex analyticity and the joint complex analyticity is equivalent to separate analyticity in each variable. Given that $\phi(\cdot, \alpha)$ decays exponentially at the rate $e^{-(\alpha^2-\epsilon) |x|}$, by the Agmon bound, we can extend $w$ to an analytic function for all $\Gamma$ and on the strip $\{v \mid |\Im v| < \alpha^2\}$. This proves analyticity in regard to $\Gamma$ and $v$.

Analyticity in the other two variables, $D$ and $\alpha$, requires that we show that $\phi(\cdot, 1)$ is an analytic function in the spatial variables, in the sense that there exist constants $C_1$ and $C_2$ such that for every multiindex $\beta = (\beta_{v_1}, \beta_{v_2}, \beta_{v_3})$
\be
\|\partial^{\beta} \phi\|_2 \leq C_1 |\beta|! C_2^{|\beta|}.
\ee
$\phi$ is exponentially decaying, $|\phi(x)| \leq C e^{-(1-\epsilon)|x|}$, and satisfies the equation
\be
-\Delta \phi + \phi = g(\phi),
\ee
where $g(z) = z^3$ is an analytic function. Differentiating $\beta$ times, we obtain
\be\begin{aligned}
(-\Delta+1)(\partial^{\beta} \phi) &= \partial^{\beta} (g \circ \phi) \\
&= \sum_{j=1}^{|\beta|} \sum_{\beta_1 + \ldots + \beta_j = \beta} \partial^{\beta_1} \phi \cdot \ldots \cdot \partial^{\beta_j} \phi \cdot (g^{(j)} \circ \phi).
\end{aligned}\ee
In the proof we make the more general assumption that $g$ is analytic and its derivatives grow subexponentially, meaning that there for any $C_4>0$ there exists $C_3$ such that 
\be
|g^{(j)}(0)| \leq C_3 C_4^j.
\ee
It follows that for any $C_4>0$ there exists $C_3$ such that, for fixed $f \in H^2$,
\be
\|g^{(j)} \circ f\|_{H^2} \leq C_3 C_4^j.
\ee
This property characterizes not only polynomials such as the case of interest, $g(z) = z^3$, but also functions of subexponential growth, such as $g(z)=\cosh(\sqrt z)$.

To avoid complications, we present the argument in the algebra $H^2$. We prove by induction on $|\beta|$ that, for properly chosen $C_1$ and $C_2$,
\be
\|\partial^{\beta} \phi\|_{H^2} \leq \frac{C_1 C_2^{|\beta|}}{|\beta|^2+1}.
\ee
Indeed, assuming that this induction hypothesis holds for all indices up to $|\beta|$, we obtain that
\be\begin{aligned}
\|\partial^{\beta} \phi\|_{H^4} & \leq C \|(-\Delta+1)(\partial^{\beta} \phi)\|_{H^2} \\
& \leq C C_3 C_2^{|\beta|} \sum_{j=1}^{|\beta|} \sum_{\beta_1 + \ldots + \beta_j = \beta} \frac{C_4^j C_1^j}{(|\beta_1|^2 + 1) \cdot \ldots \cdot (|\beta_j|^2 + 1)}.
\end{aligned}\ee
Observe that
\be
\sum_{k=0}^{n} \frac 1 {(k^2 +1)((n-k)^2 + 1)} < \frac 6 {n^2 + 1}.
\ee
By induction, we obtain that
\be
\sum_{n_1 + \ldots + n_j = n} \frac 1 {(n_1^2 + 1) \cdot \ldots \cdot (n_j^2 + 1)} \leq \frac {6^{j-1}} {n^2 + 1}. 
\ee
Therefore
\be
\|\partial^{\beta} \phi\|_{H^4} \leq \frac {C C_3 C_2^{|\beta|}}{|\beta|^2 + 1} \sum_{j=1}^{|\beta|} (6 C_4 C_1)^j.
\ee
By making $C_4$ sufficiently small and $C_2$ sufficiently large, we obtain that the sum is uniformly bounded, regardless of $|\beta|$, and
\be
\|\partial^{\beta} \phi\|_{H^4} \leq \tilde C \frac {C_2^{|\beta|}}{|\beta|^2 + 1} \leq \frac {C_1 C_2^{|\beta|+1}} {|\beta|^2+1}.
\ee
This proves that the induction assumption also holds for derivatives of order $|\beta|+1$.

The preceding proof shows that $\phi(\cdot - D)$ is an analytic $H^2$-valued map. However, the argument works for any algebra $A$ containing $\phi$ and with the property that
\be
\|d f\|_A \leq C \|(-\Delta + 1) f\|_A.
\ee
This means that the algebra $A$ has to be invariant under convolution with the kernels
\be
\frac {x_k e^{-|x|}}{|x|^3} + \frac {x_k e^{-|x|}}{|x|^2}.
\ee
Note that the algebra $A = e^{-(1-\epsilon)|x|}L^{\infty}$ fulfills both requirements. Thus, $\phi(\cdot - D)$ is an $A$-valued analytic map. This ensures the joint analyticity of $w(\pi)$ given by (\ref{2.170}) in the variables $D$, $v$, and $\Gamma$ around the point $\phi = w(1, 0, 0, 0)$. By symmetry transformations, this implies analyticity around any other point.

Concerning $\alpha$, observe that the derivative with respect to $\alpha$ is given by the generator of dilations, which is a combination of multiplication by $x$ and taking the gradient $\dl$:
\be
\partial_{\alpha} \phi = \alpha^{-1} \phi + x \dl \phi.
\ee
Therefore, analyticity in $\alpha$ follows from that with respect to to $D$ and $v$.

If $W$ is analytic, so are its derivatives up to any finite order, enabling us to conclude that $W$ is analytic in any Schwartz class seminorm.
%
\end{proof}
In a similar manner, the analyticity of $F^+(W(\pi))$ as a function of $\pi$ reduces to that of a fixed eigenfunction $F^+$. $F^+ = \bpm f^+ \\ \ov f^+ \epm$ (see Section \ref{spectru}) satisfies the equation
\be
(\Delta - 1 + 2 \phi^2) f^+ + \phi^2 \ov f^+ = i \sigma f^+.
\ee
Knowing that $\phi$ is analytic, proving the analyticity of $f^+$ proceeds in exactly the same manner as above.

Clearly, if $W(\pi)$ is analytic then so are its derivatives up to any finite order. 
The same goes for $F^+(W(\pi))$ and its derivatives. This immediately implies that $\tilde F$ is real analytic.
\end{proof}
%

Lemma \ref{lemma_11} has the following immediate consequence:
\begin{proposition} $\mc F$ given by (\ref{2.105}) is locally one-to-one and its inverse (defined on its range) is locally Lipschitz.
\lb{prop_9}
\end{proposition}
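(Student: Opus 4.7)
The plan is to factor $\mc F$ through the local diffeomorphism $\tilde F$ of Lemma \ref{lemma_11}. Set
\be
\Phi: \mc N_0 \to \mc N_0 \times \set R,\quad \Phi(R_0, W) = \big((R_0, W), h(R_0, W)\big),
\ee
so that $\mc F = \tilde F \circ \Phi$. The map $\Phi$ is locally Lipschitz: continuity of $h$ in $R_0$ along a fixed fiber is Proposition \ref{proposition_10}, and continuity transverse to fibers is obtained by first applying a symmetry transformation to move the two solitons into a common fiber (the remark following Lemma \ref{lemma_5}) and then invoking the same proposition. Lemma \ref{lemma_11} supplies, around each base point $((R_0, W), 0) \in \mc N_0 \times \set R$, an open neighborhood $U$ on which $\tilde F$ is a real analytic diffeomorphism onto an open set $\tilde F(U) \subset \dot H^{1/2}$.

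The quantitative input that makes the factorization work is the quadratic bound $|h(R_0, W)| \leq C \alpha_0 \|R_0\|_{\dot H^{1/2}}^2$. After shrinking $\delta_0$ if necessary, this bound forces $\Phi(\mc N_0)$ to lie inside a neighborhood where $\tilde F$ is injective. If $\mc F(R_{01}, W_1) = \mc F(R_{02}, W_2)$ with both inputs close to a fixed $(R_0^*, W^*)$, then local injectivity of $\tilde F$ yields $\Phi(R_{01}, W_1) = \Phi(R_{02}, W_2)$; reading off the first coordinate gives $(R_{01}, W_1) = (R_{02}, W_2)$, proving local injectivity.

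For the Lipschitz inverse, let $\pi: ((R, W), h) \mapsto (R, W)$ be the natural projection, which is $1$-Lipschitz. On $\tilde F(U)$ the inverse $\tilde F^{-1}$ is real analytic, hence locally Lipschitz, so
\be
\mc F^{-1} = \pi \circ \tilde F^{-1}\big|_{\mc F(\mc N_0) \cap \tilde F(U)}
\ee
is locally Lipschitz as a composition of two locally Lipschitz maps.

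The only technical subtlety I anticipate is ensuring that the size of the diffeomorphic neighborhood in Lemma \ref{lemma_11}, and hence the admissible $\delta_0$, can be chosen uniformly as $W$ varies over the soliton manifold $\mc M$. This is resolved by the scaling-and-conjugation relations of Section \ref{spectru}, in particular (\ref{2.50}): every $\mc H(W)$ is conjugate to $\mc H(W_0)$ via a symmetry transformation, under which $\tilde F$, $\Phi$, and the associated projections transform equivariantly up to the $\alpha$-rescaling already absorbed into the scaling parameter $\alpha_0$ of $W_0$. Thus no genuine obstruction arises and the local diffeomorphism structure of $\tilde F$ transfers verbatim to $\mc F$.
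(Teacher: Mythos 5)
Your proposal is correct and takes essentially the same route as the paper: factor $\mc F$ through the local diffeomorphism $\tilde F$ of Lemma \ref{lemma_11}, use the quadratic smallness of $h$ to stay in the region where $\tilde F$ is invertible, and obtain the Lipschitz inverse by composing $\tilde F^{-1}$ with the projection that discards the $h$-coordinate (the paper's ``ignore the parameter $h$''). Your write-up merely makes explicit the factorization map $\Phi$ and the uniformity over the soliton manifold via symmetry transformations, both of which the paper leaves implicit.
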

\begin{proof} The local invertibility of $\mc F$ follows immediately from the previous lemma. Indeed, one has
\be
\mc F(R, W) = \tilde F(W, R, h(R, W)).
\ee
For a sufficiently small $\delta_0$, $h(R, W)$ is close to zero and the previous lemma applies. In order to establish the Lipschitz property for the inverse, we can simply ignore the parameter $h$.
\end{proof}

Another consequence is that, if a function is sufficiently close to the manifold $\mc M$ of solitons, we can project it on the manifold as follows.

\begin{lemma} For every soliton $W$ there exists $\delta > 0$ such that whenever $\|\Psi-W\|_{\dot H^{1/2}} < \delta$ there exists $W_1$ such that $P_0(W_1)(\Psi-W_1) = 0$ and
\be
\|\Psi - W_1\|_{\dot H^{1/2}} \leq C \|\Psi-W\|_{\dot H^{1/2}}.
\ee
Furthermore, $W_1$ depends Lipschitz continuously on $\Psi$.
\lb{lemma_10}
\end{lemma}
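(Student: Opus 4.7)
The plan is to set this up as an implicit function theorem (or inverse function theorem) argument on the eight-dimensional modulation parameter $\pi$. Fix $W=W(\pi_0)$ and consider the map
\begin{equation}\nonumber
G:\dot H^{1/2}\times \mathbb R^8\to \mathbb R^8,\qquad G(\Psi,\pi)_f=\langle \Psi-W(\pi),\,\Xi_f(W(\pi))\rangle,\ f\in\{\alpha,\Gamma,v_k,D_k\}.
\end{equation}
The condition $P_0(W_1)(\Psi-W_1)=0$ with $W_1=W(\pi_1)$ is, by the explicit formula for $P_0$ given in Section~2.2, equivalent to $G(\Psi,\pi_1)=0$. The pairing is well defined because $\Xi_f(W(\pi))$ is a Schwartz function (built from $\partial_fW(\pi)$, which is smooth and exponentially decaying), so the $L^2$ pairing extends to a continuous pairing $\dot H^{1/2}\times\dot H^{-1/2}$; in particular $G$ is smooth in $\Psi$ (linear, actually) and real-analytic in $\pi$ thanks to Lemma~\ref{lemma_2.10}.

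First I would verify the hypotheses at the base point $(\Psi,\pi)=(W,\pi_0)$. Clearly $G(W,\pi_0)=0$. For the derivative in $\pi$, since $\Psi-W(\pi)$ vanishes at $\pi=\pi_0$,
\begin{equation}\nonumber
\partial_{\pi_j}G_f\big|_{(W,\pi_0)}=-\langle \partial_{\pi_j}W,\,\Xi_f(W)\rangle.
\end{equation}
The biorthogonality relations recorded at the beginning of the proof of Lemma~\ref{lemma_6} say that $\langle\partial_fW,\Xi_g(W)\rangle$ is zero unless $f=g$, in which case it equals a nonzero multiple of $\|W\|_2^2$. Hence $d_\pi G(W,\pi_0)$ is a diagonal matrix with nonzero diagonal entries, and in particular invertible with inverse whose norm is controlled by $\|W\|_2^{-2}$ and $\alpha_0$.

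The implicit function theorem then yields $\delta>0$ and a Lipschitz map $\Psi\mapsto\pi(\Psi)$ defined on $\{\Psi:\|\Psi-W\|_{\dot H^{1/2}}<\delta\}$ with $\pi(W)=\pi_0$ and $G(\Psi,\pi(\Psi))\equiv 0$. Setting $W_1=W(\pi(\Psi))$ gives $P_0(W_1)(\Psi-W_1)=0$, and Lipschitz dependence of $W_1$ on $\Psi$ is inherited from that of $\pi(\Psi)$ and the smoothness of $\pi\mapsto W(\pi)$ in $\dot H^{1/2}$ (a very weak norm compared to the Schwartz norms provided by Lemma~\ref{lemma_2.10}). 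The size bound follows by the triangle inequality,
\begin{equation}\nonumber
\|\Psi-W_1\|_{\dot H^{1/2}}\leq \|\Psi-W\|_{\dot H^{1/2}}+\|W(\pi_0)-W(\pi(\Psi))\|_{\dot H^{1/2}}\leq (1+C\,\mathrm{Lip})\|\Psi-W\|_{\dot H^{1/2}},
\end{equation}
since $\|\pi_0-\pi(\Psi)\|\leq C\|G(\Psi,\pi_0)-G(W,\pi_0)\|\leq C\|\Psi-W\|_{\dot H^{1/2}}$ by the implicit function theorem applied to the linearization.

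The only mildly delicate point I anticipate is bookkeeping of the norm in which the implicit function theorem is invoked: one must check that $G$ and $d_\pi G$ are Lipschitz/continuous jointly in $(\Psi,\pi)$ in the $\dot H^{1/2}$ topology for $\Psi$ and the Euclidean topology for $\pi$, which is immediate once one observes that $\pi\mapsto W(\pi)$ and $\pi\mapsto\Xi_f(W(\pi))$ are $C^\infty$ into every Schwartz seminorm (hence into $\dot H^{\pm 1/2}$) by Lemma~\ref{lemma_2.10}. Everything else is a routine application of a standard theorem.
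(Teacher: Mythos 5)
Your proof is correct, but it takes a more direct route than the paper. The paper deduces the lemma from its Lemma \ref{lemma_11}: it writes $\Psi = \tilde F(W_1, R, h) = W_1 + R + h F^+(W_1)$ using the local invertibility of the full map $\tilde F$ (which simultaneously solves for the eight parameters, the continuous/stable component $R$, and the unstable coefficient $h$), reads off $P_0(W_1)(\Psi - W_1) = 0$ from $R \in \mc N_0(W_1)$ and $P_0(W_1)F^+(W_1)=0$, and then obtains the norm bound via a Taylor expansion showing $\|(I-P_0(W_1))(W - W_1)\|_{\dot H^{1/2}} \lesssim \|W-W_1\|_{\dot H^{1/2}}^2$ followed by a small bootstrap. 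You instead apply the implicit function theorem only to the eight scalar orthogonality conditions $G(\Psi,\pi)_f = \langle \Psi - W(\pi), \Xi_f(W(\pi))\rangle = 0$, which is all the statement actually requires; the invertibility of $d_\pi G$ at the base point comes from exactly the same biorthogonality relations that underlie the paper's proof that $d\tilde F$ is invertible, and your norm bound is a clean triangle inequality plus the quantitative IFT estimate $|\pi(\Psi)-\pi_0| \leq C\|\Psi - W\|_{\dot H^{1/2}}$, avoiding the paper's second-order Taylor argument entirely. The trade-off is that the paper's approach reuses machinery it needs anyway (the diffeomorphism $\tilde F$ is also used in Proposition \ref{comparatie}), whereas yours is self-contained and arguably simpler; both rest on the analyticity of $\pi \mapsto W(\pi)$ and $\pi \mapsto \Xi_f(W(\pi))$ in Schwartz seminorms from Lemma \ref{lemma_2.10}, which is what makes $G$ and $d_\pi G$ jointly continuous in the $\dot H^{1/2}\times\set R^8$ topology, as you note at the end.
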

Again, following the use of symmetry transformations, $\delta$ can be chosen without regard for $W$.
\begin{proof}
If $\delta$ is sufficiently small, $\Psi = \tilde F(W_1, R, h)$ for some $(W_1, R, h)$ close to $(W, 0, 0)$ by Lemma \ref{lemma_2.10}. Since the inverse of $\tilde F$ is bounded,
\be
\|W-W_1\|_{\dot H^{1/2}} + \|R\|_{\dot H^{1/2}} + |h| \leq C \|\Psi-W\|_{\dot H^{1/2}} \leq C \delta.
\ee
Since $R \in \mc N_0(W_1)$, by definition $P_0(W_1)(\Psi-W_1) = 0$.

To a first order $W-W_1$ lies in a direction tangent to the soliton manifold $\mc M$, meaning within the range of $P_0(W_1)$, so by means of a Taylor expansion we obtain
\be
\|(I-P_0(W_1))(W-W_1)\|_{\dot H^{1/2}} \leq C \|W-W_1\|_{\dot H^{1/2}}^2.
\ee
Thus
\be\begin{aligned}
\|\Psi - W_1\|_{\dot H^{1/2}} & = \|(I-P_0(W_1))(\Psi-W_1)\|_{\dot H^{1/2}} \\
& \leq C \|\Psi-W\|_{\dot H^{1/2}} + \|(I-P_0(W_1))(W-W_1)\|_{\dot H^{1/2}} \\
& \leq C (\|\Psi-W\|_{\dot H^{1/2}} + \|W-W_1\|_{\dot H^{1/2}}^2).
\end{aligned}\ee
On the other hand,
\be\begin{aligned}
\|W-W_1\|_{\dot H^{1/2}}^2 & \leq (\|\Psi-W\|_{\dot H^{1/2}} + \|\Psi - W_1\|_{\dot H^{1/2}})^2 \\
& \leq C \|\Psi-W\|_{\dot H^{1/2}} + C \delta \|\Psi - W_1\|_{\dot H^{1/2}}.
\end{aligned}\ee
For $\delta$ sufficiently small, the conclusion follows.
\end{proof}
This lemma would be superfluous if $P_0$ were an orthogonal projection and the constant could be taken to be one then. Also note that in this generality the conclusion still holds for $\dot W^{1/2, 6}$, with the same proof.

\begin{definition} By small asymptotically stable solution we mean one that can be written as $\Psi(t) = W_{\pi}(t) + R(t)$ where $W_{\pi}(t)$ is a moving soliton governed by the parameter path $\pi$ as in (\ref{8}) and
\be
\|(R, \pi)\|_X = \|R\|_{L^{\infty}_t \dot H^{1/2}_x \cap L^2_t \dot W^{1/2, 6}_x} + \|\dot \pi\|_1 < \delta_0.
\ee
\lb{definition_4}\end{definition}
$X$ is the space that appears in the contraction argument, see (\ref{X}).

We can rewrite any small asymptotically stable solution $\Psi$ as $W_{\tilde \pi}(t) + \tilde R(t)$ such that the orthogonality condition is satisfied:
\be
P_0(W(\tilde \pi(t))) \tilde R(t) = 0.
\ee
Following Lemma \ref{lemma_10}, $\tilde R(t)$ is still small in the space $L^{\infty}_t \dot H^{1/2}_x \cap L^2_t \dot W^{1/2, 6}_x$.

Furthermore, $W_{\tilde \pi}(t)$ depends Lipschitz continuously on $\Psi$. Writing the modulation equations explicitly as in (\ref{mod}), it follows that $\|\dot {\tilde \pi}\|_{L^1_t}$ is small too.

Thus, it makes no difference whether we assume the orthogonality condition initially, as part of Definition \ref{definition_4}, since we can produce it in this manner. We arrive at the same definition with or without the orthogonality condition, though it may be for different values of $\delta_0$.

Clearly, every solution with initial data on the manifold $\mc N$ is small and asymptotically stable. A partial converse is also true.
\begin{proposition}
If $\Psi(0)$ is the initial value of a small asymptotically stable solution $\Psi$ in the sense of Definition \ref{definition_4} to equation (\ref{NLS}), then $\Psi(0) \in \mc N$.
\lb{comparatie}
\end{proposition}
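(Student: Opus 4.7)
The plan is to reverse-engineer the representation $\Psi(0) = \mc F(R_0, W_0)$ and then appeal to the uniqueness of the fixed point of $\Upsilon$. Using the remark following Definition \ref{definition_4}, I may assume without loss of generality that the orthogonality condition $P_0(W(\pi(t))) R(t) = 0$ holds for all $t \geq 0$. Set $W_0 := W(\pi(0))$, a soliton in $\mc M$ with scaling $\alpha_0 = \alpha(0)$. Since $P_0(W_0) R(0) = 0$, the spectral decomposition of $\mc H(W_0)$ gives
$$R(0) = R_0 + h^* F^+(W_0), \quad R_0 := (P_c(W_0) + P_-(W_0)) R(0), \quad h^* := \alpha_0^{-3} \langle R(0), i\sigma_3 F^-(W_0) \rangle.$$
Smallness of $R(0)$ in $\dot H^{1/2}$ yields $\|R_0\|_{\dot H^{1/2}} \lesssim \delta_0$, so $R_0 \in \mc N_0(W_0)$ provided $\delta_0$ was chosen small enough. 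It therefore suffices to prove $h^* = h(R_0, W_0)$.

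Next I would observe that, upon substituting $(R^0, \pi^0) := (R, \pi)$ into the linearized system (\ref{ec_liniara}), the right-hand side collapses to the genuine nonlinearity (\ref{rr}) and the full modulation equations (\ref{mod}) from Lemma \ref{lemma_6}. Since $\Psi = W_\pi + R$ solves (\ref{NLS}) and satisfies the orthogonality condition, the pair $(R, \pi)$ is a bounded solution of (\ref{ec_liniara}) with this choice of auxiliary functions and with initial data $R(0) = R_0 + h^* F^+(W_0)$. Applying Lemma \ref{hyp} to the imaginary-spectrum component $P_{im}(t) Z(t)$, global boundedness forward in time is compatible with only one value of the coefficient of $F^+$, namely the value prescribed by formula (\ref{2.100}); hence $h^* = h(R_0, R, \pi)$ in the notation of the stability proof. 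Equivalently, $(R, \pi) = \Upsilon(R, \pi)$ with initial data $(R_0 + h^* F^+(W_0), \pi_0)$.

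Finally, I would invoke the contraction established in Section \ref{Chapter_2.4}: the map $\Upsilon$ has a unique fixed point in the ball $\{\|(R^0, \pi^0)\|_X \leq \delta_0\}$ with these prescribed initial data, and that fixed point is precisely the one whose coefficient in the $F^+$ direction is $h(R_0, W_0)$ in the sense of Definition \ref{def3}. Therefore $h^* = h(R_0, W_0)$, and
$$\Psi(0) = W_0 + R_0 + h(R_0, W_0) F^+(W_0) = \mc F(R_0, W_0) \in \mc N(W_0) \subset \mc N.$$

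The main difficulty lies not in the algebra but in matching smallness scales. One must pick $\delta_0$ in Definition \ref{definition_4} small enough that: the pair $(R, \pi)$ genuinely lies in the region where the contraction governing $\Upsilon$ is valid; the orthogonalization step used to enforce $P_0(W(\pi(t))) R(t) = 0$ (via Lemma \ref{lemma_10} applied fiberwise in $t$) does not inflate $\|R\|_{L^\infty_t \dot H^{1/2}_x \cap L^2_t \dot W^{1/2,6}_x}$ or $\|\dot\pi\|_1$ beyond that ball; and $R_0$ remains inside $\mc N_0(W_0)$. Once these calibrations are in place, every step above is essentially bookkeeping within the framework already built in the stability and contraction sections.
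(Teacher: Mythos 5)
Your proposal is correct and follows essentially the same route as the paper: orthogonalize, decompose $R(0)=R_0+h^*F^+(W_0)$, recognize $(R,\pi)$ as a bounded solution (hence a fixed point of $\Upsilon$) with $h^*$ forced by Lemma \ref{hyp}, and then identify it with the manifold solution via the uniqueness coming from the contraction estimate of Lemma \ref{lemma_5}. The paper phrases the last step as a direct comparison of the two solutions using the perturbation lemma rather than as ``uniqueness of the fixed point,'' but these are the same argument.
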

\begin{proof} Write $\Psi = W_{\pi}(t) + R(t)$, satisfying the orthogonality condition
\be
P_0(W_{\pi}(t)) R(t) = 0.
\ee
By Lemma \ref{lemma_11}, there exist $R_0 \in \mc N_0(W_{\pi}(0))$ and $h$ such that
\be
R(0) = R_0 + h_0 F^+(W_{\pi}(0)).
\ee

Then, we note that both the initial data
\be
\Psi(0) = W_{\pi}(0) + R_0 + h_0 F^+(W_{\pi}(0))
\ee
and the initial data
\be
\tilde \Psi(0) = W_{\pi}(0) + R_0 + h(R_0, W_{\pi}(0)) F^+(W_{\pi}(0))
\ee
give rise to small asymptotically stable solutions, in the form
\be
\Psi(t) = W_{\pi}(t) + R(t),\ \tilde \Psi(t) = W_{\tilde \pi}(t) + \tilde R(t).
\ee
The perturbation Lemma \ref{lemma_5} then applies, implying that
\be
\|(R, \pi) - (\tilde R, \tilde \pi)\|_Y \leq C \delta \|(R, \pi) - (\tilde R, \tilde \pi)\|_Y.
\ee
Otherwise put, we obtain that $(R, \pi) = (\tilde R, \tilde \pi)$. Applying the lemma once more, it follows that the $h$ values coincide as well, that is $h_0 = h(R_0, W_{\pi}(0))$. Therefore
\be
\Psi(0) = \tilde \Psi(0) = W_{\pi}(0) + R_0 + h(R_0, W_{\pi}(0)) F^+(W_{\pi}(0))
\ee
and thus $\Psi(0)$ belongs to $\mc N$.
\end{proof}

\begin{corollary} If $\Psi$ is a solution to (\ref{NLS}) whose initial data $\Psi(0)$ belongs to $\mc N$, then $\Psi(t)$ also belongs to $\mc N$ for all positive $t$ and for sufficiently small negative $t$.
\lb{stable}
\end{corollary}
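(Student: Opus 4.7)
The plan is to combine the characterization of $\mc N$ given by Proposition \ref{comparatie} with the obvious invariance of Definition \ref{definition_4} under forward time translation.

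For the positive-time assertion, let $\Psi(0)\in\mc N$. By the preceding main existence result the solution exists globally for $t\ge 0$ and admits a decomposition $\Psi(t)=W_\pi(t)+R(t)$ with $\|(R,\pi)\|_X<\delta_0$ and $P_0(W(\pi(t)))R(t)=0$. For any fixed $t_0>0$, I would define the translated path $\tilde\pi(s)=(\alpha(t_0+s),\tilde\Gamma(s),v(t_0+s),\tilde D(s))$, where $\tilde\Gamma$ and $\tilde D$ absorb the additive constants $\int_0^{t_0}(\alpha^2-|v|^2)\,du$ and $2\int_0^{t_0}v\,du$ in such a way that the formula (\ref{1.1}) gives $w_{\tilde\pi}(s)=w_\pi(t_0+s)$. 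Setting $\tilde R(s)=R(t_0+s)$, the map $s\mapsto\Psi(t_0+s)$ is then a solution of (\ref{NLS}) on $[0,\infty)$ with decomposition $W_{\tilde\pi}(s)+\tilde R(s)$ satisfying the orthogonality condition, and
\[
\|\tilde R\|_{L^\infty_s\dot H^{1/2}_x\cap L^2_s\dot W^{1/2,6}_x}+\|\dot{\tilde\pi}\|_{L^1_s}\le\|(R,\pi)\|_X<\delta_0,
\]
since each norm on the left is simply the restriction of the corresponding norm on the right to $[t_0,\infty)$. Thus $\Psi(t_0)$ is the initial value of a small asymptotically stable solution in the sense of Definition \ref{definition_4}, and Proposition \ref{comparatie} immediately yields $\Psi(t_0)\in\mc N$.

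For the negative-time statement I would invoke local well-posedness of (\ref{NLS}) in $\dot H^{1/2}$ in a neighborhood of the smooth soliton manifold, producing, for some $T'>0$, an extension of $\Psi$ to $[-T',\infty)$ with $\Psi(t)$ remaining in a small $\dot H^{1/2}$-neighborhood of $\mc M$ for all $t\in[-T',0]$. Lemma \ref{lemma_10} then lets me project $\Psi(-T')$ onto $\mc M$, and running the modulation construction of Section 2.1 forward from $t=-T'$ yields a decomposition $\Psi(t)=W_{\pi'}(t)+R'(t)$ on $[-T',\infty)$ satisfying the orthogonality condition and the modulation equations (\ref{mod}). Local Strichartz bounds on $[-T',0]$, combined with the already-available global bound on $[0,\infty)$ from $\Psi(0)\in\mc N$, show that the shifted solution $s\mapsto\Psi(-T'+s)$ has $X$-norm strictly below $\delta_0$ provided $T'$ is taken sufficiently small: both the short-interval Strichartz contribution for $R'$ and the corresponding $L^1$-contribution of $\dot\pi'$ on $[-T',0]$ are $O(T')$ and hence absorbable. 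Proposition \ref{comparatie} then gives $\Psi(-T'+s)\in\mc N$ for every $s\ge 0$, and in particular $\Psi(t)\in\mc N$ for all $t\in[-T',0]$, completing the proof.

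The main obstacle is the negative-time half. The forward-time argument is essentially a translation trick using only the monotonicity of $\|(R|_{[t_0,\infty)},\pi|_{[t_0,\infty)})\|_X$ in $t_0$. Going backwards, however, requires a separate local well-posedness argument to continue $\Psi$ below $t=0$, together with a quantitative matching so that the full $X$-norm of the shifted decomposition remains strictly under the threshold $\delta_0$ needed to invoke Proposition \ref{comparatie}; once this local continuation and smallness are in hand the rest of the proof reduces to a single application of that proposition.
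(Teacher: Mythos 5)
Your argument is correct and is essentially the paper's own proof, which is stated in two sentences: $\Psi(t)$ exists for all positive $t$ and for small negative $t$ by local existence theory, gives rise to a small asymptotically stable solution, and hence lies on $\mc N$ by Proposition \ref{comparatie}. Your write-up simply fills in the details (the forward translation of the decomposition and the backward local continuation plus re-modulation) that the paper leaves implicit.
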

\begin{proof}
Clearly, both for positive $t$ and for sufficiently small negative $t$ $\Psi(t)$ exists (due to the local existence theory, for negative $t$) and gives rise to a small asymptotically stable solution. Then, the previous proposition shows that $\Psi(t)$ must still be on the manifold.
\end{proof}

To recapitulate, we have investigated the properties of five maps that describe the solution of the nonlinear problem (\ref{NLS}), namely
\begin{align}
h(R_0, W_0)&: \mc N_0 \to \set R, \\
\mc F(R_0, W_0)&: \mc N_0 \to \mc N,\ \mc F(R_0, W_0) = \tilde F(W_0, R_0, h(R_0, W_0)),
\intertext{and the solution itself,}
\Psi=\Psi(R_0, W_0)&: \mc N_0 \to L^{\infty}_t \dot H^{1/2}_x \cap L^2_t \dot W^{1/2, 6/5}_x,\ \Psi(t) = W_{\pi}(t) + R(t), \\
\pi = \pi(R_0, W_0)&: \mc N_0 \to \dot W^{1, 1}_t, \\
R = R(R_0, W_0)&: \mc N_0 \to L^{\infty}_t \dot H^{1/2}_x \cap L^2_t \dot W^{1/2, 6/5}_x.
\end{align}
It turns out that all five are real analytic and one can also replace $\mc N_0$ by $\mc N$ --- change the variable to $\Psi(0)$, that is.
\begin{proposition} For fixed $W_0$ there exists $\rho>0$ such that the maps $h$, $\mc F$ considered as a map into $e^{t\rho}L^{\infty}_t \dot H^{1/2}_x$, $(R, \pi)$ regarded as a map into $Y$ (see (\ref{Y})), and $\Psi$, seen as map into $e^{t\rho} (L^{\infty}_t \dot H^{1/2}_x \cap L^2_t \dot W^{1/2, 6}_x)$, are real analytic in the variable $R_0$ on the fibre $\mc N_0(W_0)$ or, equally, in the variable $\Psi(0)$ on the fibre $\mc N(W_0)$.
\end{proposition}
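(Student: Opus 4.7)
The proposition upgrades the Lipschitz conclusions of Proposition \ref{proposition_10} to real analyticity. My plan is to feed the contraction of Sections 2.3--2.4 into an analytic fixed-point theorem, after verifying that the underlying map is itself real analytic between the relevant Banach spaces. Fix $W_0 = W(\pi_0)$ and $\rho \in (0, a_1)$ as in Section \ref{Chapter_2.4}. Consider the map
\be\nonumber
\Upsilon: U \subset \dot H^{1/2} \times Y \to Y, \qquad (R_0, (R^0, \pi^0)) \longmapsto (R, \pi),
\ee
defined by solving the linearized system (\ref{ec_liniara}) with initial data $R(0) = R_0 + h(R_0, R^0, \pi^0) F^+(W_0)$, $\pi(0) = \pi_0$, where $h$ is given by the explicit integral formula (\ref{2.100}). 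Lemma \ref{lemma_5} already shows that for small enough $\delta_0$, $\Upsilon(R_0, \cdot)$ is a strict contraction on a ball in $Y$, uniformly in $R_0$ near $0$.

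\textbf{Step 1: $\Upsilon$ is real analytic.} The linearized system has three ingredients: (i) the time-dependent Hamiltonian $\mc H_{\pi^0}(t)$, (ii) the source $N(R^0, W_{\pi^0}) - N_{\pi^0}(R^0, W_{\pi^0})$, which is polynomial of degree three in $R^0$ with coefficients built from $W_{\pi^0(t)}$, $F^{\pm}(W_{\pi^0(t)})$, $\Xi_f(W_{\pi^0(t)})$ and their $\pi$-derivatives, and (iii) the linear solution operator for $i\partial_t + \mc H_{\pi_0}$ on the exponentially weighted Strichartz space underlying $Y$. Ingredient (iii) is bounded and linear by Theorem \ref{theorem_13} (the factor $e^{t\rho}$ is harmless since $\rho < a_1$). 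Ingredients (i)--(ii) rest on $\pi \mapsto W(\pi)$ and $\pi \mapsto F^+(W(\pi))$; the former is real analytic in every Schwartz seminorm with the geometric bounds (\ref{analitic}) of Lemma \ref{lemma_2.10}, and the same argument applied to the eigenvalue problem $\mc H(W) F^+ = i \sigma F^+$ yields analogous analyticity of $F^+(W(\pi))$ and $\sigma(W(\pi))$. The multilinear Strichartz estimates of the form (\ref{2.49}) -- with one factor placed in a non-endpoint Strichartz space to bypass the failure of $\dot W^{1/2, 6} \hookrightarrow L^{\infty}$ -- promote these pointwise Schwartz bounds to bounded multilinear maps on $Y$. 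Formula (\ref{2.100}) for $h$ converges absolutely because $\rho < a_1 \leq \sigma$, and its integrand is likewise a composition of the analytic ingredients just described. Composition preserves analyticity, so $\Upsilon$ is real analytic on a neighbourhood of zero.

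\textbf{Step 2: Analytic contraction and change of variables.} Since $\Upsilon$ is jointly real analytic and, by Lemma \ref{lemma_5}, $\|\partial_{(R^0, \pi^0)}\Upsilon\| \leq \tfrac{1}{2}$ on the relevant ball, the derivative $I - \partial_{(R^0, \pi^0)} \Upsilon$ is a Banach space isomorphism. The analytic implicit function theorem (applied to the equation $(R^0, \pi^0) - \Upsilon(R_0, R^0, \pi^0) = 0$) produces a unique real analytic fixed point $R_0 \mapsto (R, \pi)(R_0) \in Y$. Substituting back into (\ref{2.100}) yields $R_0 \mapsto h(R_0, W_0)$ real analytic, hence $\mc F(R_0, W_0) = W_0 + R_0 + h(R_0, W_0) F^+(W_0)$ is analytic into $\dot H^{1/2}$ (trivially embedded as constant functions into $e^{t\rho} L^{\infty}_t \dot H^{1/2}_x$). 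For the full solution $\Psi = W_{\pi} + R$, the map $\pi \mapsto W_{\pi}$ is real analytic from $\dot W^{1, 1}_t$ into $e^{t\rho}(L^{\infty}_t \dot H^{1/2}_x \cap L^2_t \dot W^{1/2, 6}_x)$, since Lemma \ref{lemma_2.10} provides pointwise analyticity in every Schwartz seminorm and $|\pi(t) - \pi_0| \leq \|\dot \pi\|_1$ is uniformly small so the weight poses no obstruction. To transfer analyticity from $R_0 \in \mc N_0(W_0)$ to $\Psi(0) \in \mc N(W_0)$, invoke Lemma \ref{lemma_11}: $\tilde F$ is real analytic with $d\tilde F$ an isomorphism at $(W_0, R_0, 0)$, so the analytic inverse function theorem supplies a real analytic local inverse, and composition preserves analyticity.

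\textbf{Main obstacle.} The substantive work is concentrated in Step 1, specifically verifying the bounds $\|f_n\|_{A^{\otimes n} \to B} \leq C_1 C_2^n$ on every multilinear coefficient of $\Upsilon$. Although the nonlinearity is only cubic in $R^0$, the dependence on $\pi^0$ via $W_{\pi^0}$, $F^{\pm}(W_{\pi^0})$, and the spectral projections contributes all orders to the Taylor expansion, and matching the attendant factorials is precisely what the bound $C_1 C_2^{|\beta|} \prod_f \beta_f!$ in (\ref{analitic}) is designed to supply. A secondary bookkeeping point is the endpoint Sobolev issue flagged after (\ref{2.49}): at each order one redistributes a single factor into a non-endpoint Strichartz norm such as $L^4_t \dot W^{1/2, 3}_x$, preserving the geometric constants. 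With these estimates in place, the analytic contraction principle closes the argument.
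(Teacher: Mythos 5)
Your strategy---feed the contraction into the analytic implicit function theorem rather than build the power series by hand---is a legitimate alternative to the paper's argument, which constructs the differentials $R^n$, $\pi^n$, $h^n$ recursively and closes an induction on their norms. But the proposal has a genuine gap exactly where you park the difficulty: Step 1's claim that $\Upsilon$ is real analytic as a map into the fixed space $Y$. The obstruction is not the factorials controlled by (\ref{analitic}); that bound concerns the \emph{static} soliton $w(\pi)$. The map actually entering the equation is $\pi^0 \mapsto W_{\pi^0}(t)$, the \emph{moving} soliton, whose arguments contain $\int_0^t ((\alpha^0)^2 - |v^0|^2)\,ds$ and $\int_0^t v^0\, ds$. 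Perturbing $\pi^0$ therefore perturbs these arguments by amounts growing linearly in $t$, and the $n$-th differential of $W_{\pi^0}$ in the $\Gamma$ and $D_k$ directions carries a factor $\langle t\rangle^{n}$ (this is the content of (\ref{2.229}), which is strictly stronger than (\ref{analitic})). Consequently the $n$-linear Taylor coefficients of your ingredients (i)--(ii) do not map into a fixed weighted space with geometric constants $C_1 C_2^n$; they map into the growing scale $A_n(t)\,L^\infty_t S_x$ with $A_n(t) = \sum_{j\le n}\langle at\rangle^j/j!$. "Composition preserves analyticity" is therefore not available off the shelf: one must first show that the secular factors can be resummed, i.e.\ that $A_n(t) < e^{\langle at\rangle}$ uniformly in $n$ for some $a$ strictly below $a_1$, so that every coefficient lands in $e^{t\rho}(\cdot)$ with a uniform constant, and that the products of weights recombine correctly ($\sum_j A_j A_{n-j} \le C^n A_n$, and the powers of $t$ in the $j$-fold compositions add up to at most $n-1$). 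This bookkeeping, together with the scale of spaces $X^n$ and the cancellation $F(\pi^0,R^0) = dF(\pi^0,R^0)=0$ at the base point (which removes the top-order terms from the right-hand side of the hierarchy), is the substance of the paper's proof and is absent from yours.

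A secondary point: your appeal to $\|\partial_{(R^0,\pi^0)}\Upsilon\|\le \tfrac12$ from Lemma \ref{lemma_5} gives invertibility of $I-\partial\Upsilon$ only in the $Y$-topology, so the implicit function theorem you invoke must be the analytic one \emph{in $Y$}; this again presupposes the analyticity of $\Upsilon$ on $Y$ that Step 1 has not delivered. If you supply the resummation of the $\langle t\rangle^n$ factors into the exponential weight (using that the soliton is entire of exponential type in $\Gamma$ and $D_k$, and only strip-analytic in $\alpha$ and $v_k$), your route does close, and is arguably cleaner than the paper's explicit induction; without it, the argument does not establish the geometric bounds that the definition of analyticity requires.
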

Letting $W_0$ vary, note that the dependence of these maps on $W_0$ (across the different fibers of $\mc N_0$) is given by symmetry transformations, which are also analytic in a properly considered setting.

Also note that the analytic dependence of the solution on initial data and the analyticity of the manifold are closely tied to that of the nonlinearity in (\ref{NLS}). If the nonlinearity were of class $C^k$, we would expect that the manifold and the dependence on initial data would also be of class $C^k$.
\begin{proof} Firstly note that, since $\tilde F$ is analytic, it suffices to show that $h$ and $(R, \pi)$ are analytic.

The proof consists in the following steps: firstly we exhibit the first-order differential of $h$ and $(R, \pi)$ and show that it satisfies the definition of differentiability, meaning that the remainder in the Taylor expansion is quadratic in size. By recursion, we then introduce the $n$-th order differentials for arbitrary $n$.

Finally, we show that differentials and remainders in the Taylor expansion grow in norm at most exponentially, enabling us to conclude that $h$ and $(R, \pi)$ equal the sum of their own Taylor series within a sufficiently small convergence radius.

To begin with, for given initial data $W_0=W(\pi_0)$ and $R_0$, denote the corresponding solution to (\ref{NLS}) by $\Psi = W_{\pi} + R$. $R$ and $\pi$ satisfy the equation system, in the form (\ref{ec_liniara}),
\be\begin{aligned}\lb{ec_r0}
&i \partial_t R + \mc H_{\pi}(t) R = F(\pi, R), \\
&\dot f = F_f(\pi, R),\ f \in \{\alpha, \Gamma, v_k, D_k\},
\end{aligned}\ee
where
\be\begin{aligned}
\mc H_{\pi}(t) &= \Delta \sigma_3 + V_{\pi}(t),\\
F(\pi, R) &= -i L_{\pi} R + N(R, W_{\pi}) - N_{\pi}(R, W_{\pi}),
\end{aligned}\ee
and we introduced the notation
\be
F_f(\pi, R) = \left\{\begin{aligned}
4\alpha \|W_{\pi}\|_2^{-2} &(\langle R, (d_{\pi} \Xi_{f}(W_{\pi})) \dot \pi \rangle \\
&- i\langle N(R, W_{\pi}), \Xi_{f}(W_{\pi})\rangle),\ f \in \{\alpha, \Gamma\}\\
2\|W_{\pi}\|_2^{-2} &(\langle R, (d_{\pi} \Xi_{f}(W_{\pi})) \dot \pi \rangle \\
&- i\langle N(R, W_{\pi}), \Xi_{f}(W_{\pi})\rangle),\ f \in \{v_k, D_k\}.
\end{aligned}\right.
\ee
The initial data are given by
\be
R(0) = R_0 + h(R_0, W_0) F^+(W_0),\ \pi(0) = \pi_0.
\ee
This setup makes the orthogonality condition
\be\lb{2.207}
\langle R(t), \Xi_f(W_{\pi}(t)) \rangle = 0
\ee
valid for all times. Indeed, it holds initially at time $t=0$ and equation (\ref{ec_r0}) ensures that it still holds at any other time.
We aim to find an infinite expansion of the form
\be
R = R^0 + R^1 + R^2 + \ldots,\ \pi = \pi^0 + \pi^1 + \pi^2 + \ldots,\ h(R_0, W_0) = h^0 + h^1 + h^2 + \ldots,
\ee
where $R^n = n! d^nR$, $\pi^n = n! d^n\pi$, and $h^n = n! d^nh$ are the $n$-th order terms in the power expansions of $R$, $\pi$, and $h$, given by $n$-linear expressions in $R_0$, such that
\be\begin{aligned}
\|R - R^0 - R^1 - \ldots - R^n\|_{e^{t\rho}(L^{\infty}_t \dot H^{1/2}_x \cap L^2_t \dot W^{1/2, 6}_x)} &\leq C_n \|R_0\|_{\dot H^{1/2}}^{n+1}, \\
\|\dot \pi - \dot \pi^0 - \dot \pi^1 - \ldots - \dot \pi^n\|_{e^{t\rho} L^1_t} &\leq C_n \|R_0\|_{\dot H^{1/2}}^{n+1}, \\
|h - h^0 - h^1 - \ldots - h^n| &\leq C_n \|R_0\|_{\dot H^{1/2}}^{n+1}.
\end{aligned}\ee
Showing, in addition, that $C_n \leq C_1 C_2^n$ for some constants $C_1$, $C_2$ and every $n$ guarantees the analyticity of $R$, $\pi$, and $h$.

For technical reasons, it is easier to construct the differentials and prove analyticity at zero. Analyticity will then follow at every point within the radius of convergence.

The constant terms in the expansion are
\be
R^0(t) = 0,\ \pi^0(t) = \pi_0 = (1, 0, 0,0),\ h^0=0.
\ee

The natural guess is that first-order differentials $R^1$ and $\pi^1$ satisfy the following linearized version of (\ref{ec_r0}):
\be\begin{aligned}
&i \partial_t R^1 + \mc H_{\pi^0}(t) R^1 = (\partial_{\pi} F(\pi^0, R^0)) \pi^1 + (\partial_R F(\pi^0, R^0)) R^1 - (\partial_{\pi} V_{\pi^0})(t) \pi^1 R^0 \\
&\dot {f^1} = (\partial_{\pi} F_f(\pi^0, R^0)) \pi^1 +  (\partial_R F_f(\pi^0, R^0)) R^1,\ f \in \{\alpha, \Gamma, v_k, D_k\},
\end{aligned}\ee
with initial data
\be
R^1(0) = R_0 + h^1 F^+(W_0),\ \pi^1(0) = 0.
\ee
It is important to note that, because $R^0=0$, $\dot \pi^0 = 0$, the following terms cancel:
\be\begin{aligned}
&(\partial_{\pi} V_{\pi^0})(t) \pi^1 R^0 = 0,\ \partial_{\pi} F_f(\pi^0, R^0) = 0,\ \partial_R F_f(\pi^0, R^0) = 0, \\
&\partial_{\pi} F(\pi^0, R^0) = 0,\ \partial_R F(\pi^0, R^0) = 0.
\end{aligned}\ee
This improvement only holds when taking the differential at zero and is due to the fact that the nonlinearity is of order higher than two.

It follows that $\pi^1=0$ and
\be\begin{aligned}\lb{2.213}
&i \partial_t R^1 + \mc H_{\pi^0}(t) R^1 = 0.
\end{aligned}\ee
The orthogonality condition
\be
\langle R, \Xi_f(W_{\pi}) \rangle = 0
\ee
becomes
\be
\langle R^1, \Xi_f(W_{\pi^0}) \rangle + \langle R^0, (\partial_{\pi} \Xi_f(W_{\pi^0})) \pi^1 \rangle = 0;
\ee
consequently it still holds for $R^1$:
\be\lb{2.216}
\langle R^1, \Xi_f(W_{\pi^0}) \rangle = 0.
\ee
By means of Strichartz estimates we obtain, in the same manner as in the proof of stability, that $R^1$ is bounded for a unique value of $h^1$ and then it satisfies
\be
\|R^1\|_{L^{\infty}_t \dot H^{1/2}_x \cap L^2_t \dot W^{1/2, 6}_x} \leq C \|R_0\|_{\dot H^{1/2}}.
\ee
Note that $\|R^1\|_{L^{\infty}_t \dot H^{1/2}_x \cap L^2_t \dot W^{1/2, 6}_x}$ can be made arbitrarily small, but $\|\dot \pi^1\|_{L^1_t}=0$.

To show that $R^1$ thus constructed and $\pi^1=0$ are indeed the first-order differentials of $R$ and $\pi$, consider
\be
S^1 = R^0 + R^1, \Sigma^1 = \pi^0 + \pi^1.
\ee
If $F(\pi, R)$, $V_{\pi}$, and $F_f(\pi, R)$ were differentiable, it should be the case that
\be\begin{aligned}\lb{2.221}
F(\Sigma^1, S^1) &= F(R^0, \pi^0) + (\partial_{\pi} F(\pi^0, R^0)) \pi^1 + (\partial_R F(\pi^0, R^0)) R^1 + o_1(R^1, \pi^1), \\
V_{\Sigma^1} &= V_{\pi^0} + \partial_{\pi} V_{\pi^0} \pi^1 + o_2(\pi^1), \\
F_f(\Sigma^1, S^1) &= F_f(\pi^0, R^0) + (\partial_{\pi} F_f(\pi^0, R^0)) \pi^1 + (\partial_R F_f(\pi^0, R^0)) R^1 + o_3(R^1, \pi^1).
\end{aligned}\ee
In fact, even more is true: all three quantities are analytic, as one sees by examining their explicit forms.

The main issue is that the soliton $W_{\pi}(t)$ depends not only on the values of $\pi$ and $\dot \pi$ at time $t$, but also on the integral $\int_0^{t} \pi(s) \dd s$. Thus, even though $W_{\pi}$ depends analytically on $\pi$, each derivative produces a factor of $t$.

In this setting, the error terms in (\ref{2.221}) are quadratic:
\be\begin{aligned}
 \|o_1(R^1, \pi^1)\|_{\langle t \rangle L^2_t W^{1/2, 6/5}_x} &\leq C \|(R^1, \pi^1)\|_X^2 \\
 \|o_2(\pi^1)\|_{\langle t \rangle L^{\infty}_t (\dot W^{1/2, 6/5-\epsilon} \cap \dot W^{1/2, 6/5+\epsilon})} &\leq C \|(R^1, \pi^1)\|_X^2 \\
\|o_3(R^1, \pi^1)\|_{\langle t \rangle L^1_t} &\leq C \|(R^1, \pi^1)\|_X^2.
\end{aligned}\ee
Therefore
\be\begin{aligned}\lb{2.225}
&i \partial_t S^1 + \mc H_{\Sigma^1}(t) S^1 = F(\Sigma^1, S^1) + O^2(R^1, \pi^1) \\
&\dot f_{\Sigma^1} = F_f(\Sigma^1, S^1) + O_f^2(R^1, \pi^1),\ f \in \{\alpha, \Gamma, v_k, D_k\},
\end{aligned}\ee
where $O^2$, $O_f^2$ are error terms bounded by
\be
\|O^2(R^1, \pi^1)\|_{\langle t \rangle L^2_t W^{1/2, 6/5}_x} + \|O_f^2(R^1, \pi^1)\|_{\langle t \rangle L^1_t} \leq C \|(R^1, \pi^1)\|_X^2.
\ee
Following (\ref{2.216}), $S^1$ fulfills the orthogonality relation
\be
\langle S^1(t), \Xi_f(W_{\Sigma^1}(t)) \rangle = 0.
\ee
Comparing the equation system (\ref{2.225}) satisfied by $S^1$ and $\Sigma^1$ with the one satisfied by $R$ and $\pi$, (\ref{ec_r0}), we obtain that
\be
\|(R, \pi) - (S^1, \Sigma^1)\|_Y \leq C \|(R^1, \pi^1)\|_X^2 \leq C  \|R_0\|_{\dot H^{1/2}}^2.
\ee
The proof exactly follows that of Lemma \ref{lemma_5}.

We repeat this procedure for the higher-order terms in the expansion. 
Let $n \geq 1$ and consider a variation $\pi = \pi^0 + \delta \pi$ of $\pi^0$ such that $\delta \pi(0)=0$. Recall that solitons depend analytically on parameters: for any Schwartz seminorm $S$,
\be
\|d^n_{\pi} W(\pi^0)(\delta \pi))\|_S \leq C_1 C_2^n n! \|\delta \dot \pi\|_1^n.
\ee
However, by (\ref{analitic}) only derivatives in $\alpha$ and $v_k$ produce a factorial contribution:
$$
\|\partial^{\beta_{\alpha}}_{\alpha} \partial^{\beta_{\Gamma}}_{\Gamma} \partial^{\beta_{v_k}}_{v_k} \partial^{\beta_{D_k}}_{D_k} w\|_S \leq C_1 C_2^{|\beta|} \prod_{f \in \{\alpha, v_k\}} \beta_f!.
$$
Thus we obtain for $W_{\pi^0}$ given by (\ref{8}) and (\ref{w0})
$$\begin{aligned}
W_{\pi^0}(t) &= \bpm w_{\pi^0}(t) \\ \ov w_{\pi^0}(t) \epm,\\
w_{\pi^0}(t) &= w\bigg(\alpha^0(t), \Gamma^0(t) + \int_0^t ((\alpha^0(s))^2-|v^0(s)|^2) \dd s, v^0(t), D(t) + 2\int_0^t v^0(s) \dd s \bigg)
\end{aligned}$$
--- and likewise for $\partial_f W_{\pi^0}$, $F^+(W_{\pi^0})$, and all other quantities that depend on the moving soliton --- the following explicit expression of analyticity:
\be\lb{2.229}
\|\partial^{\beta_{\alpha}}_{\alpha} \partial^{\beta_{\Gamma}}_{\Gamma} \partial^{\beta_{v_k}}_{v_k} \partial ^{D_k}_{D_k} W_{\pi^0}(t)(\delta \pi))\|_S \leq C_1 C_2^{|\beta|} \prod_{f \in \{\alpha, v_k\}} \beta_f! \prod_{f \in \{\Gamma, D_k\}} \langle t \rangle^{\beta_f} \|\delta \dot \pi\|_1^{|\beta|}.
\ee
As noted before, this is an improvement over mere analyticity (and one that is necessary in the sequel), in that, while in half the variables $W$ is only analytic on a strip, in the other half $W$ extends to an analytic function of exponential type on the whole complex plane.

At our discretion, we pick parameters $a$ and $a_1$ such that $a< a_1 \leq\sigma(\alpha(t))$, for any time $t$, where $\alpha(t)$ is the scaling component of $\pi(t)$ and $\pi$ is any path that appears in this proof (their scaling parameters are uniformly bounded away from zero). Define the weights
\be\lb{2.230}
A_n(t) = \sum_{j=0}^n \frac{\langle at \rangle^j}{j!} < e^{\langle at \rangle}.
\ee
Then (\ref{2.229}) implies
\be
W_{\pi^0} = W_{\pi^0}^0 + W_{\pi^0}^1 + \ldots
\ee
such that in any Schwartz seminorm $S_x$, in the space variables only,
\be
\|W^n_{\pi^0}(\delta \pi)\|_{A_n(t) L^{\infty}_t S_x} \leq C_1 C_2^n \|\delta \dot \pi\|_1^n.
\ee
This immediately yields a power series expansion for $V_{\pi^0}$:
\be
V_{\pi^0} = V_{\pi^0}^0 + V_{\pi^0}^1 + \ldots,
\ee
with $V^n_{\pi^0}$ $n$-linear for each $n$ and
\be
\|V^n_{\pi^0}(\delta \pi)\|_{A_n(t) L^{\infty}_t (\dot W^{1/2, 6/5-\epsilon} \cap \dot W^{1/2, 6/5+\epsilon})} \leq C_1 C_2^n \|\delta \dot \pi\|_1^n.
\ee
The weights $A_n(t)$ given by (\ref{2.230}) have the property that
\be
\sum_{j=0}^n A_j(t) A_{n-j}(t) \leq C^n A_n(t).
\ee
Consequently, consider the spaces $\partial_t^{-1} A_n(t) L^1_t$. In this setting, for
\be
m_1 + \ldots + m_n = m,
\ee
and given $n$ path variations $\delta \pi_1$ to $\delta \pi_n$ such that $\delta \pi_j = 0$, $j=\ov{1, n}$, one has that
\be\begin{aligned}
\|V^n(\delta \pi_1, \ldots, \delta \pi_n)\|_{A_{m+n}(t) L^{\infty}_t (\dot W^{1/2, 6/5-\epsilon} \cap \dot W^{1/2, 6/5+\epsilon})} \leq \\
\leq C_1 C_2^n \|\delta \dot \pi_1\|_{A_{m_1}(t) L^1_t} \cdot \ldots \cdot \|\delta \dot \pi_n\|_{A_{m_n}(t) L^1_t}.
\end{aligned}\ee
Considering the explicit form of $F(R, \pi)$ and $F_f(R, \pi)$, note again that
\be
F(R^0, \pi^0) = dF(R^0, \pi^0) = 0,\ F_f(R^0, \pi^0) = dF_f(R^0, \pi^0) = 0.
\ee
This is due to linearizing around zero ($R^0 = 0$, $\dot \pi^0 = 0$), starting from a nonlinearity of order higher than two.

In fact, one needs to take two derivatives in $R$ or $\dot \pi$ before arriving at a nonzero derivative of $F$ or $F_f$. 
We obtain the power series expansion
\be
F = F^0 + F^1 + \ldots,\ F_f = F_f^0 + F_f^1 + \ldots,
\ee
where $F^n$ and $F_f^n$ are $n$-linear and, for $X$ defined by (\ref{X}),
\be\begin{aligned}
\|F^n\|_{A_{n-2}(t) L^2_t \dot W^{1/2, 6/5}_x} &\leq C_1 C_2^n \|(\delta R_1, \delta \pi_1)\|_X \cdot \ldots \cdot \|(\delta R_n, \delta \pi_n)\|_X, \\
\|F_f^n\|_{A_{n-2}(t) L^1_t} &\leq C_1 C_2^n \|(\delta R_1, \delta \pi_1)\|_X \cdot \ldots \cdot \|(\delta R_n, \delta \pi_n)\|_X.
\end{aligned}\ee
The gain from $n$ to $n-2$ is due to the fact that the order of the derivative in $\pi$ is two less than the total order.

In the scale of weighted spaces
\be
X^n=\{(R, \pi) \mid R \in A_n(t) (L^{\infty}_t \dot H^{1/2}_x \cap L^2_t W^{1/2, 6}_x), \dot \pi \in A_n(t) L^1_t\}
\ee
and with the notation
\be
m_1 + \ldots + m_n = m,
\ee
one also has that
\be\begin{aligned}
\|F^n\|_{A_{m+n-2}(t) L^2_t \dot W^{1/2, 6/5}_x} &\leq C_1 C_2^n \|(\delta R_1, \delta \pi_1)\|_{X^{m_1}} \cdot \ldots \cdot \|(\delta R_n, \delta \pi_n)\|_{X^{m_n}} \\
\|F^n\|_{A_{m+n-2}(t) L^1_t} &\leq C_1 C_2^n \|(\delta R_1, \delta \pi_1)\|_{X^{m_1}} \cdot \ldots \cdot \|(\delta R_n, \delta \pi_n)\|_{X^{m_n}}.
\end{aligned}\ee


Then, in the Taylor expansion of $F(\pi, R)$, where $\pi = \pi^0 + \pi^1 + \ldots $ and $R = R^0 + R^1 + \ldots$ are functions of $R_0$, the $n$-th order term is
\be
\sum_{j=1}^n \sum_{n_1 + \ldots + n_j=n} F^j((R^{n_1}, \pi^{n_1}), \ldots, (R^{n_j}, \pi^{n_j})).
\ee
By writing only the $n$-th order terms in the Taylor expansion of (\ref{ec_r0}), we see that $R^n$ and $\pi^n$ fulfill the equation
\be\begin{aligned}\lb{2.231}
i \partial_t R^n + \mc H_{\pi^0}(t) R^n &= \sum_{j=2}^n \sum_{n_1 + \ldots + n_j=n} F^j((R^{n_1}, \pi^{n_1}), \ldots, (R^{n_j}, \pi^{n_j})) \\
&- \sum_{j=1}^n \sum_{n_1 + \ldots + n_j + \tilde n=n} (\partial_{\pi}^j V_{\pi^0})(t) (\pi^{n_1}, \ldots, \pi^{n_j}) R^{\tilde n} \\
\dot f^n &= \sum_{j=2}^n \sum_{n_1 + \ldots + n_j=n} F_f^j((R^{n_1}, \pi^{n_1}), \ldots, (R^{n_j}, \pi^{n_j})).
\end{aligned}\ee
with initial conditions $R^n(0) = h^n F^+(W_0)$ and $\pi^n(0)=0$.

Since the expansion is around zero, note again the cancellations of all terms containing $R^0$ or $\pi^0$:
\be\begin{aligned}\lb{2.246}
F^1\mid_{(\pi^0, R^0)} = 0,\ F_f^1\mid_{(\pi^0, R^0)} = 0,\ (\partial_{\pi} V_{\pi^0})(t) \pi^n R^0 = 0.
\end{aligned}\ee
This means that highest-order terms, depending on $R^n$ and $\pi^n$, are completely absent from the right-hand side, which is consequently given by combinations of lower-order terms.

We assume that the \emph{induction hypothesis}
\be
\|(R^{m}, \pi^{m})\|_{X^{m-1}} \leq k_1 k_2^m \|R_0\|_{\dot H^{1/2}}^m
\ee
holds for every $m<n$ and some constants $k_1$ and $k_2$ to be established later. Then the right-hand sides of equations (\ref{2.231}) obey the bounds
\be\begin{aligned}\lb{2.248}
\bigg\|\sum_{j=2}^n \sum_{n_1 + \ldots + n_j=n} F^j((R^{n_1}, \pi^{n_1}), \ldots, (R^{n_j}, \pi^{n_j}))\bigg\|_{A_{n-1}(t) L^2_t \dot W^{1/2, 6/5}_x} \leq \\
\leq C_1 \sum_{j=2}^n (C_2 k_1) ^j k_2^n \|R_0\|_{\dot H^{1/2}}^n
\end{aligned}\ee
and likewise for the other two terms, that is
\be\begin{aligned}\lb{2.249}
\bigg\|\sum_{j=1}^n \sum_{n_1 + \ldots + n_j + \tilde n=n} (\partial_{\pi}^j V_{\pi^0})(t) (\pi^{n_1}, \ldots, \pi^{n_j}) R^{\tilde n} \bigg\|_{A_{n-1}(t) L^2_t \dot W^{1/2, 6/5}_x} \leq \\
\leq C_1 \sum_{j=2}^n (C_2 k_1) ^j k_2^n \|R_0\|_{\dot H^{1/2}}^n,
\end{aligned}\ee
respectively
\be\begin{aligned}\lb{2.250}
\bigg\| \sum_{j=2}^n \sum_{n_1 + \ldots + n_j=n} F_f^j((R^{n_1}, \pi^{n_1}), \ldots, (R^{n_j}, \pi^{n_j})) \bigg\|_{A_{n-1}(t) L^1_t} \leq \\
\leq C_1 \sum_{j=2}^n (C_2 k_1) ^j k_2^n \|R_0\|_{\dot H^{1/2}}^n.
\end{aligned}\ee
Importantly, summation starts from $j=2$ on the right-hand side, because the terms involving first-order differentials vanish, following (\ref{2.246}).

The powers of $t$ add up in correctly, since
\be
j-2 + \sum_{j=2}^n (n_j-1) = n-2 < n-1.
\ee
By making $k_1$ sufficiently small, we can get the sum $\sum_{j=2}^n (C_2 k_1) ^j$ to be bounded and small without regard to $n$ and thus we can put
\be
\tilde C k_1^2 k_2^n \|R_0\|_{\dot H^{1/2}}^n,
\ee
with arbitrarily small $\tilde C$, on the right-hand side of (\ref{2.248}--\ref{2.250}).

At this point we solve the equation system (\ref{2.231}) for $R^n$ and $\pi^n$ in the --- by now --- customary manner. For the modulation path $\pi^n$, (\ref{2.250}) is exactly the estimate we need.

We split $R^n$ into its projections on the three parts of the spectrum, absolutely continuous, null, and imaginary, and estimate each separately in the weighted Strichartz norm.

As previously (see (\ref{2.63}-\ref{2.67})), we first apply a unitary transformation $U(t)$ to the equation, such that
\be\begin{aligned}
U(t) &= e^{\textstyle\int_0^t(2 v^0(s) \dl + i ((\alpha^0)^2(s)-|v^0(s)|^2) \sigma_3) \dd s} \\
Z(t) &= U(t) R^n(t) \\
W(\pi^0) &= U(t) W_{\pi^0}.
\end{aligned}\ee
In case of (\ref{2.231}), this transformation takes a particularly simple form, as $\alpha^0$ and $v^0$ are constant.

For the projection on the continuous spectrum $P_c(t) Z(t)$, Strichartz estimates lead directly, for an arbitrarily small constant $\tilde C$,  to
\be
\|P_c(t) Z(t)\|_{A_n(t) (L^{\infty}_t \dot H^{1/2}_x \cap L^2_t \dot W^{1/2, 6}_x)} \leq \tilde C k_1^2 k_2^n \|R_0\|_{\dot H^{1/2}}^n.
\ee
Concerning the projection on the imaginary spectrum, note the convolution estimate
\be\begin{aligned}
\int_0^t e^{-a_1 (t-s)} A_n(s) \dd s &\leq C A_n(t), \\
\int_t^{\infty} e^{a_1 (t-s)} A_n(s) \dd s &= \int_0^{\infty} e^{-a_1 s} A_n(t+s) \dd s \leq C A_n(t).
\end{aligned}\ee
Since the right-hand side manifests only polynomial growth, it follows that there exists a unique subexponential solution to the ordinary differential equation that describes the projection on the imaginary spectrum, corresponding to a suitable value of the parameter $h^n$.

Then we obtain, for an arbitrarily small constant $\tilde C$,
\be
\|P_{im}(t) Z(t)\|_{A_n(t) (L^{\infty}_t \dot H^{1/2}_x \cap L^2_t \dot W^{1/2, 6}_x)} \leq \tilde C k_1^2 k_2^n \|R_0\|_{\dot H^{1/2}}^n.
\ee

The orthogonality condition is fulfilled approximately, in the sense that
\be
\langle R^n, \Xi_f(W_{\pi^0}) \rangle + \sum_{j=1}^n \sum_{n_1 + \ldots + n_j + \tilde n=n} \langle R^{\tilde n}, (\partial_{\pi}^j \Xi_f(W_{\pi^0})) (\pi^{n_1}, \ldots, \pi^{n_j}) \rangle = 0.
\ee
Under our induction hypothesis, this is enough for an appropriate bound on the $P_0$ component: for arbitrarily small $\tilde C$,
\be
\|P_0(t) Z(t)\|_{A_n(t) (L^{\infty}_t \dot H^{1/2}_x \cap L^2_t \dot W^{1/2, 6}_x)} \leq \tilde C k_1^2 k_2^n \|R_0\|_{\dot H^{1/2}}^n.
\ee

In conclusion, for the unique suitable value of the parameter $h^n$ and for an arbitrarily small constant $\tilde C$,
\be
\|(R^n, \pi^n)\|_{X^{n-1}} \leq \tilde C k_1^2 k_2^n \|R_0\|_{\dot H^{1/2}}^n.
\ee
By setting $k_1$ sufficiently small, we obtain
\be
\|(R^n, \pi^n)\|_{X^{n-1}} \leq k_1 k_2^n \|R_0\|_{\dot H^{1/2}}^n.
\ee
In order to compensate for the smallness of $k_1$, we also need to set $k_2$ to be large, so that the initial condition for induction
\be
\|(R^1, \pi^1)\|_{X^0} \leq k_1 k_2 \|R_0\|_{\dot H^{1/2}}
\ee
is verified.

Next, we verify that $R^n$ and $\pi^n$ are indeed the $n$-th order terms in the power series expansion of $R$ and $\pi$. Denote
\be
S^n = R^0 + R^1 + \ldots + R^n,\ \Sigma^n = \pi^0 + \pi^1 + \ldots + \pi^n.
\ee
We obtain that, up to an error $O^{n+1}$ of order $n+1$, $S^n$ and $\Sigma^n$ solve an equation system of the form (\ref{2.225}):
\be\begin{aligned}\lb{2.237}
&i \partial_t S^n + \mc H_{\Sigma^n}(t) S^n = F(\Sigma^n, S^n) + O^{n+1} \\
&\dot f_{\Sigma^n} = F_f(\Sigma^n, S^n) + O_f^{n+1},\ f \in \{\alpha, \Gamma, v_k, D_k\},
\end{aligned}\ee
with error terms of size
\be
\|O^{n+1}\|_{\langle t \rangle^N L^2_t W^{1/2, 6/5}_x} + \|O_f^{n+1}\|_{\langle t \rangle^N L^1_t} \leq C \|R_0\|_{\dot H^{1/2}}^{n+1}.
\ee
Following a comparison in the exponentially weighted space $Y$ between $(S^n, \Sigma^n)$ and $(R, \pi)$, we obtain that
\be
\|(R, \pi) - (S^n, \Sigma^n)\|_Y \leq C(n) \|R_0\|_{\dot H^{1/2}}^{n+1}.
\ee
This concludes the proof of analyticity for $R$ and $\pi$.


Finally, recall that, by (\ref{2.100}), $h(R_0, W_0)$ has the formula
\be\begin{aligned}\lb{2.266}
h(R_0, W_0) &= -\int_0^{\infty} e^{-\int_0^t \sigma(W_{\pi}(\tau)) \dd \tau} \big(\langle F,  \sigma_3 F^-(W_{\pi}(t)) \rangle - \\
&- 3 \dot \alpha(t) (\alpha(t))^{-4} \langle R, i \sigma_3 F^-(W_{\pi}(t)) \rangle + \\
&+(\alpha(t))^{-3} \langle R, i \sigma_3 (d_{\pi} F^-(W_{\pi}(t))) \dot \pi(t) \rangle\big) \dd t.
\end{aligned}\ee
Since all the components that enter this formula are analytic and grow in time more slowly than $e^{ita_1}$, we directly obtain a power series expansion for $h$.

Alternatively, we can write $h=h^0 + h^1 + \ldots$ and find each term $h^n$ as the initial data of $(R^n, \pi^n)$ in equation (\ref{2.231}). $h^n$ is the unique value that makes the solution bounded. By (\ref{2.266}), it follows that for each $n$ $h^n$ is given by an $n$-linear form in $R_0$, whose norm grows only exponentially with $n$.

Following a comparison between $(S^n, \Sigma^n)$ and $(R, \pi)$, we obtain as a byproduct an estimate for the difference
\be
|h-(h^0 + \ldots + h^n)| \leq C_1 C_2^n \|R_0\|_{\dot H^{1/2}}^{n+1}.
\ee
This gives us another way to establish the analyticity of $h$.
\end{proof}

\section{The centre-stable manifold}
Finally, we state the connection between the manifold $\mc N$ issued by our proof and the centre-stable manifold of \cite{bates}.
\begin{proposition} $\mc N$ is a centre-stable manifold in the sense of Bates--Jones.
\end{proposition}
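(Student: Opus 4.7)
The plan is to verify, one by one, the three conditions of Definition \ref{centr}: $t$-invariance, that $\pi^{cs}(\mc N)$ covers a neighborhood of $0$ in $X^{cs}$, and that $\mc N \cap W^u = \{0\}$. The Lipschitz (in fact, real-analytic) manifold structure of $\mc N$ is already established in the previous section. Fix a reference soliton $W_0$; for the linearised generator $A = -i\mc H(W_0)$, the only real eigenvalues are $\pm \sigma$, with eigenvectors $F^\pm(W_0)$. Hence $X^u = \mathrm{span}\,F^+(W_0)$, $X^s = \mathrm{span}\,F^-(W_0)$, $X^c = (P_c + P_0)(W_0)\dot H^{1/2}$, so that $X^{cs} = (P_c + P_0 + P_-)(W_0)\dot H^{1/2}$ and $\pi^{cs} = I - P_+(W_0)$. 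In Bates--Jones language the ``origin'' is $W_0$.

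The first condition is the content of Corollary \ref{stable}: $\mc N$ is $t$-invariant forward in time and for sufficiently small backward time. The second follows from the parametrisation $\mc F(R_0, W) = W + R_0 + h(R_0, W) F^+(W)$ of $\mc N$ (Definition \ref{def3}): since $|h(R_0,W)| = O(\|R_0\|_{\dot H^{1/2}}^2)$, the differential of $\pi^{cs} \circ \mc F$ at $W_0$ agrees with $(\delta \pi, R_0) \mapsto (d_\pi W_0)\delta \pi + R_0$, which is a bijection onto $X^{cs}$. The inverse function theorem (together with Lemma \ref{lemma_11}) then delivers a neighborhood of $0$ in $X^{cs}$ inside $\pi^{cs}(\mc N - W_0)$.

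For the third condition, assume $\Psi_0 \in \mc N \cap W^u$, so $\Psi_0 = W_0 + R_0 + h(R_0, W_0) F^+(W_0)$ with $R_0 \in (P_c + P_-)(W_0)\dot H^{1/2}$, and the backward trajectory $\Psi(t)$ stays in $U$ and converges exponentially to a soliton as $t \to -\infty$; by absorbing modulation parameters, we may identify that soliton with $W_0$. Decompose $R(t)$ along the spectral projections at $W_0$. The hyperbolic coefficients $b_\pm(t)$ obey the system (\ref{hiperb}); backward exponential decay forces $b_-(0) = P_-(W_0) R_0 = 0$ because the $P_-$ mode grows like $e^{\sigma|t|}$ as $t \to -\infty$. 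The continuous-spectrum part $P_c(W_0) R(t)$ satisfies the Strichartz estimate of Theorem \ref{theorem_13}; exponential backward decay is incompatible with a nonzero $P_c(W_0) R_0$ since dispersion produces only polynomial decay in the relevant Strichartz sense, hence $P_c(W_0) R_0 = 0$ as well. Together with the a priori constraint $P_0 R_0 = P_+ R_0 = 0$ built into the definition of $\mc N$, we conclude $R_0 = 0$, whence $h(0, W_0) = 0$ and $\Psi_0 = W_0 \in \mc M$. Identifying $W_0$ with the origin, this is the desired $\mc N \cap W^u = \{0\}$.

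The main obstacle is justifying rigorously that backward exponential decay forces $P_c(W_0) R_0 = 0$. The cleanest route is to conjugate by the moving-frame isometry $U(t)$ of (\ref{2.63}), apply the time-dependent spectral projections of Section \ref{spectru}, and then use the convolution formulas (\ref{2.83}) together with Theorem \ref{theorem_13} to see that if $\|R(t)\|_{\dot H^{1/2}} \leq C e^{\rho t}$ as $t \to -\infty$ for some $\rho > 0$, then $P_c(W_0) R$ has finite Strichartz norms backward in time, and the uniqueness of the backward-bounded Strichartz solution of the continuous-spectrum equation forces $P_c(W_0) R(0) = 0$. An alternative, more indirect but structurally appealing route is via time reversal: $\tilde\Psi(t) = \overline{\Psi(-t)}$ solves (\ref{NLS}), is a small asymptotically stable solution in the sense of Definition \ref{definition_4}, and by Proposition \ref{comparatie} lies on (the conjugate of) $\mc N$; the uniqueness clauses in Proposition \ref{comparatie} and formula (\ref{2.100}) for $h$ then close the argument.
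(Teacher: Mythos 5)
Your verification of the first two conditions ($t$-invariance via Corollary \ref{stable}, and the covering of a neighborhood of $0$ in $X^{cs}$ via the local invertibility of $\mc F$) matches the paper. The gap is in the third condition, $\mc N \cap W^u = \{0\}$, where you claim that backward exponential decay forces $P_-(W_0)R_0 = 0$ and $P_c(W_0)R_0 = 0$ exactly. That would be true for the \emph{linear} flow, but the trajectory in $W^u$ solves the nonlinear equation: the ODEs for $b_\pm$ and the equation for $P_c Z$ carry forcing terms $N_\pm$ and $\tilde F$ coming from the nonlinearity and the modulation, so the unique backward-bounded solution is not zero but is given by Duhamel integrals from $-\infty$ (cf.\ (\ref{PplusU})); e.g.\ $b_-(0) = -\int_{-\infty}^0 e^{\sigma s} N_-(s)\, ds$, which is generically nonzero. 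The same objection defeats your ``uniqueness of the backward-bounded Strichartz solution'' fallback for the $P_c$ component. What the backward analysis actually yields is only the relative smallness $\|(I-P_+)Z(t)\|_{\dot H^{1/2}} \leq C\,\delta(t)\,\|P_+Z(t)\|_{\dot H^{1/2}}$ as $t \to -\infty$, i.e.\ the trajectory is asymptotically aligned with the unstable direction --- not that the non-unstable components of the data vanish.

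The missing idea is that the contradiction must come from combining this backward information with the \emph{forward} behavior of a solution on $\mc N$: forward in time $\|P_+Z(t)\|_{\dot H^{1/2}} \to 0$ (by Lemma \ref{hyp}), while $\|Z(t)\|_{\dot H^{1/2}}$ stays bounded from below (to first order $Z$ is the evolution of its initial data), so the ratio $\|P_+Z\|/\|(I-P_+)Z\|$ becomes small at some large time $T_0$; the Gronwall-type Lemma 2.4 of Bates--Jones then propagates boundedness of this ratio to all $t \leq T_0$, which contradicts the backward estimate above unless $Z \equiv 0$. Your time-reversal alternative does not close the argument either: a solution converging exponentially to a soliton in one time direction and asymptotically stable in the other is exactly the behavior of the special solutions $\phi_\pm$ of Theorem \ref{holmer}, so it cannot be excluded by soft uniqueness considerations alone.
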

\begin{proof}
To begin with, we rewrite equation (\ref{NLS}) to make it fit the framework of the theory of Bates--Jones \cite{bates}.

Consider a soliton $W_{\pi_0}(t)$, described by the constant path $\pi_0(t) = (1, 0, 0, 0)$ (without loss of generality). Thus
\be
W_{\pi_0}(t) = \bpm e^{it} \phi(\cdot, 1) \\ e^{-it} \phi(\cdot, 1) \epm
\ee
for all $t$. Linearizing the equation around this constant path yields, for $R = \Psi - W_{\pi_0}$, equation (\ref{rr}), which in this case takes the form
\be
\partial_t R - i \mc H_{\pi_0} R = N(R, W_{\pi_0}).
\ee
Making the substitution
\be
Z = e^{-it\sigma_3} R,
\ee
we see that
\be
i \partial_t Z + \mc H(W(\pi_0)) Z = N(Z, W(\pi_0)),
\lb{eq_2188}
\ee
where
\be
\mc H(W(\pi_0)) = \mc H = \bpm \Delta+2\phi^2(\cdot, 1) -1 & \phi^2(\cdot, 1) \\ -\phi^2(\cdot, 1) & -\Delta-2\phi^2(\cdot, 1)+1 \epm
\ee
and
\be
N(Z, W(\pi_0)) = \bpm -|z|^2 z - z^2 \phi(\cdot, 1) - 2|z|^2 \phi(\cdot, 1) \\ |z|^2 z + z^2 \phi(\cdot, 1) + 2|z|^2 \phi(\cdot, 1) \epm.
\ee
Note that the right-hand side terms are at least quadratic in $Z$, due to linearizing around a constant path.

The spectrum of $\mc H$ is $\sigma(\mc H) = (-\infty, -1] \cup [1, \infty) \cup \{0, \pm i \sigma\}$; for a more detailed discussion see Section \ref{spectru}. The stable spectrum is $-i\sigma$, the unstable spectrum is $i\sigma$, and everything else belongs to the centre.

One can check that all the conditions of \cite{bates} are met in regard to (\ref{eq_2188}), leading to the existence of a centre-stable manifold in this setting. This was done by \cite{ges}.

Indeed, in the Banach space $X=H^s$, $s>3/2$, $\mc H$ is a closed, densely defined operator with the required spectral properties. The nonlinearity $N(Z_1, W(\pi_0))$ has the Lipschitz property with arbitrarily small constant, since $H^s \subset L^{\infty}$ is an algebra:
\be
\|N(Z_1, W(\pi_0)) - N(Z_2, W(\pi_0))\|_{H^s} \leq C \max(\|Z_1\|_{H^s}, \|Z_2\|_{H^s}) \|Z_1-Z_2\|_{H^s}.
\ee

We have also exhibited another stable manifold, $\mc N \subset \dot H^{1/2}$ given by Definition \ref{def3}, 
which is invariant under symmetry transformations and under the time evolution induced by (\ref{NLS}). Our claim, which we prove in the sequel, is that $\Psi$ belongs to this manifold if and only if $Z$ belongs to the other one, that is
\be
\tilde{\mc N} = \mc N - \Phi,\ \Phi = \bpm \phi(\cdot, 1) \\ \phi(\cdot, 1) \epm
\ee
is a centre-stable manifold for (\ref{eq_2188}) relative to a neighborhood $\mc V$ of $0$, namely $\mc V = \{Z \mid \|Z\|_{\dot H^{1/2}} < \delta_0\}$ for some small $\delta_0$.

The main difference between the two results is that $\dot H^{1/2}$ is not an algebra, so the conditions for Bates--Jones's result, Theorem \ref{t1}, are not met in this space. Indeed, if $\psi \in \dot H^{1/2}$, it does not follow that $|\psi|^2 \psi \in \dot H^{1/2}$, much less that this nonlinearity is Lipschitz continuous (see Condition H3 preceding Theorem \ref{t1}).

However, even though the hypothesis fails in $\dot H^{1/2}$, the conclusion of \cite{bates} --- the existence of a centre-stable manifold $\tilde {\mc N}$ --- still holds, following a proper use of Strichartz inequalities.

To show this, we verify the three properties listed in Definition \ref{centr}: $\tilde{\mc N}$ is $t$-invariant with respect to a neighborhood of $\Phi$, $\pi^{cs}(\tilde{\mc N})$ contains a neighborhood of $0$ in $X^c \oplus X^s$, and $\tilde{\mc N} \cap W^u = \{0\}$.

The $t$-invariance of $\tilde{\mc N}$ relative to $\mc V$ follows from definition (see the introduction) and Proposition \ref{comparatie}. Indeed, the invariance established by Corollary \ref{stable} is strictly stronger than $t$-invariance, as it holds globally in time.

The fact that $\pi^{cs}(\tilde{\mc N})$ contains a neighborhood of $0$ in $X^c \oplus X^s$ is a consequence of the local invertibility of $\mc F$ established in Proposition \ref{prop_9}.

Finally, we show that $\tilde{\mc N} \cap W^u = \{0\}$, where $W^u$ is the unstable manifold of the equation (the set of solutions that decay exponentially at $-\infty$). To this purpose, in the sequel we follow the proof of \cite{bec} with slight modifications.

Consider a solution $Z^0 \in \tilde N \cap W^u$ of (\ref{eq_2188}), meaning that $Z^0$ is defined for all $t<0$,
\be
\|Z^0(t)\|_{\dot H^{1/2}} < \delta_0
\ee
for some small $\delta_0$ and all negative $t$, and $Z^0$ decays exponentially as $t \to -\infty$, meaning that for some constants $C_1$ and $C_2$ and $t\leq 0$
\be
\|Z^0(t)\|_{\dot H^{1/2}} \leq C_1 e^{C_2 t}
\ee
(even though polynomial decay is sufficient for a contradiction). Using Strichartz estimates for the free Schr\"{o}dinger evolution, we obtain that for every $T$
\be
\|Z^0\|_{L^2_t(T, T+1) \dot W^{1/2, 6}_x} \leq C \|Z^0\|_{L^{\infty}_t(T, T+1) \dot H^{1/2}_x}.
\ee
Therefore, for $T \leq 0$,
\be
\|Z^0\|_{L^2_t(-\infty, T] \dot W^{1/2, 6}_x} \leq C e^{C_2 T}.
\ee

Thus, $\Psi(t) = e^{it\sigma_3} Z^0(t) + W_{\pi_0}(t)$ is a small asymptotically stable solution of (\ref{NLS}) in a sense similar to Definition \ref{definition_4}, but as $t$ goes to $-\infty$. 

Therefore, for $t \leq 0$ one can write $\Psi = R(t) + W_{\pi}(t)$, such that the orthogonality condition
\be
P_0(W_{\pi}(t)) R(t) = 0
\ee
is satisfied and for $T \leq 0$
\be
\|R(T)\|_{\dot H^{1/2}} \leq C \delta,\ \|R\|_{L^{\infty}_t(-\infty, T] \dot H^{1/2}_x \cap L^2_t(-\infty, T] \dot W^{1/2, 6}_x} \leq C e^{C_2 T}. 
\ee
The path $\pi$ satisfies the corresponding modulation equations and
\be
\|\dot \pi\|_{L^1_t(-\infty, T]} \leq C e^{C_2 T}.
\ee
Since $\Psi(0) \in \mc N$, the path $\pi$ extends to all positive times and $R$ extends to a solution bounded in the Strichartz norm:
\be
\|R\|_{L^{\infty}_t[0, \infty) \dot H^{1/2}_x \cap L^2_t[0, \infty) \dot W^{1/2, 6}_x} \leq C \delta.
\ee
As in previous arguments, let $U(t)$ be the family of isometries defined for $\pi$
\be
U(t) = e^{\textstyle\int_0^t(2 v(s) \dl + i (\alpha^2(s)-|v(s)|^2) \sigma_3) \dd s}
\ee
and set
\be
Z(t) = U(t) R(t).
\ee
Then $Z(t)$ satisfies the equation
\be
i \partial_t Z - \mc H(t) Z = F.
\ee

Decompose $Z$ into its projections on the continuous, imaginary, and zero spectrum of $\mc H(t)$ and let
\be
\delta(T) = \|Z\|_{L^{\infty}_t(-\infty, T] \dot H^{1/2}_x \cap L^2_t(-\infty, T] \dot W^{1/2, 6}_x} + \|\dot {\pi}\|_{L^1_t(-\infty, T]}.
\ee
Observe that $\delta(t) \to 0$ as $t \to -\infty$, so we can assume it to be arbitrarily small.

By means of Strichartz estimates one obtains that
\be
\begin{aligned}
\|P_c(t) Z\|_{L^2_t(-\infty, T] \dot W^{1/2, 6}_x \cap L^{\infty}_t(-\infty, T] \dot H^{1/2}_x} &\leq C \|F\|_{L^2_t(-\infty, T] \dot W^{1/2, 6/5}_x + L^{1}_t(-\infty, T] \dot H^{1/2}_x} \\
&\leq C \delta(T) \|Z\|_{L^{\infty}_t (-\infty, T] \dot H^{1/2}_x},
\end{aligned}
\ee
because the right-hand side contains only quadratic or higher degree terms.

Let
\be
P_{im} Z(t) = b_-(t) F^-(t) + b_+(t) F^+(t).
\ee
Since $Z(t)$ is bounded as $t \to -\infty$, we can use Lemma \ref{hyp} for $t \to -\infty$ and obtain
\be\begin{aligned}
b_-(t) &= -\int_{-\infty}^t e^{-\sigma(t-s)} N_-(s) \dd s\\
b_+(t) &= e^{(t-T)\sigma} b_+(T) - \int_{t}^T e^{(t-s)\sigma} N_+(s) \dd s.
\end{aligned}\lb{PplusU}\ee
Therefore
\be
\|P_{im} Z\|_{L^{\infty}_t(-\infty, T] \dot H^{1/2}_x} \leq C (\|P_+ Z(T)\|_{\dot H^{1/2}_x} + \delta(T) \|Z\|_{L^{\infty}_t(-\infty, T] \dot H^{1/2}_x}).
\ee

We have constructed $Z$ such that the orthogonality condition holds, so
\be
P_0(t) Z(t) = 0.
\ee

Putting these estimates together, we arrive at
\be
\|Z\|_{L^{\infty}_t(-\infty, T] \dot H^{1/2}_x} \leq C (\delta(T) \|Z\|_{L^{\infty}_t(-\infty, T] \dot H^{1/2}_x} + \|P_+ Z(T)\|_{\dot H^{1/2}}).
\ee
For sufficiently negative $T_0$, it follows that $\|Z(t)\|_{\dot H^{1/2}} \leq C \|P_+ Z(t)\|_{\dot H^{1/2}}$, for any $t \leq T_0$. The converse is obviously true, so the two norms are comparable.

Furthermore, by reiteration one has that
\be\lb{2.294}
\|(I-P_+) Z(t)\|_{\dot H^{1/2}} \leq C \delta(t) \|P_+Z(t)\|_{\dot H^{1/2}}.
\ee

Next, assume that $Z(0)$ is on $\tilde {\mc N}$, meaning that $Z(0) + \Phi \in \mc N$.

If the size $\delta_0$ that appears in Definition \ref{definition_4} of $\mc N$ is sufficiently small, it follows that $\|Z(t)\|_{\dot H^{1/2}}$ is bounded from below as $t \to \infty$. Indeed, to a first order, $Z(t)$ is given by the time evolution of the initial data, following (\ref{2.213}), and this expression cannot vanish.

On the other hand, Lemma \ref{hyp} implies that
\be
\|P_+ Z(t)\|_{\dot H^{1/2}} \leq \int_t^{\infty} e^{(t-s)\sigma} |N_+(s)| \dd s
\ee
and thus $\|P_+ Z(t)\|_{\dot H^{1/2}}$ goes to zero and can be made arbitrarily small as $t \to \infty$.

Lemma 2.4 of \cite{bates} implies, under even more general conditions, that if the ratio $\|P_+ Z(T_0)\|_{\dot H^{1/2}}/\|(I-P_+) Z(T_0)\|_{\dot H^{1/2}}$ is small enough, it will stay bounded for all $t \leq T_0$. The proof of this result is based on Gronwall's inequality.

However, this contradicts our previous conclusion (\ref{2.294}), stating that
\be
\|(I-P_+) Z(t)\|_{\dot H^{1/2}}/\|P_+ Z(t)\|_{\dot H^{1/2}}  \leq C \delta(t)
\ee
goes to $0$ as $t$ goes to $-\infty$. Therefore, $Z$ can only be $0$.

This proves that $\tilde{\mc N} \cap W^u = \{0\}$. In other words, there are no exponentially unstable solutions in $\tilde{\mc N}$ in the sense of \cite{bates}. The final requirement for $\tilde{\mc N}$ to be a centre-stable manifold in the sense of Definition \ref{centr} is thus met.
\end{proof}

\section{Scattering}
Strichartz space bounds and estimates imply that the radiation term scatters like the solution of the free equation, meaning
\be
r(t) = e^{-it\Delta} r_{free} + o_{\dot H^{1/2}}(1)
\ee
for some $r_{free} \in \dot H^{1/2}$.

As a reminder, $R$ satisfies the equation (\ref{rr})
\be\lb{2.196}
i \partial_t R - \mc H_{\pi}(t) R = F,
\ee
where $\mc H_{\pi}(t) = \Delta \sigma_3 + V_{\pi}(t)$, and $R$ has finite Strichartz norm,
\be
\|R\|_{L^{\infty}_t \dot H^{1/2}_x \cap L^2_t \dot W^{1/2, 6}_x} < \infty,
\ee
while $F$ has finite dual Strichartz norm,
\be
\|F\|_{L^2_t \dot W^{1/2, 6/5}_x} < \infty.
\ee
Rewrite (\ref{2.196}) as
\be
i \partial_t R - \Delta \sigma_3 R = F - V_{\pi}(t) R.
\ee
By Duhamel's formula,
\be
R(t) = e^{-it\Delta \sigma_3} R(0) -i \int_0^t e^{-i(t-s)\Delta \sigma_3} (F(s) - V_{\pi}(s) R(s)) \dd s.
\ee
Let
\be
R_{free} = R(0) -i \int_0^{\infty} e^{it\Delta \sigma_3} (F(t) - V_{\pi}(t) R(t)) \dd t.
\ee
Then
\be
R(t) - e^{-it\Delta \sigma_3} R_{free} = ie^{-it\Delta \sigma_3} \int_t^{\infty} e^{is\Delta \sigma_3} (F(s) - V_{\pi}(s) R(s)) \dd t.
\ee
Note that
\be\begin{aligned}
& \|F - V_{\pi} R\|_{L^2_t \dot W^{1/2, 6/5}_x} \leq \\
\leq & \|F\|_{L^2_t \dot W^{1/2, 6/5}_x} + \|V_{\pi}\|_{L^{\infty}_t (\dot W^{1/2, 6/5-\epsilon}_x \cap \dot W^{1/2, 6/5+\epsilon}_x)} \|R\|_{L^2_t \dot W^{1/2, 6}_x} < \infty.
\end{aligned}\ee
implies
\be
\lim_{t \to \infty} \|\chi_{[t, \infty)}(s) (F(s) - V_{\pi}(s) R(s))\|_{L^2_t \dot W^{1/2, 6/5}_x} = 0.
\ee
$e^{-it\Delta \sigma_3}$ being an isometry, it follows that $R(t) - e^{-it\Delta \sigma_3} R_{free} \to 0$ in $\dot H^{1/2}$.

This leads to the desired conclusion, upon passing to the scalar functions $r$ and $r_{free}$, where $R = \bpm r \\ \ov r \epm$ and $R_{free} = \bpm r_{free} \\ \ov r_{free} \epm$.

\chapter{Linear estimates}
We seek a dispersive estimate for the linear time-dependent equation. The highest-order terms we need to take into account are of the form $v(t) \dl Z(t)$, where $\dot v(t)$ is small in the $L^1$ norm. $Z$ is a solution of the Schr\"{o}dinger equation and thus has a finite Strichartz norm.

We study the scalar case as a simplified model, together with the nonselfadjoint case in which we are properly interested.

The first piece is an ad hoc Wiener Theorem for abstract spaces, which requires some background.

\section{Motivation} As motivation for this approach, consider the linear Schr\"{o}dinger equation in $\set R^3$
\be
i \partial_t Z + \mc H Z = F,\ Z(0) \text{ given},
\ee
where
\be\lb{eq_3.2}
\mc H = \mc H_0 + V = -\Delta + V
\ee
in the scalar case and
\be
\mc H = \mc H_0 + V = \bpm \Delta-\mu & 0 \\ 0 & -\Delta+\mu \epm + \bpm |W_1| & W_2 \\ -\ov W_2 & -|W_1| \epm
\lb{3.2}
\ee
in the matrix nonselfadjoint case, in which we really are interested for the nonlinear applications. $W_1$ is always taken to be real-valued and the same is true for $W_2$ in the case of interest in the current work.

By Duhamel's formula,
\be\begin{aligned}
Z(t) &= e^{it\mc H} Z(0) - i \int_0^t e^{i(t-s) \mc H} F(s) \dd s \\
&= e^{it\mc H_0} Z(0) - i \int_0^t e^{i(t-s) \mc H_0} F(s) \dd s + i \int_0^t e^{i(t-s)\mc H_0} V Z(s) \dd s.
\end{aligned}\ee
In addition, for any multiplicative decomposition $V = V_1 V_2$ of the potential,
\be
V_2 Z(t) = V_2 \bigg(e^{it\mc H_0} Z(0) - i \int_0^t e^{i(t-s) \mc H_0} F(s) \dd s \bigg) + i \int_0^t (V_2 e^{i(t-s)\mc H_0} V_1) V_2 Z(s) \dd s.
\ee
In the scalar case we are especially interested in the decomposition
\be
V = V_1 V_2,\ V_1 = |V|^{1/2} \sgn V,\ V_2 = |V|^{1/2}.
\lb{3.5}
\ee
In the matrix nonselfadjoint case (\ref{3.2}), an analogous decomposition is
\be
V = V_1 V_2,\ V_1 = \sigma_3 \bpm |W_1| & W_2 \\ \ov W_2 & |W_1| \epm^{1/2},\ V_2 = \bpm |W_1| & W_2 \\ \ov W_2 & |W_1| \epm^{1/2},
\lb{3.6}
\ee
where $\sigma_3$ is the Pauli matrix
\be
\sigma_3 = \bpm 1 & 0 \\ 0 & -1 \epm.
\ee
Consider the kernel defined by
\be
(T_{V_2, V_1} F)(t)= \int_0^t (V_2 e^{i(t-s)\mc H_0} V_1) V_2 F(s) \dd s.
\ee
Thus, at least formally we can write
\be\begin{aligned}
V_2 Z(t) &= (I- i T_{V_2, V_1})^{-1} V_2 \bigg(e^{it\mc H_0} Z(0) - i \int_0^t e^{i(t-s) \mc H_0} F(s) \dd s \bigg).
\end{aligned}\ee
If the operator 
$I - iT_{V_2, V_1}$ can actually be inverted, then the computation is justified. In the sequel we set forth conditions under which this happens.

\section{Abstract theory} Let $H$ be a Hilbert space, $\mc L(H, H)$ be the space of bounded linear operators from $H$ to itself, and $M_t H$ be the set of $H$-valued measures of finite mass on the Borel algebra of $\set R$. $M_t H$ is a Banach space, with the norm
\be\lb{3.11}
\|\mu\|_{M_t H} = \sup \bigg\{\sum_{k=1}^n \|\mu(A_k)\|_H \mid A_k \text{ disjoint Borel sets}\bigg\}.
\ee
Note that the absolute value of $\mu \in M_t H$ given by
\be
|\mu|(A) = \sup\bigg\{\sum_{k=1}^n \|\mu(A_k)\|_H \mid \bigcup_{k=1}^n A_k = A,\ A_k\text{ disjoint Borel sets}\bigg\}
\ee
is a positive measure of finite mass (bounded variation) and $\|\mu(A)\|_H \leq |\mu|(A)$. By the Radon--Nikodym Theorem, $\mu$ is in $M_t H$ if and only if it has a decomposition
\be
\mu = \mu_{\infty} |\mu|
\ee
with $|\mu| \in M$ (the space of real-valued measures of finite mass), $\mu_{\infty} \in L^{\infty}_{d|\mu|(t)} H$.

Furthermore, $\|\mu\|_{M_t H} = \||\mu|\|_{M}$ and the same holds if we replace $H$ by any Banach space.

\begin{definition}\lb{def_k} Let $K = \mc L(H, M_t H)$ be the algebra of bounded operators from $H$ to $M_t H$.
\end{definition}
It has the following natural properties, which are, yet, not completely trivial:
\begin{lemma}
$K$ takes $M_t H$ into itself by convolution, is a Banach algebra under convolution, and multiplication by bounded continuous functions (and $L^{\infty}$ Borel measurable functions) is bounded on $K$:
\be
\|f k\|_K \leq \|f\|_{\infty} \|k\|_K.
\lb{3.8}
\ee
Furthermore, by integrating an element $k$ of $K$ over $\set R$ one obtains $\int_{\set R} k \in \mc L(H, H)$, with $\|\int_{\set R} k\|_{\mc L(H, H)} \leq \|k\|_K$.
\end{lemma}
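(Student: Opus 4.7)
The plan is to reduce every claim to elementary manipulations of $H$-valued measures, making repeated use of the Radon--Nikodym decomposition $\mu = \mu_\infty |\mu|$ with $|\mu| \in M$ and $\mu_\infty \in L^\infty_{d|\mu|} H$ recalled just before the lemma.

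First I would define the action of $K$ on $M_t H$ by convolution: for $k \in K$ and $\mu = \mu_\infty |\mu| \in M_t H$, set
$$(k * \mu)(B) := \int_{\mathbb{R}} \bigl(k(\mu_\infty(s))\bigr)(B - s) \, d|\mu|(s)$$
for each Borel set $B \subset \mathbb{R}$. Summing $\|(k*\mu)(B_\ell)\|_H$ over any Borel partition $\{B_\ell\}$ of $\mathbb{R}$, bringing the sum inside the Bochner integral, and applying the definition (\ref{3.11}) of the $M_t H$-norm to the measure $k(\mu_\infty(s))$ pointwise in $s$ yields
$$\|k * \mu\|_{M_t H} \leq \int_{\mathbb{R}} \|k(\mu_\infty(s))\|_{M_t H} \, d|\mu|(s) \leq \|k\|_K \int_{\mathbb{R}} \|\mu_\infty(s)\|_H \, d|\mu|(s) \leq \|k\|_K \|\mu\|_{M_t H}.$$

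The Banach algebra structure is then obtained by setting $(k_1 * k_2)(h) := k_1 * (k_2 h)$, for which the previous estimate gives $\|k_1 * k_2\|_K \leq \|k_1\|_K \|k_2\|_K$ immediately. Associativity I would check first on point-masses $\mu = h\delta_s$, where it reduces to translation-covariance of the formula, and then extend by linearity and the continuity estimate above to all of $M_t H$; completeness of $K = \mathcal{L}(H, M_t H)$ is inherited from $M_t H$. For the multiplier bound (\ref{3.8}), given a bounded Borel $f$ and $k \in K$, I define $(fk)(h)$ to be the $H$-valued measure with Radon--Nikodym density $f \cdot (k(h))_\infty$ against $|k(h)|$, so that $\|(fk)(h)\|_{M_t H} = \int |f| \, d|k(h)| \leq \|f\|_\infty \|k(h)\|_{M_t H} \leq \|f\|_\infty \|k\|_K \|h\|_H$. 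Finally, $\int_{\mathbb{R}} k$ is the linear map $h \mapsto \int_{\mathbb{R}} d(k(h))$, a Bochner integral of an $H$-valued measure of finite variation satisfying $\|\int_{\mathbb{R}} d(k(h))\|_H \leq |k(h)|(\mathbb{R}) = \|k(h)\|_{M_t H} \leq \|k\|_K \|h\|_H$.

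The only genuine obstacle is the measurability and $M_t H$-valuedness of the convolution in the first step: one must check that $s \mapsto k(\mu_\infty(s))$ is Bochner-measurable into $M_t H$ and that $B \mapsto \int (k(\mu_\infty(s)))(B - s) \, d|\mu|(s)$ defines a countably additive $H$-valued measure to which the partition-based norm estimate applies. Both are handled by approximating $\mu_\infty$ in $L^1_{d|\mu|}H$ by simple $H$-valued functions (for which the convolution is a finite sum of translates of the fixed measures $k(h_j)$), using continuity of $k$ to transfer measurability through the approximation, and invoking the estimate above to conclude that the simple-$\mu_\infty$ convolutions converge in $M_t H$ to a well-defined element, independent of the choice of approximating sequence.
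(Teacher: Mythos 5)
Your construction is correct, and it takes a route that is genuinely parallel to, but not identical with, the paper's. The paper defines the convolution by first building the product measure $\tilde\mu(A\times B)=k(\mu(B))(A)$ on $\set R\times\set R$ and then pushing it forward under $(t,s)\mapsto t+s$; you instead write the convolution directly as the Bochner integral $\int_{\set R}\big(k(\mu_\infty(s))\big)(B-s)\,d|\mu|(s)$ against the Radon--Nikodym decomposition $\mu=\mu_\infty|\mu|$. The two constructions produce the same object, and your partition estimate (using that $\{B_\ell-s\}$ is again a partition for each fixed $s$) is a clean substitute for the paper's bound $\|\tilde\mu\|_{M_{t,s}H}\le\|k\|_K\|\mu\|_{M_tH}$. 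What the paper's version buys is that associativity becomes a one-line application of Fubini on the product measure; what yours buys is that the measurability question is isolated explicitly and handled by simple-function approximation of $\mu_\infty$, which the paper glosses over. The treatment of the multiplier bound and of $\int_{\set R}k$ is essentially the same in both.

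One step of yours I would tighten: you propose to verify associativity on point masses $h\delta_s$ and then "extend by linearity and the continuity estimate to all of $M_tH$." Finite linear combinations of point masses are \emph{not} dense in $M_tH$ in the total-variation norm (Lebesgue measure on $[0,1]$ already cannot be so approximated), so this extension does not go through as literally stated. The fix is already in your own last paragraph: approximate $\mu_\infty$ by simple functions, which reduces a general $\mu$ to finite sums $\sum_j h_j\nu_j$ with $\nu_j$ scalar measures, and for those associativity is a Fubini interchange (translation covariance alone no longer suffices). With that substitution the argument is complete.
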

\begin{proof} Boundedness of multiplication by continuous or $L^{\infty}$ functions follows from the decomposition $\mu = \mu_0 |\mu|$ for $\mu \in M_t H$. The last stated property is a trivial consequence of the definition of $M_t H$.

Let $\mu \in M_t H$, $k \in K$. Consider the product measure $\tilde \mu$ first defined on product sets $A \times B \subset \set R \times \set R$ by $\tilde \mu(A \times B) = k(\mu(B))(A)$. This is again a measure of finite mass, $\tilde \mu \in M_{t, s} H$, and
\be
\|\tilde \mu\|_{M_{t, s} H} \leq \|k\|_K \|\mu\|_{M_t H}.
\ee
We then naturally define the convolution of an element of $K$ with an element of $M_t H$, by setting $k(\mu)(A)=\tilde \mu(\{(t, s) \mid t+s \in A\})$.

Thus, each $k \in K$ defines a bounded translation-invariant linear map from $M_t H$ to itself:
\be
\|k(\mu)\|_{M_t H} \leq \|k\|_K \|\mu\|_{M_t H}.
\lb{3.9}
\ee
The correspondence is bijective, as any translation-invariant $\tilde k \in \mc L(M_t H, M_t H)$ defines an element $k \in K$ by $k(h) = \tilde k(\delta_{t=0}h)$. These operations are indeed inverses of one another.

Associativity follows from Fubini's Theorem. $K$ is a Banach space by definition. The algebra property of the norm is immediate from (\ref{3.9}).
\end{proof}

\begin{observation}\lb{beurling} The Wiener algebra $K$ characterized above is the widest among several spaces that arise naturally. Another choice, $K_1$, comes from replacing $M_t$, the space of measures of finite mass, with $L^1_t$ in Definition \ref{def_k}.

The Beurling subalgebras $B_{p, a}$ arise by substituting $L^1_t$ with its subspaces $\langle t \rangle^{a} L^p_t$, for
\be\lb{Eq3.16}
a \leq 0,\ 1 \leq p \leq \infty,\ \frac a 3 + \frac 1 p > 1.
\ee

We may strengthen $\mc L(H, M_t H)$ to $M_t \mc L(H, H)$ and likewise for all the other examples, thus obtaining a different family of \emph{strong} algebras that we respectively denote by $K_s$, $K_{1s}$, and $B_{p, a s}$.

Among these algebras, only $K$ and $K_s$ are unital. However, adding the unit (the identity operator) together with its multiples to any of the other algebras considered above gives rise to unital algebras in that case as well. We use a subscript $u$ to mark these, e.g.\ $B_{p, a s u}$.
\end{observation}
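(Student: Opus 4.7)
The plan attacks the three claims bundled into the observation in turn: the inclusions that justify calling $K$ the widest such algebra, the convolution-algebra structure on each subspace, and the identification of which members possess a unit.

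First, I would establish the inclusions. The canonical identification of an integrable function with an absolutely continuous measure gives $L^1_t H \hookrightarrow M_t H$ isometrically, whence $K_1 \hookrightarrow K$ at the level of operator spaces. For the Beurling subspaces, H\"older's inequality gives
\[
\|f\|_{L^1_t H} \leq \|\langle t\rangle^{a}\|_{L^{p'}_t}\,\|\langle t\rangle^{-a} f\|_{L^p_t H},
\]
and the factor $\|\langle t\rangle^{a}\|_{L^{p'}_t}$ is finite exactly under the integrability hypothesis (\ref{Eq3.16}); consequently $\langle t\rangle^a L^p_t \subseteq L^1_t$ and so $B_{p,a} \subseteq K_1 \subseteq K$. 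The strong variants embed into their weak counterparts by letting an $\mc L(H,H)$-valued measure act on each vector of $H$, yielding $K_s \hookrightarrow K$, $K_{1s} \hookrightarrow K_1$, and $B_{p,as}\hookrightarrow B_{p,a}$.

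Second, I would verify that each subspace is closed under the convolution inherited from $K$. For $K_1$, Fubini's theorem shows that the convolution of an $L^1_t$ density with any finite measure is again an $L^1_t$ density, so $K_1 \times K_1 \to K_1$. For $B_{p,a}$ the essential tool is submultiplicativity of the weight:
\[
\langle t+s\rangle^{-a} \leq 2^{-a}\,\langle t\rangle^{-a}\langle s\rangle^{-a} \qquad (a \leq 0).
\]
Combined with the pointwise bound $|(f\ast g)(t)|\leq \int |f(t-s)||g(s)|\,ds$, this transfers the estimate to a convolution of the weighted factors $\langle\cdot\rangle^{-a}|f|$ and $\langle\cdot\rangle^{-a}|g|$ in unweighted variables, which I would close with a weighted Young inequality that places one factor in $L^1$ (via the embedding from the first step) and the other in $L^p$. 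The strong-algebra cases $K_{1s}$ and $B_{p,as}$ follow by the same template, with scalar absolute values replaced by $\mc L(H,H)$-operator norms and products by operator composition, the latter accounting for non-commutativity but preserving all submultiplicative estimates.

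Third, for the unit, the multiplicative identity in $K$ is the operator $e : h \mapsto \delta_0 h$, equivalently $e = \delta_0\otimes I_H \in M_t\mc L(H,H)$; by the characterization of convolution from the preceding lemma, $e$ acts as the identity on $M_t H$, and it plainly belongs to both $K$ and $K_s$. Because $\delta_0$ is mutually singular with Lebesgue measure, $e$ cannot lie in any subspace built from $L^p_t$ or $\langle t\rangle^a L^p_t$, so $K_1$, $K_{1s}$, $B_{p,a}$, $B_{p,as}$ are all nonunital. For each such $A$, the unitization is the standard adjunction $A_u := A \oplus \mathbb{C}e$ with product $(a+\lambda e)(b+\mu e) = ab + \lambda b + \mu a + \lambda\mu\,e$ and norm $\|a+\lambda e\| = \|a\|_A + |\lambda|$, producing $K_{1u}$, $B_{p,au}$, and their strong analogues as unital Banach algebras containing $A$ as a codimension-one closed ideal. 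I expect the only genuine difficulty to lie in the weighted Young estimate for $B_{p,a}$: the submultiplicativity of $\langle\cdot\rangle^{-a}$ is immediate, but one must pair it correctly with (\ref{Eq3.16}) so that precisely one of the two factors can be placed in $L^1$ after the weight has been redistributed, leaving the other in $\langle t\rangle^a L^p_t$; everything else is routine bookkeeping that specializes to the already-established $K$-case in the natural way.
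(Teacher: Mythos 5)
The paper states this remark without proof, so there is no argument of the author's to compare against; judged on its own, your plan (inclusions, stability under convolution, unitality, unitization) is the natural verification and is mostly sound. Two points, however, need repair. First, the condition (\ref{Eq3.16}) cannot be used at face value: with $a \leq 0$ and $1 \leq p \leq \infty$ one always has $\frac a3 + \frac 1p \leq 1$, so as printed the condition admits no pairs $(p,a)$ whatsoever. What your H\"older step actually requires is $\langle t \rangle^{a} \in L^{p'}_t$, i.e.\ $\frac 1p - a > 1$ (for $p=1$, $a \leq 0$ suffices), and this corrected condition is the one consistent with the paper's later use of the algebra $Be_{\infty, -3/2\, s}$. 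Your claim that $\|\langle t\rangle^a\|_{L^{p'}_t} < \infty$ holds \emph{exactly} under (\ref{Eq3.16}) is therefore not correct as written; you should state and work with the corrected inequality.

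Second, the convolution estimate for $B_{p,a}$ does not close in the form you display. Applying the full submultiplicative bound $\langle t\rangle^{-a} \leq C \langle t-s\rangle^{-a}\langle s\rangle^{-a}$ strips the weight from both factors, leaving the unweighted functions $\langle \cdot \rangle^{-a}|f|,\ \langle \cdot \rangle^{-a}|g| \in L^p_t$; Young's inequality then needs one of \emph{these} in $L^1_t$, which the embedding from your first step does not provide (it gives $f, g \in L^1_t$, not their weight-stripped versions), and $L^p * L^p$ does not map into $L^p$. The standard fix, which your last sentence gestures at but must be made explicit, is to split the convolution integral into the regions $|s| \leq |t|/2$ and $|s| > |t|/2$ and in each region bound only \emph{one} of the two weights $\langle t-s\rangle^{a}, \langle s\rangle^{a}$ by $C\langle t\rangle^{a}$; the other factor then retains its weight, lies in $L^1_t$ precisely because $\langle t\rangle^{a} \in L^{p'}_t$, and Young's $L^p * L^1 \subset L^p$ closes the estimate. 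The same bookkeeping with $\mc L(H,H)$-norms handles $K_{1s}$ and $B_{p,as}$. Finally, a small logical gap in the unitality discussion: showing that $\delta_{t=0} \otimes I$ fails to lie in $K_1$, $B_{p,a}$, etc.\ only shows these algebras do not contain the unit of $K$; to conclude they are nonunital you should also exclude a unit of their own, e.g.\ by testing a putative unit against mollifiers $\phi_\epsilon \otimes I$, which belong to all of these subalgebras.
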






Returning to $K$, note that, due to our choice of a Hilbert space $H$, if $k \in K$ then $k^* \in K$ also.

Define the Fourier transform of any element in $K$ by
\be
\widehat k(\lambda) = \int_{\set R} e^{-it\lambda} \dd k(t).
\ee
This is a bounded operator from $H$ to itself.
By dominated convergence, $\widehat k(\lambda)$ is a strongly continuous (in $\lambda$) family of operators for each $k$ and, for each $\lambda$,
\be
\|\widehat k(\lambda)\|_{H \to H} \leq \|k\|_K.
\ee
This follows from (\ref{3.8}).

The Fourier transform of the identity is $\widehat I(\lambda) = I$ for every $\lambda$; $\widehat{k^*} = (\widehat k)^*$. Also, the Fourier transform takes convolution to composition.

Trivially, if a kernel $k \in K$ has both a left and a right inverse, they must be the same, $b = b * I = b * (k * B) = (b * k) * B = I * B = B$.

As usual, fix a continuous cutoff $\chi$ supported on a compact set and which equals one on some neighborhood of zero. We also specify that the inverse Fourier on $\set R$ is
\be
f^{\vee}(t) = \frac 1 {2\pi} \int_{\set R} e^{it\lambda} f(\lambda) \dd \lambda.
\ee
\begin{theorem}\lb{thm7}
Let $K$ be the operator algebra of Definition \ref{def_k}. If $A \in K$ is invertible then $\widehat A(\lambda)$ is invertible for every $\lambda$. Conversely, assume $\widehat A(\lambda)$ is invertible for each $\lambda$, $A = I + L$, and
\be
\lim_{\epsilon \to 0} \|L(\cdot + \epsilon) - L\|_K = 0,\ \lim_{R \to \infty}\|(1-\chi(t/R)) L(t)\|_K =0
\ee
Then $A$ is invertible. Furthermore, if $L$ is in any of the aforementioned unital subalgebras of $K$ ($K_{1u}$, $B_{p, a u}$, $K_{su}$, etc.), then its inverse will also belong to the same.
\end{theorem}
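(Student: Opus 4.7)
The plan is immediate: if $A \ast B = B \ast A = I$ with $B \in K$, then taking Fourier transforms pointwise in $\lambda$ and using that convolution in $K$ intertwines with operator composition yields $\widehat A(\lambda)\widehat B(\lambda) = \widehat B(\lambda)\widehat A(\lambda) = I$ for every $\lambda \in \mathbb{R}$, so $\widehat A(\lambda)$ is invertible with inverse $\widehat B(\lambda)$.

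\textbf{Backward direction: behaviour at infinity.} For $A = I + L$ I would first upgrade the pointwise invertibility to a uniform bound by controlling $\widehat A$ in operator norm. The hypothesis $\|L(\cdot+\epsilon)-L\|_K \to 0$ forces $\widehat L$ to be norm-continuous in $\lambda$ (since $\widehat{L(\cdot+\epsilon)}(\lambda) = e^{-i\epsilon\lambda}\widehat L(\lambda)$, taking $|\lambda|$ large gives oscillation control). Combined with $\|(1-\chi(t/R))L\|_K \to 0$, which says $L$ is a $K$-norm limit of compactly supported kernels, I get a Riemann--Lebesgue statement: $\widehat L(\lambda) \to 0$ in operator norm as $|\lambda| \to \infty$. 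Thus $\widehat A$ extends to a norm-continuous map on the one-point compactification $\mathbb{R} \cup \{\infty\}$ with $\widehat A(\infty) = I$. Pointwise invertibility on this compact space together with norm continuity produces $M := \sup_\lambda \|\widehat A(\lambda)^{-1}\| < \infty$.

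\textbf{Local parametrices and partition of unity.} For each $\lambda_0 \in \mathbb{R}$, I set $C_{\lambda_0} = \widehat A(\lambda_0)^{-1}$ and view $C_{\lambda_0}\,\delta_{t=0} \in K$; its Fourier transform is the constant operator $C_{\lambda_0}$. Then $A \ast (C_{\lambda_0}\delta_0)$ has Fourier transform $I + (\widehat A(\lambda) - \widehat A(\lambda_0))C_{\lambda_0}$, and by Step 1 the correction is arbitrarily small in norm on a small enough neighbourhood of $\lambda_0$ (with an analogous construction at $\infty$ using $C_\infty = I$). By compactness of $\mathbb{R}\cup\{\infty\}$ I choose finitely many such neighbourhoods covering the line and a subordinate Schwartz partition of unity $\{\phi_j\}$; inverse-Fourier-transforming gives $\check\phi_j \in \mathcal{S}(\mathbb{R}) \subset K$. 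On each piece the local inverse is constructed via a Neumann series in $K$ built from the convolution product of $\check\phi_j$, $C_{\lambda_j}\delta_0$, and $L - \widehat L(\lambda_j)\delta_0$; summing over $j$ yields a candidate $B \in K$ with $\widehat{A \ast B} = I$, whence $A \ast B = I$. The left inverse is produced symmetrically (equivalently via $A^*$, which satisfies the same hypotheses since $\widehat{A^*}(\lambda) = \widehat A(\lambda)^*$), and the two agree by associativity.

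\textbf{Main obstacle and subalgebra preservation.} The key technical point---where I expect the work to lie---is Step 3: smallness of $\widehat A - \widehat A(\lambda_j)$ on $\mathrm{supp}\,\phi_j$ does not automatically mean that the corresponding convolution operator on the kernel side has small $K$-norm after localization by $\check\phi_j$. This is exactly where the translation-continuity and decay hypotheses on $L$ are used: they permit $L$ to be approximated in $K$-norm by kernels whose Fourier transforms are essentially supported on bounded frequency sets, so that cutting off by $\check\phi_j$ actually kills the geometrically large pieces and one gets Neumann series that converge in the $K$-norm, not merely pointwise. Finally, Step 5 is inspection: every operation used (convolution, scalar bounded operators on $H$, $\delta_{t=0}$, Schwartz cutoffs, and $K$-norm limits) preserves each of the unital subalgebras $K_{1u}$, $B_{p,au}$, $K_{su}$, $B_{p,asu}$, so an inverse built by this procedure lands in whichever such algebra contains $L$.
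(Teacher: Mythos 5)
Your architecture coincides with the paper's: the forward direction by applying the Fourier transform to $A * A^{-1} = I$, and the backward direction by producing, for each $\lambda_0 \in \set R \cup \{\infty\}$, an invertible element of $K$ whose Fourier transform agrees with $\widehat A$ near $\lambda_0$, then gluing with a smooth partition of unity subordinate to a finite cover. Your local step is a minor but genuine variant: the paper first replaces $A$ by $A * A^*$ so that $\widehat A(\lambda) \geq 0$, and then needs a power trick (replacing the correction $B$ by $B^n$) to absorb the constant $\|\chi_\epsilon\|_{L^1} > 1$; your parametrix $C_{\lambda_0} = \widehat A(\lambda_0)^{-1}$ makes the localized correction have Fourier transform vanishing exactly at $\lambda_0$, which would dispense with both devices.

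However, the step you yourself flag as ``where the work lies'' is a genuine gap, and the heuristic you offer for closing it is not the mechanism that actually works. The $K$-norm of $\check\phi_j * B$ is not controlled by $\sup_{\lambda \in \supp \phi_j} \|\widehat B(\lambda)\|$, and approximating $L$ by band-limited kernels does not change this. What the paper proves instead is the convolution estimate
\be
\big\|(\chi_{\epsilon} * B)(t) - \chi_{\epsilon}(t)\, \widehat B(0)\big\|_K \to 0 \qquad (\epsilon \to 0)
\ee
for any $B$ in the decaying class $D$, obtained by splitting $\int B(s) \chi_\epsilon(t-s) \dd s$ into $|s| \leq R$, where $\big\|\sup_{|s| \leq R} |\chi_\epsilon(\cdot) - \chi_\epsilon(\cdot - s)|\big\|_{L^1_t} \to 0$, and $|s| > R$, where $\|\chi_{|s|>R} B\|_K$ is small. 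That is, narrowing the frequency cutoff spreads the kernel in time until it is $K$-close to the kernel $\chi_\epsilon(t) \widehat B(\lambda_0)$, whose norm is $\|\chi_\epsilon\|_1 \|\widehat B(\lambda_0)\|$; applied with $\widehat B(\lambda_0) = 0$ in your normalization, this is what makes the Neumann series converge. The translation-continuity hypothesis enters separately, only at $\lambda_0 = \infty$, to show $\|(I - \chi_R) * L\|_K \to 0$; your Riemann--Lebesgue claim that $\widehat L(\lambda) \to 0$ in norm is a consequence of this but is weaker than the kernel-side statement the Neumann series at infinity actually requires. Without the displayed lemma, your Step 3 does not close.
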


Further note that the set of equicontinuous operators, that is
\be
\{L \mid \lim _{\epsilon \to 0} \|L(\cdot + \epsilon) - L\|_K = 0\}
\ee
is a closed ideal, is translation invariant, contains the set of those kernels which are  strongly measurable and $L^1$ (but can be strictly larger), and $I$ is not in it. We could, though, form a Banach algebra $E$ consisting of just multiples of $I$ and this ideal.

Likewise, the set of kernels $L$ that decay at infinity, that is
\be
D = \{L \mid \lim_{R \to \infty}\|\chi_{|t|>R} L(t)\|_K =0\},
\ee
is a closed subalgebra. It contains the strong algebras that we defined above. Note that for operators $A \in D$ the Fourier transform is also a norm-continuous family of operators, not only strongly continuous.

As a final observation, the construction will ensure that, if $L$ belongs to the intersection $E \cap D$ and is invertible, then its inverse is also in it.
\begin{proof}
The proof goes through the usual paces. Firstly, if $A$ is invertible, that is $A * A^{-1} = A^{-1} * A = I$, then applying the Fourier transform yields
\be
\widehat A (\lambda) \widehat {A^{-1}}(\lambda) = \widehat {A^{-1}}(\lambda) \widehat A (\lambda) = I
\ee
for each $\lambda$, so $\widehat A(\lambda)$ is invertible.

Conversely, assume $\widehat A(\lambda)$ is invertible for every $\lambda$. Without loss of generality, we can take $\widehat A$ to be self-adjoint and non-negative for every $\lambda$, by replacing $A$ with $A * A^*$. Then at each $\lambda$ $\widehat A(\lambda)$ is invertible and bounded if and only if
\be
\inf_{\|f\|_{{}_H} = 1} \langle f, \widehat A(\lambda) f \rangle > 0.
\ee
Fix $\lambda_0 \in \set R$. With the help of a smooth cutoff function $\chi$ of compact support, equal to one on some neighborhood of zero, define
\be
\widehat A_{\epsilon}(\lambda) = \Big(1-\chi\big(\frac{\lambda-\lambda_0} {\epsilon}\big)\Big) I + \chi\big(\frac {\lambda-\lambda_0} {\epsilon}\big) \widehat A(\lambda) / \|A\|_K.
\lb{3.18}
\ee
We next prove that $A_{\epsilon}$ is invertible. Without loss of generality we can take $\lambda_0$ to be zero. 

For any kernel $B \in K$ that decays at infinity ($B \in D$, that is, in our previous notation) and for $\chi_{\epsilon} = \nobreak \frac 1 {\epsilon}\chi^{\vee}(\epsilon \cdot)$,
\be
\bigg\|(\chi_{\epsilon} * B)(t) - \chi_{\epsilon}(t) \int_{\set R} B(s) ds\bigg\|_K \to 0.
\ee
This follows, as usual, by fixing some large radius $R$ and integrating separately within and outside that radius:
\be
\|\chi_{|s|>R} B(s)\|_K \to 0,
\ee
\be
\bigg\|\int_{|s|\leq R} (\chi_{\epsilon}(t) - \chi_{\epsilon}(t-s)) B(s) \dd s\bigg\|_K \leq \|B\|_K \cdot \Big\|\sup_{|s|\leq R}(\chi_{\epsilon}(t) - \chi_{\epsilon}(t - s))\Big\|_{L^1_t} \to 0.
\ee
Thus $\chi_{\epsilon} * B$ gets close to the operator
\be
\chi_{\epsilon}(t) \bigg(\int_{\set R} B(s) ds\bigg)
\ee
whose norm equals $\|\chi_{\epsilon}\|_1 \|\widehat B(0)\|_{H \to H}$.

If $\|\widehat B(0)\| < 1/C$, where $C = \|\chi_{\epsilon}\|_{L^1_t}$ is a constant independent of scaling, then $1-\chi_{\epsilon} * B$ is invertible for small enough $\epsilon$. If we only assume that $\|\widehat B(0)\| < 1$, then we replace $B$ by $B^n$ for some large $n$ in the above and get that $1 - \chi_{\epsilon} * B^n$ is invertible for small $\epsilon$. This implies that
\be\lb{Eq3.30}
I-((\chi_{\epsilon}^{\wedge})^{1/n})^{\vee} * B
\ee
is invertible.

We choose the cutoff function $\chi$ in the above so that not only does it have compact support, but it is smooth as well. Then, for $\chi_{\epsilon} = \nobreak \frac 1 {\epsilon}\chi^{\vee}(\epsilon \cdot)$ and for large finite $n$, $(\chi_{\epsilon}^{\wedge})^{1/n}$ is a smooth compactly supported function and its Fourier transform is integrable. This makes the inverse of (\ref{Eq3.30}) belong to the algebra $K$, as required.

In particular, this applies to $B = I - A_{\epsilon}$. Indeed, since $\widehat A(\lambda)$ is positive and invertible, it follows that
\be
(I-A_{\epsilon})^{\wedge}(0) = I - \widehat A(0) / \|A\|_K
\lb{3.23}
\ee
is nonnegative and strictly less than one. Thus there exists an operator (namely $A_{\epsilon}$) whose Fourier transform equals that of $A$ on some neighborhood of $\lambda_0$ and which is invertible.

We have to consider infinity separately. Let
\be
\widehat A_R(\lambda) = (1-\chi(\lambda/R)) \widehat A(\lambda) + \chi(\lambda/R) I.
\ee
The difference between $A_R$ and $I$ is given by
\be
(I - \chi_R) * (I - A),
\ee
where $\chi_R = \frac 1 R \chi^{\vee}(R \cdot)$. At this step we use the equicontinuity assumption of the hypothesis, namely
\be
\lim_{\epsilon \to 0}\|(I-A)-(I-A)(\cdot + \epsilon)\|_K = 0.
\ee
Since $\chi_R$ is a good kernel, we separate it into two parts, away from zero and close to zero, and obtain
\be\begin{aligned}
\limsup_{R \to \infty} \|\chi_{[-\epsilon, \epsilon]} \chi_R * (I-A) - (\int_{\set R} \chi_{[-\epsilon, \epsilon]} \chi_R) (I-A)\|_K = o_{\epsilon}(1),\\
\lim_{R \to \infty} \|(1-\chi_{[-\epsilon, \epsilon]}) \chi_R\|_1 = 0.
\end{aligned}\ee
Therefore
\be
\lim_{R \to \infty} \|(I - \chi_R) * (I - A)\|_K = 0.
\ee
Thus we can invert $A_R$ for large $R$. It follows that on some neighborhood of infinity the Fourier transform of $A$ equals that of an invertible operator.

Finally, using a finite partition of unity subordinated to those neighborhoods we have found above, we explicitly construct the inverse of $A$. Indeed, consider a finite open cover of $\set R$ of the form
\be
\set R = D_{\infty} \cup \bigcup_{j=1}^n D_j,
\ee
where $D_j$ are open sets and $D_{\infty}$ is an open neighborhood of infinity. Also assume that for $1 \leq j \leq n$ and for $j=\infty$ we have $\widehat A^{-1} = \widehat A_j^{-1}$ on the open set $D_j$. Take a smooth partition of unity subordinated to this cover, that is
\be
1 = \sum_j \chi_j,\ \supp \chi_j \subset D_j.
\ee
Then the inverse of $A$ is given by
\be
A^{-1} = \sum_{j=1}^n \widehat \chi_j * A_j^{-1} + (I - \sum_{j=1}^n \widehat \chi_j) * A_{\infty}^{-1}.
\ee
Given our use of smooth cutoff functions, this construction also preserves the subalgebras we defined.
\end{proof}

We are also interested in whether, if $A$ is upper triangular (meaning that $A$ is supported on $[0, \infty)$), the inverse of $A$ is also upper triangular.
\begin{lemma}\lb{lemma8}
Given $A \in K$ upper triangular with $A^{-1} \in K$, $A^{-1}$ is upper triangular if and only if $\widehat A$ can be extended to a weakly analytic family of invertible operators in the lower half-plane, which is continuous up to the boundary, uniformly bounded, and with uniformly bounded inverse.
\end{lemma}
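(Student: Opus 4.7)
The plan is to reduce both implications of the lemma to a vector-valued Paley--Wiener principle: a measure $\mu \in M_t H$ satisfies $\supp \mu \subset [0, \infty)$ if and only if $\widehat \mu(\lambda) = \int_{\set R} e^{-it\lambda} d\mu(t)$ extends to a uniformly bounded, weakly analytic $H$-valued function on $\{\Im \lambda < 0\}$ that is continuous up to the boundary. Once this principle is in hand, the two directions of the lemma unfold in a standard fashion.

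For the forward direction, I would fix $f \in H$ and observe that since $A$ and $A^{-1}$ are both upper triangular, the $H$-valued measures $Af$ and $A^{-1}f$ are supported on $[0,\infty)$, so $\widehat A(\lambda)f$ and $\widehat{A^{-1}}(\lambda)f$ extend to the closed lower half-plane with norms bounded by $\|A\|_K\|f\|$ and $\|A^{-1}\|_K\|f\|$ respectively. The identity $\widehat A(\lambda)\widehat{A^{-1}}(\lambda) = I$, valid on $\set R$, propagates to the lower half-plane by weak analytic continuation of its matrix elements, so $\widehat A(\lambda)$ is invertible there with $\widehat A(\lambda)^{-1} = \widehat{A^{-1}}(\lambda)$, uniformly bounded and continuous up to the boundary.

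For the converse, starting from the assumed extension of $\widehat A$, I would invoke Dunford's theorem (weakly holomorphic operator-valued functions on a Banach space are automatically norm holomorphic) together with the resolvent identity $\widehat A(\lambda+h)^{-1} - \widehat A(\lambda)^{-1} = -\widehat A(\lambda+h)^{-1}(\widehat A(\lambda+h) - \widehat A(\lambda))\widehat A(\lambda)^{-1}$ to conclude that $\widehat A(\cdot)^{-1}$ is norm holomorphic, uniformly bounded, and continuous up to the boundary on the lower half-plane. Since $A^{-1} \in K$ by hypothesis and $\widehat{A^{-1}} = \widehat A^{-1}$ on $\set R$, it follows that for each $f \in H$ the $H$-valued measure $A^{-1}f$ has a Fourier transform that extends boundedly and weakly analytically to the lower half-plane.

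The final step, which I expect to be the main technical obstacle, is to derive support in $[0,\infty)$ from this analytic extension. Fixing $f, g \in H$, the scalar signed measure $\nu_{g,f}(dt) = \langle g, (A^{-1}f)(dt)\rangle$ lies in $M(\set R)$ with scalar Fourier transform $\langle g, \widehat A(\lambda)^{-1}f\rangle$, bounded and holomorphic on $\{\Im\lambda<0\}$. A contour deformation argument --- testing $\nu_{g,f}$ against a Schwartz function $\phi$ supported in $(-\infty, 0)$, whose Fourier transform extends holomorphically to the upper half-plane with rapid decay on horizontal lines, applying Fourier inversion, and closing the contour in the upper half-plane --- forces $\supp \nu_{g,f} \subset [0,\infty)$. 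Because $\|(A^{-1}f)(E)\|_H = \sup_{\|g\|\leq 1}|\nu_{g,f}(E)|$, taking the supremum in $g$ yields $(A^{-1}f)(E)=0$ for every Borel set $E \subset (-\infty, 0)$, and since $f$ was arbitrary, $A^{-1}$ is upper triangular.
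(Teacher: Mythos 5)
Your proof is correct, and while the forward direction coincides with the paper's (extend both $\widehat A$ and $\widehat{A^{-1}}$ into the closed lower half-plane from the support condition and propagate the identity $\widehat A\,\widehat{A^{-1}}=I$ by analytic continuation), your converse takes a genuinely different route. The paper splits $A^{-1}=A_-+A_+$ with $A_-=\chi_{(-\infty,0]}A^{-1}$, observes that $\widehat{A_-}$ is bounded and analytic in the upper half-plane (support of $A_-$) and also extends boundedly to the lower half-plane as $(\widehat A)^{-1}-\widehat{A_+}$, and then invokes Liouville's theorem to force $A_-$ to be a point mass at $0$; this implicitly requires an edge-of-the-wedge/Morera gluing of the two extensions across $\set R$. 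You instead scalarize, applying a Paley--Wiener contour-shift argument directly to the measures $\nu_{g,f}=\langle g,(A^{-1}f)(\cdot)\rangle$: testing against Schwartz functions supported in $(-\infty,0)$ and pushing the contour into the half-plane where $\widehat\phi$ decays exponentially kills the pairing, giving $\supp\nu_{g,f}\subset[0,\infty)$ and hence upper triangularity after taking the supremum over $g$. Your version works entirely within one half-plane and so avoids the gluing step, at the cost of the (routine) limiting argument needed to start the contour shift at the boundary where one only has continuity; the paper's version is shorter and isolates the only possible obstruction as a Dirac mass at $t=0$, which is harmless since $\{0\}\subset[0,\infty)$. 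Both arguments are sound.
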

\begin{proof}
Given that $A^{-1}$ and $A$ are upper triangular, one can construct $\widehat A(\lambda)$ and $\widehat {A^{-1}}(\lambda)$ in the lower half-plane, as the integral converges there. Strong continuity follows by dominated convergence and weak analyticity by means of the Cauchy integral formula. Furthermore, both $\widehat A(\lambda)$ and $\widehat {A^{-1}}(\lambda)$ are bounded by the respective norms and they are inverses of one another.

Conversely, consider $A_- = \chi_{(-\infty, 0]} A^{-1}$. On the lower half-plane, $\widehat {A^{-1}} = (\widehat A)^{-1}$ is uniformly bounded by assumption. Likewise, $\widehat A_+$ is bounded as the Fourier transform of an upper triangular operator. Since $A_- = A - A_+$, it too is bounded on the lower half-plane.

However, $A_-$ is lower triangular, so its Fourier transform is also bounded in the upper half-plane. By Liouville's theorem, then, $\widehat A_-$ it must be constant, so $A_-$ can only have singular support at zero. Therefore $A$ is upper triangular.
\end{proof}

In none of the above did we use compactness or the Fredholm alternative explicitly. (Still, it is interesting to note that a subset of $L^1_t H$ is precompact if and only if its elements are uniformly bounded, equicontinuous, and decay uniformly at infinity --- conditions that we actually employed).

We next apply this abstract theory to the particular case of interest.

\section{The free evolution and resolvent in three dimensions}
We return to the concrete case (\ref{eq_3.2}) or (\ref{3.2}) of a linear Schr\"odinger equation  on $\set R^3$ with scalar or matrix nonselfadjoint potential $V$. For simplicity, the entire subsequent discussion revolves around the case of three spatial dimensions.

We mainly consider $L^2$ estimates, with no derivatives, which are the most natural. However, for the nonlinear problem, the most relevant are $\dot H^{1/2}$ estimates, involving half-derivatives.

In order to apply the abstract Wiener theorem, Theorem \ref{thm7}, it is necessary to exhibit an operator-valued measure of finite mass. Accordingly, we start with the following:
\begin{proposition}\lb{prop_21} Let $\mc H_0$ be as in (\ref{eq_3.2}) or (\ref{3.2}), that is
$$
\mc H_0 = -\Delta \text{ or } \mc H_0 = \bpm \Delta-\mu & 0 \\ 0 & -\Delta+\mu \epm.
$$
Then
\be\lb{eqn_3.39}
\int_0^{\infty} \|e^{it \mc H_0} f\|_{L^{6, \infty}} \dd t \leq C \|f\|_{L^{6/5, 1}}.
\ee
\end{proposition}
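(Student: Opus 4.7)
The plan is to view this as a Lorentz-refined, time-integrated dispersive bound in $\mathbb R^3$, built from the two classical estimates on $e^{it\mc H_0}$: (i) unitarity on $L^2$, $\|e^{it\mc H_0}f\|_{L^2}=\|f\|_{L^2}$, which is immediate from the spectral theorem in the scalar case and from the block-diagonal identity $e^{it\mc H_0}=\mathrm{diag}(e^{-it\mu}e^{it\Delta},\,e^{it\mu}e^{-it\Delta})$ in the matrix case; and (ii) the three-dimensional pointwise dispersive estimate $\|e^{it\mc H_0}f\|_{L^{\infty}}\leq C|t|^{-3/2}\|f\|_{L^1}$, read off directly from the explicit Gaussian--oscillatory kernel $(4\pi it)^{-3/2}e^{i|x-y|^2/(4t)}$ of $e^{it\Delta}$, with the matrix case adding only unimodular phase factors $e^{\pm it\mu}$ that leave the bound unchanged.

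Real (Lorentz) interpolation between (i) and (ii) produces the scale-invariant pointwise-in-$t$ dispersive estimate
\be
\|e^{it\mc H_0}f\|_{L^{6,\infty}_x}\leq C|t|^{-1}\|f\|_{L^{6/5,1}_x},
\ee
in which $1/(6/5)+1/6=1$ pairs dually at the Sobolev endpoint and $|t|^{-1}$ is the matching dispersive decay. By Marcinkiewicz this already gives the weak-$L^1_t$ version of (\ref{eqn_3.39}); the remaining task is the upgrade from $L^{1,\infty}_t$ to $L^1_t$, which is precisely the endpoint at which the pointwise decay $|t|^{-1}$ fails to be integrable.

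To carry out the upgrade I would follow the bilinear endpoint method of Keel--Tao: dualize $L^1_tL^{6,\infty}_x$ against $L^\infty_tL^{6/5,1}_x$, introduce the bilinear form
\be
\mc B(f,g)=\int_0^\infty\langle e^{it\mc H_0}f,\,g(t,\cdot)\rangle_x\,dt,
\ee
and reduce the claim to $|\mc B(f,g)|\leq C\|f\|_{L^{6/5,1}_x}\|g\|_{L^\infty_tL^{6/5,1}_x}$. I would then expand $f$ and $g(t,\cdot)$ in atomic decompositions of $L^{6/5,1}$ (normalized indicator functions), decompose the time integration dyadically around the $TT^*$-diagonal $|t-s|\sim 2^k$, and apply the pointwise dispersive estimate to each dyadic block of atom-atom pairings. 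The Lorentz structure on both sides is what converts the borderline $|t|^{-1}$ decay into a summable dyadic series through the Lorentz--H\"older pairing $(L^{6/5,1},L^{6,\infty})\to L^1$.

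The main obstacle will be exactly this endpoint issue: because $|t|^{-1}$ lies on the boundary of $L^1_t$ integrability, the pointwise dispersive estimate cannot be applied term by term, and the delicate pairing of atomic decompositions on both factors, in the Keel--Tao style, is essential to close the estimate; the matrix case then reduces to the scalar case with no additional work via the block diagonalization above.
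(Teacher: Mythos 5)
Your strategy is essentially the paper's second proof of this proposition: decompose both $f$ and $g(t,\cdot)$ into $L^{6/5,1}$ atoms, decompose time dyadically, and close the endpoint by playing the two base estimates against each other. One caution: as written, your closing step says to ``apply the pointwise dispersive estimate to each dyadic block,'' but that alone diverges --- for a pair of atoms supported on sets of measure $2^{j_0}$ and $2^{k_0}$, the $L^1\to L^\infty$ bound gives $2^{-3n/2}2^{(j_0+k_0)/6}$ on the block $t\sim 2^n$, which beats the needed $2^{-n}$ only for $n\geq (j_0+k_0)/3$; for the smaller $n$ you must switch to the $L^2$ unitarity bound, which gives $2^{-(j_0+k_0)/3}\leq 2^{-n}$ there. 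You list unitarity as ingredient (i), so the idea is available to you, but it must actually enter the block-by-block estimate, not just the motivation. Also, there is no second time variable $s$ in this statement (it concerns the homogeneous flow applied to fixed data, not a Duhamel term), so the ``$TT^*$-diagonal $|t-s|\sim 2^k$'' framing is a red herring; the correct dyadic parameter is simply $t\sim 2^n$. Finally, note that the paper also gives a shorter first proof that bypasses atoms entirely: it regards the family $T_n(f,g)=\int_{2^n}^{2^{n+1}}\langle e^{it\mc H_0}f,g(t)\rangle\,dt$ as a bilinear map into weighted $\ell^\infty$ spaces, bounded for $L^p\times L^1_tL^p_x$ with $1\leq p\leq 2$, and applies the Lions--Peetre bilinear real interpolation theorem with $\theta=1/3$, $q_1=q_2=1$ to land in $\ell^1_1$; your fixed-time interpolated estimate $\|e^{it\mc H_0}f\|_{L^{6,\infty}}\leq C|t|^{-1}\|f\|_{L^{6/5,1}}$ is true but, as you correctly observe, cannot by itself be integrated in $t$, whereas interpolating the whole dyadic family at once is exactly what recovers the $\ell^1$ summability.
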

Thus, the free time evolution $e^{it \mc H_0}$, when considered between the proper spaces, is indeed an operator-valued measure of finite mass. As a reminder, $L^{p, q}$ are Lorentz spaces (see Appendix \ref{spaces}); $L^{p, \infty}$ is also known as weak-$L^p$.

The proof produces a stronger result, namely
\be\lb{eq_3.39}
\sum_{n \in \set Z} 2^n \sup_{t \in [2^n, 2^{n+1})} \|e^{it \mc H_0} f\|_{L^{6, \infty}} \leq C \|f\|_{L^{6/5, 1}}.
\ee
\begin{proof} We provide two proofs --- a shorter one based on real interpolation and a longer one, using the atomic decomposition of Lorentz spaces (Lemma \ref{lemma_30}), that exposes the proof machinery underneath.

Following the first approach, note that by duality (\ref{eq_3.39}) is equivalent to
\be\lb{eq_3.40}
\sum_{n \in \set Z} 2^n \int_{2^n}^{2^{n+1}} |\langle e^{it\mc H_0} f, g(t) \rangle| \dd t \leq C \|f\|_{L^{6/5, 1}} \|g\|_{L^1_t L^{6/5, 1}_x}
\ee
holding for any $f \in L^{6/5, 1}$ and $g \in L^1_t L^{6/5, 1}_x$.

From the usual dispersive estimate
\be
\|e^{-it\Delta} f\|_{p'} \leq t^{3/2(1-2/p)} \|f\|_p
\ee
we obtain that
\be
\int_{2^n}^{2^{n+1}} |\langle e^{it\mc H_0} f, g(t) \rangle| \dd t \leq 2^{n(3/2-3/p)} \|f\|_p \|g\|_{L^1_t L^p_x}.
\ee
Restated, this means that the bilinear mapping
\be\begin{aligned}
& T:L^p \times L^1_t L^p_x \to \ell^{\infty}_{3/p-3/2} \text{ (following the notation of Proposition \ref{prop_33})},\\
& T=(T_n)_{n \in \set Z},\ T_n(f, g) = \int_{2^n}^{2^{n+1}} \langle e^{it\mc H_0} f, g(t) \rangle \dd t
\end{aligned}\ee
is bounded for $1 \leq p \leq 2$.

Interpolating between $p=1$ and $p=2$, by using the real interpolation method (Theorem \ref{thm_32}) with $\theta=1/3$ and $q_1=q_2=1$, directly shows that
\be
T: (L^1, L^2)_{1/3, 1} \times (L^1_t L^1_x, L^1_t L^2_x)_{1/3, 1} \to (\ell^{\infty}_{3/2}, \ell^{\infty}_0)_{1/3, 1}
\ee
is bounded. By Proposition \ref{prop_33},
\be\begin{aligned}
(L^1, L^2)_{1/3, 1} &= L^{6/5, 1},\\
(L^1_t L^1_x, L^1_t L^2_x)_{1/3, 1} &= L^1_t (L^1_x, L^2_x)_{1/3, 1} = L^1_t L^{6/5, 1}_x,\\
(\ell^{\infty}_{3/2}, \ell^{\infty}_0)_{1/3, 1} &= \ell^1_1.
\end{aligned}
\ee
Hence $T$ is bounded as a map from $L^{6/5, 1} \times L^1_t L^{6/5, 1}_x$ to $\ell^1_1$, which implies (\ref{eq_3.39}).

The alternative approach is based on the atomic decomposition of $L^{6/5, 1}$. By Lemma \ref{lemma_30},
\be\begin{aligned}
f = \sum_{j \in \set Z} \alpha_j a_j, g(t) = \sum_{k \in \set Z} \beta_k(t) b_k(t),
\end{aligned}\ee
where $a_j$ and, for each $t$, $b_k(t)$ are atoms with
\be\begin{aligned}
\mu(\supp(a_j)) &= 2^j,& \esssup |a_j| &= 2^{-5j/6},\\
\mu(\supp(b_k(t))) &= 2^k,& \esssup |b_k(t)| &= 2^{-5k/6}
\end{aligned}\ee
(here $\mu$ is the Lebesgue measure on $\set R^3$), and the coefficients $\alpha_j$ and $\beta_k(t)$ satisfy
\be
\sum_{j \in \set Z} |\alpha_j| \leq C \|f\|_{L^{6/5, 1}},\ \sum_{k \in \set Z} |\beta_k(t)| \leq C \|g(t)\|_{L^{6/5, 1}}.
\ee
Integrating in time and exchanging summation and integration lead to
\be
\sum_k \int_0^{\infty} |\beta_k(t)| \dd t \leq C \|g\|_{L^1_t L^{6/5, 1}_x}.
\ee
Since (\ref{eq_3.40}) is bilinear in $f$ and $g$, it suffices to prove it for only one pair of atoms. Fix indices $j_0$ and $k_0 \in \set Z$; the problem reduces to showing that
\be\lb{eq_3.51}
\sum_{n \in \set Z} 2^n \int_{2^n}^{2^{n+1}} \langle e^{it \mc H_0} a_{j_0}, \beta_{k_0}(t) b_{k_0}(t) \rangle \dd t \leq C \int_0^{\infty} |b_{k_0}(t)| \dd t.
\ee
The reason for making an atomic decomposition is that atoms are in $L^1 \cap L^{\infty}$, instead of merely in $L^{6/5, 1}$, enabling us to employ both $L^1$ to $L^{\infty}$ decay and $L^2$ boundedness estimates in the study of their behavior. For each $n$ one has
\be\begin{aligned}\lb{eq_3.52}
\int_{2^n}^{2^{n+1}} \langle e^{it \mc H_0} a_{j_0}, \beta_{k_0}(t) b_{k_0}(t) \rangle \dd t &\leq C 2^{-3n/2} \|a_{j_0}\|_1 \sup_t \|b_{k_0}(t)\|_1 \int_{2^n}^{2^{n+1}} |\beta_{k_0}(t)| \dd t \\
&= C 2^{-3n/2} 2^{j_0/6} 2^{k_0/6} \int_{2^n}^{2^{n+1}} |\beta_{k_0}(t)| \dd t
\end{aligned}\ee
as a consequence of the $L^1 \to L^{\infty}$ decay estimate. At the same time, by the $L^2$ boundedness of the evolution,
\be
\begin{aligned}\lb{eq_3.53}
\int_{2^n}^{2^{n+1}} \langle e^{it \mc H_0} a_{j_0}, \beta_{k_0}(t) b_{k_0}(t) \rangle \dd t & \leq C \|a_{j_0}\|_2 \sup_t \|b_{k_0}(t)\|_2 \int_{2^n}^{2^{n+1}} |\beta_{k_0}(t)| \dd t \\
&= C 2^{-j_0/3} 2^{-k_0/3} \int_{2^n}^{2^{n+1}} |\beta_{k_0}(t)| \dd t.
\end{aligned}
\ee
Using the first estimate (\ref{eq_3.52}) for large $n$, namely $n\geq j_0/3 + k_0/3$, and the second estimate (\ref{eq_3.53}) for small $n$, $n<j_0/3 + k_0/3$, we always obtain that
\be
\int_{2^n}^{2^{n+1}} \langle e^{it \mc H_0} a_{j_0}, \beta_{k_0}(t) b_{k_0}(t) \rangle \dd t \leq C 2^{-n} \int_{2^n}^{2^{n+1}} |\beta_{k_0}(t)| \dd t.
\ee
Multiplying by $2^n$ and summing over $n \in \set Z$ we retrieve (\ref{eq_3.51}), which in turn proves (\ref{eq_3.40}).
\end{proof}

The resolvent of the unperturbed Hamiltonian, $R_0(\lambda) = (\mc H_0 - \lambda)^{-1}$, is given by
\be\lb{eq_3.55}
R_0(\lambda^2)(x, y) = \frac 1 {4\pi} \frac {e^{i \lambda |x-y|}}{|x-y|}
\ee
in the scalar case (\ref{3.2}) and
\be\lb{eq_3.56}
R_0(\lambda^2+\mu)(x, y) = \frac 1 {4\pi} \bpm -\frac {e^{-\sqrt{\lambda^2+2\mu} |x-y|}}{|x-y|} & 0 \\ 0 & \frac {e^{i \lambda |x-y|}}{|x-y|} \epm
\ee
in the matrix case (\ref{eq_3.2}).

In either case, $R_0(\lambda) = (\mc H_0 - \lambda)^{-1}$ is an analytic function, on $\set C \setminus [0, \infty)$ or respectively on $\set C \setminus ((-\infty, -\mu] \cup [\mu, \infty))$. It can be extended to a continuous function in the closed lower half-plane or the closed upper half-plane, but not both at once, due to a jump discontinuity on the real line.

The resolvent is the Fourier transform of the time evolution. We formally state this known connection between $e^{it\mc H_0}$ and the resolvent $R_0 = (\mc H_0 - \lambda)^{-1}$.
\begin{lemma}\lb{lemma_22}
Let $\mc H_0$ be given by (\ref{eq_3.2}) or (\ref{3.2}). For any $f \in L^{6/5, 1}$ and $\lambda$ in the lower half-plane, the integral
\be
\lim_{\rho \to \infty} \int_0^{\rho} e^{-it\lambda} e^{it \mc H_0} f \dd t
\lb{3.47}
\ee
converges in the $L^{6, \infty}$ norm and equals $i R_0(\lambda) f$ or $i R_0(\lambda-i0) f$ in case $\lambda \in \set R$.

Furthermore, for real $\lambda$,
\be
\lim_{\rho \to \infty} \int_{-\rho}^{\rho} e^{-it\lambda} e^{it \mc H_0} f \dd t = i(R_0(\lambda-i0) - R_0(\lambda+i0)) f,
\lb{3.39}\ee
also in the $L^{6, \infty}$ norm.
\end{lemma}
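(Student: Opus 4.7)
The plan is to split the argument into four steps: convergence of the truncated integrals in $L^{6,\infty}$, identification of the limit with $iR_0(\lambda)f$ when $\Im\lambda<0$, passage to the boundary $\lambda\in\set R$, and finally assembly of the symmetric integral from two one-sided pieces. For convergence of $\int_0^\rho e^{-it\lambda}e^{it\mc H_0}f\dd t$, observe that when $\Im\lambda\leq 0$ and $t\geq 0$ one has $|e^{-it\lambda}|=e^{t\Im\lambda}\leq 1$, so the integrand is controlled in $L^{6,\infty}$ norm by $\|e^{it\mc H_0}f\|_{L^{6,\infty}}$, which by Proposition~\ref{prop_21} lies in $L^1_t([0,\infty))$. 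Hence the net is Cauchy in $L^{6,\infty}$ and converges.

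For $\lambda$ in the open lower half-plane I would identify the limit by a generator argument. On a dense subspace of Schwartz data where $e^{it\mc H_0}$ is strongly differentiable, the identity
\[
\tfrac{d}{dt}\bigl(e^{-it\lambda}e^{it\mc H_0}f\bigr)=ie^{-it\lambda}e^{it\mc H_0}(\mc H_0-\lambda)f
\]
integrates to
\[
e^{-i\rho\lambda}e^{i\rho\mc H_0}f-f=i(\mc H_0-\lambda)\int_0^\rho e^{-it\lambda}e^{it\mc H_0}f\dd t.
\]
Since $|e^{-i\rho\lambda}|=e^{\rho\Im\lambda}\to 0$ while $\|e^{i\rho\mc H_0}f\|_{L^{6,\infty}}$ remains bounded, letting $\rho\to\infty$ gives $(\mc H_0-\lambda)g=if$ for the limit $g$, i.e.\ $g=iR_0(\lambda)f$. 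The identity then extends from the dense subspace to all of $L^{6/5,1}$: both sides are uniformly bounded as maps $L^{6/5,1}\to L^{6,\infty}$, the left by Proposition~\ref{prop_21}, the right because the kernels (\ref{eq_3.55})--(\ref{eq_3.56}) are dominated in modulus by $\frac{1}{4\pi|x-y|}$ uniformly in $\lambda$ with $\Im\lambda\leq 0$, yielding a Riesz potential bounded $L^{6/5,1}\to L^{6,\infty}$ by Hardy--Littlewood--Sobolev.

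For $\lambda\in\set R$ I would take $\lambda_n=\lambda-i/n$, for which the previous step gives $\int_0^\infty e^{-it\lambda_n}e^{it\mc H_0}f\dd t=iR_0(\lambda_n)f$. On the left, Lebesgue's dominated convergence with majorant $\|e^{it\mc H_0}f\|_{L^{6,\infty}}\in L^1_t$ gives convergence to $\int_0^\infty e^{-it\lambda}e^{it\mc H_0}f\dd t$ in $L^{6,\infty}$. On the right, $R_0(\lambda_n)f\to R_0(\lambda-i0)f$ in $L^{6,\infty}$ follows from pointwise a.e.\ convergence of the explicit kernels together with the uniform Riesz-type majorant. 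Equating the two limits produces the identity for real $\lambda$. Finally, for the symmetric integral (\ref{3.39}) the change of variables $s=-t$ in $\int_{-\rho}^0$ yields $\int_0^\rho e^{is\lambda}e^{-is\mc H_0}f\dd s$, to which the same analysis applies with $-\mc H_0$ in place of $\mc H_0$, now taking boundary values from the upper half-plane and producing $-iR_0(\lambda+i0)f$; summing gives the claimed $i(R_0(\lambda-i0)-R_0(\lambda+i0))f$. The variant of Proposition~\ref{prop_21} needed for $e^{-is\mc H_0}$, $s\geq 0$, holds by time reversal in the scalar case (\ref{eq_3.2}) and by the symmetry swapping diagonal blocks in the matrix case (\ref{3.2}).

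The main obstacle I anticipate is the $L^{6,\infty}$ convergence $R_0(\lambda_n)f\to R_0(\lambda-i0)f$ in the boundary step: Lorentz quasi-Banach spaces do not admit dominated convergence directly, so pointwise convergence of the kernels cannot be transplanted in one stroke. My workaround would be to first prove the convergence on a dense Schwartz subspace by dominated convergence applied to the spatial integral (where it yields convergence in, say, $L^q$ on compact sets), and then extend by an $\varepsilon/3$ argument using the uniform $L^{6/5,1}\to L^{6,\infty}$ operator bound provided by the Riesz majorant.
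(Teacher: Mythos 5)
Your proposal is correct and follows essentially the same route as the paper: absolute convergence from Proposition \ref{prop_21}, identification of the limit with $iR_0(\lambda)f$ on a dense subspace (the paper uses $f\in L^2\cap L^{6/5,1}$ and the explicit identity $\int_0^{\rho}e^{-it(\lambda-i\epsilon)}e^{it\mc H_0}f\dd t=iR_0(\lambda-i\epsilon)(f-e^{-i\rho(\lambda-i\epsilon)}e^{i\rho\mc H_0}f)$, which is your generator computation in integrated form), extension by the uniform $L^{6/5,1}\to L^{6,\infty}$ bounds, and a boundary limit $\epsilon\to 0$ handled exactly by the dense-subspace-plus-uniform-bound device you describe. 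The only detail worth recording is that inverting $(\mc H_0-\lambda)g=if$ to $g=iR_0(\lambda)f$ uses injectivity of $\mc H_0-\lambda$, which is available because for Schwartz $f$ and $\Im\lambda<0$ the limit $g$ also lies in $L^2$, where $\lambda\notin\sigma(\mc H_0)$.
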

\begin{proof}

Note that (\ref{3.47}) is dominated by (\ref{eqn_3.39}),
\be
\int_0^{\infty} \|e^{it \mc H_0} f\|_{L^{6, \infty}},
\ee
and this ensures its absolute convergence. Next, both (\ref{3.47}), as a consequence of the previous argument, and $i R_0(\lambda+i0)$ are bounded operators from $L^{6/5, 1}$ to $L^{6, \infty}$. To show that they are equal, it suffices to address this issue over a dense set. Observe that
\be
\int_0^{\rho} e^{-it(\lambda-i\epsilon)} e^{it \mc H_0} f \dd t = iR_0(\lambda-i\epsilon) (f - e^{-i\rho(\lambda-i\epsilon)} e^{i\rho \mc H_0} f).
\ee
Thus, if $f \in L^2 \cap L^{6/5, 1}$, considering the fact that $e^{it \mc H_0}$ is unitary and $R_0(\lambda-i\epsilon)$ is bounded on $L^2$,
\be
\lim_{\rho \to \infty} \int_0^{\rho} e^{-it(\lambda-i\epsilon)} e^{it \mc H_0} f \dd t = iR_0(\lambda-i\epsilon) f.
\lb{3.54}
\ee
Letting $\epsilon$ go to zero, the left-hand side in (\ref{3.54}) converges, by dominated convergence, to (\ref{3.47}), while the right-hand side (also by dominated convergence, using the explicit form (\ref{eq_3.55})-(\ref{eq_3.56}) of the operator kernels) converges to $iR_0(\lambda-i0) f$. Statement (\ref{3.39}) follows directly.
\end{proof}

\section{The exceptional set and the resolvent}
We explore further properties of the resolvent in the $L^2$ setting. Important in this context is the \emph{Birman-Schwinger operator},
\be
\widehat T_{V_2, V_1} (\lambda) = i V_2 R_0(\lambda) V_1,
\ee
where $V = V_1 V_2$ and $V_1$, $V_2$ are as in (\ref{3.5}) or (\ref{3.6}).

The relation between the Birman-Schwinger operator and the resolvent $R_V=(\mc H_0 + V - \lambda)^{-1}$ is that
\be\lb{eq_3.64}
R_V(\lambda) = R_0(\lambda) - R_0(\lambda) V_1 (I + V_2 R_0(\lambda) V_1)^{-1} V_2 R_0(\lambda)
\ee
and
\be\lb{eq_3.65}
(I + V_2 R_0(\lambda) V_1)^{-1} = I - V_2 R_V(\lambda) V_1.
\ee

Note that, for $V \in L^{3/2, \infty}$, $V_2 R_0(\lambda) V_1$ is $L^2$-bounded for every value of $\lambda$. At the $\dot H^{1/2}$ level, assuming that $V = V_1 V_2$ and $V_1$, $V_2 \in \dot H^{1/2}$, it follows that $\widehat T_{V_2, V_1}(\lambda)$ is $\dot H^{1/2}$-bounded.

Indeed, making use of the sharp form of the Sobolev embedding $\dot H^{1/2} \subset L^{3, 2}$, it follows that $R_0(\lambda) V_1 f$ is in $\dot W^{1/2, 6} \cap L^{\infty}$ for $f \in \dot H^{1/2}$ and therefore, by the fractional Leibniz rule, $V_2 R_0(\lambda) V_1 f$ is in $\dot H^{1/2}$ again.

However, to avoid such endpoint-related complications, in the sequel we assume when proving $\dot H^{1/2}$ estimates that $V = V_1 V_2$ with $V_1$, $V_2 \in \dot W^{1/2, 2+\epsilon} \cap \dot H^{1/2}$.

The Rollnick class is the set of measurable potentials $V$ whose Rollnick norm
\be
\|V\|_{\mc R} = \int_{(\set R^3)^2} \frac {|V(x)| |V(y)|}{|x-y|^2} \dd x \dd y
\ee
is finite. The Rollnick class $\mc R$ contains $L^{3/2}$. For a potential $V \in \mc R$, the operator $\widehat T_{V_2, V_1}$ described above is Hilbert-Schmidt for every value of $\lambda$ in the lower half-plane up to the boundary. We obtain that $\widehat T_{V_2, V_1}$ is compact whenever $V$ is in $L^{3/2, \infty}$ and can be approximated by bounded compactly supported functions.

\begin{definition}\lb{def_7} Given $V \in L^{3/2}$, its exceptional set $\mc E$ is the set of $\lambda$ in the complex plane for which $I - i\widehat T_{V_2, V_1}(\lambda)$ is not invertible from $L^2$ to itself.
\end{definition}

Other choices of $V_1$ and $V_2$ such that $V = V_1 V_2$, $V_1$, $V_2 \in L^{3}$ lead to the same operator up to conjugation.

Below we summarize a number of observations concerning the exceptional sets of operators in the form (\ref{eq_3.2}) or (\ref{3.2}).
\begin{proposition} Assume $V \in L^{3/2}$ is a potential as in (\ref{eq_3.2}) or (\ref{3.2}) and denote its exceptional set by $\mc E$.

\begin{itemize}\item $\mc E$ is bounded and discrete outside $\sigma(\mc H_0)$, but can accumulate toward $\sigma(\mc H_0)$. $\mc E \cap \sigma(\mc H_0)$ has null measure (as a subset of $\set R$). Elements of $\mc E \setminus \sigma(\mc H_0)$ are eigenvalues of $\mc H = \mc H_0 + V$.

\item If $V$ is real-valued and scalar as in (\ref{eq_3.2}), then the only possible exceptional value in $\sigma(\mc H_0)$ is zero (the endpoint of the spectrum) and $\mc E$ is located on the real axis. If $V$ is scalar as in (\ref{eq_3.2}) and complex-valued, then embedded exceptional values can occur, but only zero can can be an eigenvalue.

\item If $V$ is real and matrix-valued as in (\ref{3.2}), then embedded exceptional values must be eigenvalues, except for the endpoints of $\sigma(\mc H_0)$, which need not be eigenvalues. If $V$ is complex matrix-valued as in (\ref{3.2}), there is no restriction on embedded exceptional values.

\item If $V$ is complex scalar as in (\ref{eq_3.2}) or complex matrix-valued as in (\ref{3.2}), then $\mc E$ is symmetric with respect to the real axis. In case $V$ is real-valued and as in (\ref{3.2}), $\mc E$ is symmetric with respect to both the real axis and the origin.
\end{itemize}
\end{proposition}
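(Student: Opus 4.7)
The plan is to reduce every statement to properties of the Birman--Schwinger operator $\widehat T_{V_2,V_1}(\lambda)=iV_2R_0(\lambda)V_1$ via the resolvent identities (\ref{eq_3.64})--(\ref{eq_3.65}) and the explicit free resolvent kernels (\ref{eq_3.55})--(\ref{eq_3.56}). Since $V\in L^{3/2}$ is approximable by bounded compactly supported functions in $L^{3/2,\infty}$, with factors $V_1,V_2\in L^3$, the map $\lambda\mapsto\widehat T_{V_2,V_1}(\lambda)$ is a compact-operator-valued analytic function on $\set C\setminus\sigma(\mc H_0)$, with $\|\widehat T_{V_2,V_1}(\lambda)\|_{L^2\to L^2}\to 0$ as $|\lambda|\to\infty$ off $\sigma(\mc H_0)$. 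A Neumann series then yields boundedness of $\mc E$, and the analytic Fredholm alternative applied on each connected component of $\set C\setminus\sigma(\mc H_0)$ forces $\mc E$ to be discrete there, with accumulation possible only toward $\sigma(\mc H_0)$. For $\lambda_0\in\mc E\setminus\sigma(\mc H_0)$, the Birman--Schwinger principle converts a non-zero $\phi\in\Ker(I-i\widehat T_{V_2,V_1}(\lambda_0))$ into $\psi=-R_0(\lambda_0)V_1\phi$, which lies in $L^2$ (since the free resolvent at a point off the spectrum has an exponentially decaying kernel) and solves $(\mc H-\lambda_0)\psi=0$, identifying $\lambda_0$ as an eigenvalue.

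The null-measure statement on $\sigma(\mc H_0)$ is the main technical obstacle and I expect to handle it through the limiting absorption principle: for a.e.\ real $\lambda\in\sigma(\mc H_0)$ the boundary values $R_0(\lambda\pm i0)$ exist as bounded operators between suitable weighted $L^2$ spaces, so $\widehat T_{V_2,V_1}(\lambda\pm i0)$ remains compact, and the analytic Fredholm alternative in the open half-planes adjacent to the real axis forces $\mc E\cap\sigma(\mc H_0)$ to have zero Lebesgue measure (nontrivial invertibility in each component is already ensured by the large-$|\lambda|$ bound). The restrictions on embedded exceptional values follow from the structure of $\mc H$: for real scalar $V$, selfadjointness of $-\Delta+V$ gives $\mc E\subset\set R$, and Kato's theorem on the absence of positive eigenvalues leaves only the edge $\lambda=0$ as a possible embedded point; for complex scalar $V$ an Agmon-type integration-by-parts argument still confines $L^2$ eigenvectors to $\lambda=0$, although non-eigenvalue embedded resonances become possible in $(0,\infty)$. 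In the real matrix case (\ref{3.2}) the statement that embedded resonances in the interior of $\sigma(\mc H_0)=(-\infty,-\mu]\cup[\mu,\infty)$ must actually be $L^2$ eigenvalues is the adaptation of Agmon's method by Erdogan--Schlag and Cuccagna already cited in Section~\ref{spectru}; the edges $\pm\mu$ may host non-eigenvalue resonances, and no analogue holds in the complex matrix case.

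For the symmetries in the final bullet, one reads off from (\ref{eq_3.55})--(\ref{eq_3.56}) that $\overline{R_0(\bar\lambda)(x,y)}=R_0(\lambda)(x,y)$. Conjugating $(I-i\widehat T_{V_2,V_1}(\lambda))\phi=0$ shows that $\mc E$ for a potential $V$ and $\mc E$ for $\bar V$ are complex conjugates of each other; in particular, if $V$ is real then $\lambda\in\mc E\Leftrightarrow\bar\lambda\in\mc E$, giving real-axis symmetry in all the cases stated. In the real matrix case (\ref{3.2}), a direct computation with $\sigma_1=\bpm 0&1\\1&0\epm$ yields $\sigma_1\mc H\sigma_1=-\mc H$ (crucially using that $W_2$ is real), which lifts to an intertwining relation between $\widehat T_{V_2,V_1}(\lambda)$ and $\widehat T_{V_2,V_1}(-\lambda)$ after matching factorizations of $V$. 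Thus $\mc E$ is invariant under $\lambda\mapsto-\lambda$ as well, and in combination with the conjugation symmetry this produces invariance under $\lambda\mapsto-\bar\lambda$, which is the claimed symmetry with respect to both real axis and origin.
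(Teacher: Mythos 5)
Your overall architecture (compactness of the Birman--Schwinger operator, analytic/meromorphic Fredholm theory, the Birman--Schwinger principle off $\sigma(\mc H_0)$, and symmetry via conjugation of the free resolvent kernel) is the same as the paper's, and the first bullet and the real-matrix origin symmetry are handled correctly. But two steps as you describe them would not close. First, for real scalar $V\in L^{3/2}$ you cannot dispose of embedded exceptional values in $(0,\infty)$ by citing ``Kato's theorem on the absence of positive eigenvalues'': an exceptional value $\lambda>0$ only produces a resonance function $g\in L^{6,2}$ solving $g=-R_0(\lambda\pm i0)Vg$, not an $L^2$ eigenfunction, and classical Kato-type theorems require pointwise decay that an $L^{3/2}$ potential need not have. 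The missing step is the bootstrap the paper takes from Goldberg--Schlag: reality of the pairing $\langle R_0(\lambda\pm i0)Vg,Vg\rangle$ forces $\widehat{Vg}$ to vanish on the sphere $|\xi|=\sqrt\lambda$, which upgrades $g$ to $\langle x\rangle^{1/2-\delta}L^2$, and only then does the Ionescu--Jerison theorem (which, unlike Kato, tolerates $L^{3/2}$ potentials) give $g=0$. Your ``Agmon-type'' remark in the complex case shows you know the technique, but it is precisely the real scalar bullet where it must be carried out.

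Second, your conjugation argument for the final bullet proves $\overline{\mc E(V)}=\mc E(\ov V)$, which yields the asserted real-axis symmetry only when $V$ is real; your concluding claim that this covers ``all the cases stated'' is unsupported, since the proposition asserts the symmetry for complex $V$ as well. The paper's argument combines the conjugation with a passage to the adjoint, using $\sigma(A^*)=\overline{\sigma(A)}$, $R_0(\lambda)^*=R_0(\ov\lambda)$, and the invariance of the nonzero spectrum under cyclic permutation of the factors $V_1,V_2$, so as to land back on the operator built from $V$ rather than $\ov V$; you need that extra duality step (or an equivalent one). Two smaller omissions: the positive assertion that embedded exceptional values \emph{can} occur for complex scalar $V$ is part of the statement and is proved in the paper by a separate lemma (showing $0\notin\sigma(V_2R_0(1-i0)V_1)$, so the necessarily nonempty spectrum contains some $-1/z\ne 0$), whereas you merely assert it; and the real-matrix embedded case is only deferred to the literature, while the paper actually executes the Erdo\^gan--Schlag bootstrap through the system (\ref{eq_3.77}).
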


\begin{proof}
By the analytic and meromorphic Fredholm theorems (for statements see \cite{reesim}, p.\ 101, and \cite{reesim4}, p.\ 107), the exceptional set $\mc E$ is closed, bounded, and consists of at most a discrete set outside $\sigma(\mc H_0)$, which may accumulate toward $\sigma(\mc H_0)$, and a set of measure zero contained in $\sigma(\mc H_0)$.

Assuming that $V \in L^{3/2}$ is real-valued and scalar, the exceptional set resides on the real line. Indeed, if $\lambda$ is exceptional, then by the Fredholm alternative (\cite{reesim1}, p.\ 203) the equation
\be
f = -V_2 R_0(\lambda) V_1 f
\ee
must have a solution $f \in L^2$. Then $g = R_0(\lambda) V_1 f$ is in $|\dl|^{-2} L^{6/5, 2} \subset L^{6, 2}$ and satisfies
\be
g = - R_0(\lambda) V g.
\lb{3.471}
\ee

If $\lambda \in \mc E \setminus \sigma(\mc H_0)$, the kernel's exponential decay implies that $\lambda$ is an eigenvalue for $\mc H$ and that the corresponding eigenvectors must be at least in $\langle \dl \rangle^{-2} L^{6/5, 2}$.

Furthermore, by applying $\mc H_0 -\lambda$ to both sides we obtain
\be
(\mc H_0 + V - \lambda) g = 0.
\ee
In the case of a real scalar potential $V$, $\mc H_0+V$ is self-adjoint, so this is a contradiction for $\lambda \not \in \set R$. In general, exceptional values off the real line can indeed occur.

Next, we analyze exceptional values embedded in $\sigma(\mc H_0)$. In the real scalar case (\ref{eq_3.2}), the reasoning of Goldberg--Schlag \cite{golsch} (see Lemma 9 there) implies that zero is the only possible embedded exceptional value. Indeed, take $\lambda > 0$ to be an exceptional value for $V \in L^{3/2}$, which is real-valued and scalar. Then the pairing
\be
\langle g, V g \rangle = \langle R_0(\lambda\pm i0) V g, V g \rangle
\ee
is well-defined and yields a real value. Therefore $\widehat{V g}(\xi) = 0$ on the sphere of radius $|\xi|=\sqrt \lambda$. Following Proposition 7 of \cite{golsch}, for $1 \leq p < 4/3$, $\delta=1/2-1/p'>0$, and any function $h$ whose Fourier transform vanishes on the sphere,
\be
\|\langle x \rangle^{-1/2+\delta} R_0(\lambda\pm i0) h\|_2 \leq C \|h\|_p.
\ee
Setting $h = Vg$, we infer that $\langle x \rangle^{-1/2+\delta} g \in L^2$.
Then, one can apply Ionescu--Jerison's result of \cite{ionjer} and conclude that $g=0$.

In the scalar case (\ref{eq_3.2}), if $V \in L^{3/2}$ is not real-valued, then the exceptional set need not consist only of eigenvalues or zero. Rather, the opposite is true:
\begin{lemma}
Consider a real-value or complex-valued potential $V \in L^{3/2}$. Then there exists $z \in \set C$ such that $1$ is an embedded exceptional value for the potential $zV$. 
\end{lemma}
Here $1$ can be replaced with any other number in $(0,\infty)$.
\begin{proof}
Let $\mc H_0 = -\Delta$, $R_0(\lambda) = (\mc H_0 - \lambda)^{-1}$, $V_1 = |V|^{1/2}$, $V_2 = |V|^{1/2} \sgn V$, and consider the equation
\be
f = - z V_2 R_0(1-i0) V_1 f
\ee
for $z \in \set C$. The existence of a nonzero solution $f \in L^2(\supp V)$ is equivalent to $-1/z$ being in $\sigma(V_2 R_0(1-i0) V_1)$, by the Fredholm alternative.

The spectrum of $V_2 R_0(1-i0) V_1$, as an operator on $L^2(\supp V)$, is nonempty by Liouville's Theorem. We next ascertain whether $0 \in \sigma(V_2 R_0(1-i0) V_1)$.

If $0 \in \sigma(V_2 R_0(1-i0) V_1)$, then for some nonzero $f \in L^2(\supp V)$
\be
V_2 R_0(1-i0) V_1 f = 0.
\ee
This would immediately imply that $\langle V_1 f, R_0(1-i0) V_1 f \rangle = 0$ and consequently $(V_1 f)^{\wedge}(\xi) = 0$ on the sphere $|\xi|=1$.

Let $R_0(1-i0) V_1 f = h \in L^6$. $h$ has the property that $(-\Delta-1)h = V_1 f$ and $V_1 h = 0$ almost everywhere. We infer that $h \in W^{2, 6/5}_{loc}$ and $\ov h(x) (-\Delta-1) h(x) = 0$ pointwise in $x$ almost everywhere.

Consequently, $-\Delta |h|^2 \leq 2 |h|^2$ almost everywhere and thus $|h|$ is a continuous function. Therefore, the set on which $h \ne 0$ is open and there one has $(-\Delta - 1)h = 0$. It follows that $h$ is continuous and it is analytic on the set where it is nonzero.

Furthermore, since gradients vanish on the inverse of small sets (Theorem 6.19, Lieb and Loss p.\ 154 \cite{lieb}, following Almgren-Lieb), $\Delta h = 0$ almost everywhere where $h=0$. It follows that $-\Delta h = h$ almost everywhere. Therefore $V_1 f = 0$ almost everywhere. This contradicts our assumption that $f \ne 0$ in $L^2(\supp V)$. 

Thus $0 \not \in \sigma(V_2 R_0(1-i0) V_1)$ and the spectrum must contain some nonzero value $-1/z$. Then $1-i0$ is an embedded exceptional value for the potential $zV$.
\end{proof}

\cite{ionjer} still implies in this case that any embedded eigenvalue must be zero (a solution $g$ of (\ref{3.471}) must be zero if $\langle x \rangle^{-1/2+\delta} g \in ~L^2$ and $V \in L^{3/2}$, regardless of whether $V$ is real or complex-valued), but one cannot always bootstrap $g$ to this space. Embedded exceptional values can occur, as seen previously, but they cannot be eigenvalues.

For real-valued $V \in L^{3/2}$ having the matrix form (\ref{3.2}) we retain the other half of the argument. \cite{ionjer} does not apply and embedded eigenvalues can occur. The half that still applies is that any embedded exceptional values must be eigenvalues, following the argument of Lemma 4 of Erdogan--Schlag \cite{erdsch2}.

Explicitly, consider $\lambda \in \mc E \cap \sigma(\mc H_0) \setminus \{\pm \mu\}$; without loss of generality $\lambda>\mu$. It corresponds to a nonzero solution $G \in L^6$ of
\be\lb{eq_3.75}
G = - R_0(\lambda-i0) V G.
\ee
We show that $G \in L^2$ and that it is an eigenfunction of $\mc H$. Let
\be
G = \bpm g_1 \\ g_2 \epm,\ V = \bpm W_1 & W_2 \\ -W_2 & -W_1 \epm,\ \mc H_0 = \bpm \Delta - \mu & 0 \\ 0 & -\Delta + \mu \epm,
\ee
where $W_1$ and $W_2$ are real-valued. We expand (\ref{eq_3.75}) accordingly into
\be\begin{aligned}\lb{eq_3.77}
g_1 &= (-\Delta + \lambda + \mu)^{-1}(W_1 g_1 + W_2 g_2) \\
g_2 &= (-\Delta - (\lambda - \mu - i0))^{-1}(W_2 g_1 + W_1 g_2).
\end{aligned}\ee
This implies that $g_1 \in L^1 \cap L^6$ and
\be\begin{aligned}
\langle g_2, W_2 g_1 + W_1 g_2 \rangle &= \langle (-\Delta - (\lambda - \mu - i0)^{-1}(W_2 g_1 + W_1 g_2), (W_2 g_1 + W_1 g_2) \rangle, \\
\langle g_1, W_2 g_2 \rangle &= \langle (-\Delta + \lambda + \mu)^{-1}(W_1 g_1 + W_2 g_2), W_2 g_2 \rangle, \\
\langle g_1, W_1 g_1 \rangle &= \langle (-\Delta + \lambda + \mu)^{-1}(W_1 g_1 + W_2 g_2), W_1 g_1 \rangle.
\end{aligned}\ee
However,
\be
\langle g_2, W_2 g_1 + W_1 g_2 \rangle = \ov{\langle g_1, \ov W_2 g_2 \rangle} + \langle g_2, W_1 g_2 \rangle. 
\ee
Since $W_2$ is real-valued, it follows that
\be
\langle (-\Delta - (\lambda - \mu - i0)^{-1}(W_2 g_1 + W_1 g_2), (W_2 g_1 + W_1 g_2) \rangle
\ee
is real-valued. Therefore the Fourier transform vanishes on a sphere:
\be
(W_2 g_1 + W_1 g_2)^{\wedge}(\xi) = 0
\ee
for $|\xi|^2 = \lambda-\mu$. We then apply Agmon's bootstrap argument, as follows. By Corollary 13 of \cite{golsch}, if $f \in L^1$ has a Fourier transform that vanishes on the sphere, meaning $\hat f(\xi) = 0$ for every $\xi$ such that $|\xi|^2 = \lambda \ne 0$, then
\be
\|R_0(\lambda \pm i0) f\|_2 \leq C_{\lambda} \|f\|_1.
\ee
Interpolating between this and
\be
\|R_0(\lambda \pm i0) f\|_{4} \leq C_{\lambda} \|f\|_{4/3},
\ee
which holds without conditions on $\hat f$, we obtain that for $1 \leq p \leq 4/3$ and for $\hat f = 0$ on the sphere of radius $\sqrt \lambda>0$
\be
\|R_0(\lambda \pm i0) f\|_{2p/(2-p)} \leq C_{\lambda} \|f\|_{p}.
\ee
Thus, starting with the right-hand side of (\ref{eq_3.77}) in $L^{6/5}$, we obtain that $g_2 \in L^3$, a gain over $L^2$. Iterating once, we obtain that $g_2 \in L^2$. Therefore $g$ is an $L^2$ eigenvector.

Thus, in case of a real-valued $V \in L^{3/2}$ having the matrix form (\ref{3.2}), the exceptional set consists only of eigenvalues, potentially together with the endpoints of the continuous spectrum $\pm \mu$.

For a complex potential of the form (\ref{3.2}), neither of the previous arguments holds. Embedded exceptional values can occur and they need not be eigenvalues.

Next, we examine symmetries of the exceptional set $\mc E$. If $V$ is real-valued and scalar, we have already characterized $\mc E$ as being situated on the real line. If $V$ is scalar, but complex-valued, then consider an exceptional value $\lambda$, for which, due to compactness, there exists $f \in L^2$ such that
\be
f = -|V|^{1/2} \sgn V R_0(\lambda) |V|^{1/2} f.
\ee
Then
\be
(\sgn V \ov f) = -|V|^{1/2} R_0(\ov \lambda) |V|^{1/2} \sgn {\ov V} (\sgn V \ov f),
\ee
so the adjoint has an exceptional value at $\ov \lambda$. However, $\sigma(\widehat T_{V_2, V_1}(\lambda)) = \sigma(\widehat T_{V_1, V_2}(\lambda)^*)$, so all this proves that the exceptional set $\mc E$ is symmetric with respect to the real axis.

If $V$ has the matrix form (\ref{3.6}), then note that $\sigma_1 V \sigma_1 = -\ov V$, $\sigma_3 V \sigma_3 = V^*$, where $\sigma_1$ is the Pauli matrix
\be
\sigma_1 = \bpm 0 & 1 \\ 1 & 0 \epm,\ \sigma_1 \sigma_3 = -\sigma_3 \sigma_1.
\ee
Let $\lambda$ be an exceptional value, for which
\be
f = -\sigma_3 (\sigma_3 V)^{1/2} R_0(\lambda) (\sigma_3 V)^{1/2} f.
\ee
Here $\sigma_3 V = \bpm W_1 & W_2 \\ \ov W_2 & W_1 \epm$ is a selfadjoint matrix.

Then
\be\begin{aligned}
\ov f &= -\sigma_3 (\sigma_3 \ov V)^{1/2} R_0(\ov \lambda) (\sigma_3 \ov V)^{1/2} \ov f \\
&= -\sigma_3 (\sigma_3 V)^{1/2} \sigma_3 R_0(\ov \lambda) \sigma_3 (\sigma_3 V)^{1/2} \sigma_3 \ov f \\
&= -\sigma_3 (\sigma_3 V)^{1/2} R_0(\ov \lambda) (\sigma_3 V)^{1/2}\sigma_3 \ov f
\end{aligned}\ee
since $R_0$ commutes with $\sigma_3$, so whenever $\lambda$ is an exceptional value so is $\ov \lambda$.

If $V$ as in (\ref{3.2}) is a real-valued matrix, then by the same methods we obtain that $-\lambda$ is an exceptional value whenever $\lambda$ is an exceptional value.
\end{proof}

We next consider the $\dot H^{1/2}$ case. In the sequel we introduce the following notation: $p-$ means $p-\epsilon$, for various small positive values of $\epsilon$.
\begin{lemma}
Assume that $V = V_1 V_2$ and that $V_1$, $V_2$ are in $\dot H^{1/2}$. Then $V_2 R_0(\lambda) V_1$ is bounded and compact from $\dot H^{1/2}$ to itself for any $\lambda$ in the lower or upper half-plane.
\end{lemma}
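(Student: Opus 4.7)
My plan is to establish both boundedness and compactness via the factorization
\begin{equation*}
V_2 R_0(\lambda) V_1 \;:\; \dot H^{1/2} \;\xrightarrow{\,\cdot\, V_1\,}\; \dot H^{-1/2} \;\xrightarrow{\,R_0(\lambda)\,}\; \dot H^{3/2} \;\xrightarrow{\,V_2\,\cdot\,}\; \dot H^{1/2}.
\end{equation*}
The middle arrow is immediate from the symbol: since $|\xi|^2/(|\xi|^2-\lambda)$ is uniformly bounded on $\set R^3$ for $\lambda \notin [0,\infty)$ (and similarly for the matrix case with $\lambda \notin (-\infty,-\mu]\cup[\mu,\infty)$), $R_0(\lambda)$ carries $\dot H^{-1/2}$ into $\dot H^{3/2}$ continuously, with norm depending on $\lambda$.

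For the first arrow I would invoke the sharp Sobolev embedding $\dot H^{1/2}(\set R^3)\hookrightarrow L^{3,2}$. H\"older in Lorentz spaces then gives $\|V_1 f\|_{L^{3/2,1}}\lesssim\|V_1\|_{L^{3,2}}\|f\|_{L^{3,2}}\lesssim\|V_1\|_{\dot H^{1/2}}\|f\|_{\dot H^{1/2}}$, and the dual embedding $L^{3/2,1}\hookrightarrow\dot H^{-1/2}$ finishes the step. The third arrow I would handle via the fractional Leibniz splitting
\begin{equation*}
|\nabla|^{1/2}(V_2 u) \;=\; V_2\,|\nabla|^{1/2} u \;+\; [|\nabla|^{1/2},V_2]\,u,
\end{equation*}
bounding the first piece by H\"older (using $V_2\in L^{3,2}$ paired with $|\nabla|^{1/2}u\in\dot H^1\hookrightarrow L^{6,2}$), and the commutator by a Kenig--Ponce--Vega-type estimate combined with the embedding $\dot H^{3/2}(\set R^3)\hookrightarrow\mathrm{BMO}$.

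The main obstacle will be this commutator bound, since $\dot H^{1/2}$ sits precisely at the scaling boundary for the bilinear law $\dot H^{1/2}\cdot\dot H^{3/2}\hookrightarrow\dot H^{1/2}$. I would resolve it through a Littlewood--Paley paraproduct decomposition of $V_2 u$, treating the low-high, high-low, and resonant high-high frequency interactions separately; the resonant piece is the delicate one, and is where the sharp $\mathrm{BMO}$ embedding comes in. As a safety net, the stronger hypothesis $V_1,V_2\in\dot W^{1/2,2\pm\epsilon}$ used elsewhere in the paper opens up the endpoint and allows a direct appeal to the standard (non-endpoint) Kato--Ponce inequality.

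For compactness I would approximate $V_1,V_2$ by smooth compactly supported $V_i^{(n)}\to V_i$ in $\dot H^{1/2}$. For each $n$, multiplication by $V_i^{(n)}$ is compact from $\dot H^{1/2}$ into $\dot H^{-1/2}$ (by Rellich--Kondrachov on the support combined with the continuity estimate above), so composing with the bounded middle multiplier produces a compact operator $V_2^{(n)} R_0(\lambda) V_1^{(n)}$ on $\dot H^{1/2}$. The boundedness estimate just derived converts the $\dot H^{1/2}$-convergence of the $V_i^{(n)}$ into operator-norm convergence, and compactness passes to the norm limit.
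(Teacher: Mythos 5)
Your factorization $\dot H^{1/2}\xrightarrow{V_1}\dot H^{-1/2}\xrightarrow{R_0(\lambda)}\dot H^{3/2}\xrightarrow{V_2}\dot H^{1/2}$ has a genuine gap in the middle arrow. As you note yourself, the multiplier $|\xi|^2/(|\xi|^2-\lambda)$ is bounded only for $\lambda\notin[0,\infty)$ (resp.\ off $\sigma(\mc H_0)$ in the matrix case), and the bound degenerates as $\Im\lambda\to 0$ near the essential spectrum. But the lemma is needed for the boundary values $R_0(\lambda\pm i0)$ with $\lambda\in\sigma(\mc H_0)$: that is exactly the setting in which the exceptional set is defined on $\sigma(\mc H_0)$, in which the subsequent lemma identifies the $L^2$ and $\dot H^{1/2}$ exceptional sets (including embedded ones), and in which compactness feeds the Fredholm alternative. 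For such $\lambda$ the operator $R_0(\lambda\pm i0)$ simply does not map $\dot H^{-1/2}$ to $\dot H^{3/2}$ --- the symbol blows up on the sphere $|\xi|^2=\lambda$ --- so your chain breaks precisely in the case that matters. A secondary (repairable, but avoidable) difficulty is that your third arrow lands at the endpoint $\dot H^{3/2}=\dot H^{n/2}\not\hookrightarrow L^\infty$, forcing the BMO/paraproduct machinery you describe.

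The paper's route sidesteps both issues by never converting the resolvent's gain into full derivatives on the $L^2$ scale. It runs $\dot H^{1/2}\xrightarrow{V_1}\dot W^{1/2,6/5}\cap L^{3/2,1}\xrightarrow{R_0(\lambda)}\dot W^{1/2,6}\cap L^{\infty}\xrightarrow{V_2}\dot H^{1/2}$. The first step is your own Lorentz--H\"older argument ($\dot H^{1/2}\hookrightarrow L^{3,2}$ and $L^{3,2}\cdot L^{3,2}\subset L^{3/2,1}$, plus the fractional Leibniz rule for the derivative part). The middle step is uniform in $\lambda$ up to the boundary, because $R_0(\lambda)$ commutes with $|\dl|^{1/2}$ and its kernel is dominated pointwise by $C|x-y|^{-1}$, so it inherits the $(-\Delta)^{-1}$ mapping properties $L^{6/5,2}\to L^{6,2}$ and $L^{3/2,1}\to L^{\infty}$; the Lorentz exponent $1$ in $L^{3/2,1}$ is what buys genuine $L^{\infty}$ rather than BMO, which makes the final multiplication by $V_2$ elementary. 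For compactness the paper interpolates (Persson): the operator is compact on $L^2$ and bounded on $\dot H^{1/2+\epsilon_0}$ for regularized potentials, hence compact on $\dot H^{1/2}$, and one concludes by approximation in $\dot H^{1/2}$ exactly as you do; your Rellich-plus-norm-limit argument for compactness is fine once a boundedness estimate valid up to the boundary is in hand.
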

\begin{proof} 
Boundedness follows from the fractional Leibniz formula:
\be\begin{aligned}
\|V_1 f\|_{\dot W^{1/2, 6/5} \cap L^{3/2, 1}} &\leq C \|V_1\|_{\dot H^{1/2}} \|f\|_{\dot H^{1/2}},\\
\|R_0(\lambda) f\|_{\dot W^{1/2, 6} \cap L^{\infty}} &\leq C \|f\|_{\dot W^{1/2, 6/5} \cap L^{3/2, 1}},\\
\|V_1 f\|_{\dot H^{1/2}} &\leq C \|V_1\|_{\dot H^{1/2}} \|f\|_{\dot W^{1/2, 6} \cap L^{\infty}}.
\end{aligned}
\ee
In a similar manner we get that, for $V_1$, $V_2 \in \dot H^{1/2+\epsilon_0} \cap \dot W^{1/2+\epsilon_0, 2-}$, $V_2 R_0(\lambda) V_1$ is bounded on $\dot H^{1/2+\epsilon_0}$.

Indeed, multiplication by $V_1$ takes $\dot H^{1/2+\epsilon_0}$ to $\dot W^{1/2+\epsilon_0, 6/5} \cap \dot W^{1/2+\epsilon_0, 6/5-} \subset L^{3/2-} \cap L^{3/2+}$. Then, the resolvent takes this to $\dot W^{1/2+\epsilon_0, 6} \cap L^{\infty}$. Finally, $V_2$ takes $\dot W^{1/2+\epsilon_0, 6} \cap L^{\infty}$ to $\dot H^{1/2+\epsilon_0}$.

Compactness follows by interpolation, see Persson \cite{persson}. Given a mapping between two pairs of spaces, which is compact between one of the pairs, under certain approximation assumptions it is also compact between all intermediate spaces obtained by interpolation.

Take $V_1$, $V_2 \in \dot H^{1/2+\epsilon_0} \cap \dot W^{1/2+\epsilon_0, 2-}$. The mapping is compact from $L^2$ to itself, bounded from $\dot H^{1/2+\epsilon_0}$ to itself, and therefore also compact on $\dot H^{1/2}$. By approximation we obtain compactness for $V_1$, $V_2 \in \dot H^{1/2}$.
\end{proof}

In the sequel, in order to avoid any endpoint-related complications, when proving $\dot H^{1/2}$ estimates we assume that $V_1$, $V_2 \in \dot W^{1/2, 2+\epsilon} \cap \dot W^{1/2, 2-\epsilon}$.

In the same manner as for $L^2$, we define the $\dot H^{1/2}$ exceptional set as the set of points $\lambda$ where
\be
I + V_2 R_0(\lambda) V_1
\ee
is not invertible from $\dot H^{1/2}$ to itself. However, under reasonable assumptions they coincide:
\begin{lemma}
The $L^2$ exceptional set and the $\dot H^{1/2}$ exceptional set are the same, assuming that $V_1$, $V_2 \in \dot W^{1/2, 2+\epsilon} \cap \dot W^{1/2, 2-\epsilon}$.
\end{lemma}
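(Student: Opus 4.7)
The plan is to establish the equality of exceptional sets via the Fredholm alternative plus a bootstrapping argument for the kernel of $I + V_2 R_0(\lambda) V_1$ between the two spaces. Since $V_2 R_0(\lambda) V_1$ is compact on both $L^2$ (by the Birman--Schwinger/Rollnick theory invoked earlier) and on $\dot H^{1/2}$ (by the preceding lemma), non-invertibility in each space is equivalent to the existence of a nonzero element in the kernel. It therefore suffices to show
\[
\ker_{L^2}(I + V_2 R_0(\lambda) V_1) \neq \{0\} \iff \ker_{\dot H^{1/2}}(I + V_2 R_0(\lambda) V_1) \neq \{0\}.
\]

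For the $(\Rightarrow)$ direction, take $f \in L^2$ with $f = -V_2 R_0(\lambda) V_1 f$. The strategy is to read regularity off the right-hand side. Using $V_1 \in \dot W^{1/2, 2+\epsilon} \cap \dot W^{1/2, 2-\epsilon}$ and the Sobolev embedding $\dot W^{1/2, 2\pm\epsilon} \hookrightarrow L^{3\pm}$ together with H\"older, one has $V_1 f \in L^{6/5+} \cap L^{6/5-}$, so the fractional Leibniz rule (applied at an off-endpoint exponent to avoid the $\dot W^{1/2,6} \not\hookrightarrow L^\infty$ issue) gives $V_1 f \in \dot W^{1/2, 6/5+} \cap \dot W^{1/2, 6/5-}$. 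Applying $R_0(\lambda)$ produces an element of $\dot W^{1/2, 6+} \cap \dot W^{1/2, 6-}$, and one more application of the fractional Leibniz rule with $V_2$ places $V_2 R_0(\lambda) V_1 f = -f$ inside $\dot H^{1/2}$. Hence $f \in L^2 \cap \dot H^{1/2}$, producing a nontrivial element of the $\dot H^{1/2}$ kernel.

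For the $(\Leftarrow)$ direction, take $f \in \dot H^{1/2}$ with $f = -V_2 R_0(\lambda) V_1 f$. By the mapping-property chain already written down in the paper, $V_1 f \in \dot W^{1/2, 6/5} \cap L^{3/2, 1}$, then $R_0(\lambda) V_1 f \in \dot W^{1/2, 6} \cap L^\infty$. Multiplying by $V_2 \in \dot W^{1/2, 2\pm\epsilon}$ and using H\"older at an off-endpoint exponent (pairing $V_2 \in L^{6\mp}$ from Sobolev embedding with the $L^\infty$ factor, and $V_2 \in L^2$ with the $\dot W^{1/2,6}$ factor) yields $f = -V_2 R_0(\lambda) V_1 f \in L^2$. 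Hence $f \in \dot H^{1/2} \cap L^2$, giving a nontrivial element of the $L^2$ kernel.

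The main obstacle, as signaled earlier in the text, is the endpoint Sobolev embedding $\dot W^{1/2, 6} \not\hookrightarrow L^\infty$, which prevents a direct application of the fractional Leibniz rule at the natural exponents; this is precisely why the hypothesis is strengthened from $V_1, V_2 \in \dot H^{1/2}$ to $V_1, V_2 \in \dot W^{1/2, 2+\epsilon} \cap \dot W^{1/2, 2-\epsilon}$. Everything else is a careful tracking of exponents through a finite chain of multiplication and resolvent estimates; once the two kernels are shown to coincide (both equal the kernel inside the smaller space $L^2 \cap \dot H^{1/2}$), the Fredholm alternative closes the argument.
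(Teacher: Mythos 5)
Your overall strategy -- reduce non-invertibility to the existence of a nontrivial kernel via compactness and the Fredholm alternative, then transfer kernel elements between $L^2$ and $\dot H^{1/2}$ by bootstrapping through the identity $f = -V_2 R_0(\lambda) V_1 f$ -- is exactly the paper's. But the forward direction as written has a genuine gap. From $f \in L^2$ and $V_1 \in \dot W^{1/2,2\pm\epsilon} \hookrightarrow L^{3\pm}$ you correctly get $V_1 f \in L^{6/5\pm}$, but the next step, ``so the fractional Leibniz rule gives $V_1 f \in \dot W^{1/2,6/5\pm}$,'' is a non sequitur: the fractional Leibniz rule \emph{bounds} $\||\nabla|^{1/2}(V_1 f)\|$ by terms each of which carries the half-derivative on one factor, and the term $\|V_1\|_{p}\, \||\nabla|^{1/2} f\|_{q}$ is not controllable when $f$ is only known to lie in $L^2$. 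No product rule can manufacture a derivative on $f$ out of nothing, and the resolvent cannot absorb it either at these exponents ($|\nabla|^{1/2}R_0(\lambda)$ gains only $3/2$ derivatives' worth of integrability, so $\dot W^{1/2,6}$ output would require $\dot W^{1/2,6/5}$ input, which is precisely what is in question). The repair is the one the paper uses: iterate the identity purely at the level of Lebesgue exponents first ($g = R_0(\lambda)V_1 f \in L^6$ gives $f = V_2 g \in L^{2+\epsilon}$, and each further pass gains another $\epsilon$ until $f \in L^2\cap L^\infty$), and only in one \emph{final} iteration harvest the half-derivative, at which point the derivative can legitimately fall on $V_1$, on $V_2$, or be supplied by the smoothing of $R_0(\lambda)$ -- never on the raw $f$.

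The reverse direction is essentially right in spirit but the exponents are off: in $\set R^3$ one has $\dot W^{1/2,2\pm\epsilon} \hookrightarrow L^{3\pm}$, not $L^{6\mp}$, and $V_2 \in L^2$ is not among the hypotheses. With the correct embeddings a single pass gives $f \in L^{2-\delta}$ for some $\delta>0$ from the pairing of $V_2 \in L^{3-}$ with $R_0(\lambda)V_1 f \in L^6$, and combined with $f \in \dot H^{1/2}\subset L^{3,2}$ this already places $f$ in $L^2$ by interpolation; alternatively, iterate as the paper does down to $L^{3,2}\cap L^1 \subset L^2$. So that half is repairable by bookkeeping; the forward half needs the integrability bootstrap inserted before the derivative step.
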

\begin{proof}
Consider an exceptional value $\lambda$, for which there exists a nonzero $f \in \dot H^{1/2}$ such that
\be
f = - V_2 R_0(\lambda) V_1 f.
\ee
Starting with $f \in \dot H^{1/2}$ on the right-hand side, we retrieve that $f$ is in $L^{3-\epsilon}$ on the left-hand side. Iterating, we gain $\epsilon$ at each step and in the end arrive at $f \in L^{3, 2} \cap L^1 \subset L^2$. Thus, the $\dot H^{1/2}$ exceptional set is a subset of the $L^2$ exceptional set.

Conversely, assume $\lambda$ is in the $L^2$ exceptional set, meaning that there exists $f \in L^2$ such that
\be
f = - V_2 R_0(\lambda) V_1 f.
\ee
Then $g = R_0(\lambda) V_1 f$ is in $L^6$ and therefore $f=V_2 g$ itself is in $L^{2+\epsilon}$. By iterating and gaining $\epsilon$ at each step, we eventually obtain that $f$ is in $L^2 \cap L^{\infty}$. One final iteration also yields a half-derivative.
\end{proof}

\section{The time evolution and projections}\lb{sect_3.5}
Before making any further claims about the perturbed Hamiltonian, we prove the following basic lemma, which endows the evolution $e^{it \mc H}$ with a precise meaning.
\begin{lemma}\lb{lemma_24}
Assume $V \in L^{\infty}$ and the Hamiltonian is described by (\ref{eq_3.2}) or (\ref{3.2}). Then the equation
\be
i \partial_t Z + \mc H Z = F,\ Z(0) \text{ given},
\lb{3.58}\ee
admits a weak solution $Z$ for $Z(0) \in L^2$, $F \in L^{\infty}_t L^2_x$ and
\be
\|Z(t)\|_2 \leq C e^{t \|V\|_{\infty}} \|Z(0)\|_2 + \int_0^t e^{(t-s) \|V\|_{\infty}} \|F(s)\|_2 \dd s.
\lb{3.59}\ee
\end{lemma}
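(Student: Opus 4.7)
The plan is to construct the solution by Picard iteration, treating $V$ as a bounded $L^2$-multiplier perturbation of $\mc H_0$. Rewriting (\ref{3.58}) as $i\partial_t Z + \mc H_0 Z = F - VZ$ and invoking Duhamel's formula for the free evolution gives the equivalent integral equation
\be\nonumber
Z(t) = e^{it\mc H_0} Z(0) - i\int_0^t e^{i(t-s)\mc H_0} F(s) \dd s + i\int_0^t e^{i(t-s)\mc H_0} V Z(s) \dd s.
\ee
The key observation is that on $L^2$ the free propagator $e^{it\mc H_0}$ is an isometry. In the scalar case $\mc H_0 = -\Delta$ this is classical; in the matrix case (\ref{3.2}) the operator is block-diagonal with blocks $\pm(\Delta - \mu)$, each of which generates a unitary group on $L^2(\set R^3)$ by the Fourier representation. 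Thus $\|e^{it\mc H_0}\|_{L^2 \to L^2} = 1$ for all $t \in \set R$, and multiplication by $V$ is bounded on $L^2$ with norm at most $\|V\|_\infty$.

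I would then define the map $\Phi$ sending $Z \mapsto $ RHS above and set $Z_0(t) = e^{it\mc H_0} Z(0) - i\int_0^t e^{i(t-s)\mc H_0} F(s) \dd s$, $Z_{n+1} = \Phi(Z_n)$. The difference estimate
\be\nonumber
\|Z_{n+1}(t) - Z_n(t)\|_2 \leq \|V\|_\infty \int_0^t \|Z_n(s) - Z_{n-1}(s)\|_2 \dd s
\ee
iterates to $\|Z_{n+1}(t) - Z_n(t)\|_2 \leq \frac{(t\|V\|_\infty)^n}{n!}\, M(t)$ where $M(t) = \sup_{s\leq t}\|Z_1(s) - Z_0(s)\|_2$. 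Hence the series $\sum (Z_{n+1}-Z_n)$ converges absolutely in $C([0,T], L^2)$ for every $T > 0$ to a fixed point $Z$, which is the desired weak solution (verification of the weak formulation is immediate since $e^{it\mc H_0}$ solves the free equation and $V Z \in L^\infty_{t, loc} L^2_x$, so the Duhamel identity yields the equation distributionally).

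For the quantitative bound (\ref{3.59}), I would take the $L^2$ norm of the integral equation and apply the isometry property of $e^{it\mc H_0}$ to obtain
\be\nonumber
\|Z(t)\|_2 \leq \|Z(0)\|_2 + \int_0^t \|F(s)\|_2 \dd s + \|V\|_\infty \int_0^t \|Z(s)\|_2 \dd s,
\ee
and then invoke Gronwall's inequality in the form
\be\nonumber
\|Z(t)\|_2 \leq e^{t\|V\|_\infty}\|Z(0)\|_2 + \int_0^t e^{(t-s)\|V\|_\infty}\|F(s)\|_2 \dd s,
\ee
which is exactly (\ref{3.59}). Uniqueness, although not claimed in the statement, follows from the same Gronwall argument applied to the difference of two solutions with identical data.

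There is no real obstacle here: $V \in L^\infty$ is exactly what is needed to make multiplication by $V$ a bounded perturbation on $L^2$, regardless of whether $V$ is scalar, matrix-valued, self-adjoint, or not. The only point requiring mild care is that in the matrix nonselfadjoint case (\ref{3.2}) the perturbed evolution $e^{it\mc H}$ will generally fail to be an isometry --- which is precisely why (\ref{3.59}) carries the exponentially growing factor $e^{t\|V\|_\infty}$ rather than being a conservation law.
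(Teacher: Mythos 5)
Your argument is correct and is essentially the paper's own: both rewrite the equation as $i\partial_t Z + \mc H_0 Z = F - VZ$, exploit the unitarity of $e^{it\mc H_0}$ on $L^2$ together with the boundedness of multiplication by $V$ to run a fixed-point argument in $L^\infty_t L^2_x$, and then derive (\ref{3.59}) from Gronwall's inequality. The only cosmetic difference is that you iterate globally with the factorial gain $(t\|V\|_\infty)^n/n!$, whereas the paper contracts on a short time interval of length depending only on $\|V\|_\infty$ and bootstraps.
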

\begin{proof} We introduce an auxiliary variable and write
\be
i \partial_t Z + \mc H_0 Z = F - V Z_1,\ Z(0) \text{ given}.
\ee
Over a sufficiently small time interval $[T, T+\epsilon]$, whose size $\epsilon$ only depends on $\|V\|_{\infty}$, the map that associates $Z$ to some given $Z_1$ is a contraction, in a sufficiently large ball in $L^{\infty}_t L^2_x$. The fixed point of this contraction mapping is then a solution to (\ref{3.58}).

This shows that the equation is locally solvable and, by bootstrapping, since the length of the interval is independent of the size of $F$ and of the initial data $Z(T)$, we obtain an exponentially growing global solution. The bound (\ref{3.59}) follows by Gronwall's inequality.
\end{proof}

A nonempty exceptional set $\mc E$ destroys Strichartz estimates, so we need a means to project it away. In preparation, we define an algebra generated by $\mc H$, then spectral projections within it.

\begin{lemma}\lb{lemma_28} Consider $V \in L^{3/2, \infty}$, either scalar as in (\ref{eq_3.2}) or matrix-valued as in (\ref{3.2}), and $\chi \in L^{\infty}(\set R)$, such that, for some $\epsilon>0$, $\chi(\lambda) = 0$ on $\{\lambda \mid d(\lambda, \mc E) \leq \epsilon\}$. Then
\be
\chi(\mc H) =  \frac i {2\pi} \int_{\set R} \chi(\lambda) (R_V(\lambda+i0) - R_V(\lambda-i0)) \dd \lambda
\ee
is a bounded operator from $L^2$ to itself, of norm at most $C_{\epsilon} \|\chi\|_{\infty}$.

$\chi(\mc H)$ is defined in the weak sense that for any $f$, $g \in L^2$, the function under the integral
\be
\langle \chi(\mc H) f, g \rangle =  \frac i {2\pi} \int_{\set R} \chi(\lambda) \langle (R_V(\lambda+i0) - R_V(\lambda-i0)) f, g \rangle \dd \lambda
\ee
is absolutely integrable. Furthermore, these operators commute with the evolution and with one another:
\be
e^{it \mc H} \chi(\mc H) = \chi(\mc H) e^{it \mc H} = (e^{it\lambda} \chi)(\mc H),\ \chi_1(\mc H) \chi_2(\mc H) = (\chi_1 \chi_2)(\mc H).
\ee
In particular, $e^{it \mc H} \chi(\mc H)$ is a uniformly $L^2$-bounded family of operators.
\end{lemma}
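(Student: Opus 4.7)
The plan is to use the resolvent identity (\ref{eq_3.64}) to decompose
\begin{equation*}
R_V(\lambda \pm i0) = R_0(\lambda \pm i0) - R_0(\lambda \pm i0)\, V_1 S(\lambda \pm i0) V_2\, R_0(\lambda \pm i0),
\end{equation*}
where $S(\lambda \pm i0) = (I + V_2 R_0(\lambda \pm i0) V_1)^{-1}$, and correspondingly split $\chi(\mc H) = \chi(\mc H_0) + \Phi$. Since $\mc H_0$ is self-adjoint (both in the scalar case and in the block-diagonal matrix case), the first summand is the usual Stone-formula representation of the $L^\infty$ functional calculus of $\mc H_0$, so it is bounded on $L^2$ by $\|\chi\|_\infty$ and the integral converges in the weak sense supplied by the spectral theorem.

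For the perturbation $\Phi$, the first task is a uniform operator-norm bound $\|S(\lambda \pm i0)\|_{L^2 \to L^2} \leq C_\epsilon$ on $\supp\chi$. For $V \in L^{3/2,\infty}$ approximated in norm by bounded compactly supported potentials, the Birman--Schwinger operator $V_2 R_0(\lambda \pm i0) V_1$ is compact on $L^2$ and depends operator-norm continuously on $\lambda$ up to the boundary of $\sigma(\mc H_0)$. The analytic/meromorphic Fredholm theorem, together with Definition \ref{def_7} of $\mc E$, then guarantees that $S(\lambda \pm i0)$ exists and is locally uniformly bounded off $\mc E$. At infinity, $\|V_2 R_0(\lambda) V_1\|_{L^2 \to L^2} \to 0$ as $|\lambda| \to \infty$ (a consequence of Proposition \ref{prop_21} via the Fourier representation of the kernel), forcing $S(\lambda) \to I$ and yielding the global bound $C_\epsilon$ on $\{\lambda : d(\lambda, \mc E) \geq \epsilon\}$.

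Next, I would pair $\Phi$ with $f, g \in L^2$ and rewrite the integrand using $R_0(\lambda \pm i0)^* = R_0(\lambda \mp i0)$:
\begin{equation*}
\langle \Phi f, g\rangle = \tfrac{i}{2\pi}\int \chi(\lambda) \bigl[ \langle S(\lambda+i0) V_2 R_0(\lambda+i0) f,\, V_1^* R_0(\lambda-i0) g\rangle - (+\leftrightarrow -) \bigr] \dd\lambda.
\end{equation*}
Applying Cauchy--Schwarz in $\lambda$ reduces the estimate to a Kato smoothing bound
\begin{equation*}
\int_{\set R} \|V_2 R_0(\lambda \pm i0) f\|_2^2 \,\dd\lambda \leq C \|f\|_2^2,
\end{equation*}
which I would derive from Proposition \ref{prop_21} by Plancherel: that proposition realises $-i\chi_{[0,\infty)}(t) V_2 e^{it \mc H_0} V_1$ as an element of the algebra $K = \mc L(L^2, M_t L^2)$ of Definition \ref{def_k}, and its time-Fourier transform is $V_2 R_0(\lambda - i0) V_1$; a $TT^*$ argument against the free $L^2$-isometry $e^{it \mc H_0}$ then delivers the displayed bound. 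Combined with the uniform bound on $S$, this yields $\|\Phi\|_{L^2 \to L^2} \leq C_\epsilon \|\chi\|_\infty$ and hence the required norm estimate for $\chi(\mc H)$.

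The algebraic properties --- the commutation $e^{it\mc H}\chi(\mc H) = \chi(\mc H)e^{it\mc H} = (e^{it\lambda}\chi)(\mc H)$ and the multiplicativity $\chi_1(\mc H)\chi_2(\mc H) = (\chi_1\chi_2)(\mc H)$ --- follow by inserting the first resolvent identity $R_V(\lambda)R_V(\mu) = (R_V(\lambda) - R_V(\mu))/(\mu - \lambda)$ and the resolvent-semigroup commutation $e^{it\mc H}R_V(\lambda) = R_V(\lambda)e^{it\mc H}$ (valid on the dense domain afforded by Lemma \ref{lemma_24}) into the defining double integrals and rearranging by Fubini. The main obstacle I anticipate is justifying this interchange rigorously, since $\chi(\mc H)$ is defined only weakly; the uniform $L^2$-bound on $S$ over $\supp\chi$ together with the $L^2_\lambda$ Kato smoothing estimate are precisely what legitimize Fubini and convert the formal spectral-calculus identities into genuine operator equalities on $L^2$.
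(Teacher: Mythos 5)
Your treatment of the $L^2$ bound is essentially the paper's: the same resolvent expansion (\ref{eq_3.64}), the same uniform bound on $(I+V_2R_0(\lambda\pm i0)V_1)^{-1}$ away from $\mc E$ (the paper records exactly the constant $C_\epsilon = 1 + C\|V\|_{L^{3/2,\infty}}\sup_{d(\lambda,\mc E)\geq\epsilon}\|(I+V_2R_0(\lambda\pm i0)V_1)^{-1}\|_{2\to 2}$), and the same reduction via Cauchy--Schwarz to the Kato smoothing estimate $\int\||V|^{1/2}R_0(\lambda\pm i0)f\|_2^2\,d\lambda\leq C\|f\|_2^2$, which the paper simply quotes as the $\mc H_0$-smoothness of $|V|^{1/2}$; your derivation of it from Proposition \ref{prop_21} via the time-Fourier transform and Kato's resolvent criterion is a legitimate variant. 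The free part is handled in the paper not by splitting off $\chi(\mc H_0)$ but by the observation (\ref{3.100}) that the pairing of the jump of $R_0$ with $f,g$ is absolutely integrable with total mass $\frac12(\|f\|_2^2+\|g\|_2^2)$ --- same content.

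The gap is in the multiplicativity. Expanding $\chi_1(\mc H)\chi_2(\mc H)$ with the first resolvent identity at the boundary produces cross terms $R_V(\lambda+i0)R_V(\mu-i0) = (R_V(\lambda+i0)-R_V(\mu-i0))/(\lambda-\mu+i0)$ whose numerator does \emph{not} vanish on the diagonal (it tends to the jump of $R_V$ across the spectrum), so the double integral carries a genuine first-order pole along $\lambda=\mu$ and is not absolutely convergent; Fubini is simply unavailable, and no amount of smoothing in $\lambda$ alone repairs a $1/|\lambda-\mu|$ singularity, which fails to be locally integrable across the diagonal in the two variables. Worse, the identity $\chi_1(\mc H)\chi_2(\mc H)=(\chi_1\chi_2)(\mc H)$ comes precisely from the distributional part of $1/(\lambda-\mu\mp i0)$: one must keep $\Im z=\pm\epsilon$ strictly nonzero, recognize the Poisson kernel $\frac1\pi\frac{\epsilon}{(\lambda_1-\lambda_2)^2+\epsilon^2}$ as an approximate identity, and pass to the limit $\epsilon\to0$ --- which is exactly how the paper proceeds. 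A literal Fubini rearrangement would lose this diagonal contribution and could not produce $(\chi_1\chi_2)(\mc H)$. For the commutation with $e^{it\mc H}$ you also need, as the paper does, to first approximate $V$ by bounded potentials (so that Lemma \ref{lemma_24} furnishes the evolution at all) and to truncate $\chi$ to compact support, removing the truncation by strong limits using the density of $\Dom(\mc H)$; the paper reaches $e^{it\mc H}\chi(\mc H)=(e^{it\lambda}\chi)(\mc H)$ through the intermediate identity $\mc H\chi(\mc H)=(\lambda\chi)(\mc H)$ rather than by commuting the propagator with each resolvent, but that difference is cosmetic.
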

\begin{proof}
Let $V = V_1 V_2$, with $V_1$, $V_2 \in L^{3, \infty}$. We expand the resolvent as follows:
\be\begin{aligned}
R_V(\lambda) &= R_0(\lambda) - R_0(\lambda) V_1 (I + V_2 R_0(\lambda) V_1)^{-1} V_2 R_0(\lambda).
\end{aligned}\ee
Outside the exceptional set, which is symmetric with respect to the real axis, $(I + V_2 R_0(\lambda\pm i0) V_1)^{-1}$ is uniformly bounded on compact sets as an operator from $L^2$ to itself, so it suffices to note that $|V|^{1/2}$ is an $\mc H_0$-smooth operator, meaning
\be
\int_{\set R} \||V|^{1/2} R_0 (\lambda\pm i0) f\|_2^2 \dd \lambda \leq C \|f\|_2^2.
\ee
Also, $\langle (R_0(\lambda+i0) - R_0(\lambda-i0)) f, g \rangle$ is absolutely integrable, though each of its two terms may not be, since
\be\begin{aligned}\lb{3.100}
&\int_{-\infty}^{\infty} |\langle (R_0(\lambda+i0) - R_0(\lambda-i0)) f, g \rangle| \dd \lambda \\
&\leq \int_{-\infty}^{\infty} \frac 1 {2i} (\langle (R_0(\lambda+i0) - R_0(\lambda-i0)) f, f \rangle + \langle (R_0(\lambda+i0) - R_0(\lambda-i0)) g, g \rangle) \dd \lambda \\
&= \frac 1 2 (\|f\|_2^2 + \|g\|_2^2).
\end{aligned}\ee

Following this, we actually obtain an explicit constant $C_{\epsilon}$ such that
\be
\|\chi(\mc H)\|_{2 \to 2} \leq C_{\epsilon} \|\chi\|_{\infty},
\ee
namely
\be
C_{\epsilon} = 1 + C\|V\|_{L^{3/2, \infty}} \cdot \sup_{d(\lambda, \mc E) \geq \epsilon}\|(I + V_2 R_0(\lambda\pm i0) V_1)^{-1})\|_{2 \to 2}.
\ee

Further note that, for any $\lambda$ not in the spectrum of $\mc H$,
\be
\mc H R_V(\lambda) = R_V(\lambda) \mc H = \lambda R_V(\lambda)
\ee
(in particular, $R_V(\lambda) f$ belongs to $\Dom(\mc H)$). Then, for sufficiently small $\epsilon$, due to the need to avoid exceptional values,
\be\begin{aligned}
&\mc H  \frac i {2\pi} \int_{-R}^R \chi(\lambda) (R_V(\lambda+i \epsilon) - R_V(\lambda-i\epsilon)) \dd \lambda = \\
&=  \frac i {2\pi} \int_{-R}^R \chi(\lambda) \big((\lambda+i\epsilon)R_V(\lambda+i \epsilon) - (\lambda-i\epsilon)R_V(\lambda-i\epsilon)\big) \dd \lambda.
\end{aligned}\ee
Letting $\epsilon$ go to zero, we have convergence in the same weak sense as above. Thus we get that, for every bounded $\chi$ compactly supported away from $\mc E$,
\be
\mc H \chi(\mc H) = \chi(\mc H) \mc H = (\lambda \chi(\lambda))(\mc H).
\ee
It immediately follows that, also for bounded $\chi$ of compact support away from $\mc E$, the evolution can simply be expressed as
\be
e^{it \mc H} \chi(\mc H) = \chi(\mc H) e^{it \mc H} = (e^{it\lambda} \chi)(\mc H).
\ee
Take $f \in \Dom(\mc H)$, that is $f \in L^2$, $\mc H f = g \in L^2$. Then for any $\chi \in L^{\infty}(\set R)$ supported away from $\mc E$ one has that
\be\begin{aligned}
\rho (\chi_{(-\infty, -\rho] \cup [\rho, \infty)} \chi)(\mc H) f &= (\rho \chi_{(-\infty, -\rho] \cup [\rho, \infty)} \chi)(\mc H) f \\
&= ((\rho/\lambda) \chi_{(-\infty, -\rho] \cup [\rho, \infty)} \chi)(\mc H) H f \\
&= ((\rho/\lambda) \chi_{(-\infty, -\rho] \cup [\rho, \infty)} \chi)(\mc H) g
\end{aligned}\ee
and is uniformly bounded in $L^2$. Therefore $(\chi_{[-\rho, \rho]} \chi)(\mc H)f$ converges to $\chi(\mc H)f$ in the $L^2$ norm as $\rho$ goes to infinity. Since $\Dom(\mc H)$ is dense in $L^2$, it follows that $(\chi_{[-\rho, \rho]} \chi)(\mc H)$ converges strongly, but not necessarily in norm, to $\chi(\mc H)$. Therefore the identity
\be
e^{it \mc H} \chi(\mc H) = \chi(\mc H) e^{it \mc H} = (e^{it\lambda} \chi)(\mc H)
\ee
extends to $\chi$ without compact support.

In order to take this strong limit, we first approximate $V$ by $L^{\infty}$ potentials, for which the evolution is $L^2$-bounded. These approximations may move the boundary of the exceptional set by some small amount, but, since $\chi$ is supported some positive distance away from the exceptional set, this brings no prejudice.

In particular, this shows that $e^{it \mc H} \chi(\mc H)$ is a uniformly bounded family of $L^2$ operators.

Next, for $\chi$ of compact support
\be
\mc H \chi(\mc H) = \chi(\mc H) \mc H = (\lambda \chi)(\mc H)
\ee
implies
\be
R_V(\lambda_0) \chi(\mc H) = \chi(\mc H) R_V(\lambda_0) = \Big(\frac{\chi(\lambda)}{\lambda-\lambda_0}\Big)(\mc H)
\ee
for any $\lambda_0$ not in the spectrum. By passing to the strong limit, we remove the condition that $\chi$ should have compact support. Integrating, we obtain that for $\chi_1$ of compact support and any $\chi_2$
\be\begin{aligned}
 \frac i {2\pi} \int_{\set R} \chi_1(\lambda) (R_V(\lambda+i\epsilon) - R_V(\lambda-i\epsilon)) \chi_2(\mc H) \dd \lambda = \\
 = \bigg(\int_{\set R} \int_{\set R} \frac 1 {\pi} \frac{\epsilon \chi_1(\lambda_1) \dd \lambda_1}{(\lambda_1-\lambda_2)^2 + \epsilon^2} \chi_2(\lambda_2) \dd \lambda_2\bigg)(\mc H),
\end{aligned}\ee
where we recognize the Poisson kernel. Letting $\epsilon$ go to zero we get that
\be
\chi_1(\lambda) \chi_2(\lambda) = (\chi_1 \chi_2)(\lambda).
\ee
Then, by passing to the strong limit, we remove the condition that $\chi_1$ should have compact support.
\end{proof}

This leads to a natural definition of $L^2$ spectral projections for $\mc H$, on the real axis. Namely, for a set $A \subset \set R$ at a positive distance away from the exceptional set $\mc E$, we can define $P_A = \chi_A(\mc H)$. This is a projection in the sense that $P_A^2 = P_A$, but it need not be self-adjoint.

For a clearer picture, we are also interested in the following fact. Under more stringent assumptions, this appeared in Schlag \cite{schlag}.
\begin{lemma}\lb{lemma_31} Consider $V \in L^{3/2, \infty}$ that can be approximated by bounded functions. Then for sufficiently large $y$
\be
\langle f, g \rangle = \frac i {2\pi} \int_{\set R} \langle (R_V(\lambda + iy) - R_V(\lambda - iy)) f, g \rangle \dd \lambda
\ee
and the integral is absolutely convergent.

Furthermore, for every $\epsilon>0$
\be
\langle f, g \rangle = \frac i {2\pi} \int_{\set R} \langle (R_V(\lambda + i\epsilon) - R_V(\lambda - i\epsilon)) f, g \rangle \dd \lambda + \sum_{k=1}^n P^0_{\zeta_k}
\ee
where $P^0_{\zeta_k}$ are projections corresponding to the finitely many exceptional points $\zeta_k$ of imaginary part greater than $\epsilon$.

As a consequence, the rate of growth of $\|e^{it \mc H}\|_{2 \to 2}$ can be given an asymptotic expansion in terms of exponentials.
\end{lemma}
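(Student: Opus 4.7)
My plan is to reduce the identity to the self-adjoint free case via the resolvent identity
\[
R_V(z) = R_0(z) - R_0(z) V_1 \bigl(I + V_2 R_0(z) V_1\bigr)^{-1} V_2 R_0(z),
\]
and then deform the horizontal contour $\{\Im z = y\}$ downward to $\{\Im z = \epsilon\}$, picking up residues at exceptional values. For the free Hamiltonian, the spectral theorem provides
\[
R_0(\lambda+iy) - R_0(\lambda-iy) = \int \frac{2iy}{(\mu-\lambda)^2 + y^2}\,\dd E_{\mc H_0}(\mu),
\]
and testing against $f,g \in L^2$ produces a complex measure of total variation at most $\|f\|_2 \|g\|_2$. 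Since $\int_{\set R} 2y/((\mu-\lambda)^2 + y^2)\,\dd \lambda = 2\pi$ uniformly in $\mu$, Fubini--Tonelli gives absolute convergence of the iterated integral and yields the desired identity for $R_0$ at every $y > 0$.

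Next, for the perturbed resolvent, I aim to show that the correction $R_V - R_0 = -R_0 V_1 (I + V_2 R_0 V_1)^{-1} V_2 R_0$ integrates to zero along $\{\Im z = \pm y\}$ when $y$ is large. Approximating $V$ by bounded compactly supported functions, the bound $\|R_0(z)\|_{L^2 \to L^2} \leq 1/d(z, \sigma(\mc H_0))$ together with Hilbert--Schmidt estimates on $V_2 R_0(z) V_1$ shows that $\|V_2 R_0(z) V_1\|_{L^2 \to L^2} \to 0$ as $|\Im z| \to \infty$, so $(I + V_2 R_0(z) V_1)^{-1}$ exists and is bounded by $2$ there. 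This in turn gives decay of $\langle (R_V - R_0)(z) f, g\rangle$ like $|z|^{-2}$ on a horizontal line well inside the upper half-plane. Choosing $y$ larger than the maximum imaginary part of all exceptional values, $R_V - R_0$ is analytic on $\{\Im z \geq y\}$; a rectangular contour closing upward together with Cauchy's theorem then yield $\int_{\{\Im z = y\}} \langle (R_V - R_0)(z) f, g\rangle\,\dd z = 0$, and symmetrically at $-y$. Combined with the free identity this proves the first statement.

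To pass from height $y$ to height $\epsilon$, I rely on the meromorphic Fredholm analysis of Section~\ref{sect_3.5}: $(I + V_2 R_0(z) V_1)^{-1}$ is meromorphic in the open upper half-plane with poles exactly at the exceptional values $\zeta_k$ there, finite in number above any fixed height. Sliding the contour from $\Im z = y$ down to $\Im z = \epsilon$ across a tall rectangle whose horizontal extent goes to infinity (the same decay estimates controlling the vertical sides), the residue theorem produces, for each $\zeta_k$ with $\epsilon < \Im \zeta_k < y$, a contribution equal to the finite-rank algebraic-eigenspace projection $P^0_{\zeta_k}$; repeating symmetrically in the lower half-plane gives the second claim.

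Finally, inserting $e^{it\lambda}$ into the representation and running the same deformation for the Dunford functional-calculus integral for $e^{it\mc H}$ yields
\[
e^{it\mc H} = \frac{i}{2\pi}\!\int_{\set R} e^{it\lambda} \bigl(R_V(\lambda+i\epsilon) - R_V(\lambda-i\epsilon)\bigr)\,\dd \lambda + \sum_{|\Im \zeta_k| > \epsilon} e^{it \zeta_k} P^0_{\zeta_k},
\]
which isolates finitely many exponentials. The $\epsilon$-integral is bounded in operator norm by $C_\epsilon e^{|t|\epsilon}$ (via Lemma~\ref{lemma_28} after factoring out $e^{\mp t \epsilon}$), while each residue contributes $\|e^{it \zeta_k} P^0_{\zeta_k}\| \asymp e^{-t\Im \zeta_k}$; this is the announced asymptotic expansion. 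The hard part will be the uniform Birman--Schwinger decay at large $|\Im z|$ under the weak hypothesis $V \in L^{3/2, \infty}$: this reduces by approximation to compactly supported bounded $V$, a Hilbert--Schmidt bound for $V_2 R_0(z) V_1$, and dominated convergence, with some Lorentz-endpoint bookkeeping that is delicate but routine.
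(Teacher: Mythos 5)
Your route to the first identity is genuinely different from the paper's. The paper never touches the free spectral measure directly: it observes (via Lemma \ref{lemma_24}) that for bounded $V$ and large $y$ the damped evolution $\chi_{t\ge 0}\langle e^{it\mc H}e^{-yt}f,g\rangle$ is integrable with Fourier transform $-i\langle R_V(\lambda-iy)f,g\rangle$, glues the two half-lines together, and reads off the identity as Fourier inversion at $t=0$; general $V\in L^{3/2,\infty}$ is then handled by approximation with bounded potentials. You instead stay entirely on the resolvent side: Stone's formula and the Poisson kernel for $R_0$, plus Cauchy's theorem to kill the contribution of $R_V-R_0$ on high horizontal lines. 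That is a legitimate alternative and arguably more transparent about where the exceptional values enter; your contour shift down to height $\epsilon$ and the insertion of $e^{it\lambda}$ at the end coincide with what the paper does.

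The gap is in your decay claim. On the line $\Im z=y$ one has $\|R_0(\lambda+iy)\|_{2\to2}=1/d(\lambda+iy,\sigma(\mc H_0))$, which equals $1/y$ for every $\lambda\in\sigma(\mc H_0)$ and hence does \emph{not} tend to zero as $\Re z\to+\infty$. Consequently $\langle(R_V-R_0)(z)f,g\rangle$ is only $O(1/y^2)$ uniformly on the line, not $O(|z|^{-2})$, and as stated neither the absolute convergence of $\int_{\Im z=y}\langle(R_V-R_0)(z)f,g\rangle\,d\lambda$ nor the vanishing of the sides and top of your closing rectangle follows. The repair is exactly the mechanism the paper invokes in (\ref{3.100}): the Kato-smoothing bound $\int_{\set R}\|V_2R_0(\lambda\pm iy)f\|_2^2\,d\lambda\le C\|f\|_2^2$, uniform in $y$, which by Cauchy--Schwarz gives absolute convergence of the line integral; the vertical sides of the rectangle then require a dominated-convergence argument (the integrand is dominated by $\min(C,C/s^2)$ in the height variable and tends to zero pointwise as $\Re z\to\pm\infty$ because the spectral mass of $\hat f$ escapes the shells $|\xi|^2\approx\Re z$), and the top edge at height $Y$ should be taken with $Y\gg\sqrt{R}$ so that the crude bound $CR/Y^2$ vanishes. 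A second, minor point: the residue of $e^{itz}R_V(z)$ at an exceptional value with a nontrivial Jordan block is $e^{it\zeta_k}\bigl(P^0_{\zeta_k}+itP^1_{\zeta_k}+\dots\bigr)$, so your final expansion should carry polynomial factors in $t$, consistent with the growth rate $e^{|\Im\zeta_k||t|}|t|^{n-1}$ recorded in the paper's subsequent lemma.
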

\begin{proof} Assume at first that $V \in L^{\infty}$ and take $y > \|V\|_{\infty}^{1/2}$. Then
\be
(I + V_2 R_0(\lambda \pm iy) V_1)^{-1}
\ee
must be invertible. Indeed, $V_1$ and $V_2$ are bounded $L^2$ operators of norm at most $\|V\|^{1/2}_{\infty}$ and
\be
\|R_0(\lambda \pm iy)\|_{2 \to 2} \leq \frac 1 {4 \pi} \int_{\set R^3} \frac{e^{-y|x|}}{|x|} \dd x = 1/y^2.
\ee
Therefore one can expand $(I + V_2 R_0(\lambda \pm iy) V_1)^{-1}$ into a series. Thus
\be\begin{aligned}\lb{3.99}
R_V(\lambda\pm iy) &= R_0(\lambda \pm iy) - R_0(\lambda \pm iy) V R_0(\lambda \pm iy) + \\
&+ R_0(\lambda\pm iy) V_1 (I + V_2 R_0(\lambda\pm iy) V_1)^{-1} V_2 R_0(\lambda \pm iy)
\end{aligned}\ee
is a bounded $L^2$ operator.

By Lemma \ref{lemma_24}
\be
\chi_{t \geq 0} \langle e^{it \mc H} e^{-yt} f, g \rangle
\ee
is an exponentially decaying function and its Fourier transform is
\be
\int_0^{\infty} \langle e^{-(y+i\lambda)t} e^{it \mc H} f, g \rangle \dd y = -i\langle R_V(\lambda-iy) (I -\lim_{t \to \infty} e^{-i(\lambda-iy)t} e^{it \mc H}) f, g \rangle.
\ee
Combining this with the analogous result for the positive side, we see that
\be
(\langle e^{it \mc H} e^{-yt} f, g \rangle)^{\wedge} = i \langle (R_V(\lambda+iy)-R_V(\lambda-iy)) f, g \rangle.
\ee
However, assuming that $V \in L^{3/2, \infty}$, now the right-hand side is absolutely integrable due to (\ref{3.99}), (\ref{3.100}), and to smoothing estimates, though neither half has to be absolutely integrable on its own. Therefore, by the Fourier inversion formula,
\be\lb{3.103}
\frac i {2\pi} \int_{\set R} \chi(\lambda) \langle (R_V(\lambda + iy) - R_V(\lambda - iy)) f, g \rangle \dd \lambda = \langle f, g \rangle.
\ee
We can also shift $y$, provided we do not encounter any eigenvalue.

From the beginning, we have assumed that $V \in L^{\infty}$. Now consider the general case $V \in L^{3/2, \infty}$ and a sequence of approximations by bounded potentials $V^n = V_1^n V_2 ^n \in L^{\infty}$, such that $\|V^n - V\|_{L^{3/2, \infty}} \to 0$ as $n \to \infty$. Let $\mc E$ be the exceptional set of $V$. On the set $\{\lambda \mid d(\lambda, \mc E) \geq \epsilon\}$, the norm
\be
\|(I + V_2^n R_0(\lambda) V_1^n)^{-1}\|_{L^2 \to L^2}
\ee
is uniformly bounded for large $n$. For some sufficiently high $n$, then, $\mc E(V_n) \subset \{\lambda \mid d(\lambda, \mc E) < \epsilon\}$. If
\be
y_0 = \sup \{|\Im \lambda| \mid \lambda \in \mc E\},
\ee
then for any $y > y_0$ and sufficiently large $n$
\be\lb{3.104}
\frac i {2\pi} \int_{\set R} \chi(\lambda) \langle (R_{V^n}(\lambda + iy) - R_{V^n}(\lambda - iy)) f, g \rangle \dd \lambda = \langle f, g \rangle.
\ee
Both for $V$ and for $V^n$ the integrals (\ref{3.103}) and (\ref{3.104}) converge absolutely and as $n \to \infty$ (\ref{3.104}) converges to (\ref{3.103}). To see this, subtract the corresponding versions of (\ref{3.99}) from one another and evaluate.

Consider $V \in L^{3/2, \infty}$ approximable by bounded functions. We then shift this contour arbitrarily close to the real line, leaving behind contour integrals around elements of the exceptional set. It is easy to prove, in the same manner as previously, that
\be\begin{aligned}
\mc H \frac i {2\pi} \int_{-R}^R \langle (R_V(\lambda+iy)-R_V(\lambda-iy)) f, g \rangle \dd \lambda = \\
= \frac i {2\pi} \int_{-R}^R \langle ((\lambda+iy)R_V(\lambda+iy)-(\lambda-iy)R_V(\lambda-iy)) f, g \rangle \dd \lambda.
\end{aligned}\ee
Then
\be\begin{aligned}
e^{it \mc H} \frac i {2\pi} \int_{-R}^R \langle (R_V(\lambda+iy)-R_V(\lambda-iy)) f, g \rangle \dd \lambda = \\
= \frac i {2\pi} \int_{-R}^R \langle (e^{it(\lambda+iy)}R_V(\lambda+iy)-e^{it(\lambda-iy)}R_V(\lambda-iy)) f, g \rangle \dd \lambda.
\end{aligned}\ee
We let $R$ tend to infinity and obtain that the same holds for the whole contour. Indeed, over any horizontal line $\lambda \pm iy$, $y \ne 0$, that does not intersect the exceptional set $\mc E$, the integral
\be
\int_{\set R} \big|\langle R_V(\lambda\pm iy) - R_0(\lambda \pm iy) f, g \rangle\big| \dd \lambda \leq C \|f\|_2 \|g\|_2
\ee
converges absolutely due to the resolvent identity (\ref{3.99}) and smoothing estimates, while the remaining part
\be
\frac i {2\pi} \int_{-R}^R \langle (\chi(\lambda+iy) R_0(\lambda+iy)- \chi(\lambda-iy) R_0(\lambda-iy)) f, g \rangle \dd \lambda
\ee
converges to $\langle \chi(\mc H_0) f, g \rangle$ for both $\chi=1$ and $\chi(\lambda)=e^{it\lambda}$.

Therefore $\|e^{it \mc H}\|_{2 \to 2}$ grows no faster than $e^{|t| (y_0 +\epsilon)}$. In particular, if $\mc E$ is situated on the real line, $\|e^{it \mc H}\|_{2 \to 2}$ grows more slowly than any exponential.
\end{proof}

To conclude, we prove the following lemma about Riesz projections.
\begin{lemma} Assume $V \in L^{3/2, \infty}$. To each element $\zeta$ of the exceptional set of $\mc H$ not in $\sigma(\mc H_0)$ there corresponds a family of operators
\be
P^k_{\zeta} = \frac 1 {2\pi i} \int_{|z-\zeta| = \epsilon} R_V(z) (z-\zeta)^k \dd z.
\lb{3.111}
\ee
They have finite rank, $P^k_{\zeta} = 0$ for all $k \geq n$, for some $n$, $P^0_{\zeta}=(P^0_{\zeta})^2$, and more generally $(P_{\zeta}^k)(P_{\zeta}^{\ell}) = P_{\zeta}^{k+\ell}$.

Moreover, $P^k_{\zeta}$ are bounded from $L^{\infty} + L^{6/5, 2}$ to $L^1 \cap L^{6, 2}$ and $P^k_{\zeta}$, $\zeta$ depend continuously on $V$.

In the $\dot H^{1/2}$ setting, assume that $V=V_1 V_2$, with $V_1$, $V_2 \in \dot W^{1/2, 2-\epsilon} \cap \dot W^{1/2, 2+\epsilon}$. Then $P^k_{\zeta}$ takes $\dot W^{1/2, 6}$ to $\dot W^{1/2, 6/5}$ and $P^k_{\zeta}$, $\zeta$ depend continuously on $V$.
\end{lemma}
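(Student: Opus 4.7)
The plan is to build on the resolvent identity (\ref{eq_3.64}) and the meromorphic Fredholm theorem to analyze $R_V$ near an isolated exceptional point $\zeta \notin \sigma(\mc H_0)$, and then extract the claimed properties from the standard structure theory of Riesz projections. First, since $V_2 R_0(z) V_1$ is compact and analytic in a punctured neighborhood of $\zeta$ and since $I + V_2 R_0(z) V_1$ is invertible there, the meromorphic Fredholm theorem provides a Laurent expansion
\be
(I + V_2 R_0(z) V_1)^{-1} = \sum_{j \geq -n}(z-\zeta)^j S_j,
\ee
with $S_j$ finite rank for $j < 0$. Since $R_0(z)$ itself is analytic at $\zeta$, substituting into (\ref{eq_3.64}) shows that the principal part of $R_V$ at $\zeta$ is a sum of finitely many finite-rank terms of the form $R_0(\zeta) V_1 S_j V_2 R_0(\zeta)$, so $P^k_\zeta$ has finite rank and vanishes for $k \geq n$.

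The algebraic identities $P^0_\zeta = (P^0_\zeta)^2$ and $P^k_\zeta P^\ell_\zeta = P^{k+\ell}_\zeta$ follow in the classical way from the second resolvent identity $R_V(z) - R_V(w) = (w-z) R_V(z) R_V(w)$: apply it to two concentric circles around $\zeta$, use Fubini, and evaluate the residues of $(z-\zeta)^k (w-\zeta)^\ell/(w-z)$. For the mapping property, the key observation is that for $\zeta \notin \sigma(\mc H_0)$ the free resolvent kernel is $(4\pi)^{-1} e^{-\kappa|x-y|}/|x-y|$ (or its $2\times 2$ analog) with $\Re\kappa>0$, hence $R_0(z)$ is a convolution against an exponentially decaying, integrable kernel uniformly for $z$ on a small circle around $\zeta$. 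Consequently $R_0(z) V_1: L^2 \to L^1 \cap L^{6,2}$ and, dually, $V_2 R_0(z): L^{6/5,2} + L^\infty \to L^2$, both uniformly on the contour. Using the explicit formula
\be
P^k_\zeta = -\frac 1 {2\pi i} \oint_{|z-\zeta|=\epsilon} R_0(z) V_1 (I + V_2 R_0(z) V_1)^{-1} V_2 R_0(z) (z-\zeta)^k dz,
\ee
together with the uniform bound on $(I + V_2 R_0(z) V_1)^{-1}$ on the contour, gives the boundedness $L^\infty + L^{6/5,2} \to L^1 \cap L^{6,2}$.

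Continuous dependence on $V$ follows because, for small perturbations $V'$ of $V$ in $L^{3/2,\infty}$, the operator $V_2' R_0(z) V_1'$ converges in norm to $V_2 R_0(z) V_1$ uniformly on the fixed contour $|z-\zeta|=\epsilon$, which initially encloses only $\zeta$. The uniform invertibility of $I + V_2 R_0(z) V_1$ on the contour is preserved for $V'$ close enough to $V$, and the Neumann series yields norm-convergence of $(I + V_2' R_0(z) V_1')^{-1}$. Integrating, $P^k_{\zeta'}$ depends continuously on $V$; the location of the perturbed pole is then read off from the identity $\zeta' P^0_{\zeta'} = \frac{1}{2\pi i} \oint z R_{V'}(z)\,dz$, establishing continuity of $\zeta'$. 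For the $\dot H^{1/2}$ statement, the assumption $V_1, V_2 \in \dot W^{1/2,2-\epsilon} \cap \dot W^{1/2,2+\epsilon}$ ensures that $V_2 R_0(z) V_1$ is compact on $\dot H^{1/2}$ by the fractional Leibniz rule, so every step above carries through verbatim, while the mapping $\dot W^{1/2, 6} \to \dot W^{1/2, 6/5}$ follows from the exponential decay of $R_0(z)$ combined with the fractional Leibniz estimates already used in the construction.

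The main obstacle I expect is the mapping bound in Step 2: one must exploit that $\zeta$ lies strictly off $\sigma(\mc H_0)$ to get the exponential decay of $R_0(z)$ on a neighborhood of the contour, and then combine this with the rough regularity of $V_1, V_2 \in L^{3,\infty}$ to land in $L^1 \cap L^{6,2}$ (rather than merely in $L^2$). Approximating $V$ by bounded compactly supported potentials and passing to the limit using the uniform bounds from Definition~\ref{def_7} should handle any endpoint subtleties.
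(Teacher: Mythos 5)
Your handling of the algebraic part (finite rank and $P^k_{\zeta}=0$ for $k\geq n$ via the meromorphic Fredholm expansion of $(I+V_2R_0(z)V_1)^{-1}$, the identities $P^k_{\zeta}P^{\ell}_{\zeta}=P^{k+\ell}_{\zeta}$ from the second resolvent identity on concentric contours) and of continuity in $V$ (norm convergence of $(I+V_2'R_0(z)V_1')^{-1}$ on a fixed contour) is sound and close to the paper's; the one refinement the paper adds there is Kato's theorem on pairs of projections, which guarantees that nearby projections with $\|P-Q\|<1$ are similar and hence have equal rank --- your trace identity for $\zeta'$ gives continuity of the enclosed exceptional points but not, by itself, preservation of rank.

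The genuine gap is in the mapping bound $L^{\infty}+L^{6/5,2}\to L^1\cap L^{6,2}$. Under the hypothesis $V\in L^{3/2,\infty}$ one only has $V_1, V_2\in L^{3,\infty}$, and then both of your ``key observations'' fail: for $f\in L^2$, H\"older gives $V_1f\in L^{6/5,2}$, and convolution with the kernel $e^{-\kappa|x-y|}/(4\pi|x-y|)\in L^1\cap L^{3,\infty}$ can only raise the Lebesgue exponent (O'Neil's inequality gives $L^{6/5,2}\to L^{q}$ for $6/5\leq q\leq 6$), so $R_0(z)V_1$ maps $L^2$ into $L^{6/5,2}\cap L^{6,2}$ but \emph{not} into $L^1$; dually, $R_0(z)$ preserves $L^{\infty}$ and multiplication by $V_2\in L^{3,\infty}$ sends $L^{\infty}$ only to $L^{3,\infty}\not\subset L^2$, so $V_2R_0(z)$ is not bounded from $L^{\infty}$ to $L^2$. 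A single pass through the contour formula therefore cannot produce either half of the claimed bound. The paper circumvents this by identifying $\Ran(P^0_{\zeta})$ with the generalized eigenspace of $\mc H-\zeta$ and $\Ran((P^0_{\zeta})^*)$ with that of $\mc H^*-\ov{\zeta}$: each of the finitely many generalized eigenfunctions satisfies $f=R_0(\zeta)(g-Vf)$ \emph{exactly}, so the improved information can be fed back in and iterated --- after one step $f\in L^{6/5,2}\cap L^{6,2}\subset L^{3,1}$, whence $Vf\in L^1$ and the next application of $R_0(\zeta)$ puts $f$ in $L^1$. Writing $P^0_{\zeta}=\sum_j\langle\cdot,\psi_j\rangle\phi_j$ with $\phi_j$, $\psi_j$ bootstrapped in this way yields both the range and the domain side of the bound. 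The same eigenfunction bootstrap is what delivers $\dot W^{1/2,6}\to\dot W^{1/2,6/5}$ in the $\dot H^{1/2}$ setting, so ``carries through verbatim'' hides the identical missing step there.
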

The $\dot H^{1/2}$ estimate is decidedly suboptimal, but appropriate for our purposes.
\begin{proof} If $\mc H_0 + V - \zeta$ is invertible, then $\zeta$ is not in the exceptional set and vice-versa, as a consequence of (\ref{eq_3.64}) and (\ref{eq_3.65}).

We form the contour integral, following Schlag \cite{schlag} and Reed--Simon \cite{reesim1},
\be
P^k_{\zeta} = \frac 1 {2\pi i} \int_{|z-\zeta| = \epsilon} R_V(z) (z-\zeta)^k \dd z.
\ee
This integral is independent of $\epsilon$ if $\epsilon$ is sufficiently small and $P^k_{\zeta} = 0$ for $k \geq n$. Using the Cauchy integral, it immediately follows that $(P_{\zeta}^k)(P_{\zeta}^{\ell}) = P_{\zeta}^{k+\ell}$. Furthermore,
\be\lb{3.135}
\mc H P^0_{\zeta} = P^1_{\zeta} + \zeta P^0_{\zeta}.
\ee
Then $e^{it \mc H} P^0_{\zeta}$ can be described explicitly as
\be\begin{aligned}
e^{it \mc H} P^0_{\zeta} &= e^{it\zeta} P^0_{\zeta} + \frac {e^{it\zeta} - 1}{\zeta} P^1_{\zeta} + \frac {e^{it\zeta} - 1 - it\zeta}{\zeta^2} P^2_{\zeta} + \ldots \\
&+ \frac{e^{it\zeta}-1-\ldots-(it\zeta)^{n-2}/{(n-2)!}}{\zeta^{n-1}} P^{n-1}_{\zeta}.
\end{aligned}\ee
Its rate of growth is at most $e^{|\Im \zeta| |t|} |t|^{n-1}$.


It is a consequence of Fredholm's theorem that the range of $P^0_{\zeta}$ is finite dimensional; from (\ref{3.111}) it follows that $\Ran(P^0_{\zeta}) \subset L^2 \cap L^{6, 2}$. Also, $\Ran(P^0_{\zeta})$ is the generalized eigenspace of $\mc H - \zeta$, meaning
\be
\Ran(P^0_{\zeta}) = \bigcup_{k \geq 0} \Ker((\mc H - \zeta)^k).
\ee
One inclusion follows from (\ref{3.135}) and the fact that $P^k_{\zeta} = 0$ for $k \geq n$. The other inclusion is a consequence of the fact that, if $(\mc H - \zeta) f = 0$, then $R_V(z) f = (\zeta - z)^{-1} f$ and, using the definition (\ref{3.111}), $P^0_{\zeta} f = f$. For higher values of $k$ we proceed by induction.

Furthermore, $\Ran(P^0_{\zeta})$ consists of functions in $\langle \dl \rangle^{-2} L^{6/5, 2}$. If $f$ is a generalized eigenfunction, meaning $f \in L^2 \cap L^{6, 2}$ and $(\mc H - \zeta)^n f = 0$, then
\be
(\mc H_0 + V) f = \zeta f + g,
\ee
where $g$ is also a generalized eigenfunction. Assuming by induction that $g \in \langle \dl \rangle^{-2} L^{6/5, 2}$ (or is zero, to begin with), the same follows for $f$. Furthermore, if $g \in L^1$ we can infer the same about $f$.

The range of $(P^0_{\zeta})^*$ is the generalized eigenspace of $\mc H^* - \ov \zeta$, which means that it is also finite-dimensional and spanned by functions in $L^1 \cap \langle \dl \rangle^{-2} L^{6/5, 2}$.

Thus, each such projection is bounded from $L^{6/5, 2} + L^{\infty}$ to $L^1 \cap L^{6, 2}$.

Regarding the $\dot H^{1/2}$ case, assume that $V = V_1 V_2$, with $V_1$, $V_2 \in \dot W^{1/2, 2-\epsilon} \cap \dot W^{1/2, 2+\epsilon}$. We obtain that each generalized eigenfunction is in $L^1 \cap L^{\infty} \cap \dot W^{1/2, 6/5}$. This implies the boundedness of $P^k_{\zeta}$ from $\dot W^{1/2, 6}$ to $\dot W^{1/2, 6/5}$.

We next prove continuity with respect to $V$. Consider two potentials and their decomposition
\be
V = V_1 V_2,\ \tilde V = \tilde V_1 \tilde V_2.
\ee
We express the resolvent in a manner similar to (\ref{3.99}):
\be
R_V(z) = R_0(z) - R_0(z) V R_0(z) + R_0(z) V_1 (I + V_2 R_0(z) V_1)^{-1} V_2 R_0(z).
\ee
It follows that
\be\begin{aligned}
P^k_{\zeta}(\tilde V) - P^k_{\zeta}(V) &= \frac 1 {2\pi i} \int_{|z-\zeta|=\epsilon} (z-\zeta)^k \big(R_0(z) V_1 (I + V_2 R_0(z) V_1)^{-1} V_2 R_0(z) - \\
&-R_0(z) \tilde V_1 (I + \tilde V_2 R_0(z) \tilde V_1)^{-1} \tilde V_2 R_0(z) \big) \dd z.
\end{aligned}\ee
Keeping the integration contour fixed ($\zeta$ and $\epsilon$ fixed), for each $z$ on the contour one obtains by the resolvent identity that
\be
\|(I + V_2 R_0(z) V_1)^{-1} - (I + \tilde V_2 R_0(z) \tilde V_1)^{-1}\|_{2 \to 2} \leq C (\|\tilde V_1 - V_1\|_{L^{3, \infty}} + \|\tilde V_2 - V_2\|_{L^{3, \infty}})
\ee
as long as one has
\be
\|\tilde V_1 - V_1\|_{L^{3, \infty}} \|V_2\|_{L^{3, \infty}} + \|\tilde V_2 - V_2\|_{L^{3, \infty}} \|\tilde V_1\|_{L^{3, \infty}} \leq \frac C {\|(I + V_2 R_0(z) V_1)^{-1}\|_{2 \to 2}}.
\ee
Consequently, whenever the right-hand side is sufficiently small,
\be\begin{aligned}\lb{3.145}
&\|P^k_{\zeta}(\tilde V) - P^k_{\zeta}(V)\|_{\mc L(L^2, L^2)} \leq C (\|\tilde V_1 - V_1\|_{L^{3, \infty}} + \|\tilde V_2 - V_2\|_{L^{3, \infty}}).
\end{aligned}\ee
Likewise, at the $\dot H^{1/2}$ level
\be\begin{aligned}
&\|P^k_{\zeta}(\tilde V) - P^k_{\zeta}(V)\|_{\mc L(\dot H^{1/2}, \dot H^{1/2})} \\
&\leq C (\|\tilde V_1 - V_1\|_{\dot W^{1/2, 2-\epsilon} \cap \dot W^{1/2, 2+\epsilon}} + \|\tilde V_2 - V_2\|_{\dot W^{1/2, 2-\epsilon} \cap \dot W^{1/2, 2+\epsilon}}).
\end{aligned}\ee
%

Following Simon \cite{opuc}, p.\ 819, we quote the following theorem concerning projections, due to Kato:
\begin{theorem}\lb{simonkato}
Consider two projections (not necessarily orthogonal) $P$ and $Q$ on a Hilbert space, such that $\|P - Q\| < 1$. Then there exists an invertible operator \be
U(P, Q) = QP + (I-Q)(I-P)
\ee
such that
\be
U(P, Q) P U(P, Q)^{-1} = Q,
\ee
$U(P, P) = I$, and $U(P, Q)$ is analytic in $P$ and $Q$.
\end{theorem}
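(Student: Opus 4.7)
The plan is to verify the three properties directly by polynomial identities in $P$ and $Q$, then deduce the invertibility and the intertwining from a single key computation showing that $U(P,Q)\,U(Q,P) = I - (P-Q)^2$.

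First, since $P$ and $Q$ are idempotents, a short expansion gives $U(P,P) = P^2 + (I-P)^2 = P + (I - 2P + P) = I$, so $U(P,P)=I$. Analyticity in $P$ and $Q$ (jointly) is immediate from the definition, because $U(P,Q) = QP + (I-Q)(I-P)$ is a polynomial of total degree two in the two operator variables.

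The central step is the intertwining. Using $P^2 = P$ and $P(I-P)=0$, one finds
\begin{equation}
U(P,Q)\,P = Q P^2 + (I-Q)(I-P)P = QP, \qquad Q\,U(P,Q) = Q^2 P + Q(I-Q)(I-P) = QP,
\end{equation}
so $U(P,Q)\,P = Q\,U(P,Q)$. Therefore, once $U(P,Q)$ is shown to be invertible, the conjugation identity $U(P,Q)\,P\,U(P,Q)^{-1}=Q$ follows immediately. To establish invertibility, multiply out
\begin{equation}
U(P,Q)\,U(Q,P) = \bigl(QP + (I-Q)(I-P)\bigr)\bigl(PQ + (I-P)(I-Q)\bigr).
\end{equation}
The cross terms vanish due to $P(I-P)=0$, leaving $QPQ + (I-Q)(I-P)^2(I-Q) = QPQ + (I-Q)(I-P)(I-Q)$, which after expansion collapses to $I - P - Q + PQ + QP = I - (P-Q)^2$, using $(P-Q)^2 = P + Q - PQ - QP$. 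By the same computation with the roles of $P$ and $Q$ swapped, $U(Q,P)\,U(P,Q) = I - (P-Q)^2$ as well.

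The hypothesis $\|P-Q\|<1$ gives $\|(P-Q)^2\| < 1$, so $I-(P-Q)^2$ is invertible by a Neumann series. Hence $U(P,Q)$ has both a left and a right inverse, namely $U(Q,P)\bigl(I-(P-Q)^2\bigr)^{-1}$, and is therefore invertible. Combining with the intertwining identity yields $U(P,Q)\,P\,U(P,Q)^{-1}=Q$. The only step requiring any care is the algebraic identity $U(P,Q)\,U(Q,P) = I-(P-Q)^2$; everything else is a direct consequence of the idempotence relations and of the Neumann series.
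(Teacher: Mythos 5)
Your proof is correct, and it is essentially the standard argument of Kato: the identity $U(P,Q)\,U(Q,P)=U(Q,P)\,U(P,Q)=I-(P-Q)^2$ together with the Neumann series is exactly how the result is proved in the cited sources; the paper itself only quotes the theorem from Simon's book without proof. One small point worth tightening: the left inverse you produce is a priori $\bigl(I-(P-Q)^2\bigr)^{-1}U(Q,P)$ rather than $U(Q,P)\bigl(I-(P-Q)^2\bigr)^{-1}$, but this is harmless both because an operator with a left and a right inverse is invertible with the two coinciding, and because $(P-Q)^2$ in fact commutes with $P$ and $Q$ (e.g.\ $P(P-Q)^2=P-PQP=(P-Q)^2P$), hence with $U(Q,P)$, so the two expressions agree.
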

In particular, if $P$ and $Q$ have finite rank, it must be the same.

Therefore, for a fixed value of $\epsilon$ in the definition (\ref{3.111}), whenever $\|\tilde V_1 - V_1\|_{L^{3, \infty}} + \|\tilde V_2 - V_2\|_{L^{3, \infty}}$ is sufficiently small, $P^0_{\zeta}(\tilde V)$ is an operator of the same rank as $P^0_{\zeta}(V)$ on $L^2$ (and likewise in the $\dot H^{1/2}$ setting). This shows the continuity of $\zeta$, as a multivalued function of $V$.
\end{proof}

Obtaining estimates in this generality can be quite complicated, especially if the exceptional set has infinitely many elements or if there are embedded exceptional values in $\sigma(\mc H_0)$. Therefore, in the sequel we make a simplifying assumption. Namely, we assume that there are no exceptional values embedded in $\sigma(\mc H_0)$. By Fredholm's analytic theorem, this implies that there are finitely many exceptional values overall.

Then we can define $P_c$, the projection on the continuous spectrum. Let $\chi$ be a function that equals one on $\sigma(\mc H_0)$ and zero outside a small neighborhood.
\begin{definition}
Assume that there are no exceptional values of $\mc H$ embedded in the spectrum of $\mc H_0$. Let $P_c$ be defined as $P_c=\chi(\mc H)$, for $\chi(\mc H)$ given by Lemma \ref{lemma_28}. 
\end{definition}
$P_c$ commutes with $\mc H$, other operators in the algebra of the form $\chi_1(\mc H)$, and with $e^{it \mc H}$, as a direct consequence of the definition and of Lemma \ref{lemma_28}.

From the previous Lemma \ref{lemma_31}, it follows that
\be
P_c = \chi(\mc H) = \frac i {2\pi} \int_{\sigma(\mc H_0)} \big(R_V(\lambda+i0) - R_V(\lambda-i0)\big) \dd \lambda
\ee
and $I-P_c$ is a finite range operator,
\be
I-P_c = P_p = \sum_{k=1}^n P_{\zeta_k}^0.
\ee
$P_c$ is bounded on $L^2$, but, since each projection $P_{\zeta_k}^0$ is bounded from $L^{\infty} + L^{6/5, 2}$ to $L^{6, 2} \cap L^1$, the same holds for $P_p = I-P_c$.

Therefore $P_c$ is bounded on $L^{6/5, q}$, $q \leq 2$, and on $L^{6, q}$, $q\geq 2$, as well as on intermediate spaces.





The following technical lemma is important in the proof of Strichartz estimates.
\begin{lemma}\lb{lemma32}
Consider $V \in L^{3/2, 1}(\set R^3)$ and $\mc H = \mc H_0 + V$ as in (\ref{eq_3.2}) or (\ref{3.2}) such that $\mc H$ has no exceptional values embedded in $\sigma(\mc H_0)$. Additionally, assume that zero is not an exceptional value for $\mc H$, either. Then there exists a decomposition
\be
V - P_p \mc H = F_1(V) F_2(V),
\ee
where $P_p = I - P_c$, such that $F_1(V) \in \mc L(L^2, L^{6/5, 1})$, $F_2(V) \in \mc L(L^{6, \infty}, L^2)$, and the Fourier transform of $I-iT_{F_2(V), F_1(V)}$ is invertible in the lower half-plane up to the boundary, where
\be
(T_{F_2(V), F_1(V)}F)(t) = \int_{-\infty}^t F_2(V) e^{i(t-s)\mc H_0} F_1(V) F(s) \dd s.
\ee
Furthermore, $F_1(V)$ and $F_2(V)^*$ can be approximated in the $\mc L(L^2, L^{6/5, 1})$ norm by operators that are bounded from $L^2$ to $\langle x \rangle^{-N} L^2$, for any fixed $N$.

Assuming that $V = V_1 V_2$ with $V_1$, $V_2 \in \dot W^{1/2, 2-\epsilon} \cap W^{1/2, 2+\epsilon}$, one can take
\be
F_1(V) \in \mc L(\dot H^{1/2}, |\dl|^{-1/2} L^{6/5, 1}),\ F_2(V) \in \mc L(|\dl|^{-1/2} L^{6, \infty}, \dot H^{1/2}),
\ee
such that $F_1(V)$ is approximable by operators in $\mc L(\dot H^{1/2},\langle x \rangle^{-N} L^2)$ and $F_2(V)$ is approximable by operators in $\mc L(\langle x \rangle^{-N} L^2, \dot H^{1/2})$.
\end{lemma}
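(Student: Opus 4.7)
My approach is to absorb the finite-rank correction $P_p \mc H$ into an augmented Birman--Schwinger factorization of $V$, then invoke the abstract Wiener theorem (Theorem~\ref{thm7}) together with the causal criterion of Lemma~\ref{lemma8} to obtain invertibility of $I - iT_{F_2, F_1}$ in the correct subalgebra of $K$. I will begin by writing $V = V_1 V_2$ as in (\ref{3.5}) or (\ref{3.6}); since $V \in L^{3/2, 1}$, one has $V_1, V_2 \in L^{3, 2}$, and Hölder's inequality for Lorentz spaces gives multiplication operators $V_1 \in \mc L(L^2, L^{6/5, 1})$ and $V_2 \in \mc L(L^{6, \infty}, L^2)$. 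By the spectral hypothesis together with the discussion preceding the statement, $P_p = I - P_c$ is a finite-rank projection whose range and cokernel are spanned by generalized eigenfunctions of $\mc H$ and $\mc H^*$, all smooth and exponentially decaying (hence lying in $L^{6/5, 1} \cap \langle x \rangle^{-N} L^2$ for every $N$). Writing $P_p \mc H = \sum_{j=1}^n w_j \langle \cdot, v_j \rangle$ with $w_j = \mc H u_j$ of the type just described, I enlarge the intermediate Hilbert space from $L^2$ to $H = L^2 \oplus \set C^n$ (identified with $L^2$ via any unitary isomorphism) and set
\begin{align*}
F_1(f, c) &= V_1 f - \sum_{j=1}^n c_j w_j, & F_2 g &= \Bigl(V_2 g,\, (\langle g, v_j \rangle)_{j=1}^n\Bigr).
\end{align*}
A direct composition check yields $F_1 F_2 = V - P_p \mc H$, and the mapping bounds $F_1 \in \mc L(H, L^{6/5, 1})$, $F_2 \in \mc L(L^{6, \infty}, H)$ follow from Hölder--Lorentz and from $w_j, v_j \in L^{6/5, 1}$.

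The next step combines these mapping properties with Proposition~\ref{prop_21} to place $T_{F_2, F_1}$, which is supported on $[0, \infty)$, inside the causal subalgebra of $K = \mc L(H, M_t H)$. By Lemma~\ref{lemma8}, invertibility of $I - i T_{F_2, F_1}$ in this subalgebra is equivalent to the family $I + F_2 R_0(\lambda) F_1$ being a weakly analytic, uniformly bounded family of invertible operators on $H$ throughout the closed lower half-plane, with uniformly bounded inverse. A standard Birman--Schwinger style manipulation identifies the kernel of $I + F_2 R_0(\lambda) F_1$ at a given $\lambda$ with the $\lambda$-eigenspace of $\mc H_0 + (V - P_p \mc H) = P_c \mc H$ acting on $P_c L^2$; since $P_p$ absorbs every exceptional eigenspace of $\mc H$ off $\sigma(\mc H_0)$ (including zero, by hypothesis), $P_c \mc H$ possesses no discrete spectrum in the open lower half-plane.

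The main obstacle I anticipate is ruling out exceptional values of the modified operator on the boundary inside $\sigma(\mc H_0)$. The argument will be that any nontrivial $(f, c)$ in the kernel of $I + F_2 R_0(\lambda \pm i0) F_1$ produces $g = R_0(\lambda \pm i0)(V_1 f - \sum_j c_j w_j) \in L^{6, \infty}$ satisfying $(\mc H - \lambda) g \in \Ran P_p$; projecting onto $P_c L^2$ yields a generalized eigenfunction of $\mc H$ embedded in $\sigma(\mc H_0)$, contradicting the spectral assumption. The bootstrapping of $g$ from $L^{6, \infty}$ to a space sensitive enough to apply either the Ionescu--Jerison regularity theorem (scalar case) or Agmon's bootstrap (matrix case) is the delicate point; however, since the correction $P_p \mc H$ is smooth and rapidly decaying, it contributes only to the well-behaved part of the equation and does not interfere with these bootstrap steps.

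Finally, the equicontinuity and decay-at-infinity conditions of Theorem~\ref{thm7} are verified by approximating $V_1, V_2 \in L^{3, 2}$ in norm by bounded, compactly supported functions (which is possible because $L^{3, 2}$ has absolutely continuous norm). These approximants lie in $\langle x \rangle^{-N} L^2$ for every $N$, and the finite-rank block is already of this form, which simultaneously yields the approximation assertion for $F_1$ and $F_2^*$ and gives continuity of $\widehat{T}(\lambda)$ in $\lambda$. The $\dot H^{1/2}$ version follows by precisely the same construction, with the Hölder--Lorentz bounds replaced by the fractional Leibniz rule applied under the stronger hypothesis $V_1, V_2 \in \dot W^{1/2, 2-\epsilon} \cap \dot W^{1/2, 2+\epsilon}$, and with multiplication by the smooth functions $w_j, v_j$ trivially bounded on fractional Sobolev spaces.
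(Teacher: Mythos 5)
Your overall strategy coincides with the paper's: factor $V - P_p\mc H$, show the kernel $T_{F_2,F_1}$ lies in the causal part of the Wiener algebra, and reduce invertibility of $I-iT_{F_2,F_1}$ via Lemma~\ref{lemma8} and the Fredholm alternative to the non-vanishing of $\Ker(I+F_2R_0(\lambda)F_1)$ on the closed lower half-plane. Your concrete factorization, however, is genuinely different from the paper's: you enlarge the intermediate Hilbert space to $L^2\oplus\set C^n$ and park the finite-rank correction in the extra block, whereas the paper keeps the intermediate space equal to $L^2$ by taking the polar decomposition $P_p=UA$ and setting $F_2(V)=V_2+A$, $F_1(V)=V_1G_1-G_2$ with the division operators $G_1=V_2/(V_2+A)$, $G_2=(\mc H P_p)/(V_2+A)$. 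Your version is more transparent (no division operators to extend by continuity) at the cost of an identification $L^2\oplus\set C^n\cong L^2$; both give the same composition $V-P_p\mc H$ and the same mapping and approximation properties (though note that for a general $V\in L^{3/2,1}$ the generalized eigenfunctions are only known to lie in $L^1\cap L^{6,2}$, not to be smooth and exponentially decaying, so the finite-rank block also needs the cutoff approximation rather than being "already of this form").

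The genuine gap is in the invertibility step, specifically in the case you set aside as "the delicate point." Your identification of $\Ker(I+F_2R_0(\lambda)F_1)$ with the $\lambda$-eigenspace of $P_c\mc H$ \emph{acting on $P_cL^2$} is not correct: the Birman--Schwinger correspondence produces the $\lambda$-eigenspace of $P_c\mc H$ on the whole space, and since $P_c\mc H u=P_cP_p\mc H u=0$ for every $u\in\Ran P_p$, the point $\lambda=0$ is always an eigenvalue of the modified operator with eigenspace containing $\Ran P_p$. Concretely, take $0\ne g\in\Ran P_p$ and set $(f,c)=-F_2g$. Then
\be
F_1(f,c)=-Vg+P_p\mc Hg=-Vg+\mc Hg=\mc H_0g,\qquad R_0(0)F_1(f,c)=R_0(0)\mc H_0g=g,
\ee
so $(f,c)=-F_2R_0(0)F_1(f,c)$, and $(f,c)\ne0$ because $\langle g,v_j\rangle=0$ for all $j$ would force $\mc Hg=P_p\mc Hg=0$, contradicting the hypothesis that $0$ is not exceptional. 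Hence $I+F_2R_0(0)F_1$ has a nontrivial kernel whenever $P_p\ne0$, and your boundary argument fails precisely when $P_cg=0$ — the one case you do not address. (The same computation defeats the paper's own factorization; the paper's proof concludes $\tilde g=P_cg=0$ at $\lambda=0$ and stops there without showing $g=0$.) The Ionescu--Jerison/Agmon machinery you invoke is irrelevant to this failure mode: those arguments rule out kernel elements with $P_cg\ne0$, which the no-embedded-exceptional-values hypothesis already handles. A repair is available and cheap: replace the factored operator by $V-P_p\mc H+i\mu_0P_p$ for some $\mu_0>0$ (equivalently, add $i\mu_0$ times the finite-rank block to your $\set C^n$ component). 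This moves the spurious eigenvalue from $0$ to $i\mu_0$ in the open upper half-plane, restores invertibility throughout the closed lower half-plane, and does not disturb the Duhamel identity for $P_cZ$ in Theorem~\ref{theorem_26}, since the added term annihilates $\Ran P_c$. Without some such modification, the invertibility claim of the lemma — and hence your proof of it — does not go through at $\lambda=0$.
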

More generally, a similar result holds for $V$ belonging to the $L^{3/2, \infty}$ closure of the set of bounded compactly supported functions.

\begin{proof}
Consider a potential $V$ such that $\mc H$ has no exceptional values embedded in $\sigma(\mc H_0)$. Being a finite-rank operator, $P_p$ has the form
\be
P_p f = \sum_{k=1}^n \langle f, f_k \rangle g_k 
\ee
where $f_k$ and $g_k$ belong to $L^{6, 2} \cap L^1$. 
It follows that $A$ can be expressed as
\be
A f = \sum_{j, k=1}^n a_{jk} \langle f, f_k \rangle f_j.
\ee
Take the standard polar decomposition of $P_p$, with respect to $L^2$:
\be
P_p = U A,
\ee
where $A = (P_p^* P_p)^{1/2} \geq 0$ is a nonnegative $L^2$ operator of finite rank. $A$ maps the span of $f_k$ to itself and $U$ is a partial $L^2$ isometry defined on the range of $A$. $U$ maps the span of $f_k$ to the span of $g_k$ and can be extended by zero on the orthogonal complement:
\be\begin{aligned}
U f &= \sum_{j, k=1}^n u_{jk} \langle f, f_k \rangle g_j.
\end{aligned}\ee
From these explicit forms we see that both $U$ and $A$ are bounded operators from $L^{6/5, 2} + L^{\infty}$ to $L^{6, 2} \cap L^1$.


Also let $V = V_1 V_2$, where $V_2 \geq 0$ is a nonnegative operator on $L^2$, meaning $\langle f, V_2 f \rangle \geq 0$ for every $f \in \Dom(V_2)$, and $V_1$, $V_2 \in L^{3, 2}$.

Then, define the bounded $L^2$ operators $G_1 = V_2/(V_2 + A)$ and $G_2 = (\mc H P_p)/(V_2 + A)$, initially on $\Ran(V_2 + A)$, by setting
\be
G_1((V_2+A)f) = V_2 f,\ G_2((V_2+A)f) = \mc H P_p f
\ee
and extending them by continuity to $\ov{\Ran(V_2 + A)}$. On the orthogonal complement
\be
\Ran(V_2 + A)^{\perp}=\{f \mid P_p f =0,\ V f = 0\}
\ee
we simply set $G_1=G_2=0$. We then make the construction
\be\begin{aligned}
V - P_p \mc H &= F_1(V) F_2(V), \\
F_2(V) &= V_2 + A, \\
F_1(V) &= V_1 G_1 - G_2.
\end{aligned}\ee
$F_1(V)$ and $F_2(V)$ are bounded from $L^2$ to $L^{6/5, 1}$ and from $L^{6, \infty}$ to $L^2$ respectively.

We next prove that $F_1(V)$ and $F_2(V)$ can be approximated by operators in better classes, as claimed. Firstly, consider a family of smooth compactly supported functions $\chi_n$ such that $0 \leq \chi_n \leq 1$ and $\chi_n \to 1$ as $n \to \infty$. Let
\be\begin{aligned}\lb{eq_3.177}
V_1^n &= \chi_n V_1, &V_2^n &= \chi_n V_2,\\
A^n &= \chi_n A \chi_n,\\
G_1^n &= V_2^n / (V_2^n + A^n), &G_2^n &= (\chi_n \mc H P_p \chi_n) /(V_2^n + A^n),\\
F_1^n &= V_1^n G_1^n - G_2^n, &F_2^n &= V_2^n + A^n.
\end{aligned}\ee

It is plain that $F_1^n$ and $(F_2^n)^*$ take $L^2$ to $\langle x \rangle^{-N} L^2$. We also prove that they also approximate $F_1(V)$ and $F_2(V)$ in $\mc L(L^2, L^{6/5, 1})$. To begin with, as $n \to \infty$
\be\begin{aligned}
\|V_j^n - V_j\|_{\mc L(L^2, L^{6/5, 1})} \to 0,\\
\|A^n - A\|_{\mc L(L^2, L^{6/5, 1})} \to 0.
\end{aligned}\ee
The second statement is due to the fact that, for finite rank operators, weak and norm convergence are equivalent. Note that $I - G_1^n$ is also a finite rank operator, so the weak convergence of $I - G_1^n$ to $I-G_1$ implies norm convergence also, in the $\mc L(L^2, L^2)$ norm. This reasoning also proves that $G_2^n \to G_2$ in $\mc L(L^2, L^{6/5, 1})$.

The same construction works in the $\dot H^{1/2}$ case, except that we have to consider the polar decomposition corresponding to $\dot H^{1/2}$ instead of $L^2$. Keeping exactly the same notations, let $P_p = UA$ be the polar decomposition corresponding to $\dot H^{1/2}$ and let $V = V_1 V_2$, where $V_1$, $V_2 \in \dot W^{1/2, 2-\epsilon} \cap W^{1/2, 2+\epsilon}$ and $V_2 \geq 0$ as a $\dot H^{1/2}$ operator. The last property is ensured by (\ref{3.5}) or (\ref{3.6}) and is essentially a consequence of the fact that $V_2$ is real-valued (or self-adjoint). Setting
\be
G_1((V_2+A)f) = V_2 f,\ G_2((V_2+A)f) = \mc H P_p f
\ee
results in bounded $\dot H^{1/2}$ operators. Then take
\be\begin{aligned}
V - P_p \mc H &= F_1(V) F_2(V), \\
F_2(V) &= V_2 + A, \\
F_1(V) &= V_1 G_1 - G_2.
\end{aligned}\ee
Upon inspection, we immediately see that $F_1(V)$ and $F_2(V)$ are bounded from $|\dl|^{-1/2} L^{6, \infty}$ to $\dot H^{1/2}$ and from $\dot H^{1/2}$ to $|\dl|^{-1/2} L^{6/5, 1}$ respectively.

Again, we use cutoffs to approximate $F_1(V)$ and $F_2(V)$, exactly as in (\ref{eq_3.177}).



Finally, we show that the Fourier transform $I-i\widehat{T}_{F_2(V), F_1(V)}(\lambda)$ is nonvanishing. Let the Fourier transform vanish at $\lambda$ in the lower half-plane, meaning that there exists a nonzero $f \in L^2$ such that
\be
f = -F_2(V) R_0(\lambda) F_1(V) f.
\ee
Then $g=R_0(\lambda) F_1(V) f$ belongs to $|\dl|^{-2} L^{6/5, 2}$ and
\be
g = -R_0(\lambda) (V - P_p \mc H) g.
\ee
Applying $-\Delta+\lambda$ to both sides, we obtain that
\be
\mc H P_c g = \lambda g.
\ee
Let $P_c g = \tilde g \in |\dl|^{-2} L^{6/5, 2}$. One has $\mc H \tilde g = \lambda \tilde g$ and thus, for $\tilde f = V_2 \tilde g$ (see \ref{3.2}),
\be
f = -V_2 R_0(\lambda) V_1 f.
\ee
Therefore $\lambda$ is an exceptional value for $\mc H$, according to Definition \ref{def_7}. Since, by assumption, there are no exceptional values in $\sigma(\mc H_0)$, it follows that $\tilde g$ is an eigenvector of $\mc H$ and therefore $P_p \tilde g = \tilde g$. This, together with $P_c \tilde g = \tilde g$, implies that $\tilde g = 0$. Therefore $\lambda g = 0$, so either $g=0$, in which case either $f=0$ or $\lambda=0$.

However, we specifically assumed that $\lambda=0$ is not an exceptional value for $\mc H$. It follows that the Fourier transform of $I - i T_{F_2(V), F_1(V)}$ is invertible in the lower half-plane.
\end{proof}

\section{Strichartz estimates}
In the sequel, by $e^{it \mc H}$ we designate the solution of the homogenous equation, which is well-defined at all times as a bounded $L^2$ operator, for $V \in L^{\infty}$. We extend its meaning gradually to other cases.

To begin with, we prove the following straightforward result, which makes no use of spectral projections.



\begin{theorem}\lb{theorem_25} Consider a potential $V \in L^{3/2, 1}$, scalar or matrix-valued as in (\ref{3.2}), real or complex-valued, whose exceptional set $\mc E$ is empty in the lower half-plane up to the boundary. Then Strichartz estimates hold for $\mc H = \mc H_0+V$: for the equation
\be
i \partial_t Z + \mc H Z = F,\ Z(0) \text{ given}
\ee
one has, for $t \geq 0$,
\be
\|Z\|_{L^{\infty}_t L^2_x \cap L^2_t L^{6, 2}_x} \leq C \Big(\|Z(0)\|_2 + \|F\|_{L^1_t L^2_x + L^2_t L^{6/5, 2}_x}\Big)
\ee
and
\be
\|Z\|_{L^1_t L^{6, \infty}_x} \leq C (\|Z(0)\|_{L^{6, 1}} + \|F\|_{L^1_t L^{6/5, 1}_x}).
\ee
Suppose that $V = V_1 V_2$ and $V_1$, $V_2 \in \dot W^{1/2, 2-\epsilon}\cap \dot W^{1/2, 2+\epsilon}$. Then, assuming there are no $\dot H^{1/2}$ exceptional values in the lower half-plane or on its boundary, for $t \geq 0$
\be
\|Z\|_{L^{\infty}_t \dot H^{1/2}_x \cap L^2_t \dot W^{1/2, 6}_x} \leq C \Big(\|Z(0)\|_{\dot H^{1/2}} + \|F\|_{L^1_t \dot H^{1/2}_x + L^2_t \dot W^{1/2, 6/5}_x}\Big).
\ee
\end{theorem}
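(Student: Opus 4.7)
The plan is to invert a Volterra integral operator in the Wiener algebra $K$ of Definition \ref{def_k} and then recover $Z$ via free Strichartz estimates. Factor $V = V_1 V_2$ as in (\ref{3.5}) or (\ref{3.6}) and apply $V_2$ to Duhamel's formula to obtain
$$V_2 Z(t) = V_2 e^{it\mc H_0}Z(0) - i\int_0^t V_2 e^{i(t-s)\mc H_0}F(s)\,ds + i T_{V_2,V_1}(V_2 Z)(t),$$
where $T_{V_2,V_1}$ is the causal convolution operator from the motivation section. If $I - iT_{V_2,V_1}$ can be inverted within $K$, then $V_2 Z$ is controlled by the data; reinserting it as a source term in Duhamel's formula and applying the endpoint free Strichartz estimates for $\mc H_0$ yields the conclusion.

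First I would verify $T_{V_2,V_1}\in K=\mc L(L^2,M_t L^2)$. Since $V\in L^{3/2,1}$, its factors satisfy $V_1,V_2\in L^{3,2}$, so H\"older in Lorentz spaces gives $V_1:L^2\to L^{6/5,1}$ and $V_2:L^{6,\infty}\to L^2$; together with Proposition \ref{prop_21} this places $T_{V_2,V_1}$ in $K$. The equicontinuity condition $\|L(\cdot+\epsilon)-L\|_K \to 0$ and the decay-at-infinity condition $\|(1-\chi(t/R))L\|_K \to 0$ of Theorem \ref{thm7} are obtained by approximating $V_1,V_2$ in $L^{3,2}$ by bounded compactly supported functions, for which they follow from strong continuity of the free evolution and from the scale-invariant integrability (\ref{eq_3.39}); these properties are preserved under $K$-norm limits.

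Next I would check pointwise Fourier invertibility. Because $T_{V_2,V_1}$ is supported on $[0,\infty)$, its Fourier transform extends analytically into the lower half-plane, and Lemma \ref{lemma_22} identifies it as $\widehat T_{V_2,V_1}(\lambda)=iV_2R_0(\lambda)V_1$. Thus $I - i\widehat T_{V_2,V_1}(\lambda) = I + V_2 R_0(\lambda)V_1$ is the Birman--Schwinger operator, invertible and uniformly bounded on the whole closed lower half-plane exactly by the standing assumption that $\mc E$ is empty there. Theorem \ref{thm7} then yields $(I - iT_{V_2,V_1})^{-1}\in K$, and Lemma \ref{lemma8} promotes this inverse to upper triangular (causal) form.

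With $V_2 Z$ now solved and bounded in the appropriate $M_t L^2$-type norms by the right-hand sides of the two target estimates, I would write $Z(t) = e^{it\mc H_0}Z(0) - i\int_0^t e^{i(t-s)\mc H_0}(F - V_1 (V_2 Z))(s)\,ds$ and invoke the endpoint Keel--Tao Strichartz estimates for $\mc H_0$ in Lorentz form, using $V_1\in L^{3,2}$ to land $V_1(V_2Z)$ in $L^2_t L^{6/5,2}_x$ for the first estimate, and (\ref{eq_3.39}) directly for the dispersive $L^1_t L^{6,\infty}_x$ estimate. For the $\dot H^{1/2}$ version, the same scheme runs with $H=\dot H^{1/2}$ in Definition \ref{def_k}; the hypothesis $V_1,V_2\in \dot W^{1/2,2\pm\epsilon}$ combined with the fractional Leibniz rule keeps every intermediate operator in the right space, and the dispersive input is again Proposition \ref{prop_21}. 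The main obstacle is verifying the $K$-algebra equicontinuity at the critical regularity $V\in L^{3/2,1}$, which is precisely why the stronger Lorentz assumption (as opposed to $L^{3/2,\infty}$) is imposed and why the density argument through bounded compactly supported potentials is essential.
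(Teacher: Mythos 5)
Your proposal is correct and follows essentially the same route as the paper: factor $V=V_1V_2$, show $T_{V_2,V_1}$ lies in the Wiener algebra $K$ via Proposition \ref{prop_21}, verify the equicontinuity and decay hypotheses of Theorem \ref{thm7} by approximating $V_1,V_2$ by bounded compactly supported functions, identify $\widehat T_{V_2,V_1}(\lambda)=iV_2R_0(\lambda)V_1$ through Lemma \ref{lemma_22}, invert using the empty exceptional set together with Lemma \ref{lemma8} for causality, and close with endpoint free Strichartz estimates. The only cosmetic difference is that you solve for $V_2Z$ and reinsert it into Duhamel, whereas the paper writes the equivalent composed formula $T_{1,V_1}(I-iT_{V_2,V_1})^{-1}T_{V_2,1}(F+\delta_{t=0}Z(0))$.
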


\begin{proof}
Take a multiplicative decomposition $V = V_1 V_2$ with $V_1$, $V_2 \in L^{3, 2}$ and let 
\be
(T_{V_2, V_1} F)(t) = \int_{-\infty}^t V_2 e^{i(t-s)\mc H_0} V_1 F(s) \dd s.
\ee
If the exceptional set is empty, then
\be
\|(I + V_2 R_0(\lambda) V_1)^{-1}\|_{2 \to 2}
\ee
is bounded in the lower half-plane up to the boundary. Therefore, one can approximate $V$ in the $L^{3/2, 1}$ norm by $L^{\infty}$ potentials whose exceptional sets are still empty (in the lower half-plane). If the conclusion stands for each of these approximations, then we pass to the limit and it also holds for $V$ itself. Likewise, we can approximate $F$ by functions located in $L^1_t L^2_x$. For these approximations, Lemma \ref{lemma_24} provides the evolution with a natural meaning in $L^{\infty}_{t, loc} L^2_x$.

Firstly, we need to establish that $T_{V_1, V_2} \in K$, where $K$ is the Wiener algebra of Definition \ref{def_k}. By Proposition \ref{prop_21} and Minkowski's inequality, we see that $T$ given by
\be
TF(t) = \int_{-\infty}^t e^{i(t-s) \mc H_0} F(s) \dd s
\ee
takes $L^1_t L^{6, 1}_x$ to $L^1_t L^{6/5, \infty}_x$.
Therefore
\be
T_{V_2, V_1} F(t) = \int_{-\infty}^t V_2 e^{i(t-s) \mc H_0} V_1 F(s) \dd s
\ee
takes $L^1_t L^2_x$ to itself and, for the Hilbert space $L^2_x$, belongs to the algebra $K$ of Theorem \ref{thm7}. Following Lemma \ref{lemma_22}, the Fourier transform of $T_{V_2, V_1}$ is given by
\be
\widehat T_{V_2, V_1}(\lambda) = iV_2 R_0(\lambda) V_1.
\ee
Having an empty exceptional set $\mc E$ simply ensures that $I-i\widehat T_{V_2, V_1}(\lambda)$ is invertible in the lower half-plane.

Furthermore, for $V_1$, $V_2 \in L^{3+\epsilon}$, $T_{V_2, V_1}$ decays at infinity like $|t|^{-1-\epsilon}$ in norm. By approximating in the $L^{3, 2}$ norm, it follows that $T_{V_2, V_1}$ belongs to the subalgebra $D \subset K$ of kernels that decay at infinity for any $V_1$, $V_2 \in L^{3, 2}$.

Likewise, assume $V_1$, $V_2$ are smooth of compact support. Then for each $t$
\be
\|(V_2 e^{-i(t+\epsilon)\Delta} V_1 - V_2 e^{-it\Delta} V_1)f\|_2 \leq C \epsilon^{1/4} \|e^{-it\Delta} V_1 f\|_{\dot H^{1/2}_{loc}}
\ee
and therefore
\be\begin{aligned}
&\int_{-t}^t \|(V_2 e^{i(t+\epsilon)\mc H_0} V_1 - V_2 e^{it\mc H_0} V_1)f\|_2 \dd t \\
&\leq C \epsilon^{1/4} t^{1/2} \Big(\int_{-t}^t \|e^{-it\Delta} V_1 f\|_{\dot H^{1/2}_{loc}}^2 \dd t\Big)^{1/2}\\
&\leq C \epsilon^{1/4} t^{1/2} \|f\|_2.
\lb{3.52}
\end{aligned}\ee
Since $T_{V_2, V_1}$ decays at infinity, (\ref{3.52}) implies that $T_{V_2, V_1}$ is equicontinuous. Again, by approximating we find that the same holds for any $V_1$, $V_2 \in L^{3, 2}$.

Therefore $I-iT_{V_2, V_1}$ satisfies all the hypotheses of Theorem \ref{thm7}, with respect to the Hilbert space $L^2$ and the algebra $K$ of Definition \ref{def_k}. Given that $\mc E = \varnothing$, one can invert and $(I-iT_{V_2, V_1})^{-1}$ is an upper triangular operator in $K$ by Lemma ~\ref{lemma8}.

Let $T_{1, V_1}$ and $T_{V_2, 1}$ be respectively given by
\be
(T_{1, V_1}F)(t) = \int_{-\infty}^t e^{i(t-s)\mc H_0} V_1 F(s) \dd s
\ee
and
\be
(T_{V_2, 1} F)(t) = \int_{-\infty}^t V_2 e^{i(t-s)\mc H_0} F(s) \dd s.
\ee
Now write the evolution as
\be
Z(t) = e^{it \mc H_0} Z(0) + \int_0^t e^{i(t-s)\mc H_0} F(s) \dd s + T_{1, V_1} (I-iT_{V_2,  V_1})^{-1} T_{V_2, 1} (F + \delta_{t=0} Z(0)).
\lb{3.51}
\ee
By Strichartz estimates, it follows that $T_{V_2, 1}$ takes right-hand side terms in the Strichartz space and $L^2$ initial data, $F + \delta_{t=0} Z(0)$, to $L^2_t L^{2, 1}_x \subset L^2_{t, x}$. The convolution kernel $(I-T_{V_2, V_1})^{-1}$ then takes $L^2_{t, x}$ into itself again, while the last operator $T_{1, V_1}$ takes $L^2_{t, x} \subset L^2_t L^{2, \infty}_x$ into the dual Strichartz space.

With no further effort we obtain that
\be
\|Z\|_{L^1_t L^{6, \infty}_x} \leq C (\|Z(0)\|_{L^{6, 1}} + \|F\|_{L^1_t L^{6/5, 1}_x}).
\ee
This results from (\ref{3.51}) by the estimates we already have, in particular by Proposition \ref{prop_21}. Indeed, one has that $T_{V_2, 1}$ takes $L^1_t L^{6/5, 1}$ to $L^1_t L^2_x$, then by Theorem \ref{thm7} $(I-iT_{V_2,  V_1})^{-1}$ takes $L^1_t L^2_x$ into itself, and finally $T_{1, V_1}$ takes $L^1_t L^2_x$ to $L^1_t L^{6, \infty}$.

Concerning the $\dot H^{1/2}$ case, the conditions of Theorem \ref{thm7} are again satisfied. Indeed, assume that $V_1$ and $V_2$ are smooth and of compact support. One then has decay at infinity of the kernel, meaning
\be
\|V_2 e^{-it\Delta} V_1\|_{\dot H^{1/2} \to \dot H^{1/2}} \leq C \|e^{-it\Delta}\|_{\dot W^{1/2, 6/5-\epsilon} \to W^{1/2, 6+\epsilon}} \leq C |t|^{-1-\epsilon}
\ee
and
\be
\lim_{t \to \infty} \int_t^{\infty} |s|^{-1-\epsilon} \dd s= 0.
\ee
Furthermore,
\be\begin{aligned}
&\int_{-t}^t \|(V_2 e^{i(t+\epsilon)\mc H_0} V_1 - V_2 e^{it\mc H_0} V_1)f\|_{\dot H^{1/2}} \dd t \\
&\leq C \epsilon^{1/4} t^{1/2} \Big(\int_{-t}^t \|e^{-it\Delta} V_1 f\|_{\dot H^{1}_{loc}}^2 \dd t\Big)^{1/2}\\
&\leq C \epsilon^{1/4} t^{1/2} \|f\|_{\dot H^{1/2}}.
\end{aligned}\ee
Finally, we have to check that the kernel is integrable in time. Since $\mc H_0$ commutes with differentiation,
\be
\int_0^{\infty} \|e^{it \mc H_0} f\|_{|\dl|^{-1/2} L^{6, \infty}} \dd t \leq C \|f\|_{|\dl|^{-1/2} L^{6/5, 1}}.
\ee
In other words,
\be
TF(t) = \int_{-\infty}^t e^{i(t-s) \mc H_0} F(s) \dd s
\ee
takes $L^1_t |\dl|^{-1/2} L^{6/5, 1}_x$ into $L^1_t |\dl|^{-1/2} L^{6, \infty}_x$. Multiplying by $V_2$ on the left and $V_1$ on the right produces $T_{V_2, V_1}$, a $\dot H^{1/2}$ bounded operator.

Then, following approximation of $V_1$ and $V_2$ in $\dot W^{1/2, 2-\epsilon} \cap \dot W^{1/2, 2+\epsilon}$ by smooth, compactly supported functions, we obtain that Theorem \ref{thm7} applies. Since the $\dot H^{1/2}$ exceptional set is empty in the lower half-plane, we obtain that $I-i T_{V_1, V_2}$ is invertible from $L^2_t \dot H^{1/2}_x$ to itself.

By (\ref{3.51}) we then obtain Strichartz estimates for $\dot H^{1/2}$. Indeed, $T_{V_2, 1}$ takes $L^2_t \dot W^{1/2, 6/5}_x + L^1_t \dot H^{1/2}_x$ to $L^2_t \dot H^{1/2}_x$. Then, an application of $(I-i T_{V_1, V_2})^{-1}$ preserves $L^2_t \dot H^{1/2}_x$. Finally, $T_{1, V_1}$ takes $L^2_t \dot H^{1/2}_x$ to $L^2_t \dot W^{1/2, 6}_x$.
\end{proof}

\begin{observation} An easy proof can be given, if $V \in L^1 \cap L^{\infty}$ has an empty exceptional set, to $\langle t \rangle^{-3/2}$ decay estimates from $L^1 \cap L^2$ to $L^2 + L^{\infty}$. We can conduct the proof in the Beurling algebra of kernels (see Remark \ref{beurling})
\be
Be_{\infty, -3/2 s} = \{A \mid \|A(t)\|_{Be} = \sup_t (\langle t \rangle^{3/2} \|A(t)\|_{L^2 \to L^2}) < \infty\}. 
\ee
The proof follows that of Theorem \ref{theorem_25}, but is more straightforward.
\end{observation}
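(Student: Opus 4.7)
The plan is to mirror the proof of Theorem \ref{theorem_25}, replacing the Wiener algebra $K$ by the Beurling subalgebra $Be_{\infty, -3/2, s}$ of operator-valued kernels whose $L^2 \to L^2$ operator norm decays like $\langle t \rangle^{-3/2}$. First I would factor $V = V_1 V_2$ with $V_1 = |V|^{1/2}\sgn V$ and $V_2 = |V|^{1/2}$, so that $V_1, V_2 \in L^2 \cap L^{\infty}$, and examine the kernel $T_{V_2, V_1}(t) = V_2 e^{it \mc H_0} V_1$. Its $L^2 \to L^2$ norm is bounded by $\|V_1\|_\infty \|V_2\|_\infty$ via unitarity of $e^{it\mc H_0}$ for small $t$, and by $C \|V_1\|_2 \|V_2\|_2\, |t|^{-3/2}$ via the free $L^1 \to L^\infty$ decay estimate for large $t$; taking the minimum yields the required $\langle t \rangle^{-3/2}$ bound, so $T_{V_2, V_1} \in Be_{\infty, -3/2, s}$.

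Next I would apply Theorem \ref{thm7} to $I - i T_{V_2, V_1}$ inside the unital algebra generated by $Be_{\infty, -3/2, s}$ together with $I$. The main hypothesis, that the Fourier transform $I + V_2 R_0(\lambda) V_1$ be invertible at every real $\lambda$, holds precisely because $\mc E$ is assumed empty in the lower half-plane up to the boundary; the equicontinuity and decay-at-infinity conditions of Theorem \ref{thm7} follow from the same density argument used in the proof of Theorem \ref{theorem_25}, approximating $V_1, V_2$ by smooth compactly supported functions. Lemma \ref{lemma8} then ensures that $(I - iT_{V_2, V_1})^{-1}$ is itself upper triangular, since $I + V_2 R_0(\lambda) V_1$ extends analytically and nonvanishingly throughout the lower half-plane.

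With the inverse in hand, I would write the homogeneous evolution using the analogue of (\ref{3.51}),
\[
Z(t) = e^{it \mc H_0} Z(0) + T_{1, V_1}(I - i T_{V_2, V_1})^{-1} T_{V_2, 1}(\delta_{t=0} Z(0)),
\]
and estimate each piece for $Z(0) \in L^1 \cap L^2$. The first term has $\langle t \rangle^{-3/2}$ decay into $L^2 + L^\infty$ by combining $L^2$-unitarity with the free decay bound. The intermediate function $T_{V_2, 1}(\delta_{t=0} Z(0))(t) = V_2 e^{it \mc H_0} Z(0)$ satisfies the same $\langle t \rangle^{-3/2}$ bound in $L^2$ by exactly the split argument used to place $T_{V_2, V_1}$ in the Beurling algebra; convolving with the kernel of $(I - i T_{V_2, V_1})^{-1}$ preserves this class. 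Finally $T_{1, V_1}$ turns an $L^2$-valued, $\langle t \rangle^{-3/2}$-decaying input into an $(L^2 + L^\infty)$-valued output with the same decay, because $V_1 G(s)$ splits into an $L^1$ and an $L^2$ piece to which the free $L^1 \to L^\infty$ decay and free $L^2$-unitarity bounds respectively apply.

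The main obstacle I expect is the polynomial-decay convolution step: I must verify that the convolution of two $\langle t \rangle^{-3/2}$-decaying (in operator norm) kernels is again $\langle t \rangle^{-3/2}$. This reduces to the classical bound $\int \langle t - s \rangle^{-3/2} \langle s \rangle^{-3/2}\, \dd s \leq C \langle t \rangle^{-3/2}$, obtained by splitting into $|s| \leq |t|/2$ and $|s| \geq |t|/2$, and it confirms that $Be_{\infty, -3/2, s}$ is a genuine Banach convolution subalgebra despite the apparent mismatch with the general criterion (\ref{Eq3.16}). Once this is in place, every other step is bookkeeping that parallels the proof of Theorem \ref{theorem_25}, and the observation follows.
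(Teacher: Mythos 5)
Your proposal is correct and follows exactly the route the paper sketches for this remark: place $T_{V_2,V_1}$ in the strong Beurling algebra via the interpolation of $L^2$-unitarity with the free $t^{-3/2}$ dispersive bound, invert $I-iT_{V_2,V_1}$ there by Theorem \ref{thm7} (using the empty exceptional set for invertibility of the Fourier transform and Lemma \ref{lemma8} for upper-triangularity), and propagate the $\langle t\rangle^{-3/2}$ decay through the Duhamel identity (\ref{3.51}). Your direct verification of the convolution inequality $\int \langle t-s\rangle^{-3/2}\langle s\rangle^{-3/2}\,\dd s \leq C\langle t\rangle^{-3/2}$ is the right way to confirm the algebra property, which the general criterion (\ref{Eq3.16}) does not literally cover as stated.
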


\begin{theorem}\lb{theorem_26}
Let $Z$ be a solution of the linear Schr\"{o}dinger equation
\be
i \partial_t Z + \mc H Z = F,\ Z(0) \text{ given}.
\ee
Assume that $\mc H = \mc H_0 + V$, $V \in L^{3/2, 1}$ is scalar as in (\ref{eq_3.2}) or is matrix-valued as in (\ref{3.2}), and that no exceptional values of $\mc H$ are contained in $\sigma(\mc H_0)$. Then Strichartz estimates hold: for $t \geq 0$
\be\lb{3.137}
\|P_c Z\|_{L^{\infty}_t L^2_x \cap L^2_t L^{6, 2}_x} \leq C \Big(\|Z(0)\|_2 + \|F\|_{L^1_t L^2_x + L^2_t L^{6/5, 2}_x}\Big)
\ee
and
\be\lb{3.138}
\|P_c Z\|_{L^1_t L^{6, \infty}_x} \leq C \Big(\|Z(0)\|_{6/5, 1} + \|F\|_{L^1_t L^{6/5, 1}_x}\Big).
\ee
Assume that $V = V_1 V_2$ as in (\ref{3.5}) or (\ref{3.6}) and $V_1$, $V_2 \in \dot W^{1/2, 2-\epsilon} \cap \dot W^{1/2, 2+\epsilon}$. Then
\be
\|P_c Z\|_{L^{\infty}_t \dot H^{1/2}_x \cap L^2_t \dot W^{1/2, 6}_x} \leq C \Big(\|Z(0)\|_{\dot H^{1/2}} + \|F\|_{L^1_t \dot H^{1/2}_x + L^2_t \dot W^{1/2, 6/5}_x}\Big).
\ee
\end{theorem}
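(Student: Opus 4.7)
The plan is to reduce Theorem \ref{theorem_26} to an application of Theorem \ref{thm7} in essentially the same manner as Theorem \ref{theorem_25}, but with the factorization $V = V_1 V_2$ replaced by the more subtle factorization $V - P_p \mc H = F_1(V) F_2(V)$ supplied by Lemma \ref{lemma32}. The point is that once we project onto the continuous spectrum, the point-spectrum contributions disappear exactly, and we are left with an equation in which the Fourier transform of the associated Birman--Schwinger kernel is invertible on the closed lower half-plane.

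More precisely, first I would verify that $W := P_c Z$ solves
\be\nonumber
i \partial_t W + \mc H_0 W + F_1(V) F_2(V) W = P_c F,
\ee
with initial data $P_c Z(0)$. This uses that $P_c$ commutes with $\mc H$ and with $e^{it \mc H}$ (Lemma \ref{lemma_28}), and that $P_p$ is a Riesz projection so that $\mc H P_p = P_p \mc H$; consequently $P_p \mc H P_c = \mc H P_p P_c = 0$, which gives the key identity $V P_c = (V - P_p \mc H) P_c = F_1(V) F_2(V) P_c$. Duhamel then yields
\be\nonumber
W(t) = e^{it \mc H_0} W(0) - i \int_0^t e^{i(t-s) \mc H_0} \bigl(P_c F - F_1(V) F_2(V) W\bigr)(s) \dd s.
\ee

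Next I would apply $F_2(V)$ on the left and read the resulting identity as $(I - i T_{F_2(V), F_1(V)}) F_2(V) W = F_2(V) \cdot [\text{free part}]$, where $T_{F_2, F_1}$ is the convolution kernel $(T_{F_2, F_1} G)(t) = \int_{-\infty}^t F_2(V) e^{i(t-s)\mc H_0} F_1(V) G(s) \dd s$. By Proposition \ref{prop_21} together with the mapping properties $F_2(V): L^{6, \infty} \to L^2$ and $F_1(V): L^2 \to L^{6/5, 1}$ from Lemma \ref{lemma32}, this operator lies in the Wiener algebra $K$ of Definition \ref{def_k}. To invoke Theorem \ref{thm7} I must check (i) Fourier invertibility, which is exactly the conclusion of Lemma \ref{lemma32} (no exceptional values of $\mc H$ are embedded in $\sigma(\mc H_0)$), and (ii) equicontinuity in $t$ and norm decay at infinity. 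Both of the latter reduce, by the approximability of $F_1(V)$ and $F_2(V)^*$ in $\mc L(L^2, L^{6/5, 1})$ by operators mapping into $\langle x \rangle^{-N} L^2$, to the corresponding estimates already carried out in Theorem \ref{theorem_25} for smooth compactly supported weights.

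Inverting $I - i T_{F_2(V), F_1(V)}$ within $K$ solves for $F_2(V) W$ in $L^2_{t,x}$, and plugging back into the Duhamel formula expresses $W$ entirely in terms of the free evolution acting on $P_c Z(0)$, $P_c F$, and $F_1(V) \cdot [\text{bounded}]$. The Keel--Tao endpoint Strichartz estimate for $e^{it \mc H_0}$, combined with the mapping $F_1(V): L^2 \to L^{6/5, 1} \hookrightarrow L^{6/5, 2}$, then yields (\ref{3.137}); the estimate (\ref{3.138}) follows by the same mechanism using the $L^1_t L^{6, \infty}_x$ bound from Proposition \ref{prop_21}. Throughout we use that $P_c = I - P_p$ is bounded on every Strichartz space encountered, as noted after Lemma \ref{lemma_28}, so that $\|P_c F\|$ and $\|P_c Z(0)\|$ are controlled by $\|F\|$ and $\|Z(0)\|$ in the relevant norms. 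For the $\dot H^{1/2}$ estimate, I would repeat verbatim the same argument using the second half of Lemma \ref{lemma32}, which provides $F_1(V) \in \mc L(\dot H^{1/2}, |\dl|^{-1/2} L^{6/5, 1})$ and $F_2(V) \in \mc L(|\dl|^{-1/2} L^{6, \infty}, \dot H^{1/2})$ together with a suitable approximation, matching the $\dot H^{1/2}$-level computation carried out at the end of the proof of Theorem \ref{theorem_25}.

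The main obstacle is the verification of equicontinuity and decay for $T_{F_2(V), F_1(V)}$: unlike the factorization $V = |V|^{1/2} \sgn V \cdot |V|^{1/2}$, the operators $F_1(V), F_2(V)$ are not multiplication operators but include the finite-rank perturbation coming from $P_p \mc H$. The saving grace is the approximation statement in Lemma \ref{lemma32}, which reduces the verification to the pointwise-in-time estimate $\|V_2 (e^{i(t+\epsilon)\mc H_0} - e^{it\mc H_0}) V_1 f\|_2 \lesssim \epsilon^{1/4} \|e^{it \mc H_0} V_1 f\|_{\dot H^{1/2}_{\mathrm{loc}}}$ used in Theorem \ref{theorem_25}, now applied to the approximants mapping into $\langle x \rangle^{-N} L^2$.
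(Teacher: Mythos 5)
Your overall strategy is the paper's: project onto the continuous spectrum, replace $V$ by the factorized operator $V - P_p \mc H = F_1(V) F_2(V)$ from Lemma \ref{lemma32}, set up the Wiener-algebra equation $(I - iT_{F_2(V), F_1(V)}) F_2(V) P_c Z = [\text{free part}]$, invert via Theorem \ref{thm7}, and recover the estimates from Duhamel, Keel--Tao, and Proposition \ref{prop_21}. The algebraic reduction via $V P_c = (V - P_p\mc H)P_c$ is a legitimate rephrasing of the paper's identity $P_c \mc H = \mc H_0 + V - P_p \mc H$ acting on the range of $P_c$.

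There is, however, one genuine gap: you invoke Lemma \ref{lemma32} citing only the hypothesis ``no exceptional values of $\mc H$ embedded in $\sigma(\mc H_0)$,'' but the lemma additionally requires that \emph{zero} not be an exceptional value of $\mc H$. This extra condition is not implied by the hypotheses of Theorem \ref{theorem_26}: zero may well be an eigenvalue of $\mc H$ lying outside $\sigma(\mc H_0)$ (in the matrix case $\sigma(\mc H_0) = (-\infty,-\mu]\cup[\mu,\infty)$ with $\mu>0$), and indeed in the main nonlinear application the linearized Hamiltonian has a generalized eigenspace at $0 \notin (-\infty,-1]\cup[1,\infty)$. The reason the condition is needed is visible at the end of the proof of Lemma \ref{lemma32}: a nonzero $f$ with $f = -F_2(V) R_0(\lambda) F_1(V) f$ forces $\mc H P_c g = \lambda g$ with $P_c g = 0$, hence $\lambda g = 0$ --- which yields a contradiction only when $\lambda \ne 0$. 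So if $0$ is an eigenvalue of $\mc H$, the Fourier transform $I - i\widehat{T}_{F_2(V),F_1(V)}(0)$ can fail to be invertible and the Wiener inversion breaks down at that frequency. The paper's proof handles this by first conjugating with $e^{it\lambda_0}$, i.e.\ replacing $\mc H$, $\mc H_0$ by $\mc H + \lambda_0$, $\mc H_0 + \lambda_0$ for a suitable real $\lambda_0$ chosen so that the translated operator has no exceptional value at zero; the projections and Strichartz norms are unaffected by this unimodular time-dependent phase. Your argument needs this shift (or an equivalent device) inserted before Lemma \ref{lemma32} is applied; with it, the rest of your outline matches the paper's proof.
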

\begin{proof}



Fix $\lambda_0 \in \set R$ and let
\be
\tilde Z = e^{it\lambda_0} P_c Z,\ \tilde F = e^{it\lambda_0} P_c F,\ \tilde {\mc H} = \mc H + \lambda_0,\ \tilde {\mc H}_0 = \mc H_0 + \lambda_0.
\ee
The equation for $\tilde Z$ becomes
\be
i \partial_t \tilde Z + \tilde{\mc H} P_c \tilde Z = \tilde F,\ \tilde Z(0) = P_c Z(0) \text{ given}.
\ee
Write, initially for $F \in L^1_t L^2_x$,
\be
\tilde Z(t) = \int_{-\infty}^t e^{i(t-s) (\mc H + \lambda_0) P_c} (-i\tilde F(s) + \delta_{s=0} \tilde Z(0)) \dd s.
\ee
It follows by our estimates of Lemma \ref{lemma_28} that
\be
\|\tilde Z\|_{L^{\infty}_t L^2_x} \leq C (\|\tilde Z(0)\|_2 + \|\tilde F\|_{L^1_t L^2_x}) \leq C (\|\tilde Z(0)\|_2 + \|\tilde F\|_{L^1_t L^2_x}).
\ee
To begin with, assume the potential $V$ is bounded of compact support. 
Consider the following arrangement:
\be
P_c \tilde{\mc H} = \tilde{\mc H}_0 + V - P_p \tilde{\mc H}.
\ee
The spectrum and exceptional set of $\tilde{\mc H}$ are the same as those of $\mc H$, only translated by $\lambda_0$. For a proper choice of $\lambda_0$, we can always ensure that $\tilde{\mc H}$ does not have an exceptional value at zero.

For $V \in L^{3/2, 1}$, Lemma \ref{lemma32} provides a multiplicative decomposition
\be
V - P_p \tilde{\mc H} = F_1(V) F_2(V),
\ee
such that both $F_1(V)$ and $F_2^*(V)$ are bounded operators from $L^2$ to $L^{6/5, 1}$.
Let
\be
(T_{F_2(V), F_1(V)} F)(t) = \int_{-\infty}^t F_2(V) e^{i(t-s)\tilde{\mc H}_0} F_1(V) F(s) \dd s
\ee
and
\be
(T_{F_2(V), 1} F)(t) = \int_{-\infty}^t F_2(V) e^{i(t-s)\tilde{\mc H}_0} F(s) \dd s.
\ee

Duhamel's formula implies that
\be
F_2(V) \tilde Z = T_{F_2(V), 1} (-i\tilde F + \delta_{t=0} \tilde Z(0)) + i T_{F_2(V), F_1(V)} (F_2(V) \tilde Z)
\ee
and, further,
%
\be\lb{3.143}
\big(I - i T_{F_2(V), F_1(V)}\big) F_2(V) \tilde Z = T_{F_2(V), 1} (-i\tilde F + \delta_{t=0} \tilde Z(0)).
\ee

By Lemma \ref{lemma32}, the Fourier transform of $I - i T_{F_2(V), F_1(V)}$ is invertible in the lower half-plane. One can check the other requirements of Theorem \ref{thm7} in the same manner as in the proof of Theorem \ref{theorem_25}. Then we apply Theorem \ref{thm7} and conclude that
\be
\big(I - i T_{F_2(V), F_1(V)}\big)^{-1}
\ee
exists as a bounded operator from $L^p_t L^2_x$ to itself for $1 \leq p \leq \infty$.

Thus we can invert in (\ref{3.143}), obtaining
\be\lb{3.134}
F_2(V) P_c Z = \big(I - i T_{F_2(V), F_1(V)}\big)^{-1} T_{F_2(V), 1} (-i\tilde F + \delta_{t=0} \tilde Z(0)).
\ee
Strichartz estimates for the free Schr\"{o}dinger evolution imply, then, that
\be
\|V_2 P_c Z\|_{L^2_t L^2_x} \leq C \Big(\|Z(0)\|_2 + \|F\|_{L^1_t L^2_x + L^2_t L^{6/5, 2}_x}\Big).
\ee


Applying the Duhamel formula again results in the Strichartz estimate (\ref{3.137}). Proposition \ref{prop_21} combined with (\ref{3.143}) leads instead to a proof of (\ref{3.138}).
%
\end{proof}


\section{The time-dependent case}\lb{sec3.7}
We turn to time-dependent equations. Many of the previous observations, concerning the Fourier transform and Wiener's Theorem, apply here as well. However, the Fourier transform of a kernel $T(t, s)$ which is not invariant under time translation is no longer a multiplier $\widehat T(\lambda):H \to H$ for each $\lambda$; it is a family of non-local operators instead. Such a generalization was studied by Howland \cite{how}.

We shall not follow this direction in the current work. Instead, we only look at small perturbations of time-independent operators.

We prove a concrete result in the case of interest, while at the same time placing it within a more general framework.

Given parameters $A(t)$ and $v(t) = (v_1(t), v_2(t), v_3(t))$, consider the family of isometries
\be\lb{3.155}
U(t) =  e^{\textstyle\int_0^t (2v(s) \dl + i A(s) \sigma_3) \dd s}.
\ee
The rate of change of $U(t)$ is then controlled by the norm
\be
\|A(t)\|_{L^{\infty}_t} + \|v(t)\|_{L^{\infty}_t}.
\ee
The linearized Schr\"{o}dinger equation under study has the form
\be\lb{3.157}
i \partial_t R(t) + (\mc H_0 + U(t)^{-1} V U(t)) R(t) = F(t),\ R(0) \text{ given}.
\ee
Observe that the Hamiltonian at time $t$ is $U(t)^{-1} (\mc H_0 + V) U(t)$, since $\mc H$ and $U$ commute, i.e.\ it is $\mc H_0 + V = \mc H$ conjugated by $U(t)$.

Let $Z(t) = U(t) R(t)$. We rewrite the equation in the new variable $Z$, obtaining
\be\lb{3.158}
i \partial_t Z(t) - i \partial_t U(t) U(t)^{-1} Z(t) + \mc H_0 Z(t) + V Z(t) = U(t) F(t),\ Z(0) = R(0).
\ee

In our study of the time-dependent equations (\ref{3.157}) and (\ref{3.158}), we employ the following algebras, whose definitions parallel Definition \ref{def_k}.
\begin{definition}\lb{def_kt} The algebras $\tilde K_{1su}$ and $\tilde K$ are
\be\begin{aligned}
\tilde K_{1su} = \Big\{T(t, s) \mid &\sup_s \int_{\set R} \|T(t, s)\|_{2 \to 2} \dd t \leq C, \\
&\sup_t \int_{\set R} \|T(t, s)\|_{2 \to 2} \dd s \leq C\Big\} \oplus \{z I \mid z \in \set C\}.\\
\tilde K = \{T(t, s) \mid &\sup_s \|T(t, s) f\|_{M_t L^2_x} \leq C \|f\|_2,\\
&\sup_s \|T^*(t, s) f\|_{M_t L^2_x} \dd s \leq C\|f\|_{2}\}.
\end{aligned}\ee
\end{definition}
This expands the definition of $K_{1su}$, from Remark \ref{beurling}, by allowing operators that are not trans\-la\-tion-invariant.

Either one of the two conditions alone would be enough to define an algebra, but having both satisfied makes $\tilde K_{1su}$ a $C^*$ algebra. The addition of the unit turns it into a unital algebra.

$\tilde K$ also contains operators that are not invariant under translation, is naturally endowed with a unit, and is a $C^*$ algebra.

In addition to these two algebras, when proving $\dot H^{1/2}$ estimates we employ the algebras $|\dl|^{-1/2} \tilde K_{1su} |\dl|^{1/2}$ and $|\dl|^{-1/2} \tilde K |\dl|^{1/2}$, for which a specific notation will not be assigned.

In the subsequent lemma we encapsulate all the properties of $U$ that we actually use in our study of (\ref{3.157}) and (\ref{3.158}).

\begin{lemma}\lb{lem_32} Let $U(t)$ be defined by (\ref{3.155}).
\begin{enumerate}
\item $U(t)$ is a strongly continuous family of $\dot W^{s, p}$ isometries, for $s \in \set R$, $1 \leq p < \infty$.
\item For every $t$ and $s \geq 0$, $U(t)$ and $U(s)$ commute with $\mc H_0$ and each other.
\item For some $N$
\be
\|\langle x \rangle^{-N} (U(t, s) e^{i(t-s) \mc H_0} - e^{i(t-s) \mc H_0}) \langle x \rangle^{-N}\|_{\tilde K_{1su}} \leq C
\ee
and there exists $\epsilon(N)>0$ such that
\be\lb{3.163}\begin{aligned}
\|\langle x \rangle^{-N} (U(t, s) e^{i(t-s) \mc H_0} - e^{i(t-s) \mc H_0}) \langle x \rangle^{-N}\|_{\tilde K_{1su}} \leq \\
\leq C (\|A(t)\|_{L^{\infty}_t} + \|v(t)\|_{L^{\infty}_t})^{\epsilon(N)}.
\end{aligned}\ee
\end{enumerate}
\end{lemma}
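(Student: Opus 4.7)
\emph{Plan.} The first two items rely on unpacking $U(t)$ explicitly. Because $\sigma_3$ is a constant matrix while $\dl$ is scalar, the generators $2v(s)\cdot\dl$ and $iA(s)\sigma_3$ commute at every $s$, so the time-ordered exponential collapses to a product: $U(t)=T_{2V(t)}\,M_{a(t)}$, where $V(t)=\int_0^t v$, $a(t)=\int_0^t A$, $T_h f(x)=f(x+h)$ is translation, and $M_a=e^{ia\sigma_3}$ is multiplication by a unitary constant matrix. Both factors are isometries on each $\dot W^{s,p}_x$ for $p<\infty$, and strong continuity in $t$ follows from the $L^p$-strong continuity of translation together with the continuity of $V,a$ as functions of $t$, which establishes (1). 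Since $\mc H_0$ is translation invariant and diagonal, it commutes with each $T_{2V(t)}$ and each $M_{a(t)}$; the $T_\bullet$ and $M_\bullet$ factors also commute pairwise, yielding (2).

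\emph{Setup for (3).} Set $U(t,s)=U(t)U(s)^{-1}=T_h M_\alpha$ with $h=2(V(t)-V(s))$, $\alpha=a(t)-a(s)$, so $|h|\le2\|v\|_\infty|t-s|$ and $|\alpha|\le\|A\|_\infty|t-s|$. In the scalar component of $\mc H_0$ the kernel of $\langle x\rangle^{-N}e^{i(t-s)\mc H_0}\langle x\rangle^{-N}$ is $c|t-s|^{-3/2}\langle x\rangle^{-N}\langle y\rangle^{-N}e^{i|x-y|^2/(4(t-s))}$, and the $U(t,s)$-conjugated kernel replaces $|x-y|^2$ by $|x+h-y|^2$ and multiplies by $e^{i\alpha\sigma_3}$. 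Their difference has pointwise size
\[
c|t-s|^{-3/2}\langle x\rangle^{-N}\langle y\rangle^{-N}\bigl|e^{i(\alpha+\gamma(x,y))\sigma_3}-1\bigr|,\qquad \gamma(x,y)=\frac{2h\cdot(x-y)+|h|^2}{4(t-s)}.
\]
The Yukawa component in the matrix case (\ref{3.2}) is handled by the same argument, using exponential decay in $|x-y|$ instead of the oscillatory Gaussian.

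\emph{Finiteness and the smoothness bound.} For $2N>3$ one has $\langle x\rangle^{-N}\in L^2$, and a direct Hilbert--Schmidt computation gives $\|\langle x\rangle^{-N}e^{i\tau\mc H_0}\langle x\rangle^{-N}\|_{L^2\to L^2}\le C\min(1,|\tau|^{-3/2})$, which is integrable over $\tau\in\mathbb R$; using $\langle x\rangle^{-N}\lesssim\langle h\rangle^N\langle x+h\rangle^{-N}$ (absorbed into a slightly smaller $N'$) the same estimate transfers to the $U$-conjugated kernel, which together with $\sup_s$ (resp.\ $\sup_t$) yields the first part of (3). For the power improvement, bound $|e^{i(\alpha+\gamma)}-1|\le\min(|\alpha|+|\gamma|,2)$ and $|\gamma|\le\|v\|_\infty|x-y|+\|v\|_\infty^2|t-s|$; since $\int\!\!\int\langle x\rangle^{-2N}\langle y\rangle^{-2N}|x-y|^{2\theta}\,dx\,dy<\infty$ for $N$ large, a Hilbert--Schmidt estimate yields, for any $\theta\in(0,1)$,
\[
\|K_{\text{diff}}(t,s)\|_{L^2\to L^2}\le C|t-s|^{-3/2}\bigl((\|A\|_\infty+\|v\|_\infty)^\theta|t-s|^\theta+\|v\|_\infty^\theta\bigr).
\]

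\emph{Obstacle and conclusion.} This smoothness-based bound is non-integrable near $\tau=0$ because $L^2$ input carries no intrinsic regularity to exploit; this is the main technical obstacle. I would resolve it by taking the min with the trivial bound $\le 2$ (from $\|U(t,s)-I\|_{L^2\to L^2}\le 2$ and $\|\langle x\rangle^{-N}\|_\infty=1$), splitting the $\tau$-integral at the crossover $|\tau|_\ast\sim(\|A\|_\infty+\|v\|_\infty)^{a(\theta)}$ chosen to equalize the two regimes, and then optimizing in $\theta$; an explicit choice (e.g.\ $\theta=1/4$) yields $\int\|K_{\text{diff}}(\cdot,s)\|_{L^2\to L^2}\,dt\le C(\|A\|_\infty+\|v\|_\infty)^{\epsilon(N)}$ with an explicit $\epsilon(N)>0$. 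Exchanging the roles of $t$ and $s$ (by the symmetry of the HS norm) gives the other half of the $\tilde K_{1su}$ seminorm, completing (\ref{3.163}).
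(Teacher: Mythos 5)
Your proof follows essentially the same route as the paper's: write out the explicit kernel of the translated and phase-shifted free propagator, bound the phase discrepancy by $\min(|\alpha|+|\gamma|,2)$ with $|\gamma|\lesssim\|v\|_\infty|x-y|+\|v\|_\infty^2|t-s|$ absorbed by the weights, and trade the resulting smallness bound off against the uniform $|t-s|^{-3/2}$ decay by splitting the time integral at an $\eta$-dependent threshold. The paper does exactly this (it treats the factors $e^{i\alpha\sigma_3}$, $e^{i(d(t)-d(s))^2/(t-s)}$ and $e^{i(x-y)(d(t)-d(s))/(t-s)}$ one at a time, and phrases the kernel bounds as interpolation between the $L^1\to L^\infty$ dispersive estimate and the $L^2\to L^2$ bound rather than as Hilbert--Schmidt estimates), so the approaches are equivalent. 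Two small corrections. First, the Peetre step $\langle x\rangle^{-N}\lesssim\langle h\rangle^{N}\langle x+h\rangle^{-N}$ cannot be ``absorbed into a slightly smaller $N'$'': the factor $\langle h\rangle^{N}\sim(1+\|v\|_\infty|t-s|)^{N}$ is a $(t,s)$-dependent constant, not a spatial weight, and it would destroy integrability in $t-s$. The step is, however, unnecessary: the uniform bound $C\min(1,|t-s|^{-3/2})$ for the conjugated kernel follows from your own difference formula together with $|e^{iz}-1|\le 2$, or from the $1\to\infty$ dispersive bound (translation being an isometry of $L^\infty$). Second, there is no ``Yukawa component'' in the propagator for the matrix case: since $\mc H_0=\mathrm{diag}(\Delta-\mu,-\Delta+\mu)$, one has $e^{it\mc H_0}=\mathrm{diag}(e^{-it\mu}e^{it\Delta},\,e^{it\mu}e^{-it\Delta})$, so both diagonal entries are oscillatory Gaussians; the exponentially decaying kernel belongs to the resolvent $R_0$, not to the time evolution. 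The matrix case is therefore handled verbatim as the scalar one, and your conclusion stands.
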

These properties can be further refined. More specifically, we may replace  $\langle x\rangle^{-N} L^2$ by $L^{6/5-\epsilon} \cap L^{6/5+\epsilon}$ and by $\langle x \rangle^{-\epsilon} (L^{6/5-\epsilon} \cap L^{6/5+\epsilon})$, respectively. However, this sharpening is unnecessary.
\begin{proof} It is easy to verify the first two properties directly. In particular, the Laplace operator $\Delta$ commutes with translations.

Concerning the last two properties, we compare
\be\begin{aligned}
T(t, s) = e^{i(t-s) \mc H_0}
\end{aligned}\ee
and
\be\begin{aligned}
\tilde T(t, s) = e^{i(t-s) \mc H_0} e^{\int_s^t (2v(\tau) \dl + i A(\tau) \sigma_3) \dd \tau}.
\end{aligned}\ee

On one hand, from dispersive estimates,
\be\lb{3.167}
\|T(t, s) - \tilde T(t, s)\|_{1 \to \infty} \leq C |t-s|^{-3/2}.
\ee
On the other hand,
\be
\|T(t, s) - \tilde T(t, s)\|_{2 \to 2} \leq C.
\ee
It follows that for $N>1$
\be
\|\langle x \rangle^{-N} (T - \tilde T) \langle x \rangle^{-N}\|_{\tilde K_{1su}} < C.
\ee
This holds with a constant independent of $A$ and $v$.

Finally, we prove the last stated property. Assume $A$ and $v$ are small. We first consider the case when there is no translation movement due to $v$ and we only have to handle oscillation, due to $A$. Denote this kernel by $\tilde T_{osc}$, that is
\be\begin{aligned}
\tilde T_{osc}(t, s) &= e^{i(t-s) \mc H_0} e^{\int_s^t (i A(\tau) \sigma_3) \dd \tau}.
\end{aligned}\ee
One has
\be
e^{ia} -1 \leq C \min(1, a)
\ee
and thus
\be\lb{3.172}
\|T(t, s) - \tilde T_{osc}(t, s)\|_{2 \to 2} \leq C \min(1, \|A\|_{\infty} |t-s|) \leq C \|A\|_{\infty}^{\epsilon} |t-s|^{\epsilon}.
\ee
Therefore, for large enough $N$, it follows from (\ref{3.167}) and from (\ref{3.172}) that
\be
\|\langle x \rangle^{-N} (T - \tilde T_{osc}) \langle x \rangle^{-N}\|_{\tilde K_{1su}} \leq C \|A\|_{\infty}^{\epsilon/3}.
\ee

Next, we consider the case when $v$ is not necessarily zero. Let $d(t) = \int_0^t v(\tau) \dd \tau$. Then
\be\begin{aligned}
&e^{-i(t-s) \Delta} e^{(\int_s^t 2v(\tau) \dd \tau) \dl} = \\
&\textstyle= \frac 1 {(-4\pi i)^{3/2}}(t-s)^{-3/2} e^{i\big(\frac{|x-y|^2}{4(t-s)} - \frac{(x-y) (d(t) - d(s))}{t-s} + \frac {(d(t)-d(s))^2}{t-s}\big)}.
\end{aligned}\ee

We treat the last factor $e^{i \frac {(d(t)-d(s))^2}{t-s}}$ in the same manner in which we treated the factors containing $A$. Consider the kernel that contains those factors together with $e^{i \frac {(d(t)-d(s))^2}{t-s}}$, leaving aside $e^{\frac{(x-y) (d(t) - d(s))}{t-s}}$:
\be\begin{aligned}
\tilde T_1(t, s) &= e^{i(t-s) \mc H_0} e^{\int_s^t i(A(\tau) \sigma_3) \dd \tau} e^{i \frac {(d(t)-d(s))^2}{t-s}}.
\end{aligned}\ee
For this kernel as well, one has
\be\begin{aligned}
\|\langle x \rangle^{-N} (T - \tilde T_1) \langle x \rangle^{-N}\|_{\tilde K_{1su}} \leq C (\|A\|_{\infty}^{\epsilon/3} + \|v\|_{\infty}^{\epsilon/3}).
\end{aligned}\ee
Considering the fact that
\be
\Big|e^{i \frac{(x-y) (d(t) - d(s))}{t-s}} -1\Big| \leq C \min(1, \|v\|_{\infty} (|x| + |y|) \leq C \|v\|_{\infty}^{\epsilon} (|x| + |y|)^{\epsilon},
\ee
it follows that for large enough $N$
\be\begin{aligned}
\|\langle x \rangle^{-N} (\tilde T(t, s) - \tilde T_1(t, s)) \langle x \rangle^{-N}\|_{2 \to 2} \leq C \|v\|_{\infty}^{\epsilon} |t-s|^{-3/2}.
\end{aligned}\ee
We also have the trivial bound
\be\begin{aligned}
\|\tilde T(t, s) - \tilde T_1(t, s)\|_{2 \to 2}\leq C.
\end{aligned}\ee
Therefore
\be
\|\langle x \rangle^{-N} (\tilde T - \tilde T_1) \langle x \rangle^{-N}\|_{\tilde K_{1su}} \leq C \|v\|_{\infty}^{\epsilon/3}.
\ee
Overall, we find that
\be
\|\langle x \rangle^{-N} (\tilde T - T) \langle x \rangle^{-N}\|_{\tilde K_{1su}} \leq C (\|A\|_{\infty}^{\epsilon/3} + \|v\|_{\infty}^{\epsilon/3}).
\ee
\end{proof}
\begin{theorem}\lb{theorem_13}
Consider equation (\ref{3.157}), for $\mc H = \mc H_0 + V$ as in (\ref{3.2}) and $V \in L^{3/2, 1}$ not necessarily real-valued:
\be
i \partial_t Z - i v(t) \dl Z + A(t) \sigma_3 Z + \mc H Z = F,\ Z(0) \text{ given},
\ee
\be
\mc H = \bpm \Delta - \mu & 0 \\ 0 & -\Delta + \mu \epm + \bpm |W_1| & W_2 \\ -\ov W_2 & |W_1| \epm,
\ee
and assume that $\|A\|_{\infty}$ and $\|v\|_{\infty}$ are sufficiently small (in a manner that depends on $V$) and there are no exceptional values of $\mc H$ embedded in $\sigma(\mc H_0)$. Then
\be
\|P_c Z\|_{L^{\infty}_t L^2_x \cap L^2_t L^{6, 2}_x} \leq C \Big(\|Z(0)\|_2 + \|F\|_{L^1_t L^2_x + L^2_t L^{6/5, 2}_x}\Big).
\ee
Under the same conditions, assume in addition that, when factoring $V = V_1 V_2$ as in (\ref{3.6}), $V_1$, $V_2 \in \dot W^{1/2, 2-\epsilon} \cap \dot W^{1/2, 2+\epsilon}$. Then
\be
\|P_c Z\|_{L^{\infty}_t \dot H^{1/2}_x \cap L^2_t \dot W^{1/2, 6}_x} \leq C \Big(\|Z(0)\|_{\dot H^{1/2}} + \|F\|_{L^1_t \dot H^{1/2}_x + L^2_t \dot W^{1/2, 6/5}_x}\Big).
\ee
\end{theorem}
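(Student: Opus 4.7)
The strategy is to absorb the first-order time-dependent terms $-iv(t)\dl + A(t)\sigma_3$ into the framework of Section \ref{sect_3.5} by conjugating with the family of isometries $U(t)$ of (\ref{3.155}), which is tailored precisely to these terms. As in the proof of Theorem \ref{theorem_26}, I first spectrally shift $\mc H$ so that zero is not exceptional, factor $V - P_p \mc H = F_1(V)F_2(V)$ via Lemma \ref{lemma32}, and then rewrite Duhamel's formula with this factorization. The main difference from the stationary case is that the relevant operator kernel is no longer translation-invariant: it is a non-stationary kernel $\tilde T_{F_2(V), F_1(V)}(t,s)$ obtained by inserting factors of $U(t)U(s)^{-1}$ between the free propagator and the weights, and belongs naturally to the algebra $\tilde K_{1su}$ of Definition \ref{def_kt}.

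The key step is to write
\be\nonumber
\tilde T_{F_2(V), F_1(V)} = T_{F_2(V), F_1(V)} + \big(\tilde T_{F_2(V), F_1(V)} - T_{F_2(V), F_1(V)}\big)
\ee
and to treat the second summand as a small perturbation. By the approximation statement in Lemma \ref{lemma32}, $F_1(V)$ and $F_2(V)^*$ may be approximated in the appropriate operator norms by operators bounded into $\langle x\rangle^{-N} L^2$ for arbitrary $N$. For such weighted approximants, Lemma \ref{lem_32}(3) gives the bound
\be\nonumber
\|\tilde T_{F_2(V), F_1(V)} - T_{F_2(V), F_1(V)}\|_{\tilde K_{1su}} \leq C(\|A\|_\infty + \|v\|_\infty)^{\epsilon(N)},
\ee
which is arbitrarily small under the hypothesis of the theorem. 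Since the invertibility of $I - i T_{F_2(V), F_1(V)}$ in $K \subset \tilde K_{1su}$ was already established in the proof of Theorem \ref{theorem_26} via the abstract Wiener Theorem \ref{thm7}, a Neumann series argument yields $(I - i\tilde T_{F_2(V), F_1(V)})^{-1}$ in $\tilde K_{1su}$ once $\|A\|_\infty$ and $\|v\|_\infty$ are small enough relative to the inverse's norm in $K$.

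With this inversion in hand, the Strichartz estimate is extracted exactly as in Theorem \ref{theorem_26}: one solves for $F_2(V) P_c Z$ from the $\tilde K_{1su}$-inverted Duhamel identity, and then reconstructs $P_c Z$ by a final application of Duhamel's formula combined with the free-evolution Strichartz estimates of Proposition \ref{prop_21} and the $L^2$-boundedness of $I - P_c$ from Lemma \ref{lemma_28}. The $\dot H^{1/2}$ version is obtained by running the identical scheme in the conjugated algebra $|\dl|^{-1/2} \tilde K_{1su} |\dl|^{1/2}$; the sharper regularity assumption $V_1, V_2 \in \dot W^{1/2, 2-\epsilon} \cap \dot W^{1/2, 2+\epsilon}$ is exactly what Lemma \ref{lemma32} needs to guarantee that $F_1(V)$ and $F_2(V)$ act boundedly between the appropriate half-derivative spaces, and Lemma \ref{lem_32} adapts without change because $\mc H_0$ and $U(t)$ commute with $|\dl|^{1/2}$.

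The main obstacle is verifying that the $\tilde K_{1su}$-smallness of $\tilde T - T$ is actually available in a sufficiently strong sense to invert: this forces one to couple the weighted approximation from Lemma \ref{lemma32} with the decay-plus-equicontinuity bound (\ref{3.163}), and to be careful that the resulting Wiener inversion in the non-translation-invariant algebra $\tilde K_{1su}$ still produces a kernel whose action on right-hand sides in $L^1_t L^2_x + L^2_t L^{6/5,2}_x$ lands in the Strichartz space. A secondary subtlety is that $P_c$ is the spectral projection for the stationary operator $\mc H$ while the Hamiltonian in (\ref{3.157}) is time-dependent, so one must argue that the off-diagonal commutator $[U(t), V]$ can be absorbed into the small perturbation; this is exactly what the smallness of $\|A\|_\infty$ and $\|v\|_\infty$ in Lemma \ref{lem_32}(3) is designed to deliver.
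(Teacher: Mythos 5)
Your proposal follows essentially the same route as the paper: spectral shift so that zero is not exceptional, the factorization $V - P_p\tilde{\mc H} = F_1(V)F_2(V)$ from Lemma \ref{lemma32}, comparison of the non-stationary kernel $\tilde T_{F_2(V),F_1(V)}$ with its translation-invariant counterpart via the weighted approximants and Lemma \ref{lem_32}(3), and a Neumann-series inversion in $\tilde K$ followed by the Duhamel reconstruction, with the $\dot H^{1/2}$ case run in the conjugated algebra. This matches the paper's argument, including the treatment of the commutator terms involving $P_c$ as a small absorbable perturbation.
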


\begin{proof} To begin with, we assume that $V$ is smooth of compact support.
This guarantees the existence of a solution $R$, albeit one that may grow exponentially. Once we have obtained the desired result under this restriction, we use approximation and Strichartz estimates to retrieve it in the general case as well.

As in the time-independent case, let
\be\begin{aligned}
&\tilde Z = e^{it\lambda_0} P_c Z,\ \tilde F = e^{it\lambda_0} P_c F - 2iv(t)[P_c, \dl] \tilde Z + A(t) [P_c, \sigma_3] \tilde Z,\\
&\tilde {\mc H} = \mc H + \lambda_0,\ \tilde {\mc H}_0 = \mc H_0 + \lambda_0.
\end{aligned}\ee
The equation becomes
\be\begin{aligned}
&i \partial_t \tilde Z - i v(t) \dl \tilde Z + A(t) \sigma_3 \tilde Z + \tilde {\mc H} \tilde Z = \tilde F,\ \tilde Z(0) = P_c R(0) \text{ given}.
\end{aligned}\ee
The commutation terms
\be
2iv(t)[P_c, \dl] \tilde Z,\ A(t) [P_c, \sigma_3] \tilde Z
\ee
are small in the dual Strichartz norm (for small $\|v\|_{\infty}$ and $\|A\|_{\infty}$) and thus can be controlled by Strichartz inequalities and a simple fixed point argument.

For a proper choice of $\lambda_0$, zero is not an exceptional value for $\tilde{\mc H}$. Lemma \ref{lemma32} then provides a decomposition
\be
V - P_p \tilde {\mc H} = F_1(V) F_2(V),
\ee
where $F_1(V)$ and $F_2^*(V)$ are in $\mc L(L^2, L^{6/5, 1})$ and can be approximated in this space by operators $F_1^n$ and $F_2^n$, such that $F_1^n$ and $(F_2^n)^*$ are bounded from $L^2$ to $\langle x \rangle^{-N} L^2$.


Denote
\be\lb{3.144}\begin{aligned}
\tilde T_{F_2(V), F_1(V)} F(t) &= \int_{-\infty}^t F_2(V) P_c e^{i(t-s) \tilde{\mc H}_0} U(t, s) F_1(V) F(s) \dd s, 
\end{aligned}\ee
respectively
\be\begin{aligned}
T_{F_2(V), 1} F(t) &= \int_{-\infty}^t F_2(V) P_c e^{i(t-s) \tilde {\mc H}_0} F(s) \dd s.
\end{aligned}\ee
By Duhamel's formula,
\be\lb{3.186}\begin{aligned}
F_2(V) \tilde Z(t) &= i \tilde T_{F_2(V), F_1(V)} V_2 \tilde Z(t) + T_{F_2(V), 1} (-iU(s) \tilde F(s) + \delta_{s=0} \tilde Z(0)).
\end{aligned}\ee
We compare the time-dependent kernel $\tilde T_{F_2(V), F_1(V)}$ with the time-independent one
\be\begin{aligned}
T_{F_2(V), F_1(V)} F(t) &= \int_{-\infty}^t F_2(V) P_c e^{i(t-s) \tilde {\mc H}_0} F_1(V) F(s) \dd s. 
\end{aligned}\ee
For each $n$, by Lemma \ref{lem_32} we obtain that
\be
\lim_{\substack{\|A\|_{\infty} \to 0 \\ \|v\|_{\infty} \to 0}} \|T_{F_2^n, F_1^n} - \tilde T_{F_2^n, F_1^n}\|_{\tilde K_{1su}} = 0.
\ee
At the same time, since $F_2^n$ and $F_1^n$ are approximations of $F_2(V)$ and $F_1(V)$,
\be\begin{aligned}
&\lim_{n \to \infty} \|T_{F_2^n, F_1^n} - T_{F_2(V), F_1(V)}\|_{\tilde K} = 0, \\
&\lim_{n \to \infty} \|\tilde T_{F_2^n, F_1^n} - \tilde T_{F_2(V), F_1(V)}\|_{\tilde K} = 0.
\end{aligned}\ee
Therefore,
\be\lb{3.189}
\lim_{\substack{\|A\|_{\infty} \to 0 \\ \|v\|_{\infty} \to 0}} \|T_{F_2(V), F_1(V)} - \tilde T_{F_2(V), F_1(V)}\|_{\tilde K_{1su}} = 0.
\ee
As a consequence of Lemma \ref{lemma32}, the operator $I - i T_{F_1(V), F_2(V)}$ is invertible in the translation-invariant algebra $K$ and, thus, also in the time-dependent $\tilde K$ algebra of Definition \ref{def_kt}. By (\ref{3.189}), it follows that when $\|A\|_{\infty}$ and $\|v\|_{\infty}$ are small enough $I - i \tilde T_{F_1(V), F_2(V)}$ is also invertible in $\tilde K$.

This immediately implies the desired Strichartz estimates.

In regard to the $\dot H^{1/2}$ case, assume that $V_1$ and $V_2$ belong to $\dot W^{1/2, 2-\epsilon} \cap \dot W^{1/2, 2+\epsilon}$. Then we have the decomposition
\be
V - P_p \tilde {\mc H} = F_1(V) F_2(V),
\ee
where
\be
F_1(V) \in \mc L(\dot H^{1/2}, |\dl|^{-1/2} L^{6/5, 1}),\ F_2(V) \in \mc L(|\dl|^{-1/2} L^{6, \infty}, \dot H^{1/2})
\ee
respectively. $F_1(V)$ can be approximated in this space by operators
\be
F_1^n \in \mc L(\dot H^{1/2}, \langle x \rangle^{-N} L^2)
\ee
and $F_2(V)$ can be approximated by $F_2^n \in \mc L(\langle x \rangle^N L^2, \dot H^{1/2})$, for some large $N$.

Proceeding in the same manner, with the help of Lemma \ref{lemma32} it follows that
\be
\lim_{\substack{\|A\|_{\infty} \to 0 \\ \|v\|_{\infty} \to 0}} \|T_{F_2(V), F_1(V)} - \tilde T_{F_2(V), F_1(V)}\|_{|\dl|^{-1/2}\tilde K|\dl|^{1/2}} = 0.
\ee
Therefore we can invert $\tilde T_{F_2(V), F_1(V)}$ in the operator algebra $|\dl|^{-1/2}\tilde K|\dl|^{1/2}$ and the proof of $\dot H^{1/2}$ Strichartz estimates proceeds in exactly the same manner.
\end{proof}

\appendix
\chapter{Spaces of functions}\lb{spaces} Our computations take place mostly in Lebesgue and Sobolev spaces of functions defined on $\set R^{3+1}$. This  corresponds to three spatial dimensions and one extra dimension that accounts for time.

To begin with, we only consider measurable complex-valued functions or ones that take values within a finite-dimensional Banach space. In general, dealing with more general Banach space-valued functions, one must distinguish between weak and strong measurability.

We denote the Lebesgue norm of $f$ by by $\|f\|_p$. The Sobolev spaces of integral order $W^{n, p}$ are then defined by
\be
\|f\|_{W^{n, p}} = \bigg(\sum_{|\alpha| \leq n} \|\partial^{\alpha} f\|_p^p\bigg)^{1/p}
\ee
for $1 \leq p < \infty$ and
\be
\|f\|_{W^{n, \infty}} = \sup_{|\alpha| \leq n} \|\partial^{\alpha} f\|_{\infty}.
\ee
when $p=\infty$. In addition, we consider Sobolev spaces of fractional order, both homogenous and inhomogenous:
\be
\|f\|_{W^{s, p}} = \|\langle \dl \rangle^s f\|_p, \text{ respectively } \|f\|_{\dot W^{s, p}} = \||\dl| f\|_p.
\ee
Here $\langle \dl \rangle^s$ and $|\dl|^s$ denote Fourier multipliers --- multiplication on the Fourier side by $\langle \xi \rangle^s = (1+|\xi|^2)^{s/2}$ and $|\xi|^{s}$ respectively.

When $p=2$, the alternate notation $H^s = W^{s, 2}$ or $\dot H^s = \dot W^{s, 2}$ is customary.

Strichartz estimates may involve mixed space-time norms of the form
\be
\|f\|_{L^p_t \dot W^{s, q}_x} = \Bigg(\int_{-\infty}^{\infty} \|f(x, t)\|_{\dot W^{s, q}_x}^p \dd t\Bigg)^{1/p}.
\ee

Our computations naturally lead to Lorentz spaces. Given a measurable function $f$ defined on a measure space $(X, \mu)$, consider its distribution function
\be
m(\sigma, f) = \mu(\{x \mid |f(x)|>\sigma\}).
\ee
\begin{definition}
A function $f$ belongs to the Lorentz space $L^{p, q}$ if its decreasing rearrangement
\be\lb{A6}
f^*(t) = \inf\{\sigma \mid m(\sigma, f) \leq t\}
\ee
fulfills
\be
\|f\|_{L^{p, q}} = \bigg(\int_0^{\infty} (t^{1/p} f^*(t))^q \frac {\dd t} t \bigg)^{1/q} < \infty
\lb{lorentz}
\ee
or, respectively,
\be
\|f\|_{L^{p, \infty}} = \sup_{0 \leq t < \infty} t^{1/p} f^*(t) < \infty
\ee
when $q=\infty$.
\end{definition}
We list several important properties of Lorentz spaces.
\begin{lemma}\begin{enumerate}\item $L^{p, p} = L^p$ and $L^{p, \infty}$ is weak-$L^p$.
\item The dual of $L^{p, q}$ is $L^{p', q'}$, where $1/p + 1/p' = 1$, $1/q + 1/q' = 1$, just as for Lebesgue spaces.
\item If $q_1 \leq q_2$, then $L^{p, q_1} \subset L^{p, q_2}$.
\item Except when $q=\infty$, the set of bounded compactly supported functions is dense in $L^{p, q}$.
\end{enumerate}\end{lemma}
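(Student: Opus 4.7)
The plan is to verify each of the four claims in turn, exploiting the definition of the decreasing rearrangement $f^*$ in (\ref{A6}) and the fact that $f$ and $f^*$ are equimeasurable (so $\|f\|_p = \|f^*\|_p$ for every $p$, and more generally $\int \Phi(|f|)\,d\mu = \int_0^\infty \Phi(f^*(t))\,dt$ for any nonnegative Borel function $\Phi$).

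For (1), I compute directly: with $q=p$ the definition (\ref{lorentz}) gives
\[
\|f\|_{L^{p,p}}^p = \int_0^\infty t^{1} (f^*(t))^p \frac{dt}{t} = \int_0^\infty (f^*(t))^p \, dt = \|f^*\|_p^p = \|f\|_p^p,
\]
where the last equality is equimeasurability. For weak-$L^p$, note that $m(\sigma,f) = m(\sigma,f^*)$ and $f^*$ is right-continuous and decreasing, so $f^*(t)>\sigma$ iff $t < m(\sigma,f)$. Hence $\sup_t t^{1/p} f^*(t) = \sup_\sigma \sigma \, m(\sigma,f)^{1/p}$, which is precisely the weak-$L^p$ quasi-norm.

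For (3), the key observation is that $t \mapsto t^{1/p} f^*(t)$ is essentially controlled by its supremum averaged against the decreasing weight $dt/t$. Concretely, since $f^*$ is nonincreasing, $f^*(2t) \leq f^*(s)$ for $s \in [t,2t]$, so $t^{1/p} f^*(t) \leq C \big( \int_{t/2}^{t} (s^{1/p} f^*(s))^{q_1} \frac{ds}{s}\big)^{1/q_1}$. Therefore $\sup_t t^{1/p} f^*(t) \leq C \|f\|_{L^{p,q_1}}$, giving the embedding $L^{p,q_1}\subset L^{p,\infty}$. To bridge to $q_2$ with $q_1 \le q_2 < \infty$, write
\[
\|f\|_{L^{p,q_2}}^{q_2} = \int_0^\infty (t^{1/p} f^*(t))^{q_2-q_1} (t^{1/p} f^*(t))^{q_1}\frac{dt}{t} \leq \|f\|_{L^{p,\infty}}^{q_2-q_1} \|f\|_{L^{p,q_1}}^{q_1},
\]
and combine with the previous bound on the weak quasi-norm.

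For (4), fix $f \in L^{p,q}$ with $q<\infty$. Let $f_n = f \cdot \chi_{\{|f|\leq n,\,|x|\leq n\}}$; then $f_n$ is bounded and compactly supported, and $(f-f_n)^*(t) \to 0$ pointwise, dominated by $f^*(t)$. Since $t^{1/p} f^*(t) \in L^{q}(\mathbb{R}_+,dt/t)$ when $q<\infty$, dominated convergence gives $\|f-f_n\|_{L^{p,q}} \to 0$; this is impossible when $q=\infty$ because the $L^\infty$ norm against $dt/t$ of a nontrivial tail is not controlled in this way.

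The main obstacle is (2), the duality $(L^{p,q})^* = L^{p',q'}$ for $1<p<\infty$ and $1\leq q <\infty$. The standard route, which I will follow, has three steps. First, H\"older's inequality in Lorentz spaces,
\[
\int |fg|\,d\mu \leq \int_0^\infty f^*(t) g^*(t)\,dt \leq C_{p,q} \|f\|_{L^{p,q}} \|g\|_{L^{p',q'}},
\]
where the first inequality is the Hardy--Littlewood rearrangement inequality and the second is H\"older on $(0,\infty)$ against $dt/t$ with conjugate exponents $(q,q')$ and $(p,p')$ in the power of $t$. This shows $L^{p',q'}$ embeds isometrically (up to constants) into $(L^{p,q})^*$. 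Second, for the reverse inclusion, given a bounded linear functional $\Lambda$ on $L^{p,q}$, one restricts to simple functions (dense by (4)), represents $\Lambda(f) = \int f g \, d\mu$ for a locally integrable $g$ via the classical $L^p$ duality on sets of finite measure, and then shows $g \in L^{p',q'}$ with norm $\leq \|\Lambda\|$. This requires showing that the decreasing rearrangement $g^*$ satisfies the Lorentz-dual estimate; the cleanest way is to test $\Lambda$ against carefully chosen indicator functions of super-level sets of $g$, using that $\chi_E \in L^{p,q}$ with norm a constant multiple of $\mu(E)^{1/p}$. Third, to avoid the issue that $\|\cdot\|_{L^{p,q}}$ is only a quasi-norm for $q<p$, one passes to the equivalent norm obtained by replacing $f^*$ with its Hardy maximal average $f^{**}(t) = t^{-1}\int_0^t f^*(s)\,ds$; for $1<p<\infty$ and $1\leq q\leq \infty$ the two quasi-norms are equivalent and the one built from $f^{**}$ is an actual norm, so the Hahn--Banach based duality argument goes through cleanly. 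I will state but not reprove these background facts, citing standard references on interpolation theory.
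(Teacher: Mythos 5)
The paper does not actually prove this lemma: it states the four properties and refers the reader to \cite{bergh} for proofs, so there is no in-paper argument to compare against. Your proposal supplies a self-contained verification, and it is essentially correct. Items (1), (3), (4) are complete: the $L^{p,p}=L^p$ identity by equimeasurability, the identity $\sup_t t^{1/p}f^*(t)=\sup_\sigma \sigma\, m(\sigma,f)^{1/p}$, the two-step nesting argument (first $L^{p,q_1}\subset L^{p,\infty}$ via monotonicity of $f^*$ on $[t/2,t]$, then the log-convexity bound $\|f\|_{L^{p,q_2}}^{q_2}\le\|f\|_{L^{p,\infty}}^{q_2-q_1}\|f\|_{L^{p,q_1}}^{q_1}$), and the truncation-plus-dominated-convergence argument for density, where you correctly use $(f-f_n)^*\le f^*$ and the pointwise convergence $(f-f_n)^*(t)\to 0$, which in turn needs $m(\sigma,f)<\infty$ for all $\sigma>0$ (true since $p<\infty$). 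For (2) you give only an outline, but it is the standard and correct one (Hardy--Littlewood rearrangement inequality plus H\"older for the easy inclusion; representation on sets of finite measure plus the $f^{**}$ renormalization for the converse), and deferring these background facts to the literature is no worse than what the paper itself does. One caveat worth making explicit: the duality claim as stated in the lemma is loose --- it holds for $1<p<\infty$ and $1\le q<\infty$, but fails for $q=\infty$ and is degenerate for $p=1$, $q>1$; you correctly restrict to the valid range, which is the only range the paper ever uses.
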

For a more complete enumeration, see \cite{bergh}.

In this somewhat abstract setting, it is important to note that $f \in L^{p, q}$ if and only if $|f| \in L^{p, q}$, so the study of the general case can be reduced to that of real-valued functions.

Other spaces of functions are introduced as needed in the course of the proof. Especially important are the space of Borel measures of finite mass, denoted by $M$, and the discrete Lebesgue spaces $\dot \ell^q = L^q(\set Z)$, $\ell^q = L^q(\set N)$ with respect to the counting measure.

We conclude by proving a lemma concerning the atomic decomposition of Lorentz spaces. In preparation for that, we give the following definition:
\begin{definition}
The function $a$ is an $L^p$ atom, $1 \leq p < \infty$, if $a$ is measurable, bounded, its support has finite measure, and $a$ is $L^p$ normalized:
\be
\esssup_x |a(x)| < \infty,\ \mu(\supp a)< \infty,\ (\esssup_x |a(x)|)^p \cdot \mu(\supp a) = 1.
\ee
\end{definition}
Again, note that $a$ is an atom if and only if $|a|$ is one.
\begin{lemma}[Atomic decomposition of $L^{p, q}$]\lb{lemma_30} Consider $(\set R^n, \mu)$, with $\mu$ the Lebesgue measure. A function $f$ belongs to $L^{p, q}(\set R^n)$, $1 \leq p$, $q < \infty$, if and only if it can be written as a linear combination of $L^p$ atoms
\be
f = \sum_{k \in \set Z} \alpha_k a_k,
\ee
where the atoms $a_k$ have disjoint supports of size $2^k$ and $(\alpha_k)_k \in \ell^q$. Furthermore, $\|f\|_{L^{p, q}} \sim \|\alpha_k\|_{\ell^q}$ and the sum converges unconditionally (regardless of the order of summation) in the $L^{p, q}$ norm.
\end{lemma}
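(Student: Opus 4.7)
The plan is to prove the norm equivalence by constructing an explicit decomposition in one direction and estimating the Lorentz norm of an arbitrary atomic sum in the other. Since $\|f\|_{L^{p,q}} = \||f|\|_{L^{p,q}}$, I may assume $f \ge 0$. The key preliminary identity is the equivalent discrete expression
$$
\|f\|_{L^{p,q}}^q \;\sim\; \sum_{k \in \set Z} \bigl(2^{k/p} f^*(2^k)\bigr)^q,
$$
obtained by breaking the integral $(\ref{lorentz})$ over the dyadic intervals $[2^k, 2^{k+1})$ and exploiting the monotonicity of $f^*$.

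For the direct implication, I would construct the decomposition via the level sets of $f$. Set $\lambda_k = f^*(2^k)$; by $(\ref{A6})$ the superlevel sets $B_k = \{f > \lambda_k\}$ satisfy $\mu(B_k) \le 2^k$. Then $E_k = B_k \setminus B_{k+1}$ are pairwise disjoint with $\mu(E_k) \le 2^{k+1}$ and $\lambda_{k+1} < f \le \lambda_k$ on $E_k$. Defining $f_k = f \chi_{E_k}$ yields $f = \sum_k f_k$ which, after enlarging each $E_k$ by a zero set to make it have measure exactly $2^k$ (or after a cosmetic reindexing), writes $f_k = \alpha_k a_k$ with $a_k$ an $L^p$ atom and $|\alpha_k| \sim \lambda_k \cdot 2^{k/p} = 2^{k/p} f^*(2^k)$. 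The preliminary identity then gives $\sum_k |\alpha_k|^q \sim \sum_k (2^{k/p} f^*(2^k))^q \sim \|f\|_{L^{p,q}}^q$.

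For the converse, suppose $f = \sum_k \alpha_k a_k$ with disjoint supports $S_k$, $\mu(S_k) = 2^k$ and $\esssup|a_k| = 2^{-k/p}$. Then $|f(x)| \le h(x) := \sum_k \beta_k \chi_{S_k}(x)$ with $\beta_k = |\alpha_k| 2^{-k/p}$, so $\|f\|_{L^{p,q}} \le \|h\|_{L^{p,q}}$. Let $\tau$ be the permutation arranging $\beta_{\tau(j)}$ in decreasing order and set $T_j = \sum_{i \le j} 2^{\tau(i)}$. Then
$$
h^* = \sum_j \beta_{\tau(j)} \chi_{[T_{j-1}, T_j)}, \qquad \|h\|_{L^{p,q}}^q \sim \sum_j \beta_{\tau(j)}^q\bigl(T_j^{q/p} - T_{j-1}^{q/p}\bigr).
$$
For $q \le p$ the concavity of $t \mapsto t^{q/p}$ gives $T_j^{q/p} - T_{j-1}^{q/p} \le (T_j - T_{j-1})^{q/p} = 2^{\tau(j) q/p}$, and summing recovers $\sum_k |\alpha_k|^q$. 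For $q > p$ I would instead combine the convex bound $T_j^{q/p} - T_{j-1}^{q/p} \le (q/p)\, T_j^{q/p-1}\, 2^{\tau(j)}$ with Abel summation against the decreasing weights $\beta_{\tau(j)}^q$, reducing to a discrete Hardy-type inequality that yields the same upper bound with a constant depending only on $p$ and $q$. Convergence is then handled by the disjointness of supports: the partial sums converge pointwise a.e.\ to $f$, and the tail bound $\|\sum_{|k| > N} \alpha_k a_k\|_{L^{p,q}}^q \lesssim \sum_{|k|>N} |\alpha_k|^q \to 0$ gives unconditional $L^{p,q}$-convergence.

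The main obstacle I expect is the upper bound $\|h\|_{L^{p,q}}^q \lesssim \sum_k |\alpha_k|^q$ in the regime $q > p$: the decreasing rearrangement $h^*$ depends on a permutation $\tau$ that can mismatch the natural ordering of the supports $S_k$, so the straightforward concavity estimate available for $q \le p$ fails, and extracting the $\ell^q$ bound requires a more delicate Abel/Hardy argument along the rearranged indices.
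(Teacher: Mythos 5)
Your forward direction (decomposing along the superlevel sets of $f$ at the heights $f^*(2^k)$) is the same construction as the paper's, modulo the indexing slip $E_k = B_k\setminus B_{k+1}$, which is empty since $B_k\subseteq B_{k+1}$; you mean $E_k=B_{k+1}\setminus B_k$. The converse is where you genuinely diverge, and the case $q>p$ --- which you yourself flag as the main obstacle --- is left as a gap. The step you defer to, a ``discrete Hardy-type inequality'' giving
\be\nonumber
\sum_j \beta_{\tau(j)}^q\, T_j^{q/p-1}\, 2^{\tau(j)} \;\leq\; C\sum_j \beta_{\tau(j)}^q\, 2^{\tau(j)q/p},
\ee
is \emph{not} a generic fact about decreasing weights and arbitrary disjoint blocks: with $N$ blocks of equal measure and equal heights the analogous left side grows like $N^{q/p}$ against $N$ on the right, so no Hardy-type inequality in that generality can hold for $q>p$. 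What rescues your argument is precisely the lacunarity of the support sizes: since the $\tau(j)$ are distinct integers, every partial sum obeys $T_J=\sum_{j\leq J}2^{\tau(j)}\leq 2\max_{j\leq J}2^{\tau(j)}$, hence $T_J^{q/p}\leq 2^{q/p}\sum_{j\leq J}2^{\tau(j)q/p}$; feeding this into the Abel summation against the decreasing $\beta_{\tau(j)}^q$ does close the estimate. But this dyadic input is the entire content of the step and must be stated --- as written, the proof of the converse for $q>p$ is missing its key ingredient.

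For comparison, the paper's converse avoids both the rearrangement permutation and the case split on $q$ versus $p$. It observes that the atoms $a_k$ with $k<k_0$ have supports of total measure at most $\sum_{k<k_0}2^k=2^{k_0}$, so directly $f^*(2^{k_0})\leq \sup_{k\geq k_0}2^{-k/p}|\alpha_k|$; raising to the power $q$, replacing the supremum by the sum over $k\geq k_0$, and interchanging the order of summation produces the geometric factor $2^{q/p}/(2^{q/p}-1)$ and the bound $\|f\|_{L^{p,q}}^q\leq C\sum_k|\alpha_k|^q$ uniformly for all $1\leq p,q<\infty$. You should either adopt that route or insert the lacunarity estimate above into yours. (The remaining items --- the a.e.\ convergence, the tail bound giving unconditional convergence, and the padding of the sets $E_k$ to measure exactly $2^k$, which requires adding sets of \emph{positive} measure where $f$ is small rather than ``zero sets'' --- are at the same level of informality as the paper's own write-up.)
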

This is just a discretized version of the definition of Lorentz spaces (\ref{lorentz}).
\begin{proof}
In one direction, assume that $f \in L^{p, q}$ and let $f_k=f^*(2^k)$, for $f^*$ as in (\ref{A6}). By the definition of the distribution function, there exist disjoint sets $A_k$, of measure $2^k$, on which $f_k \geq |f(x)| \geq f_{k+1}$. Furthermore, since the distribution function is decreasing, by (\ref{lorentz}) one has that
\be
\bigg(\sum_k (2^{(k+1)/p} f_k)^q \bigg)^{1/q} \geq \|f\|_{L^{p, q}} \geq \bigg(\sum_k (2^{k/p} f_{k+1})^q \bigg)^{1/q}
\ee
or, equivalently,
\be
2^{1/p} \bigg(\sum_k (2^{k/p} f_k)^q \bigg)^{1/q} \geq \|f\|_{L^{p, q}} \geq 2^{-1/p} \bigg(\sum_k (2^{k/p} f_k)^q \bigg)^{1/q}.
\ee
Then let
\be\begin{aligned}
\alpha_k &= 2^{k/p} \esssup\{|f(x)| \mid x \in A_k\}, \\
a_k &= (\chi_{A_k} f)/\alpha_k;
\end{aligned}\ee
when $\alpha_k = 0$, we pick $a_k$ in an arbitrary fashion, say $a_k = 2^{-k/p} \chi_{A_k}$. Note that $a_k$ is an atom for each $k$ and
\be
2^{k/p} f_k \geq \alpha_k \geq 2^{k/p} f_{k+1}.
\ee
Therefore
\be
2^{2/p} (\sum_k |\alpha_k|^q)^{1/q} \geq \|f\|_{L^{p, q}} \geq 2^{-1/p} (\sum_k |\alpha_k|^q)^{1/q},
\ee
so $\|f\|_{L^{p, q}} \sim \|\alpha_k\|_{\ell^q}$. The unconditional convergence of $\sum_k |\alpha_k|^q$, then, implies the unconditional convergence of $\sum \alpha_k a_k$.

In order to establish the converse, consider any sum
\be
f = \sum_{k \in \set Z} \alpha_k a_k,
\ee
where $a_k$ are $L^p$ atoms of disjoint supports of size $2^k$ and only finitely many of the coefficients $\alpha_k$ are nonzero. One needs to show that
\be\lb{A.17}
\|f\|_{L^{p, q}} \leq C \|\alpha_k\|_{\ell^q},
\ee
with a constant that does not depend on the number of terms.

Observe that each atom $a_k$ has $\esssup |a_k(x)| = 2^{-k/p}$. Since the measures of supports of atoms $a_k$, for $k < k_0$, add up to at most $2^{k_0}$, it follows by the definition of the distribution function that
\be
f^*(2^{k_0}) \leq \sup_{k \geq k_0} |\alpha_k a_k(x)| = \sup_{k \geq k_0}2^{-k/p} |\alpha_k|.
\ee
Then, the integral that appears in the definition (\ref{lorentz}) can be bounded by
\be\begin{aligned}
\int_0^{\infty} (t^{1/p} f^*(t))^q \frac {\dd t} t &\leq \sum_{k_0 \in \set Z} \big(2^{k_0/p} (\sup_{k \geq k_0}2^{-k/p} |\alpha_k|)\big)^q \\
&\leq \sum_{k \in \set Z} \big(\sum_{k_0 \leq k} 2^{k_0 q/p}\big) 2^{-kq/p} |\alpha_k|^q \\
&= \frac {2^{q/p}}{2^{q/p}-1} \sum_{k \in \set Z} |\alpha_k|^q.
\end{aligned}\ee
It follows that $\|f\|_{L^{p, q}}$ is indeed finite and fulfills (\ref{A.17}).
\end{proof}

In order to avoid needless complications, we proved the atomic decomposition lemma only for $\set R^n$ endowed with the usual Lebesgue measure. The exact same conclusion holds for any infinitely divisible measure space.

Instead of having atoms supported on sets of size $2^k$, in a more general setting the correct condition is that the support sizes should be separated:
\be
\mu(\supp a_k) \in [2^k, 2^{k+1}).
\ee
With this minor modification, the lemma holds for any measure space.

\chapter{Real interpolation}

Our presentation of interpolation faithfully follows Bergh and L\"{o}f\-str\"{o}m, \cite{bergh}, and is included only for the sake of completeness. The reader is advised to consult this reference work for a much more detailed exposition.

For any couple of Banach spaces $(A_0, A_1)$, contained within a wider space $X$, their intersection, $A_0 \cap A_1$, and their sum
\be
A_0+A_1 = \{x \in X \mid x = a_0+a_1,\ a_0 \in A_0,\ a_1 \in A_1\}
\ee
give rise to two potentially new Banach spaces.

Given a couple of Banach spaces $(A_0, A_1)$ as above, define the so-called $K$ functional on $A_0+A_1$ by
\be
K(t, a) = \inf_{a = a_0 + a_1} (\|a_0\|_{A_0} + t \|a_1\|_{A_1}).
\ee
\begin{definition}
For $0 \leq \theta \leq 1$, $1 \leq q \leq \infty$, the interpolation space $(A_0, A_1)_{\theta, q} = A_{\theta, q}$ is the set of elements $f \in A_0 + A_1$ whose norm
\be
\|f\|_{A_{\theta, q}} = \bigg(\int_0^{\infty} (t^{-\theta} K(t, f))^q \frac {dt} t \bigg)^{1/q}
\ee
is finite.
\end{definition}

$(A_0, A_1)_{\theta, q}$ is an exact interpolation space of exponent $\theta$ between $A_0$ and $A_1$,  meaning that it satisfies the following defining property:
\begin{theorem} Let $T$ be a bounded linear mapping between two pairs of Banach spaces $(A_0, A_1)$ and $(B_0, B_1)$, i.e.
\be
\|T f\|_{B_j} \leq M_j \|f\|_{A_j},\ j = 0, 1.
\ee
Then
\be
\|T f\|_{(B_0, B_1)_{\theta, q}} \leq M_0^{1-\theta} M_1^{\theta} \|f\|_{(A_0, A_1)_{\theta, q}}.
\ee
\end{theorem}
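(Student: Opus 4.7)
The plan is to reduce the theorem to a single pointwise inequality for the $K$-functional and then integrate. The key observation is that the operator $T$ acts well on arbitrary decompositions: if $f = a_0 + a_1$ in $A_0 + A_1$, then $Tf = Ta_0 + Ta_1$ gives an admissible decomposition in $B_0 + B_1$, with the trivial bound $\|Ta_j\|_{B_j} \leq M_j \|a_j\|_{A_j}$.

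First I would establish the scaling estimate
\begin{equation*}
K(t, Tf; B_0, B_1) \leq M_0\, K(tM_1/M_0,\, f;\, A_0, A_1),
\end{equation*}
obtained by writing $\|Ta_0\|_{B_0} + t \|Ta_1\|_{B_1} \leq M_0 \|a_0\|_{A_0} + tM_1 \|a_1\|_{A_1} = M_0 \bigl(\|a_0\|_{A_0} + (tM_1/M_0) \|a_1\|_{A_1}\bigr)$ and taking the infimum over admissible decompositions of $f$. (Boundary cases where $M_0 = 0$ or $M_1 = 0$ are handled separately by replacing $M_j$ with $M_j + \varepsilon$ and letting $\varepsilon \to 0$, or by noting that $T$ then factors through one of the spaces and the conclusion is immediate.)

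Next I would plug this estimate into the definition of the interpolation norm and perform a change of variables. For $1 \leq q < \infty$:
\begin{align*}
\|Tf\|_{(B_0,B_1)_{\theta,q}}^q &= \int_0^\infty \bigl(t^{-\theta} K(t, Tf)\bigr)^q \frac{dt}{t} \\
&\leq M_0^q \int_0^\infty \bigl(t^{-\theta} K(tM_1/M_0, f)\bigr)^q \frac{dt}{t}.
\end{align*}
Substituting $s = tM_1/M_0$ (which preserves the measure $dt/t$) and pulling out the resulting power of $M_0/M_1$ gives $M_0^q (M_0/M_1)^{-\theta q} \|f\|_{(A_0,A_1)_{\theta,q}}^q = M_0^{q(1-\theta)} M_1^{q\theta} \|f\|_{(A_0,A_1)_{\theta,q}}^q$. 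Taking the $q$-th root yields the desired inequality. The case $q = \infty$ is identical with the supremum replacing the integral.

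There is no real obstacle here; the proof is essentially a two-line computation once the scaling estimate for $K$ is written down. The only mildly delicate point is the degenerate case where $M_0$ or $M_1$ vanishes, which must be addressed by a perturbation argument so that the change of variables $s = tM_1/M_0$ remains legitimate.
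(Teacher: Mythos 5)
Your proof is correct: the scaling inequality $K(t, Tf; B_0, B_1) \leq M_0\, K(tM_1/M_0, f; A_0, A_1)$ followed by the change of variables $s = tM_1/M_0$ (which preserves $dt/t$) is exactly the classical argument, and your handling of the degenerate cases $M_j = 0$ is adequate. Note that the paper itself offers no proof of this statement --- it is quoted verbatim from Bergh--L\"ofstr\"om for completeness --- and your argument is precisely the standard one found in that reference, so there is nothing to reconcile.
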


For two couples of Banach spaces, $(A_0^{(1)}, A_1^{(1)})$ and $(A_0^{(2)}, A_1^{(2)})$,
\be
(A_0^{(1)} \times A_0^{(2)}, A_1^{(1)} \times A_1^{(2)})_{\theta, q} = (A_0^{(1)}, A_1^{(1)})_{\theta, q} \times (A_0^{(2)}, A_1^{(2)})_{\theta, q}.
\ee

The following multilinear interpolation theorem is due to Lions--Peetre:



\begin{theorem}\lb{thm_32} Assume that $T$ is a bilinear mapping from $(A_0^{(j)} \times A_1^{(j)})$ to $B^j$ for $j = 0, 1$ and
\be
\|T(a^{(1)}, a^{(2)})\|_{B_j} \leq M_j \|a^{(1)}\|_{A_j^{(1)}} \|a^{(2)}\|_{A_j^{(2)}}.
\ee
Then
\be
\|T(a^{(1)}, a^{(2)})\|_{(B_0, B_1)_{\theta, q}} \leq C \|a^{(1)}\|_{(A_0^{(1)}, A_1^{(1)})_{\theta, q_1}} \|a^{(2)}_j\|_{(A_0^{(2)}, A_1^{(2)})_{\theta, q_2}},
\ee
if $0<\theta<1$, $1/q-1=(1/q_1-1) + (1/q_2-1)$, $1 \leq q \leq \infty$.
\end{theorem}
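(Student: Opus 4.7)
The plan is to use the J-method characterization of real interpolation spaces, which handles the bilinear situation more cleanly than a K-functional decomposition. Indeed, a K-method decomposition $a^{(k)} = a_0^{(k)} + a_1^{(k)}$ would produce cross terms $T(a_0^{(1)}, a_1^{(2)})$ on which none of the hypotheses furnish a bound, whereas J-method ``atoms'' lie in $A_0^{(k)} \cap A_1^{(k)}$ so every pairing $T(u_n^{(1)}, u_m^{(2)})$ makes sense and is estimable in both $B_0$ and $B_1$. Recall that for a couple $(A_0, A_1)$ the $J$-functional is $J(t, a) := \max(\|a\|_{A_0}, t\|a\|_{A_1})$ on $A_0 \cap A_1$, and the Fundamental Lemma of real interpolation (see Bergh--L\"ofstr\"om \cite{bergh}) gives an equivalent norm on $(A_0, A_1)_{\theta, q}$, for $0<\theta<1$, as the infimum of $\|(2^{-n\theta} J(2^n, u_n))_{n \in \set Z}\|_{\ell^q}$ taken over representations $a = \sum_n u_n$ (convergent in $A_0 + A_1$) with $u_n \in A_0 \cap A_1$. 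This equivalence is the only nontrivial external input.

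I would first fix nearly optimal $J$-representations $a^{(1)} = \sum_n u_n^{(1)}$ and $a^{(2)} = \sum_m u_m^{(2)}$ realizing, up to a constant factor, the two input norms. Bilinearity of $T$ together with the two hypothesized bounds, specialized to $(u, v) = (u_n^{(1)}, u_m^{(2)})$, yields
\[
\|T(u_n^{(1)}, u_m^{(2)})\|_{B_j} \leq M_j \|u_n^{(1)}\|_{A_j^{(1)}} \|u_m^{(2)}\|_{A_j^{(2)}},\quad j = 0, 1.
\]
Taking the maximum of the two corresponding $J$-bounds at the scale $t = 2^{n+m}$ gives
\[
J(2^{n+m}, T(u_n^{(1)}, u_m^{(2)}); B_0, B_1) \leq \max(M_0, M_1)\, J(2^n, u_n^{(1)})\, J(2^m, u_m^{(2)}).
\]

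Next, I would resum the double series $T(a^{(1)}, a^{(2)}) = \sum_{n, m} T(u_n^{(1)}, u_m^{(2)})$ along diagonals $k = n + m$, setting $w_k := \sum_{n + m = k} T(u_n^{(1)}, u_m^{(2)})$. Subadditivity of $J(2^k, \cdot)$ and the previous display produce
\[
2^{-k\theta} J(2^k, w_k; B_0, B_1) \leq \max(M_0, M_1) \sum_{n + m = k} \bigl(2^{-n\theta} J(2^n, u_n^{(1)})\bigr)\bigl(2^{-m\theta} J(2^m, u_m^{(2)})\bigr),
\]
so $(2^{-k\theta} J(2^k, w_k))_k$ is pointwise dominated by a constant multiple of the discrete convolution on $\set Z$ of the two $J$-norm sequences. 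Young's convolution inequality $\|f*g\|_{\ell^q} \leq \|f\|_{\ell^{q_1}} \|g\|_{\ell^{q_2}}$, which holds precisely when $1 + 1/q = 1/q_1 + 1/q_2$, that is, when $1/q - 1 = (1/q_1 - 1) + (1/q_2 - 1)$, combined with the $J$-method characterization of $(B_0, B_1)_{\theta, q}$ read in the reverse direction, then delivers the claimed bound with constant a multiple of $\max(M_0, M_1)$.

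The main obstacle is ensuring convergence of the various sums in $B_0 + B_1$: this is handled in the standard way by first restricting to $a^{(k)} \in A_0^{(k)} \cap A_1^{(k)}$ (which is dense in $(A_0^{(k)}, A_1^{(k)})_{\theta, q_k}$ when $q_k < \infty$) and then extending by continuity, while the case $q_k = \infty$ proceeds by a routine approximation argument. A standard sharpening, applying the argument to the rescaled bilinear form $T_{\alpha, \beta}(u, v) := T(\alpha u, \beta v)$ and optimizing over $\alpha, \beta > 0$, upgrades the constant $\max(M_0, M_1)$ to the sharp interpolation constant $C M_0^{1-\theta} M_1^\theta$; the weaker form stated suffices for the applications in the body of the paper.
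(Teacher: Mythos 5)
Your proposal is correct, but note that the paper does not actually prove this statement: Theorem \ref{thm_32} is quoted in the appendix as the Lions--Peetre multilinear interpolation theorem, with the reader referred to Bergh--L\"ofstr\"om \cite{bergh} for details. What you have written is precisely the standard proof from that reference: pass to near-optimal $J$-method representations, use the two endpoint bounds to estimate $J(2^{n+m}, T(u_n^{(1)}, u_m^{(2)}))$ by the product of the individual $J$-functionals, resum along diagonals, and apply Young's inequality on $\set Z$, whose admissibility condition $1 + 1/q = 1/q_1 + 1/q_2$ is exactly the hypothesis $1/q - 1 = (1/q_1 - 1) + (1/q_2 - 1)$. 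Your observation that a $K$-method decomposition would produce uncontrollable cross terms, forcing the $J$-method, is the right motivation, and your handling of convergence by density (for $q_k < \infty$) is the standard resolution of the only delicate point.

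One small correction to your closing aside: rescaling the arguments via $T_{\alpha,\beta}(u,v) = T(\alpha u, \beta v)$ multiplies \emph{both} $M_0$ and $M_1$ by the same factor $\alpha\beta$, so optimizing over $\alpha, \beta$ cannot separate them into $M_0^{1-\theta}M_1^{\theta}$. The sharp constant is instead obtained by renorming one of the endpoint spaces (replace $\|\cdot\|_{B_1}$ by $\lambda\|\cdot\|_{B_1}$, which scales the interpolation norm by $\lambda^{\theta}$ and the constant $M_1$ by $\lambda$, then optimize over $\lambda$), or equivalently by shifting the index in the diagonal resummation. Since the theorem as stated only asserts a bound with an unspecified constant $C$, this does not affect the validity of your proof of the stated result.
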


Finally, below we list the results of real interpolation in some standard situations:
\begin{proposition}\label{prop_33} Let $L^p$ be the Lebesgue spaces defined over a measure space $(X, \mu)$ and $L^{p, q}$ be Lorentz spaces over the same. Then
\begin{enumerate}
\item $(L^{p_0, q_0}, L^{p_1, q_1})_{\theta, q} = L^{p, q}$, for $p_0$, $p_1$, $q_0$, $q_1 \in (0, \infty]$, $p_0 \ne p_1$, $1/p = (1-\theta)/p_0 + \theta/p_1$, $0<\theta<1$.
\item For a Banach space $A$, define the weighted spaces of sequences (homogenous and inhomogenous, respectively)
\be\begin{aligned}
\dot \ell^q_s(A) &= \bigg\{(a_n)_{n \in \set Z} \mid \|(a_n)\|_{\ell^q_s(A)} = \bigg(\sum_{n=-\infty}^{\infty} (2^{ns} \|a_n\|_A)^q\bigg)^{1/q} < \infty\bigg\}\\
\ell^q_s(A) &= \bigg\{(a_n)_{n \geq 0} \mid \|(a_n)\|_{\ell^q_s(A)} = \bigg(\sum_{n=0}^{\infty} (2^{ns} \|a_n\|_A)^q\bigg)^{1/q} < \infty\bigg\}
\end{aligned}\ee
Then
\be\begin{aligned}
(\dot \ell_{s_0}^{q_0}(A_0), \dot \ell_{s_1}^{q_1}(A_1))_{\theta, q} &= \dot \ell_s^q((A_0, A_1)_{\theta, q}), \\
(\ell_{s_0}^{q_0}(A_0), \ell_{s_1}^{q_1}(A_1))_{\theta, q} &= \ell_s^q((A_0, A_1)_{\theta, q}),
\end{aligned}\ee
where $0<q_0$, $q_1<\infty$, $s = (1-\theta) s_0 + \theta s_1$, $1/q = (1-\theta)/q_0 + \theta/q_1$, $0<\theta<1$.
\item $(L^{p_0}(A_0), L^{p_1}(A_1))_{\theta, p} = L^p((A_0, A_1)_{\theta, p})$, for $1/p = (1-\theta)/p_0 + \theta/p_1$.
\end{enumerate}\end{proposition}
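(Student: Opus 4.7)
All three identities are classical consequences of the $K$-method, and my plan is to reduce each one to an explicit computation (or estimate) of the $K$-functional for the relevant couple, then recognize the resulting integral as the norm on the right-hand side. Since the paper cites Bergh--L\"ofstr\"om throughout, I would present the argument as a self-contained sequence of $K$-functional calculations rather than invoke heavy machinery like the fundamental lemma of interpolation.

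For part (1) on Lorentz spaces, the plan is to first establish the Holmstedt-type formula
\[
K(t,f;L^{p_0,q_0},L^{p_1,q_1}) \sim \Bigl(\int_0^{t^{\alpha}}\bigl(s^{1/p_0}f^*(s)\bigr)^{q_0}\tfrac{ds}{s}\Bigr)^{1/q_0} + t\Bigl(\int_{t^{\alpha}}^{\infty}\bigl(s^{1/p_1}f^*(s)\bigr)^{q_1}\tfrac{ds}{s}\Bigr)^{1/q_1},
\]
with $\alpha = 1/p_0-1/p_1$, by splitting $f$ according to a truncation of its distribution function at the level $f^*(t^{\alpha})$. I would then substitute this into the definition $\|f\|_{(A_0,A_1)_{\theta,q}} = \bigl(\int_0^\infty (t^{-\theta}K(t,f))^q\, dt/t\bigr)^{1/q}$, change variables $u = t^{\alpha}$, and apply Hardy's inequality to both integrals to collapse them into $\bigl(\int_0^\infty (u^{1/p}f^*(u))^q\, du/u\bigr)^{1/q} = \|f\|_{L^{p,q}}$, using $1/p = (1-\theta)/p_0 + \theta/p_1$. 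The main obstacle here is the Holmstedt formula itself; the rest is bookkeeping with Hardy's inequality.

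For part (2), the plan is to exploit the diagonal nature of the sequence spaces. Given $a = (a_n) \in \dot\ell^{q_0}_{s_0}(A_0) + \dot\ell^{q_1}_{s_1}(A_1)$, any decomposition $a = b+c$ can be made componentwise, so
\[
K(t,a) \sim \Bigl(\sum_{n} \bigl(2^{ns_0} K(t\, 2^{n(s_1-s_0)},a_n;A_0,A_1)\bigr)^{q_0}\Bigr)^{1/q_0} \wedge \Bigl(\sum_n\bigl(2^{ns_1}\cdots\bigr)^{q_1}\Bigr)^{1/q_1}
\]
up to a more careful diagonal computation. Rather than trying to get a closed formula, I would directly verify the two inclusions: for one direction decompose each $a_n$ at the scale $t\, 2^{n(s_1-s_0)}$ that realizes $K(\cdot,a_n)$ up to a factor of $2$, and for the other use the obvious sub-additivity. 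Substituting into the $(A_0,A_1)_{\theta,q}$ norm and applying Fubini together with the change of variables $t \mapsto t\, 2^{-n(s_1-s_0)}$ for each $n$ separates the sum in $n$ from the integral in $t$ and produces $\sum_n (2^{ns}\|a_n\|_{(A_0,A_1)_{\theta,q}})^q$ with $s = (1-\theta)s_0 + \theta s_1$, as required. The inhomogeneous case is identical except that sums run over $n \geq 0$.

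For part (3), the plan is analogous but with a continuous index. For $f \in L^{p_0}(A_0)+L^{p_1}(A_1)$ and each $x$ in the base space, let $g(x,t) = K(t,f(x);A_0,A_1)$; the pointwise optimal decomposition of $f(x)$ yields
\[
K(t,f;L^{p_0}(A_0),L^{p_1}(A_1)) \sim \inf_{f=f_0+f_1}\bigl(\|f_0\|_{L^{p_0}(A_0)} + t\|f_1\|_{L^{p_1}(A_1)}\bigr),
\]
and the key observation is that with $p = q$ one may essentially commute the infimum with the $L^p$ norm, because the relevant Minkowski/Fubini step is an equality (or more precisely a sharp inequality) exactly when the outer integration exponent matches the interpolation exponent. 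Inserting this into the definition of the interpolation norm and interchanging the $t$-integral with the $x$-integral via Fubini identifies the result with $\|f\|_{L^p((A_0,A_1)_{\theta,p})}$. The main obstacle is justifying measurability of the optimal pointwise decompositions; I would bypass this by working with near-optimal measurable selections, which suffices because we only need equivalence of norms.
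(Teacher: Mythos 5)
The paper gives no proof here: the proposition is quoted from Bergh--L\"ofstr\"om, to which the reader is explicitly referred, and your sketch reproduces precisely the standard arguments of that reference (Holmstedt's formula plus Hardy's inequality for the Lorentz scale, componentwise near-optimal decompositions with a per-component change of variables for the weighted sequence spaces, and pointwise measurable near-optimal decompositions with Fubini in the diagonal case $p=q$ for the vector-valued Lebesgue spaces), so your route is correct and is essentially the paper's. The only slip is the truncation exponent in your Holmstedt formula: the splitting point must be $t^{1/\alpha}$ with $\alpha = 1/p_0 - 1/p_1$ (not $t^{\alpha}$), since otherwise the substitution $u = t^{1/\alpha}$ does not convert $t^{-\theta}$ times the truncated integrals into $u^{1/p}f^*(u)$ with $1/p=(1-\theta)/p_0+\theta/p_1$.
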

Again, the reader is referred to \cite{bergh} for the proofs and for more details.


\end{document}